\newcommand{\circlesign}[1]{ 
\mathbin{
\mathchoice
{\buildcirclesign{\displaystyle}{#1}}
{\buildcirclesign{\textstyle}{#1}}
{\buildcirclesign{\scriptstyle}{#1}}
{\buildcirclesign{\scriptscriptstyle}{#1}}
} 
}
\newcommand\buildcirclesign[2]{%
\begin{tikzpicture}[baseline=(X.base), inner sep=0, outer sep=0]
\node[draw,circle] (X)  {\ensuremath{#1 #2}};
\end{tikzpicture}%
}
\newif\ifshow
\newtheorem{theorem}{Theorem}[section]
\newtheorem{remark}{Remark}[section]
\newtheorem{lemma}[theorem]{Lemma}
\newtheorem{assume}[theorem]{Assumption}
\newtheorem{proposition}[theorem]{Proposition}
\newtheorem{corollary}[theorem]{Corollary}
\newtheorem{define}{Definition}[section]
\newtheorem*{proposition*}{Proposition}
\newcommand{\joinR}{\hspace{-.1em}}
\newcommand{\RomanI}{I}
\newcommand{\RomanII}{\mbox{\RomanI\joinR\RomanI}}
\newcommand{\RomanIII}{\mbox{\RomanI\joinR\RomanII}}
\newcommand{\RomanV}{V}
\newcommand{\RomanIV}{\mbox{\RomanI\joinR\RomanV}}
\DeclareMathOperator*{\Tr}{Tr}
\DeclareMathOperator*{\Id}{Id}
\DeclareMathOperator*{\supp}{supp}
\DeclareMathOperator*{\divergence}{div}
\DeclareMathSymbol{:}{\mathord}{operators}{"3A}
\begin{document}
\title[Singular Navier-Stokes equations with Lions' exponent]{Remarks on the three-dimensional Navier-Stokes equations with Lions' exponent forced by space-time white noise}

\subjclass[2010]{35A02; 35R60; 76F30}
 
\author[Kazuo Yamazaki]{Kazuo Yamazaki}  
\address{Department of Mathematics, University of Nebraska, Lincoln, 243 Avery Hall, PO Box 880130, Lincoln, NE 68588-0130, U.S.A.; Phone: 402-473-3731; Fax: 402-472-8466}
\email{kyamazaki2@nebraska.edu}
\date{}
\keywords{Anderson Hamiltonian; Global well-posedness; Navier-Stokes equations; Regularity structures; Space-time white noise}
\thanks{This work was supported by the Simons Foundation MPS-TSM-00962572.}

\begin{abstract}
We study the three-dimensional Navier-Stokes equations forced by space-time white noise and diffused via the fractional Laplacian with Lions' exponent so that it is precisely the energy-critical case. We prove its global solution theory following the approach of Hairer and Rosati (2024, Annals of PDE, \textbf{10}, pp. 1--46). 
\end{abstract}

\maketitle

\section{Introduction}\label{Section 1}  

\subsection{Motivation from physics and real-world applications}\label{Subsection 1.1}
\hfill\\
Ever since the pioneering work \cite{LL57} on hydrodynamic fluctuations by Landau and Lifshitz, an abundance of physics literature has been devoted to partial differential equations (PDEs) forced by random noise, which we call stochastic PDEs (SPDEs), and a special type of such a force called the space-time white noise (STWN) (see \eqref{STWN}) has caught an exceptional amount of attention. Examples include, but are not limited to,  ferromagnet model \cite{MM75}; Kardar-Parisi-Zhang (KPZ) equation \cite{KPZ86}; magnetohydrodynamics (MHD) system \cite{CT92}; Navier-Stokes equations \cite{FNS77, YO86}; $\Phi^{4}$ model \cite{PW81, GJ87}; and the Rayleigh-B$\acute{\mathrm{e}}$nard equation \cite{ACHS81, GP75, HS92, SH77, ZS71}. As one example, postponing details of notation, we present the special case of \cite[Equations (3), (6), and (8)]{SH77}, which is the three-dimensional (3D) Navier-Stokes equations with a force in the form of a matrix $\xi = (\xi_{ij})_{1\leq i,j \leq 3}$ that represents the random effect of molecular noise:  
\begin{subequations}\label{SH77}
\begin{align}
&\partial_{t} u_{i} + (u\cdot\nabla) u_{i} + \partial_{i} \pi - \nu \Delta u_{i} = (\divergence\xi)_{i}, \hspace{10mm} \nabla\cdot u = 0,  \\
&\mathbb{E} [\xi_{ij} (x, t) \xi_{lm} (x', t') ] = \delta(x - x') \delta(t-t') (\delta_{il}\delta_{jm} + \delta_{im} \delta_{jl}), 
\end{align}
\end{subequations} 
for all $i,j,l,m \in \{1,2,3\}$ where $u(t,x)$, $\pi(t,x)$ respectively denote the velocity and pressure fields, $\nu \geq 0$ the viscosity coefficient, and $\delta$ the Dirac delta at the origin.  A general consensus in the community of researchers on SPDEs is that such divergence of a STWN makes rigorous analysis of \eqref{SH77} currently out of reach, even in the two-dimensional (2D) case. If $\{ \xi_{ij}\}_{ \{1\leq i, j \leq 3 \}}$ is replaced by a vector $\xi = (\xi_{1} \hspace{1mm} \xi_{2} \hspace{1mm}\xi_{3})^{T}$ and there is no divergence on the noise, \eqref{SH77} in the 2D case is known to admit a unique global solution thanks to Da Prato and Debussche \cite{DD02} and Hairer and Rosati \cite{HR24}, which is consistent with the deterministic case (see \eqref{Energy-critical}). In the 3D deterministic case, the global solution theory of \eqref{SH77} is a famous outstanding open problem. On the other hand, the global solution theory is available due to Lions \cite{L69} if the diffusion $-\Delta$ is replaced by $\Lambda^{m}$ with the Lions' exponent $m = 1+ \frac{d}{2}$ (see \eqref{Energy-critical}), where $\Lambda^{\gamma} \triangleq (-\Delta)^{\frac{\gamma}{2}}$ is a fractional Laplacian of order $\frac{\gamma}{2} \in \mathbb{R}$, specifically a Fourier operator with a symbol $\lvert k \rvert^{\gamma}$ so that $\widehat{ \Lambda^{\gamma} f} (k) = \lvert k \rvert^{\gamma} \hat{f} (k)$ (see Section \ref{Subsection 1.3} for further discussions). The purpose of this manuscript is to prove global solution theory for the 3D Navier-Stokes equations forced by STWN with $\Lambda^{\frac{5}{2}}$ on a torus via the approach of \cite{HR24}, i.e., extend \cite{L69} under the forcing by STWN. 
 
 We briefly highlight four major difficulties in this endeavor. 
\begin{enumerate}[label=(\roman*)]
\item Commutator: The diffusion in \cite{HR24} took the form of $-\Delta$, and the ``commutator term'' in \cite[Equation (4.15)]{HR24} consisted of ``$2\Tr [ ( \nabla w \circlesign{\prec} (\nabla Q^{\mathcal{H}})]$.'' The origin of this product can be described heuristically as a result of paracontrolled ansatz (see \eqref{Define w sharp}) that leads to 
\begin{equation}\label{HR24 commutator} 
\Delta (fg) - (\Delta f) g - f (\Delta g) = 2 (\nabla f) \cdot (\nabla g)
\end{equation} 
where we merely relied on Leibniz rule. In the field of Analysis and PDEs, it is well known that the extreme cases of distributions of derivatives such as $(\Delta f) g$ or $f (\Delta g)$ are the most difficult because it requires both $f$ and $g$ to be twice differentiable. In this perspective, $(\nabla f)\cdot (\nabla g)$ in \eqref{HR24 commutator} is informally as convenient as it gets. An estimate of $\lVert 2\Tr [ ( \nabla w \circlesign{\prec} (\nabla Q^{\mathcal{H}})]\rVert_{H^{\frac{1}{2}+ \kappa}} \lesssim \lVert \nabla w \rVert_{H^{-\frac{1}{2} + 2 \kappa}} \lVert \nabla Q^{\mathcal{H}} \rVert_{\mathscr{C}^{1-\kappa}}$ via \eqref{Sobolev products d} can be seen in \cite[p. 22]{HR24}; no commutator estimate from harmonic analysis is needed. In the current manuscript, we must face estimates of  
\begin{equation}\label{our commutator}
\Lambda^{\frac{5}{2}} (fg) - \Lambda^{\frac{5}{2}} f g - f \Lambda^{\frac{5}{2}} g 
\end{equation} 
in \eqref{est 87} multiple times, for which Leibniz rule is of no help (see also \eqref{Burgers' Define B}). We will elaborate on this difficulty furthermore in Remark \ref{Remark 2.2}. 

\item Criticality in the proof of uniqueness of weak solution: Due to higher dimension, the proof of uniqueness of weak solution in Theorem \ref{Theorem 2.3} faced difficulties that we had to overcome by finding an appropriate Besov interpolation. We will elaborate on this difficulty further in Remark \ref{Remark 2.3}. 

\item Anderson Hamiltonian: We needed to extend the well-known facts on the Anderson Hamiltonian to the 3D case with $\Lambda^{\frac{5}{2}}$. While \cite{HR24} relied on the previous work \cite{AC15} by Allez and Chouk that provided the necessary results on the 2D Andderson Hamiltonian using the theory of paracontrolled distributions from \cite{GIP15} by Gubinelli, Imkeller, and Perkowski, at first sight, the first-order paracontrolled calculus did not seem readily applicable in our case due to such a high power of the fractional Laplacian, similarly to the difficulty (i) aforementioned. To overcome this approach, we initially attempted relying on the theory of regularity structures (RS) following \cite{L19a} by Labb$\mathrm{\acute{e}}$; yet, we faced difficulty with that too due to the necessary behavior of asymptotic behavior of the Green's function to fractional Laplacian (see \eqref{Asymptotic of Bessel}). We overcame this obstacle by finding a room within the approach of \cite{AC15}. Upon doing so, it turns out that the same commutator issue in (i) arises in the approach of \cite{AC15} as well (see \eqref{Burgers' Define B}). We will elaborate on this difficulty furthermore in Remark \ref{Remark 2.4}. 

\item Renormalization: The Leray projection operator in the 2D case can be written in a very concise form (see \eqref{2D projection}), heuristically because curl of a 2D vector gives a scalar. We can extend the same formulation of the Leray projection to our 3D case (see \eqref{alternative projection}), but we found that it becomes very cumbersome to compute involving multiple curl operators. We ultimately chose the matrix form of the projection for simplicity (see \eqref{projection}). However, its price that we must pay was that the necessary computations upon renormalization increased exponentially. We will elaborate on this difficulty furthermore in Remark \ref{Remark 2.5}.  
\end{enumerate} 

\subsection{Introduction of the main equations}\label{Subsection 1.2}
\hfill\\ We set up a minimum amount of notations and preliminaries, postponing the rest to Section \ref{Section 3}. We define $\mathbb{N} \triangleq \{ 1, 2, \hdots \}$ and $\mathbb{N}_{0} \triangleq \mathbb{N} \cup \{0\}$, and work with a spatial variable $x \in \mathbb{T}^{d} = (\mathbb{R} \setminus \mathbb{Z})^{d}$ for $d \in \mathbb{N}$. We abbreviate by denoting $\partial_{t} \triangleq  \frac{\partial}{\partial t}, \partial_{i}  \triangleq \frac{\partial}{\partial x_{i}}$ for $i \in \{1, \hdots, d\}$, and define $\mathbb{P}_{\neq 0} f \triangleq f - \fint_{\mathbb{T}^{d}} f(x) dx$. We write $A \lesssim_{\alpha, \beta} B$ whenever there exists a constant $C = C(\alpha,\beta) \geq 0$ such that $A \leq CB$ and $A \approx_{\alpha,\beta} B$ in case $A \lesssim_{\alpha,\beta}B$ and $A \gtrsim_{\alpha,\beta}B$. We write $A \overset{( \cdot)}{\lesssim} B$ whenever $A\lesssim B$ due to $(\cdot)$.  We follow the typical convention that a universal constants in a series of inequalities continue to be denoted by ``$C$.'' We denote the Lebesgue, homogeneous and inhomogeneous Sobolev spaces by $L^{p}, \dot{H}^{s}$, and $H^{s}$ for $p\in [1,\infty], s\in \mathbb{R}$ with corresponding norms of $\lVert \cdot\rVert_{L^{p}}, \lVert \cdot \rVert_{\dot{H}^{s}}$, and $\lVert \cdot \rVert_{H^{s}}$, respectively. Finally, we denote the Schwartz space and its dual by $\mathcal{S}$ and $\mathcal{S}'$, and Fourier transform of $f$ by $\mathcal{F}(f) = \hat{f}$. We also recall the H$\ddot{\mathrm{o}}$lder-Besov spaces $\mathscr{C}^{\gamma} \triangleq B_{\infty,\infty}^{\gamma}$ for $\gamma \in \mathbb{R}$ which are equivalent to the classical H$\ddot{\mathrm{o}}$lder spaces $C^{\lfloor \alpha\rfloor, \alpha - \lfloor \alpha \rfloor}$ whenever $\alpha \in (0,\infty) \setminus \mathbb{N}$ although $C^{\alpha} \subsetneq \mathscr{C}^{\alpha}$ for all $\alpha \in \mathbb{N}$ (see \cite[p. 99]{BCD11}); we defer detailed definitions of Besov spaces to Section \ref{Subsection 3.2}. 

We define the 3D Leray projection onto the space of divergence-free vector fields as 
\begin{equation}\label{projection}
\mathbb{P}_{L} f(x) \triangleq \sum_{k \in\mathbb{Z}^{3} \setminus \{0\}} e^{i 2 \pi k \cdot x} \left( \hat{f}(k) - \frac{ k (k \cdot \hat{f} (k))}{\lvert k \rvert^{2}} \right) = \sum_{k\in\mathbb{Z}^{3} \setminus \{0\}} e^{i 2\pi k \cdot x} \left( \Id - \frac{k\otimes k}{\lvert k \rvert^{2}} \right) \hat{f}(k),  
\end{equation} 
for all $f \in \mathcal{S}' ( \mathbb{T}^{3}; \mathbb{R}^{3})$ that is mean-zero, where $\Id$ represents the $3 \times 3$ identity matrix. This projection in the 2D case can take a significantly simpler form (see \cite[p. 6]{HR24}):
\begin{align}\label{2D projection}
\mathbb{P}_{L} f(x) = \sum_{k\in\mathbb{Z}^{2} \setminus \{0\}} e^{i 2 \pi k \cdot x} k^{\bot}\left( \hat{f} (k) \cdot k^{\bot} \right) \frac{1}{\lvert k^{\bot} \rvert^{2}}, 
\end{align}
where $k^{\bot} \triangleq (k_{2}, -k_{1})$ for $k= (k_{1}, k_{2}) \in \mathbb{Z}^{2}$. We can extend this to the 3D case as 
\begin{equation}\label{alternative projection}
\mathbb{P}_{L} f(x) = \sum_{k\in\mathbb{Z}^{3} \setminus \{0\}} e^{i 2 \pi k \cdot x}  k \times  (\hat{f}(k) \times k)   \frac{1}{\lvert k \rvert^{2}}; 
\end{equation} 
however, unfortunately, preliminary computations using \eqref{alternative projection} turned out to be too cumbersome. For example, in the 2D case, $\hat{f}(k) \cdot k^{\bot}$ is a scalar and thus $k^{\bot} (\hat{f}(k) \cdot k^{\bot})$ in \eqref{2D projection} is just a vector multiplied by a scalar. In contrast, in the 3D case, $k\times (\hat{f}(k) \times k)$ in \eqref{alternative projection} turned out to be strenuous and hence we chose to work directly with the second expression in  \eqref{projection} in the matrix form. 

We now fix a probability space $(\Omega, \mathcal{F}, \mathbb{P})$ so that the STWN $\xi = (\xi_{1}\hspace{1mm} \xi_{2} \hspace{1mm} \xi_{3})$ can be introduced as a distribution-valued Gaussian field with a  covariance of 
\begin{equation}\label{STWN} 
\mathbb{E} [ \xi_{i}(t,x) \xi_{j}(s,y) ] = 1_{\{ i=j\}} \delta(t-s) \prod_{l=1}^{3} \delta(x_{l} - y_{l}); 
\end{equation} 
i.e.,
\begin{align}\label{Define STWN}
\mathbb{E} [ \xi_{i} (\phi) \xi_{j} (\psi) ] = 1_{\{ i = j \}} \int_{\mathbb{R} \times \mathbb{T}^{3}} \phi(t,x) \psi(t,x) dx dt \hspace{3mm} \forall \hspace{1mm} \phi, \psi \in \mathcal{S} ( \mathbb{R} \times \mathbb{T}^{3}). 
\end{align}
We also introduce the stochastic generalized Navier-Stokes equations of our main interest:
\begin{equation}\label{NS} 
\partial_{t} u + \divergence (u\otimes u) + \nabla \pi + \nu \Lambda^{m} u = \mathbb{P}_{\neq 0} \xi,\hspace{3mm} \nabla\cdot u = 0 
\end{equation} 
for $m \geq 0$, or equivalently
\begin{equation}\label{NS after PL}
\partial_{t} u + \mathbb{P}_{L} \divergence (u \otimes u) + \nu \Lambda^{m} u =  \mathbb{P}_{L} \mathbb{P}_{\neq 0} \xi. 
\end{equation} 
We refer to the case $\xi \equiv 0$ as the generalized Navier-Stokes equations, and additionally the case $m = 2$ the Navier-Stokes equations. The Navier-Stokes equations with $\nu = 0$ reduces to the Euler equations.

\subsection{Review of relevant results}\label{Subsection 1.3}
\hfill\\ The mathematical analysis of the Navier-Stokes equations dates back to the pioneering work of Leray \cite{L34}, who together with Hopf \cite{H51}, proved the global existence of a Leray-Hopf weak solution to the 3D Navier-Stokes equations. While the uniqueness of the Leray-Hopf weak solution to the 3D Navier-Stokes equations remains open, Lions introduced the generalized Navier-Stokes equations in \cite{L59} and proved in \cite{L69} (see also \cite{W03}) the uniqueness of its Leray-Hopf weak solution as long as 
\begin{equation}\label{Energy-critical}
m \geq 1+ \frac{d}{2}. 
\end{equation} 
This threshold can be understood through rescaling; i.e., if $(u,\pi)$ solves the generalized Navier-Stokes equations, then so does $(u_{\lambda},\pi_{\lambda}) (t,x) \triangleq (\lambda^{2m-1} u, \lambda^{4m-2} \pi) (\lambda^{2m} t, \lambda x)$. We call the cases $m > 1 + \frac{d}{2}, m = 1 + \frac{d}{2}$, and $m < 1 + \frac{d}{2}$, respectively $L^{2}(\mathbb{T}^{d})$-subcritical, $L^{2}(\mathbb{T}^{d})$-critical, and $L^{2}(\mathbb{T}^{d})$-supercritical. 

In relevance to our subsequent discussions in Section \ref{Subsection 1.4}, we recall that this energy-critical threshold can be overcome \emph{logarithmically}. Specifically, Tao \cite{T09} observed that the traditionally desired \emph{a priori} bounds of $\partial_{t} \lVert u(t) \rVert_{H^{s}}^{2} \leq a(t) \lVert u(t) \rVert_{H^{s}}^{2}$ for any $a \in L_{\text{loc}}^{1}(0,T)$ is sufficient but unnecessary because a logarithmically worse bound of  
\begin{equation}\label{logarithmically supercritical}
\partial_{t} \lVert u(t) \rVert_{H^{s}}^{2} \leq a(t) \lVert u(t) \rVert_{H^{s}}^{2} \ln(e+ \lVert u (t) \rVert_{H^{s}}^{2})
\end{equation} 
suffices as it results in the finite bound of  $\lVert u(t) \rVert_{H^{s}}^{2} \leq \lVert u(0) \rVert_{H^{s}}^{2} \exp (\exp (\int_{0}^{T} a(s) ds))$. Tao was able to show that the Navier-Stokes equations with diffusion $\mathcal{L} u$ such that 
\begin{equation*}
\widehat{\mathcal{L}u}(\xi) = \frac{  \lvert \xi \rvert^{1+ \frac{d}{2}}}{ \sqrt{\ln ( 2 + \lvert \xi \rvert^{2} )}} \hat{u}(\xi), 
\end{equation*} 
and hence in the \emph{logarithmically supercritical} case, still admits a global unique solution from every sufficiently smooth initial data by employing delicate harmonic analysis techniques. Specifically, he split the dyadic decomposition of Littlewood-Paley theory by a time-dependent cutoff so that 
\begin{align}\label{frequency decomposition}
\sum_{j\geq -1} = \sum_{j: 2^{j} \leq e + \lVert u(t) \rVert_{H^{s}}^{2} } + \sum_{j: 2^{j} > e + \lVert u(t) \rVert_{H^{s}}^{2}}, 
\end{align}
of which the lower frequency part already gives 
\begin{align*}
\sum_{j: 2^{j} \leq e+ \lVert u(t) \rVert_{H^{s}}^{2}} 1 \approx \ln (e+ \lVert u (t) \rVert_{H^{s}}^{2}), 
\end{align*}
indicating the heuristic of how the $\ln(e+ \lVert u (t) \rVert_{H^{s}}^{2})$ in \eqref{logarithmically supercritical} comes about (see also \cite{BMR14} and \cite{W11, Y18} in the case of the MHD system). 

If the STWN $\xi$ in \eqref{NS after PL} is replaced by a sufficiently smooth noise, then the global solution theory of  \cite{L69} can be readily extended in the $L^{2}(\mathbb{T}^{d})$-critical and subcritical cases (see e.g. \cite{BF17}). However, the situation becomes drastically different in the presence of the STWN, primarily due to its irregularity. Specifically, following \cite[Lemma 10.2]{H14} (or more precisely \cite[Lemma 4.1]{BK17} in the case of fractional Laplacian), we know that 
\begin{equation}\label{Regularity of xi}
\xi \in \mathscr{C}^{\beta} (\mathbb{T}^{d}) \hspace{1mm} \text{ for }   \hspace{1mm} \beta < - \frac{d+m}{2} \hspace{3mm} \mathbb{P}\text{-a.s.}
\end{equation}
The irregularity of the force can directly limit the smoothness of the solution and the product within the nonlinear term can become ill-defined considering Lemma \ref{Lemma 3.1}; we call SPDEs in such a case singular. 

Let us review the developments in the past few decades concerning the singular SPDEs. First, Bertini and Giacomin \cite{BG97} studied the 1D KPZ equation using Cole-Hopf transform. Hairer \cite{H13} was the first to provide a local solution theory to the 1D KPZ equation using rough path theory due to Lyons \cite{L98} (see also \cite{HQ18}). Then Hairer \cite{H14} and Gubinelli, Imkeller, and Perkowski \cite{GIP15} invented the novel theories of RS and paracontrolled distributions, respectively. Informally stated, these powerful tools provide systematic approaches to prove local-in-time solution theory for singular SPDEs that are locally subcritical, i.e., the homogeneity of nonlinear term is strictly larger than that of the force (see \cite[Assumption 8.3]{H14} for a  precise definition). Using these powerful new tools, the research area on singular SPDEs flourished, e.g. local-in-time solution theory \cite{CC18, Y23a, ZZ15} and strong Feller property \cite{HM18a, Y21a, ZZ20}, just to name a few. Initially in \cite{H14}, for every fixed locally subcritical SPDE, it was necessary to create an RS and verify issues such as the well-posedness of the abstract formulation of the system, identification of the renormalized solutions, and convergence of the renormalized models to a limiting model. However, a series of works \cite{BCCH21, BHZ19, CH18} unified these aspects and ultimately provided a more general theorem that leads to local-in-time solution theory and stability for a wide class of locally subcritical singular SPDEs (e.g. ``Metatheorem in \cite[Theorem 2.18]{BCCH21}''). 

While the local solution theory for locally subcritical SPDEs now has a clear pathway, extending such a local solution for all time is currently a very important research direction. Da Prato and Debussche  \cite{DD02} exploited the explicit knowledge of the invariant measure $\mu$ of the 2D Navier-Stokes equations forced by STWN due to the identity of 
\begin{equation}\label{Special identity}
\int_{\mathbb{T}^{2}} (u\cdot\nabla) u \cdot \Delta u dx = 0, 
\end{equation} 
which is a consequence of the solution's divergence-free property, and proved the global existence of a unique solution starting from $\mu$-almost every (a.e.) initial data (We will discuss \cite{HR24} by Hairer and Rosati on the 2D Navier-Stokes equations forced by STWN in Section \ref{Subsection 1.4}). Exploiting the damping effect from the nonlinear term of the $\Phi^{4}$ model, they obtained analogous result in \cite[Theorem 4.2]{DD03b}; subsequently, it was extended to the whole plane by Mourrat and Weber \cite{MW17} (see also \cite{HM18}).  Finally, Gubinelli and Perkowski \cite{GP17} observed that the solution to the 1D KPZ equation constructed by Hairer \cite{H13} is global-in-time (see \cite[p. 170 and Corollary 7.5]{GP17}). We also mention that the 2D and 3D stochastic Yang-Mills equation forced by STWN has seen significant developments in \cite{CCHS22a, CCHS22b} recently; however, its complex nonlinearity does not seem to admit favorable properties such as the explicit knowledge of its invariant measure, damping effect, or transformation. 

When extending local well-posedness to global well-posedness seems difficult, or even a construction of a global-in-time solution that is not necessarily unique, presents significant challenges, the technique of convex integration has shown signs of potentials. This technique that originated in differential geometry  was adapted by De Lellis and Sz$\acute{\mathrm{e}}$kelyhidi Jr. to the Euler equations in a groundbreaking work \cite{DS09} that constructed a bounded solution with compact support in space-time. Further extensions (e.g. \cite{DS13}) led Isett \cite{I18} to prove the Onsager's conjecture and Buckmaster and Vicol \cite{BV19a} to prove non-uniqueness of weak solutions to the 3D Navier-Stokes equations. Of relevance to our manuscript, we highlight that Buckmaster, Colombo, and Vicol \cite{BCV22} and Luo and Titi \cite{LT20} proved sharp non-uniqueness of weak solutions to the 3D generalized Navier-Stokes equations with $\Lambda^{m}$ for all $m < \frac{5}{2}$ (recall \eqref{Energy-critical}). We refer to \cite{BV19b} for further references of convex integration in the deterministic case. 

Concerning the convex integration technique applied on SPDEs, Breit, Feireisl, and Hofmanov$\acute{\mathrm{a}}$ \cite{BFH20} and Chiodaroli, Feireisl, and Flandoli \cite{CFF19} proved path-wise non-uniqueness of certain Euler equations forced by white-in-time noise in dimensions two and three; it was pointed out there already that the convex integration solutions to SPDEs are remarkably probabilistically strong, the construction of which was an open problem (\hspace{1sp}\cite{F08}). Subsequently, Hofmanov$\acute{\mathrm{a}}$, Zhu, and Zhu \cite{HZZ19} proved non-uniqueness in law of the 3D Navier-Stokes equations forced by white-in-time noise, and \cite{Y22a} extended it to the generalized case with $\Lambda^{m}$ for all $m < \frac{5}{2}$ (also \cite{Y22c} and \cite{CLZ24, Y24a} for extension respectively to the Boussinesq and MHD systems). Very recently, Bru$\acute{\mathrm{e}}$, Jin, Li, and Zhang \cite{BJLZ24} extended \cite{Y22a} to the case of the Leray-Hopf weak solution that satisfies the energy inequality, although under an additional non-zero force. Finally, Hofmanov$\acute{\mathrm{a}}$, Zhu, and Zhu \cite{HZZ21b} constructed, from a given initial data, infinitely many global-in-time solutions to the 3D Navier-Stokes equations forced by STWN (see \cite{LZ23} in the 2D case); prior to \cite{HZZ21b}, it was an open problem to extend \cite{ZZ15} to global-in-time. We emphasize that the solution to the 3D Navier-Stokes equations does not satisfy $\int_{\mathbb{T}^{3}} (u\cdot\nabla) u \cdot \Delta u dx = 0$, in sharp contrast to \eqref{Special identity}. Additionally, Hofmanov$\acute{\mathrm{a}}$, Zhu, and Zhu \cite{HZZ22a} constructed, from a given initial data, infinitely many solutions to the 2D surface quasi-geostrophic equations forced by spatial derivatives of white-in-space noise, which covered the locally critical and even supercritical cases; subsequently, the same authors, together with Luo, extended \cite{HZZ22a} to the case of STWN in \cite{HLZZ23}.  

\subsection{Program of \cite{HR24}}\label{Subsection 1.4}
\hfill\\ The main inspiration of this manuscript comes from \cite{HR24} which provided the second proof of the global solution theory for the 2D Navier-Stokes equations forced by STWN. Its significance is at least two-fold. First, \cite{HR24} did so without relying on the explicit knowledge of the invariant measure in contrast to \cite{DD02}; to emphasize this capability, \cite{HR24} added an extra force $\zeta \in \mathscr{C}^{-2+\kappa}$ for $\kappa > 0$ small to clearly disallow the approach of \cite{DD02}: 
\begin{equation}\label{HR24 main equation}
\partial_{t} u + \divergence ( u \otimes u) + \nabla \pi = \nu\Delta u + \mathbb{P}_{\neq 0} (\xi + \zeta), \hspace{3mm} \nabla\cdot u = 0.  
\end{equation} 
Second, the approach of \cite{HR24} seems more estimates-oriented rather than probabilistic, which we briefly outline. First, given an initial data $u^{\text{in}}$,  following \cite{DD02}, the authors of \cite{HR24} consider
\begin{equation}\label{HR24 Equation of X}
\partial_{t} X = \nu\Delta X + \mathbb{P}_{L} \mathbb{P}_{\neq 0} \xi, \hspace{3mm} X(0, \cdot) = 0, 
\end{equation} 
and then $v \triangleq u - X$ so that 
\begin{equation}\label{HR24 Equation of v}
\partial_{t} v + \mathbb{P}_{L} \divergence (v+X)^{\otimes 2} = \nu\Delta v + \mathbb{P}_{L} \mathbb{P}_{\neq 0} \zeta, \hspace{3mm} v (0,\cdot) = u^{\text{in}}(\cdot), 
\end{equation} 
where we denoted $A^{\otimes 2} \triangleq A \otimes A$. Then $\xi \in \mathscr{C}^{\beta}(\mathbb{T}^{2})$ for $\beta < - 2$ so that $X \in \mathscr{C}^{\beta}(\mathbb{T}^{2})$ for $\beta < 0$ $\mathbb{P}$-a.s. To exclude the ill-defined products, they consider again 
\begin{equation}
\partial_{t} Y + \mathbb{P}_{L} \divergence ( X \otimes Y + Y \otimes X + X^{\otimes 2}) = \nu\Delta Y + \mathbb{P}_{L} \mathbb{P}_{\neq 0} \zeta, \hspace{3mm} Y (0,\cdot ) = 0, 
\end{equation} 
so that $w \triangleq v - Y$ solves 
\begin{equation}\label{HR24 w}
\partial_{t} w + \mathbb{P}_{L} \divergence \left(  w^{\otimes 2} + (X+Y) \otimes w + w \otimes (X+Y) + Y^{\otimes 2} \right) = \nu\Delta w, \hspace{3mm} w(0,\cdot) = u^{\text{in}}(\cdot).
\end{equation} 
Here, because $X \in \mathscr{C}^{\beta}(\mathbb{T}^{2})$ for all $\beta < 0$ is more singular than $Y$ that has a strictly positive regularity, $w \in \mathscr{C}^{\beta}(\mathbb{T}^{2})$ for $\beta < 1$ due to the most singular term $\divergence (X \otimes w + w \otimes X)$ in \eqref{HR24 w}. Now, because the 2D Navier-Stokes equations is energy-critical (recall \eqref{Energy-critical}), informally we may assume for now that it suffices to perform the $L^{2}(\mathbb{T}^{2})$-estimate of $w^{\mathcal{L}}$ which is defined by $w$ (see \eqref{Define QH, wH, and wL}). Upon doing so, one is faced with 
\begin{equation}\label{HR24 estimate}
\partial_{t} \lVert w^{\mathcal{L}} \rVert_{L^{2}}^{2} = -  \nu\lVert w^{\mathcal{L}} \rVert_{\dot{H}^{1}}^{2}  + \int_{\mathbb{T}^{2}} w^{\mathcal{L}} \cdot \left( \frac{\nu\Delta}{2} \Id  -  2\nabla_{\text{sym}} X  \right)w^{\mathcal{L}}dx + \text{l.s.t.} 
\end{equation}  
where we denoted the less singular terms by ``l.s.t.'' and $\nabla_{\text{sym}}$ is a symmetric gradient matrix (see \eqref{symmetric gradient}).  Because $w^{\mathcal{L}} \in \mathscr{C}^{1-\kappa}(\mathbb{T}^{2})$ while $\nabla_{\text{sym}} X \in \mathscr{C}^{-1-\kappa}$ $\mathbb{P}$-a.s., the product $(w^{\mathcal{L}}\cdot\nabla_{\text{sym}}) X$ is ill-defined. Then Hairer and Rosati define 
\begin{equation}\label{HR24 At}
\mathcal{A}_{t}^{\lambda} \triangleq \frac{\nu\Delta}{2} \Id  -  2\nabla_{\text{sym}} X  - r_{\lambda}(t) \Id 
\end{equation} 
where $r_{\lambda}(t)$ is a renormalization constant to compensate for the ill-defined product (see \eqref{Define P lambda and r lambda}). Applying \eqref{HR24 At} to \eqref{HR24 estimate} gives us 
\begin{equation}\label{heuristic estimate} 
\partial_{t} \lVert w^{\mathcal{L}} \rVert_{\dot{H}^{1}}^{2} = - \nu\lVert w^{\mathcal{L}} \rVert_{\dot{H}^{1}}^{2} + \int_{\mathbb{T}^{2}} w^{\mathcal{L}} \cdot \mathcal{A}_{t}^{\lambda} w^{\mathcal{L}} dx + r_{\lambda}(t) \lVert w^{\mathcal{L}} \rVert_{L^{2}}^{2} + \text{l.s.t.}
\end{equation} 
Now the rest of the proof consists of three key steps, at least in the case of a mild solution (see Theorem \ref{Theorem 2.2}): 
\begin{enumerate}[label=(\alph*)]
\item Renormalization:  To close the estimate, considering \eqref{logarithmically supercritical} we realize that the renormalization constant $r_{\lambda}(t)$ must satisfy 
\begin{equation}\label{needed logarithmic renormalization}
r_{\lambda}(t) \lesssim \ln ( e+ \lVert w^{\mathcal{L}}(t) \rVert_{L^{2}}^{2}). 
\end{equation} 
\item Estimates: To obtain \eqref{needed logarithmic renormalization}, identically to the case of Tao, one can split the Fourier frequency to the lower and higher parts with a well-suited cutoff, similarly to \eqref{frequency decomposition}. Additionally, a commutator type term will arise (recall \eqref{our commutator}).   
\item Anderson Hamiltonian: To close the estimate \eqref{heuristic estimate}, we need not only \eqref{needed logarithmic renormalization} but also 
\begin{equation}\label{est 62}
\left\lvert \int_{\mathbb{T}^{2}} w^{\mathcal{L}} \mathcal{A}_{t}^{\lambda} w^{\mathcal{L}} dx \right\rvert  \lesssim \lVert w^{\mathcal{L}} \rVert_{L^{2}}^{2}. 
\end{equation} 
For this purpose, \cite{HR24} was able to directly rely on such a result from \cite{AC15}, which focused on the 2D parabolic Anderson model with white-in-space noise; informally, a 2D white-in-space noise is of regularity $\mathscr{C}^{-1-\kappa}$ with probability one, which is same as $\nabla_{\text{sym}} X$ in \eqref{HR24 estimate}. 
\end{enumerate} 
In the case of a weak solution (see Theorem \ref{Theorem 2.3}), there is an extra step of proving its uniqueness that presents a difficulty (recall (ii) in Section \ref{Subsection 1.1} and see Remark \ref{Remark 2.3}). 

The author in \cite{Y23c, Y25d} extended \cite{HR24} to the 2D MHD system forced by STWN, for which an analogue of \eqref{Special identity} does not exist, and 1D Burgers' equation forced by $\Lambda^{\frac{1}{2}} \xi$ where $\xi$ is a STWN. The purpose of this manuscript is to extend further to the 3D case with $\Lambda^{\frac{5}{2}}$, specifically \eqref{NS after PL} with $d = 3, m = \frac{5}{2}$, the energy-critical case according to \eqref{Energy-critical}. It is well known that even if both are at energy-critical level, the energy estimates in the 3D case is significantly more difficult than the 2D case due to \eqref{Special identity} that allows even the 2D Euler equations, which is $L^{2}(\mathbb{T}^{d})$-supercritical, to admit a unique global-in-time solution. Additional major difficulties were previewed in Section \ref{Subsection 1.1} (i)-(iv) and will be elaborated in Remarks \ref{Remark 2.1}-\ref{Remark 2.5}. Our estimates (e.g. \eqref{est 116}) clearly indicate that any  further improvement of Theorems \ref{Theorem 2.2}-\ref{Theorem 2.3} will require significantly new ideas. 

\section{Main results and ideas of their proofs }\label{Section 2}
\subsection{Statement of main results}
\hfill\\ In order to treat \eqref{NS after PL} in case $d=3$ and $m = \frac{5}{2}$, analogously to \eqref{HR24 Equation of X}-\eqref{HR24 Equation of v}, we consider 
\begin{equation}\label{Equation of X}
\partial_{t}X + \nu \Lambda^{\frac{5}{2}} X = \mathbb{P}_{L} \mathbb{P}_{\neq 0} \xi, \hspace{3mm} X(0,\cdot) = 0, 
\end{equation} 
so that $v \triangleq u-X$ satisfies 
\begin{equation}\label{Equation of v}
\partial_{t} v + \mathbb{P}_{L} \divergence ( v + X)^{\otimes 2} + \nu \Lambda^{\frac{5}{2}} v = 0, \hspace{3mm} v(0,\cdot) = u^{\text{in}} (\cdot). 
\end{equation} 
We note that we can expect 
\begin{equation}\label{Regularity of X} 
X \in \mathscr{C}^{-\frac{1}{4} - \kappa}(\mathbb{T}^{3})
\end{equation} 
and consequently $v \in \mathscr{C}^{1- 3 \kappa}(\mathbb{T}^{3})$ for all $\kappa > 0$ $\mathbb{P}$-a.s. 

Let us note that our case is much better than the 3D Navier-Stokes equations forced by STWN, as expected because we have $\Lambda^{\frac{5}{2}}$ instead of $-\Delta$ as diffusion. E.g., we can consider in comparison \cite[p. 4444]{ZZ15} where the authors state ``in the three-dimensional case, the trick in the two-dimensional case breaks down since $v$ and $z$ in (1.2) are so singular that not only $z \otimes z$ is not well-defined but also $v \otimes z$ and $v \otimes v$ have no meaning.'' (The ``$z$'' in \cite{ZZ15} plays the role of our $X$ in \eqref{Equation of X}.) In contrast, in our case, $v\otimes X$ and $v \otimes v$ are both well-defined as $X \in \mathscr{C}^{-\frac{1}{4} - \kappa}(\mathbb{T}^{3})$ and $v \in \mathscr{C}^{1- 3 \kappa}(\mathbb{T}^{3})$ $\mathbb{P}$-a.s.

\begin{proposition}\label{Proposition 2.1} 
Recall the $(\Omega, \mathcal{F}, \mathbb{P})$ that we fixed upon introducing \eqref{STWN}-\eqref{Define STWN}. There exists a null set $\mathcal{N} \subset \Omega$ such that for any $\omega \in \Omega \setminus \mathcal{N}$ and $\kappa > 0$, the following holds. For any $u^{\text{in}} \in \mathscr{C}^{-\frac{5}{4} + \kappa}(\mathbb{T}^{3})$ that is divergence-free and mean-zero, there exists $T^{\max} (\omega, u^{\text{in}}) \in (0, \infty]$ and a unique maximal mild solution $v(\omega)$ to \eqref{Equation of v} on $[0, T^{\max} (\omega, u^{\text{in}}))$ such that $v(\omega, 0, x) = u^{\text{in}}(x)$. 
\end{proposition}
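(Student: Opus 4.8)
The plan is to recast \eqref{Equation of v} in Duhamel form and solve it by a contraction mapping argument in a time-weighted H\"older--Besov space, the stochastic inputs being frozen as deterministic data once $\omega$ is fixed outside a null set $\mathcal{N}$. For each $T_{0}>0$ one has, off a null set, $X\in C([0,T_{0}];\mathscr{C}^{-\frac14-\kappa}(\mathbb{T}^{3}))$ by \eqref{Regularity of X}; since $(-\frac14-\kappa)+(-\frac14-\kappa)<0$, the pointwise product $X\otimes X$ is \emph{not} classically defined (cf.\ Lemma \ref{Lemma 3.1}) and must be understood in the renormalized sense, as the a.s.\ limit of $X_{\varepsilon}\otimes X_{\varepsilon}$ minus a divergent matrix constant; this limit, together with $X_{\varepsilon}$, fixes the enhanced datum and the null set $\mathcal{N}$, and yields $\mathbb{X}\triangleq-\mathbb{P}_{L}\divergence(X\diamond X)\in C([0,T_{0}];\mathscr{C}^{-\frac32-\kappa})$. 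For $\omega\notin\mathcal{N}$ one then seeks $v=\mathcal{M}(v)$ with
\[
\mathcal{M}(v)(t)\triangleq e^{-\nu t\Lambda^{5/2}}u^{\text{in}}-\int_{0}^{t}e^{-\nu(t-s)\Lambda^{5/2}}\mathbb{P}_{L}\divergence\big(v\otimes v+v\otimes X+X\otimes v\big)(s)\,ds+\int_{0}^{t}e^{-\nu(t-s)\Lambda^{5/2}}\mathbb{X}(s)\,ds,
\]
the last integral depending on $\omega$ alone.

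The two ingredients are the fractional Schauder estimates for $e^{-\nu t\Lambda^{5/2}}$, i.e.\ $\|e^{-\nu t\Lambda^{5/2}}f\|_{\mathscr{C}^{b}}\lesssim t^{-2(b-a)/5}\|f\|_{\mathscr{C}^{a}}$ for $b\ge a$ together with its time-integrated counterpart (both collected in Section \ref{Section 3}), and the paraproduct and resonance bounds of Lemma \ref{Lemma 3.1}. Since $u^{\text{in}}\in\mathscr{C}^{-\frac54+\kappa}$ is rougher than the regularity at which the nonlinear terms comfortably live, I would run $\mathcal{M}$ on a space of the form
\[
E_{T}\triangleq\Big\{v:\ \|v\|_{E_{T}}\triangleq\sup_{t\in(0,T]}\|v(t)\|_{\mathscr{C}^{-\frac54+\kappa}}+\sup_{t\in(0,T]}t^{\theta_{0}}\|v(t)\|_{\mathscr{C}^{\epsilon_{0}}}+\sup_{t\in(0,T]}t^{\theta_{1}}\|v(t)\|_{\mathscr{C}^{\sigma_{1}}}<\infty\Big\},
\]
with $\epsilon_{0}\in(0,\kappa)$ small, $\sigma_{1}\in(\tfrac14,\tfrac34)$, $\theta_{0}\triangleq\tfrac25(\epsilon_{0}+\tfrac54-\kappa)$ and $\theta_{1}\triangleq\tfrac25(\sigma_{1}+\tfrac54-\kappa)$: the small-regularity weighted norm is what makes the quadratic term $v\otimes v$ integrable through $t=0$ (it forces $\theta_{0}<\tfrac12$, so $s^{-2\theta_{0}}$ is integrable), while the norm at $\sigma_{1}>\tfrac14$ is what renders the resonance $v\odot X$ licit via Lemma \ref{Lemma 3.1} ($\sigma_{1}-\tfrac14-\kappa>0$). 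The $\mathbb{X}$-term lands in $\mathscr{C}^{1-\kappa'}$, which embeds into all three scales with a surplus of time decay, and $e^{-\nu t\Lambda^{5/2}}u^{\text{in}}\to u^{\text{in}}$ in a slightly weaker topology, giving $\mathcal{M}(v)(0)=u^{\text{in}}$.

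Assembling the bounds, $\mathcal{M}$ maps a ball of $E_{T}$ into itself and obeys $\|\mathcal{M}(v_{1})-\mathcal{M}(v_{2})\|_{E_{T}}\le C\,T^{\delta}\big(\|v_{1}\|_{E_{T}}+\|v_{2}\|_{E_{T}}+\sup_{t\in(0,T]}\|X(t)\|_{\mathscr{C}^{-\frac14-\kappa}}\big)\|v_{1}-v_{2}\|_{E_{T}}$ for some $\delta>0$, the positive power of $T$ surviving after the time integrals $\int_{0}^{t}(t-s)^{-\alpha}s^{-\beta}\,ds$ converge under the exponent constraints above. Banach's fixed point theorem gives, for each $\omega\notin\mathcal{N}$, a unique $v\in E_{T(\omega)}$ solving \eqref{Equation of v} on a short interval; the usual patching argument (two mild solutions with a common datum agree on a small common subinterval, hence wherever both exist) upgrades local uniqueness to uniqueness in the natural class, and reopening the iteration from any $t_{0}>0$ with datum $v(t_{0})\in\mathscr{C}^{\sigma_{1}}$ and concatenating produces a maximal solution on $[0,T^{\max}(\omega,u^{\text{in}}))$ with the blow-up alternative $T^{\max}<\infty\Rightarrow\limsup_{t\uparrow T^{\max}}\|v(t)\|_{\mathscr{C}^{-\frac54+\kappa}}=\infty$; maximality together with local uniqueness gives uniqueness of the maximal solution.

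The real difficulty is not any single estimate but the simultaneous consistency of $(\epsilon_{0},\sigma_{1},\theta_{0},\theta_{1})$: one needs $\sigma_{1}>\tfrac14$ for $v\odot X$, $\epsilon_{0}>0$ so that $\mathscr{C}^{\epsilon_{0}}$ is an algebra, $\theta_{0}<\tfrac12$ together with a matching upper bound that ends up forcing $\epsilon_{0}<\tfrac14+\kappa$, and $\sigma_{1}<\tfrac34$ so that $v\otimes v\in\mathscr{C}^{\epsilon_{0}}$ still lifts above $-\tfrac54+\kappa$ under the semigroup. These windows are open only because $X$ just barely fails to be a function, $-\tfrac14-\kappa>-\tfrac12$, which is precisely why $\Lambda^{5/2}$ rather than $-\Delta$ makes this local theory a routine fixed point, and, in contrast to the energy estimate of Theorem \ref{Theorem 2.2}, no fractional commutator of the form \eqref{our commutator} intervenes here, since $\Lambda^{5/2}$ enters only through the smoothing semigroup. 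The only non-elementary input is the Besov mapping property of $e^{-t\Lambda^{5/2}}$ (whose kernel is not a positive convolution kernel), which is standard and recorded in Section \ref{Section 3}.
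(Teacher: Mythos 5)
Your proposal is correct and follows essentially the route the paper itself relies on: the paper never writes out a proof of Proposition \ref{Proposition 2.1}, deferring (cf.\ Proposition \ref{Proposition 4.2}) to the Da Prato--Debussche-type fixed point of \cite[Proposition 3.2]{HR24} in time-weighted H\"older--Besov spaces, which is exactly your argument. You also correctly identify the one genuinely probabilistic input — that $X\otimes X$ fails to be classically defined since $2(-\tfrac14-\kappa)<0$ and must be replaced by the Wick-renormalized $X\diamond X$ (whose diverging constant is a multiple of $\Id$ and is annihilated by $\mathbb{P}_{L}\divergence$), this being the source of the null set $\mathcal{N}$ — whereas the paper absorbs this into the auxiliary object $Y$ of \eqref{Equation of Y} and Proposition \ref{Proposition 4.1}.
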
  

Our first main result is stated as follows. 
\begin{theorem}\label{Theorem 2.2}
Recall the $(\Omega, \mathcal{F}, \mathbb{P})$ that we fixed upon introducing \eqref{STWN}-\eqref{Define STWN}. There exists a null set $\mathcal{N}' \subset \Omega$ such that the following holds. For any $\kappa > 0$, $u^{\text{in}} \in \mathscr{C}^{-1+ \kappa}(\mathbb{T}^{3})$ that is divergence-free and mean-zero, $T^{\max} (\omega, u^{\text{in}})$ from Proposition \ref{Proposition 2.1} satisfies $T^{\max} (\omega, u^{\text{in}}) = \infty$ for all $\omega \in \Omega \setminus \mathcal{N}'$.  
\end{theorem}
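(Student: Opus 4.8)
The plan is to run the three-step scheme of \cite{HR24} recalled in Section \ref{Subsection 1.4}, adapted to the fractional diffusion $\Lambda^{\frac{5}{2}}$, and to turn the maximal mild solution of Proposition \ref{Proposition 2.1} into a global one by ruling out blow-up of a suitable energy. Starting from $v = u - X$ with $X$ as in \eqref{Equation of X}, I would peel off one further stochastic layer by letting $Y$ solve $\partial_{t} Y + \mathbb{P}_{L}\divergence(X\otimes Y + Y\otimes X + X^{\otimes 2}) + \nu\Lambda^{\frac{5}{2}}Y = 0$ with $Y(0,\cdot)=0$, so that $w \triangleq v - Y$ solves an equation whose forcing involves only $w^{\otimes 2}$, $(X+Y)\otimes w + w\otimes(X+Y)$ and $Y^{\otimes 2}$, the worst term being $\divergence(X\otimes w + w\otimes X)$; since $X\in\mathscr{C}^{-\frac14-\kappa}$ by \eqref{Regularity of X} while $Y$ has strictly positive regularity, $w\in\mathscr{C}^{\frac34-\kappa}(\mathbb{T}^{3})$ (one derivative worse than the 2D case). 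I would then introduce the paracontrolled ansatz $w = Q^{\mathcal{H}} + w^{\mathcal{H}} + w^{\mathcal{L}}$ as in \eqref{Define QH, wH, and wL}, isolating the rough remainder $w^{\mathcal{L}}$, and reduce the global bound to an a priori $L^{2}(\mathbb{T}^{3})$-estimate for $w^{\mathcal{L}}$, which is the natural energy space because $m = \frac52 = 1 + \frac{3}{2}$ is energy-critical according to \eqref{Energy-critical}.

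Testing the $w^{\mathcal{L}}$-equation against $w^{\mathcal{L}}$ produces, in analogy with \eqref{HR24 estimate}--\eqref{heuristic estimate}, an identity of the form $\partial_{t}\lVert w^{\mathcal{L}}\rVert_{L^{2}}^{2} = -c\nu\lVert w^{\mathcal{L}}\rVert_{\dot{H}^{5/4}}^{2} + \int_{\mathbb{T}^{3}} w^{\mathcal{L}}\cdot\mathcal{A}_{t}^{\lambda}w^{\mathcal{L}}\,dx + r_{\lambda}(t)\lVert w^{\mathcal{L}}\rVert_{L^{2}}^{2} + \text{l.s.t.}$, where $\mathcal{A}_{t}^{\lambda}$ is the fractional analogue of \eqref{HR24 At} with $\tfrac{\nu\Delta}{2}$ replaced by $-\tfrac{\nu}{2}\Lambda^{\frac{5}{2}}$, built on $-2\nabla_{\text{sym}}X$, and $r_{\lambda}(t)$ is the renormalization constant of \eqref{Define P lambda and r lambda}. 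Two inputs are needed. First (Anderson Hamiltonian, difficulty (iii)): the form bound $\lvert\int_{\mathbb{T}^{3}} w^{\mathcal{L}}\mathcal{A}_{t}^{\lambda}w^{\mathcal{L}}\,dx\rvert \lesssim \lVert w^{\mathcal{L}}\rVert_{L^{2}}^{2}$ as in \eqref{est 62}, for which I would adapt the construction of \cite{AC15} to the fractional operator $\Lambda^{\frac{5}{2}} - 2\nabla_{\text{sym}}X$ on $\mathbb{T}^{3}$; first-order paracontrolled calculus à la \cite{GIP15} should still suffice once the resonant product $X\circ(\text{paracontrolled part})$ is renormalized, but the Leibniz cancellation \eqref{HR24 commutator} is unavailable and one must instead control the genuinely fractional commutator \eqref{our commutator} by harmonic-analytic estimates (see Remark \ref{Remark 2.4}). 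Second (renormalization and estimates, difficulties (i) and (iv)): the logarithmic growth $r_{\lambda}(t)\lesssim\ln(e + \lVert w^{\mathcal{L}}(t)\rVert_{L^{2}}^{2})$ of \eqref{needed logarithmic renormalization}, obtained by splitting the Littlewood--Paley sum at the time-dependent scale $2^{j}\approx e + \lVert w^{\mathcal{L}}(t)\rVert_{L^{2}}^{2}$ exactly as in \eqref{frequency decomposition}--\eqref{logarithmically supercritical}, while handling the commutator term \eqref{our commutator} thrown up by the paracontrolled ansatz (see Remarks \ref{Remark 2.2} and \ref{Remark 2.5}).

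Combining the two inputs, the energy identity collapses to $\partial_{t}\lVert w^{\mathcal{L}}(t)\rVert_{L^{2}}^{2} \lesssim a(t)\lVert w^{\mathcal{L}}(t)\rVert_{L^{2}}^{2}\ln(e + \lVert w^{\mathcal{L}}(t)\rVert_{L^{2}}^{2}) + \text{l.s.t.}$ with $a\in L^{1}_{\mathrm{loc}}$ a polynomial in the ($\mathbb{P}$-a.s. finite, locally bounded) norms of $X$ and $Y$, so the Osgood/Tao argument gives $\lVert w^{\mathcal{L}}(t)\rVert_{L^{2}}^{2}\leq\lVert w^{\mathcal{L}}(t_{0})\rVert_{L^{2}}^{2}\exp\bigl(\exp\bigl(\int_{t_{0}}^{t}a\bigr)\bigr)$, finite on every bounded interval. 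To convert this into $T^{\max}=\infty$, I would run the estimate on $[\delta, T^{\max})$ for small $\delta>0$: the smoothing of $e^{-t\nu\Lambda^{5/2}}$ places $w^{\mathcal{L}}(\delta)$ in $L^{2}(\mathbb{T}^{3})$ (this is where the hypothesis $u^{\text{in}}\in\mathscr{C}^{-1+\kappa}$, slightly stronger than the $\mathscr{C}^{-5/4+\kappa}$ of Proposition \ref{Proposition 2.1}, guarantees enough instantaneous regularization), the a priori bound then prevents $\lVert w^{\mathcal{L}}\rVert_{L^{2}}$ from blowing up as $t\uparrow T^{\max}$, and a standard parabolic bootstrap on $[\delta',T^{\max})$ upgrades the $L^{\infty}_{t}L^{2}_{x}\cap L^{2}_{t}\dot{H}^{5/4}_{x}$ control of $w^{\mathcal{L}}$ to the norm required by the blow-up criterion; reconstructing $v = w^{\mathcal{L}} + w^{\mathcal{H}} + Q^{\mathcal{H}} + Y$ then shows $v$ stays in the solution space on every compact subinterval, contradicting maximality unless $T^{\max}=\infty$. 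The union over a countable family of $\kappa$'s and the null sets from Proposition \ref{Proposition 2.1} and the renormalization produces the single null set $\mathcal{N}'$.

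I expect the main obstacle to be the second step: constructing the fractional Anderson-type operator $\Lambda^{\frac{5}{2}} - 2\nabla_{\text{sym}}X$ on $\mathbb{T}^{3}$ and proving the form bound \eqref{est 62}, because the paracontrolled machinery of \cite{AC15, GIP15} was built around $-\Delta$ and the cancellation \eqref{HR24 commutator} it exploits does not survive the passage to $\Lambda^{\frac52}$; the substitute commutator estimates for \eqref{our commutator}, which also resurface in the renormalization step, are where the genuinely new harmonic analysis of the argument is concentrated.
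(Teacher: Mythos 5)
Your plan follows the paper's route essentially step for step: the second expansion $w=v-Y$, the paracontrolled ansatz isolating $w^{\mathcal{L}}$, the renormalized $L^{2}$-estimate combining the Anderson--Hamiltonian form bound with $r_{\lambda}(t)\lesssim\ln\lambda$, and an Osgood-type conclusion. One packaging remark: the Osgood inequality with a continuously varying frequency cutoff cannot be run literally, because the renormalized enhanced noise $\nabla_{\text{sym}}\mathcal{L}_{\lambda}X\circlesign{\circ}P^{\lambda}-r_{\lambda}\Id$ and the operator $\mathcal{A}_{t}^{\lambda}$ are only constructed for fixed $\lambda$; the paper therefore freezes $\lambda_{t}=(1+\lVert w(T_{i})\rVert_{L^{2}})^{\tau}$ on stopping-time intervals $[T_{i},T_{i+1})$ (Definition \ref{Definition 4.2}) and shows that the resulting lower bounds \eqref{est 110} on $T_{i+1}-T_{i}$ have divergent sum. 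This is the rigorous surrogate for your double-exponential bound, so I regard it as the same idea.

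The genuine gap is your closing clause that ``a standard parabolic bootstrap upgrades the $L^{\infty}_{t}L^{2}_{x}\cap L^{2}_{t}\dot{H}^{5/4}_{x}$ control to the norm required by the blow-up criterion.'' In three dimensions this step is not standard and is where a large part of the paper's Section \ref{Section 4} lives. To continue the local solution of Proposition \ref{Proposition 4.2} past a time where only $L^{2}$ control is available, one must return to $\mathscr{C}^{-1+2\kappa}(\mathbb{T}^{3})$, which by the Sobolev embedding \eqref{needed embedding} requires an a priori $\dot{H}^{\epsilon}$ bound with $\epsilon>\frac{1}{2}+2\kappa$; in 2D any $\epsilon>0$ sufficed, which is why \cite{HR24} could stop at $\epsilon\in(0,\kappa)$. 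The obstruction is once more the commutator \eqref{our commutator}: pairing it against $\Lambda^{2\epsilon}w^{\mathcal{L}}$ and estimating the whole commutator in $L^{2}$ forces $1+2\epsilon\leq\frac{5}{4}+\epsilon$, i.e. $\epsilon\leq\frac{1}{4}$, which is too small, while the available fractional Leibniz and commutator estimates do not cover $\Lambda^{\alpha}$ applied to a commutator whose outer operator has order $\frac{5}{2}>2$ (Remark \ref{Remark 2.2}). The paper closes this in Proposition \ref{Proposition 4.11} by exploiting the symmetrized paraproduct structure inside the commutator blockwise, applying the Kato--Ponce inequality on each Littlewood--Paley block and summing via Young's convolution inequality (see \eqref{est 125}), which yields the estimate for all $\epsilon<\frac{9}{10}$. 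Without this ingredient the continuation argument does not close, so the higher-order estimate should be flagged as a second major obstacle on a par with the $L^{2}$-level commutator and the Anderson Hamiltonian, not as a routine bootstrap.
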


Next, we define $L_{\sigma}^{2}\triangleq \{ f \in L^{2}(\mathbb{T}^{3}): \hspace{1mm} \fint_{\mathbb{T}^{3}} f dx= 0, \nabla\cdot f = 0 \}$. Let us postpone the precise definition of the high-low (HL) weak solution of \eqref{Equation of v} to Definition \ref{Definition 5.1} and state our second main result. 

\begin{theorem}\label{Theorem 2.3}
Recall the $(\Omega, \mathcal{F}, \mathbb{P})$ that we fixed upon introducing \eqref{STWN}-\eqref{Define STWN}. Consider the same null set $\mathcal{N}' \subset \Omega$ from Theorem \ref{Theorem 2.2}. For every $\omega \in \Omega \setminus \mathcal{N}'$, every $\kappa > 0$, and $u^{\text{in}} \in L_{\sigma}^{2}$, there exists a unique HL weak solution $v$ to \eqref{Equation of v} on $[0,\infty)$. 
\end{theorem}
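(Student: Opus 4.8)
The plan is to decompose the argument into an a priori estimate producing compactness, a passage to the limit giving existence, and an interpolation-based uniqueness argument, in parallel with the scheme of \cite{HR24} but adapted to the diffusion $\Lambda^{5/2}$ and the $L^2_\sigma$ data. First I would set up a Galerkin or mollified approximation $v^{(n)}$ of \eqref{Equation of v}, again passing through the split $v = X + Y + w$ where $X$ solves \eqref{Equation of X}, $Y$ absorbs the products linear and quadratic in $X$, and $w$ solves the remaining equation with $w(0,\cdot) = u^{\text{in}} \in L^2_\sigma$. Since here $u^{\text{in}}$ is only $L^2$ rather than H\"older, one should work with the decomposition $w = w^{\mathcal{H}} + w^{\mathcal{L}}$ from \eqref{Define QH, wH, and wL} / \eqref{Define w sharp} and perform the $L^2$ energy estimate for $w^{\mathcal{L}}$: testing against $w^{\mathcal{L}}$ one arrives at the analogue of \eqref{heuristic estimate}, namely $\partial_t \|w^{\mathcal{L}}\|_{L^2}^2 + \nu\|w^{\mathcal{L}}\|_{\dot H^{5/4}}^2 \lesssim \langle w^{\mathcal{L}}, \mathcal{A}_t^\lambda w^{\mathcal{L}}\rangle + r_\lambda(t)\|w^{\mathcal{L}}\|_{L^2}^2 + \text{l.s.t.}$, where the Anderson-Hamiltonian bound \eqref{est 62} and the logarithmic renormalization bound \eqref{needed logarithmic renormalization} (both established earlier in the paper for our $\Lambda^{5/2}$ setting) control the right-hand side and, via \eqref{logarithmically supercritical}, give a global-in-time bound on $\|w^{\mathcal{L}}\|_{L^2}$ together with $w^{\mathcal{L}} \in L^2_{\mathrm{loc}}([0,\infty); \dot H^{5/4})$. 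The l.s.t. terms and the $w^{\mathcal{H}}$ piece are controlled by the higher-regularity (subcritical) fixed-point / Schauder estimates already available, so that the full approximate solutions $v^{(n)}$ enjoy uniform bounds in $C([0,T]; \mathscr{C}^{-\beta}) \cap L^2([0,T]; H^s)$ for suitable $\beta, s$.

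For existence, I would then extract, via Aubin--Lions (using the uniform bound in $L^2_t H^s_x$ for the regular part and the equation to control $\partial_t v^{(n)}$ in a negative-order space), a subsequence converging strongly enough in space-time to pass to the limit in the nonlinearity $\mathbb{P}_L\divergence(v+X)^{\otimes 2}$; the only products needing care are $v\otimes X$ and $w^{\mathcal{L}}\otimes(\text{singular data})$, which are handled by the paraproduct estimates (Lemma \ref{Lemma 3.1} and the Besov product rules) plus the uniform regularity, exactly because $X \in \mathscr{C}^{-1/4-\kappa}$ leaves room. This shows the limit is an HL weak solution in the sense of Definition \ref{Definition 5.1}, on all of $[0,\infty)$ since the a priori bounds are global.

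For uniqueness, suppose $v_1, v_2$ are two HL weak solutions with the same data; set $\delta = v_1 - v_2$, which solves a linear equation with coefficients built from $v_1, v_2, X, Y$. The difficulty flagged in Remark \ref{Remark 2.3} is that the natural energy estimate for $\delta$ in $L^2$ needs to absorb terms of the form $\int \delta \cdot \nabla \delta \cdot (\text{low-regularity field})$ and the commutator terms \eqref{our commutator}, and in three dimensions the straightforward $H^{5/4}$-dissipation is not by itself enough to close it — one must borrow a sliver of extra integrability. The plan is to run the estimate in a Besov space $B^{-\epsilon}_{2,\infty}$ (or a weighted/interpolated norm) and use the Besov interpolation inequality $\|\delta\|_{L^2}^2 \lesssim \|\delta\|_{B^{-\epsilon}_{2,\infty}}^{2\theta}\|\delta\|_{\dot H^{5/4}}^{2(1-\theta)}$ with $\theta$ chosen so that the dangerous terms, after applying the H\"older--Besov product rules and the commutator estimate \eqref{est 87}, are dominated by $\eta\|\delta\|_{\dot H^{5/4}}^2 + C(t)\|\delta\|_{B^{-\epsilon}_{2,\infty}}^2$ with $C \in L^1_{\mathrm{loc}}$; a Gr\"onwall argument in the weaker norm then forces $\delta \equiv 0$. \textbf{The main obstacle} is this uniqueness step: choosing the interpolation exponent and the negative Besov index so that the fractional-Leibniz commutator \eqref{our commutator}, which (unlike \eqref{HR24 commutator}) cannot be reduced to a convenient $\nabla\cdot\nabla$ form, produces only subcritical contributions — this is precisely where the three-dimensional criticality bites and where the new Besov interpolation of Remark \ref{Remark 2.3} is indispensable.
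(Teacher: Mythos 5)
Your existence argument matches the paper's: mollify the noise and the data ($X^{n}=\mathcal{L}_{n}X$, $w^{n}(0)=\mathcal{L}_{n}u^{\text{in}}$), rerun the stopping-time $L^{2}$-estimates of Section \ref{Section 4} uniformly in $n$, and pass to the limit by Aubin--Lions (Proposition \ref{Proposition 5.1}); the only point you gloss over is verifying that the paracontrolled structure \eqref{est 149} and the Besov regularity \eqref{est 153b} of $w^{\mathcal{H},\lambda}$ survive the limit, which is what makes the limit an HL weak solution rather than merely a distributional one, and which is also what the uniqueness proof later consumes.

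The genuine gap is in your uniqueness step. The obstruction identified in Remark \ref{Remark 2.3} is \emph{not} resolved by running Gr\"onwall in a weaker norm $B^{-\epsilon}_{2,\infty}$. The problematic term is $\langle z^{\mathcal{L},\lambda},\divergence(z^{\mathcal{H},\lambda}\otimes w^{\mathcal{L},\lambda})\rangle$ in \eqref{Define III5,3}: the high-frequency difference $z^{\mathcal{H},\lambda}$ has no dissipation of its own and is only controlled through $z^{\mathcal{L},\lambda}$, and if one measures it in $L^{12}$ (the natural space dual to $\dot H^{5/4}\times L^{2}$ in 3D) one gets $\lVert z^{\mathcal{H},\lambda}\rVert_{L^{12}}\lesssim\lVert z^{\mathcal{L},\lambda}\rVert_{\dot H^{5/4}}$ by \eqref{est 168}, so the term costs a \emph{full} $\lVert z^{\mathcal{L},\lambda}\rVert_{\dot H^{5/4}}^{2}\lVert w^{\mathcal{L},\lambda}\rVert_{L^{2}}$ as in \eqref{est 170} --- there is no factor of any weak norm of $z$ left over, so no Gr\"onwall closes, and testing against $\Lambda^{-2\epsilon}z^{\mathcal{L},\lambda}$ only weakens the available dissipation to $\lVert z^{\mathcal{L},\lambda}\rVert_{\dot H^{5/4-\epsilon}}^{2}$ while $z^{\mathcal{H},\lambda}$ still demands the full $\dot H^{5/4}$ norm. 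The paper's actual mechanism keeps the $L^{2}$-Gr\"onwall for $z^{\mathcal{L},\lambda}$ but redistributes Lebesgue exponents: it places $z^{\mathcal{H},\lambda}$ in $L^{4}$ via the interpolation $\lVert f\rVert_{L^{4}}\lesssim\lVert f\rVert_{H^{1/4}}^{1/2}\lVert f\rVert_{B^{-1/4}_{\infty,2}}^{1/2}$ of \eqref{est 171}, where the $B^{-1/4}_{\infty,2}$ factor costs one full dissipation power but only to the exponent $\tfrac12$ (by \eqref{est 175}) and the $H^{1/4}$ factor is cheap, and it places the coefficients $w^{\mathcal{H},\lambda},\bar w^{\mathcal{H},\lambda}$ in $L^{3}$ via $\lVert f\rVert_{L^{3}}\lesssim\lVert f\rVert_{L^{2}}^{3/5}\lVert f\rVert_{B^{19/20}_{5/2,2}}^{2/5}$ in \eqref{Reason 4} --- which is precisely why Definition \ref{Definition 5.1} records $w^{\mathcal{H},\lambda}\in L^{2}(0,T;B^{5/4-2\kappa}_{p,2})$ for $p$ up to $\tfrac{12}{1+4\kappa}$ and why the norm $\lVert\lvert\cdot\rvert\rVert_{\lambda}$ in \eqref{Define lambda norm and MT} carries $B^{19/20}_{5/2,2}$. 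The net bound \eqref{Estimate on III5,3} is $M_{T}^{3/5}\lVert z^{\mathcal{L},\lambda}\rVert_{\dot H^{5/4}}^{8/5}\lVert z^{\mathcal{L},\lambda}\rVert_{L^{2}}^{2/5}\lVert\lvert(w,\bar w)\rvert\rVert_{\lambda}^{2/5}$, which Young's inequality absorbs into the dissipation with a Gr\"onwall factor $1+\lVert\lvert(w,\bar w)\rvert\rVert_{\lambda}^{2}\in L^{1}_{T}$. Your proposal names the right ingredient (a Besov interpolation) but applies it to the wrong object (the base norm of the Gr\"onwall argument rather than the high-frequency pieces and coefficients), and as stated the key term would not close. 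Separately, the commutator $\RomanIII_{4}$ is handled not by \eqref{est 87} alone but by the blockwise Kato--Ponce estimate \eqref{est 161}, exploiting that only $\circlesign{\prec}_{s}$ appears inside it; this is workable but is an additional step your sketch does not supply.
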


\subsection{Difficulties and new ideas}
\hfill\\ We elaborate on the difficulties of the proofs of Theorems \ref{Theorem 2.2}-\ref{Theorem 2.3} in the following Remarks \ref{Remark 2.1}-\ref{Remark 2.5}. 
\begin{remark}[Extending local solution theory]\label{Remark 2.1} 
Our Proposition \ref{Proposition 2.1} only required $\mathscr{C}^{-\frac{5}{4} + \kappa}(\mathbb{T}^{3})$; however, we restricted ourselves in Theorem \ref{Theorem 2.2} to initial data in $\mathscr{C}^{-1+\kappa}(\mathbb{T}^{3})$ in consideration of Proposition \ref{Proposition 4.2}. This has the following consequence. Informally stated, to extend our solution theory from local to global in time, we will need the embedding of $H^{\epsilon}(\mathbb{T}^{3}) \hookrightarrow \mathscr{C}^{-1+ 2\kappa}(\mathbb{T}^{3})$, which then requires $\epsilon > \frac{1}{2} + 2\kappa$ according to Sobolev embedding (see \eqref{needed embedding}). In contrast, the analogous \cite[Lemma 5.3]{HR24} in the 2D case was able to achieve such $\dot{H}^{\epsilon}(\mathbb{T}^{2})$-estimate only for ``$\epsilon \in (0,\kappa)$'' for $\kappa > 0$ arbitrarily small and this sufficed to extend their local solution to become global-in-time. The difference between such small $\epsilon$ in \cite{HR24} and our $\epsilon > \frac{1}{2}$ is clearly due to the difference of spatial dimensions. In fact, in Proposition \ref{Proposition 4.11} we will accomplish $\dot{H}^{\epsilon}(\mathbb{T}^{3})$-estimate for all $\epsilon < \frac{9}{10}$, which may not even be sharp because some dedicated computations suggest potential improvement up to $\epsilon < 1$. A close inspection of the proof of \cite[Lemma 5.3]{HR24} reveals that its restriction of ``$\epsilon \in (0,\kappa)$'' comes from the low regularity of $Y$ in the case of \cite{HR24}, which was $\mathscr{C}^{2\kappa}(\mathbb{T}^{2})$ (see ``$Y \in \mathscr{C}^{2\kappa}$'' on \cite[p. 8]{HR24}). Because extending the approach of \cite{DD02} that relies on an explicit knowledge of invariant measure seems inapplicable for the 3D Navier-Stokes equations anyway, we chose to not include the ``$\zeta$'' in \eqref{HR24 main equation} which allows our $Y$ that solves \eqref{Equation of Y} to be more regular, specifically $\mathscr{C}^{1-3\kappa}(\mathbb{T}^{3})$ (see \eqref{Regularity of Y}). This is the primary heuristic why improvement up to $\epsilon < 1$ seems hopeful; although multiple technical difficulties remain, which will be discussed in Remarks \ref{Remark 2.2} and \ref{Remark 4.3}. 
\end{remark}

\begin{remark}[Commutator estimate]\label{Remark 2.2}
As previewed in Section \ref{Subsection 1.1}, upon $L^{2}(\mathbb{T}^{3})$- and $\dot{H}^{\epsilon} (\mathbb{T}^{3})$-estimates, we will need to handle the non-trivial commutator in \eqref{our commutator} that does not cancel out via Leibniz rule, in sharp contrast to \eqref{HR24 commutator} (see \eqref{Define C4} and \eqref{Define tilde C4}). 
\begin{enumerate}[label=(\alph*)]
\item First idea: It turns out that an estimate for a commutator of type \eqref{our commutator} exists in the literature, but they are not applicable to our case. E.g., \cite[Remark 1.3]{L19b} by Li has an estimate of 
\begin{equation}\label{Li and D'Ancona}
\lVert \Lambda^{s}(fg) - f \Lambda^{s} g - g \Lambda^{s} f \rVert_{L^{p}} \lesssim_{s, s_{1}, s_{2}, p_{1}, p_{2}} \lVert \Lambda^{s_{1}} f \rVert_{L^{p_{1}}}  \lVert \Lambda^{s_{2}} g \rVert_{L^{p_{2}}}
\end{equation}
for all $p, p_{1}, p_{2} \in (1,\infty)$ such that $\frac{1}{p} = \frac{1}{p_{1}} + \frac{1}{p_{2}}, s = s_{1} + s_{2}$  but only for $s, s_{1}, s_{2} \in [0,1)$. Its extended version can be found in \cite[Equation (1.5)]{A19} by D'Ancona where $s \in (0,2)$ is allowed as long as $\frac{2d}{d+ 2s_{j}} < p_{j} < \infty$; of course, the upper endpoint case $s= 2$ is the trivial situation that we described in \eqref{HR24 commutator}. Anyway, this does not apply to our case where we would need $s = \frac{5}{2}$. 
\item Second idea: Another idea may be to make use of our strong diffusion $\Lambda^{\frac{5}{2}}u$. E.g., in the case of $L^{2}(\mathbb{T}^{3})$-estimate, we will face
\begin{align*}
\int_{\mathbb{T}^{3}} w^{\mathcal{L}} \cdot \divergence \left( \Lambda^{\frac{5}{2}} (w \circlesign{\prec}_{s} Q^{\mathcal{H}} ) - (\Lambda^{\frac{5}{2}} w) \circlesign{\prec}_{s} Q^{\mathcal{H}} - w \circlesign{\prec}_{s} \Lambda^{\frac{5}{2}} Q^{\mathcal{H}} \right)dx  
\end{align*}
in \eqref{Define C4} that if we estimate by  
\begin{align*}
C \lVert w^{\mathcal{L}} \rVert_{\dot{H}^{1}} \lVert  \Lambda^{\frac{5}{2}} (w \circlesign{\prec}_{s} Q^{\mathcal{H}} ) - (\Lambda^{\frac{5}{2}} w) \circlesign{\prec}_{s} Q^{\mathcal{H}} - w \circlesign{\prec}_{s} \Lambda^{\frac{5}{2}} Q^{\mathcal{H}}  \rVert_{L^{2}},
\end{align*}
then we have no hope to apply \eqref{Li and D'Ancona} or \cite[Equation (1.5)]{A19}. However, we can make use of diffusion and try instead 
\begin{align*}
C \lVert w^{\mathcal{L}} \rVert_{\dot{H}^{\frac{5}{4}}} \lVert  \Lambda^{\frac{5}{2}} (w \circlesign{\prec}_{s} Q^{\mathcal{H}} ) - (\Lambda^{\frac{5}{2}} w) \circlesign{\prec}_{s} Q^{\mathcal{H}} - w \circlesign{\prec}_{s} \Lambda^{\frac{5}{2}} Q^{\mathcal{H}}  \rVert_{\dot{H}^{-\frac{1}{4}}},
\end{align*}
in hope to be able to apply \cite[Equation (1.5)]{A19}. But this is highly unlikely because $\frac{5}{2} - \frac{1}{4} = \frac{9}{4} > 2$. Even if we could do so in the $L^{2}(\mathbb{T}^{3})$-estimate, this strategy is doomed for the necessary $\dot{H}^{\epsilon}(\mathbb{T}^{3})$-estimate, for which we need $\epsilon > \frac{1}{2}$ as described in Remark \ref{Remark 2.1} (see \eqref{Define tilde C4}). In the $\dot{H}^{\epsilon}(\mathbb{T}^{3})$-estimate, in order to not face an $\dot{H}^{s}(\mathbb{T}^{3})$-estimate of a commutator for $s \neq 0$, we have no choice but to bound 
\begin{align*}
&\int_{\mathbb{T}^{3}} \Lambda^{2\epsilon} w \cdot \divergence \left( \Lambda^{\frac{5}{2}} (w \circlesign{\prec}_{s} Q^{\mathcal{H}} ) - (\Lambda^{\frac{5}{2}} w) \circlesign{\prec}_{s} Q^{\mathcal{H}} - w \circlesign{\prec}_{s} \Lambda^{\frac{5}{2}} Q^{\mathcal{H}} \right)dx \\
\lesssim& \lVert w \rVert_{\dot{H}^{1+ 2\epsilon}} \lVert  \Lambda^{\frac{5}{2}} (w \circlesign{\prec}_{s} Q^{\mathcal{H}} ) - (\Lambda^{\frac{5}{2}} w) \circlesign{\prec}_{s} Q^{\mathcal{H}} - w \circlesign{\prec}_{s} \Lambda^{\frac{5}{2}} Q^{\mathcal{H}}   \rVert_{L^{2}}. 
\end{align*}
But considering that the diffusive term in the $\dot{H}^{\epsilon}(\mathbb{T}^{3})$-estimate gives us only $\nu \lVert w \rVert_{\dot{H}^{\frac{5}{4} + \epsilon}}^{2}$, this then requires $1+ 2 \epsilon \leq \frac{5}{4} + \epsilon$ and equivalently $\epsilon \leq \frac{1}{4}$, which is not good enough for us because we need $\epsilon > \frac{1}{2}$ according to Remark \ref{Remark 2.1}.  
\item Third idea: The following due to \cite{W25} may be of independent interest in harmonic analysis and PDEs in general. First, let us estimate 
\begin{align*}
&\int_{\mathbb{T}^{3}} \Lambda^{2\epsilon} w \cdot \divergence \left( \Lambda^{\frac{5}{2}} (w \circlesign{\prec}_{s} Q^{\mathcal{H}} ) - (\Lambda^{\frac{5}{2}} w) \circlesign{\prec}_{s} Q^{\mathcal{H}} - w \circlesign{\prec}_{s} \Lambda^{\frac{5}{2}} Q^{\mathcal{H}} \right)dx \\
\lesssim& \lVert w \rVert_{\dot{H}^{\frac{5}{4}+ \epsilon}} \lVert  \Lambda^{\frac{5}{2}} (w \circlesign{\prec}_{s} Q^{\mathcal{H}} ) - (\Lambda^{\frac{5}{2}} w) \circlesign{\prec}_{s} Q^{\mathcal{H}} - w \circlesign{\prec}_{s} \Lambda^{\frac{5}{2}} Q^{\mathcal{H}}   \rVert_{\dot{H}^{\epsilon - \frac{1}{4}}}. 
\end{align*}
Then, denoting $\alpha \triangleq \epsilon -\frac{1}{4}$ for simplicity, we see that we need to estimate for $\alpha > \frac{1}{4}$, 
\begin{align*}
\left\lVert \Lambda^{\alpha}\left( \Lambda^{\frac{5}{2}} (fg) - (\Lambda^{\frac{5}{2}} f) g - f( \Lambda^{\frac{5}{2}} g) \right)  \right\rVert_{L^{2}}
\end{align*}
involving $\lVert g \rVert_{\mathscr{C}^{\beta}}$ for $\beta < \frac{9}{4}$ (see \eqref{Regularity of Q}). We can rewrite 
\begin{align*}
 \Lambda^{\alpha} \left( \Lambda^{\frac{5}{2}} ( f g) - (\Lambda^{\frac{5}{2}} f) g - f (\Lambda^{\frac{5}{2}} g) \right) = \sum_{i=1}^{3} J_{i}
\end{align*}
where 
\begin{align*}
J_{1} \triangleq  \Lambda^{\alpha + \frac{5}{2}} (f g) - f( \Lambda^{\alpha + \frac{5}{2}} g ), \hspace{2mm} J_{2} \triangleq  f ( \Lambda^{\alpha+ \frac{5}{2}} g) - \Lambda^{\alpha} \left( f ( \Lambda^{\frac{5}{2}} g) \right),  \hspace{2mm} J_{3} \triangleq  \Lambda^{\alpha} \left( ( \Lambda^{\frac{5}{2}} f) g \right). 
\end{align*}
We can estimate $\lVert J_{1} \rVert_{L^{2}}$ by the classical commutator estimate stated in Lemma \ref{Lemma 3.3} and $\lVert J_{3} \rVert_{L^{2}}$ by the classical product estimate (e.g. \cite[Lemma A.2]{K90}) so that $g$ gets no norm worse than that of $\mathscr{C}^{\alpha + \frac{3}{2}}(\mathbb{T}^{3})$. For the difficult $J_{2}$ term, we use the fact that $\Lambda^{2} = -\sum_{j=1}^{3} \partial_{j}^{2}$ and rewrite it as
\begin{align*}
J_{2} = \sum_{i=1}^{2} J_{2i} 
\end{align*} 
where 
\begin{align*}
J_{21} \triangleq - \sum_{j=1}^{3} \left[f ( \Lambda^{\alpha + \frac{1}{2}} \partial_{j}^{2} g) - \Lambda^{\alpha} \partial_{j} \left( f ( \Lambda^{\frac{1}{2}} \partial_{j} g ) \right) \right], \hspace{3mm} J_{22}\triangleq - \sum_{j=1}^{3}\Lambda^{\alpha} \left(( \partial_{j} f) (\Lambda^{\frac{1}{2}} \partial_{j} g ) \right).  
\end{align*}
Again, the classical product estimate (e.g. \cite[Lemma A.2]{K90}) allows one to estimate $J_{22}$ so that at worst $\mathscr{C}^{\frac{3}{2} + \alpha}$-norm is placed on $g$. This boils everything down to how we can estimate $J_{21}$, which resembles the structure of Lemma \ref{Lemma 3.3} and hence incites high hope. The best way to estimate such a combination of $\Lambda^{\alpha}$ and $\partial_{j}$ seems to be through an application of the Plancherel theorem, and writing the Fourier symbols as 
\begin{align*}
\int_{0}^{1} \frac{\partial}{\partial \tau} \left( \lvert \tau \xi + (1-\tau) ( \xi - \eta ) \rvert^{\alpha} [ \tau \xi + (1-\tau) (\xi - \eta)_{j} ]_{j} \right) d \tau \lesssim \lvert \eta \rvert \lvert \xi - \eta \rvert^{\alpha} + \lvert \eta \rvert^{1+ \alpha}. 
\end{align*}
However, this seems to lead to a bound of $\lVert \widehat{\Lambda^{\alpha} g} \rVert_{L^{1}}$ and ultimately 
\begin{align*}
\lVert \Lambda^{\alpha} \partial_{j} \left( f ( \Lambda^{\frac{1}{2}} \partial_{j} g ) \right) -  f ( \Lambda^{\alpha + \frac{1}{2}} \partial_{j}^{2} g) \rVert_{L^{2}}  \lesssim \lVert \nabla f \rVert_{L^{2}}  \lVert g \rVert_{H^{3 + \delta + \alpha}}+ \lVert \Lambda^{1+ \alpha} f  \rVert_{L^{2}}\lVert g \rVert_{H^{3 + \delta}}
\end{align*}
for any $\delta > 0$ (see also \cite[Proposition 2.1]{CCCGW12}). This is not good enough for us because $3+ \delta + \alpha$ for $\alpha > \frac{1}{4}$ is much bigger than $\frac{9}{4}$. 
\end{enumerate} 
We will overcome these difficulties in \eqref{Split C4}-\eqref{Estimate on C4}, \eqref{est 126}-\eqref{est 131}, and \eqref{est 273}-\eqref{est 276}. 
\end{remark}

\begin{remark}[Criticality in the proof of uniqueness of HL weak solution]\label{Remark 2.3}
To explain this difficulty, let us first describe the analogue of the proof of \cite[Theorem 2.3]{Y23c} concerning the uniqueness of HL weak solutions in the case of the 2D Navier-Stokes equations forced by STWN; to emphasize the difference in dimensions, we denote $\mathbb{T}^{2}$  and $\mathbb{T}^{3}$ in each norm appropriately. Denoting the difference between two solutions $w = w^{\mathcal{L},\lambda} + w^{\mathcal{H},\lambda}$ and $\bar{w} = \bar{w}^{\mathcal{L},\lambda} + \bar{w}^{\mathcal{H},\lambda}$ and their components by $z, z^{\mathcal{L},\lambda}$ and $z^{\mathcal{H},\lambda}$ (see \eqref{est 149}), we are faced with the estimate of 
\begin{align*}
\frac{1}{2} \partial_{t} \lVert z^{\mathcal{L},\lambda} \rVert_{L^{2} (\mathbb{T}^{2}) }^{2} + \nu \lVert z^{\mathcal{L},\lambda} \rVert_{\dot{H}^{1}(\mathbb{T}^{2}) }^{2} = - \langle z^{\mathcal{L},\lambda}, \divergence (z^{\mathcal{H},\lambda} \otimes w^{\mathcal{L},\lambda} ) \rangle + \hdots 
\end{align*}
(see \cite[Equations (170) and (183)]{Y23c}). We can bound this term by 
\begin{subequations} 
\begin{align}
- \langle z^{\mathcal{L},\lambda},  \divergence (z^{\mathcal{H},\lambda}& \otimes w^{\mathcal{L},\lambda} ) \rangle \lesssim \lVert z^{\mathcal{L},\lambda} \rVert_{\dot{H}^{1}(\mathbb{T}^{2}) }\lVert z^{\mathcal{H},\lambda} \rVert_{L^{4}(\mathbb{T}^{2}) } \lVert w^{\mathcal{L},\lambda} \rVert_{L^{4}(\mathbb{T}^{2}) }  \label{est 58a}\\
\lesssim& \lVert z^{\mathcal{L},\lambda} \rVert_{\dot{H}^{1}(\mathbb{T}^{2}) } \lVert z^{\mathcal{H},\lambda} \rVert_{L^{2}(\mathbb{T}^{2}) }^{\frac{1}{2}} \lVert z^{\mathcal{H},\lambda} \rVert_{B_{\infty,2}^{0}(\mathbb{T}^{2}) }^{\frac{1}{2}} \lVert w^{\mathcal{L},\lambda} \rVert_{L^{2}(\mathbb{T}^{2}) }^{\frac{1}{2}} \lVert w^{\mathcal{L},\lambda} \rVert_{\dot{H}^{1}(\mathbb{T}^{2}) }^{\frac{1}{2}} \label{est 58b} 
\end{align}
\end{subequations}
(see \cite[Equation (187)]{Y23c}). Then \cite[Equation (188)]{Y23c} shows that there exist universal constants $C_{1}, C_{2} \geq 0$ such that for any $\kappa \in (0, \frac{1}{2})$ sufficiently small, 
\begin{equation}\label{est 59}
\lVert z \rVert_{B_{\infty,2}^{0}(\mathbb{T}^{2}) } \leq C_{1} \lVert z^{\mathcal{L},\lambda} \rVert_{\dot{H}^{1}(\mathbb{T}^{2}) } + C_{2} \lambda^{-\frac{\kappa}{2}} \lVert z \rVert_{B_{\infty,2}^{0}(\mathbb{T}^{2}) }. 
\end{equation} 
Thus, for $\lambda \gg 1$, we have $\lVert z^{\mathcal{H},\lambda} \rVert_{B_{\infty,2}^{0}(\mathbb{T}^{2}) } \lesssim  \lVert z^{\mathcal{L},\lambda} \rVert_{\dot{H}^{1}(\mathbb{T}^{2}) }$. By using another estimate of $\lVert z \rVert_{H^{s}(\mathbb{T}^{2}) } \lesssim \lVert z^{\mathcal{L},\lambda} \rVert_{H^{s}(\mathbb{T}^{2}) }$ for all $s \in [0, 1 - 2 \kappa)$ (see \eqref{est 156}), we were able to deduce in \cite[Equations (192)-(193)]{Y23c} 
\begin{align*}
- \langle z^{\mathcal{L},\lambda}, \mathbb{P}_{L} \divergence (z^{\mathcal{H},\lambda} \otimes w^{\mathcal{L},\lambda} ) \rangle \leq \frac{\nu}{2} \lVert z^{\mathcal{L},\lambda} \rVert_{\dot{H}^{1}(\mathbb{T}^{2}) }^{2} + C \lVert z^{\mathcal{L},\lambda} \rVert_{L^{2}(\mathbb{T}^{2}) }^{2} \lVert w^{\mathcal{L},\lambda} \rVert_{L^{2}(\mathbb{T}^{2}) }^{2} \lVert w^{\mathcal{L},\lambda}\rVert_{\dot{H}^{1}(\mathbb{T}^{2}) }^{2}.
\end{align*}
In \eqref{est 58a}, we assigned to $z^{\mathcal{L},\lambda}$ the $\dot{H}^{1}(\mathbb{T}^{2})$-norm, the regularity worth full diffusion; thus, we can similarly estimate 
\begin{align*}
\frac{1}{2} \partial_{t} \lVert z^{\mathcal{L},\lambda} \rVert_{L^{2} (\mathbb{T}^{3}) }^{2} + \nu \lVert z^{\mathcal{L},\lambda} \rVert_{\dot{H}^{\frac{5}{4}}(\mathbb{T}^{3}) }^{2} = - \langle z^{\mathcal{L},\lambda}, \divergence (z^{\mathcal{H},\lambda} \otimes w^{\mathcal{L},\lambda} ) \rangle + \hdots 
\end{align*}
where 
\begin{align}
&- \langle z^{\mathcal{L},\lambda},   \divergence (z^{\mathcal{H},\lambda} \otimes w^{\mathcal{L},\lambda}) \rangle  \lesssim \lVert z^{\mathcal{L},\lambda} \rVert_{\dot{H}^{\frac{5}{4}} (\mathbb{T}^{3})  } \lVert z^{\mathcal{H},\lambda} \otimes w^{\mathcal{L},\lambda} \rVert_{\dot{H}^{-\frac{1}{4}}(\mathbb{T}^{3})}  \nonumber \\
\lesssim& \lVert z^{\mathcal{L},\lambda} \rVert_{\dot{H}^{\frac{5}{4}}(\mathbb{T}^{3})} \lVert z^{\mathcal{H},\lambda} \otimes w^{\mathcal{L},\lambda} \rVert_{L^{\frac{12}{7}}(\mathbb{T}^{3})}  \lesssim \lVert z^{\mathcal{L},\lambda} \rVert_{\dot{H}^{\frac{5}{4}}(\mathbb{T}^{3})} \lVert z^{\mathcal{H},\lambda} \rVert_{L^{12}}  \lVert w^{\mathcal{L},\lambda} \rVert_{L^{2}(\mathbb{T}^{3})}.  \label{est 60}
\end{align}  
It turns out that in the 3D case, $L^{12}(\mathbb{T}^{3})$-norm somewhat plays the same role of $B_{\infty,2}^{0}(\mathbb{T}^{2})$-norm in \eqref{est 59}; i.e., there exist constants $C_{1}, C_{2} \geq 0$ such that 
\begin{equation}
\lVert z \rVert_{L^{12}(\mathbb{T}^{3})} \leq C_{1} \lVert z^{\mathcal{L},\lambda} \rVert_{\dot{H}^{\frac{5}{4}}(\mathbb{T}^{3})} + C_{2} \lambda^{-\frac{\kappa}{2}} \ \lVert z \rVert_{L^{12}(\mathbb{T}^{3})}
\end{equation} 
so that $\lVert z^{\mathcal{H},\lambda} \rVert_{L^{12}(\mathbb{T}^{3})} \lesssim \lVert z^{\mathcal{L},\lambda} \rVert_{\dot{H}^{\frac{5}{4}}(\mathbb{T}^{3})}$ (see \eqref{est 166}-\eqref{est 168}). But if we continue from \eqref{est 60} by such estimates to reach  
\begin{align*}
- \langle z^{\mathcal{L},\lambda}, \mathbb{P}_{L} \divergence (z^{\mathcal{H},\lambda} \otimes w^{\mathcal{L},\lambda}) \rangle  \lesssim \lVert z^{\mathcal{L},\lambda} \rVert_{\dot{H}^{\frac{5}{4}}(\mathbb{T}^{3})}^{2}  \lVert   w^{\mathcal{L},\lambda} \rVert_{L^{2}(\mathbb{T}^{3})}, 
\end{align*}
then this will not allow us to prove uniqueness. Hence, we need to employ estimates differently from \cite{HR24, Y23c, Y25d}. We refer to Remark \ref{Remark 5.3} for more details.
\end{remark}

\begin{remark}[Anderson Hamiltonian and the theory of RS]\label{Remark 2.4}
According to step (c) described in Section \ref{Subsection 1.4}, we need help from the theory of Anderson Hamiltonian to obtain \eqref{est 62}. The previous works of \cite{HR24, Y23c} relied on \cite{AC15} that utilized the theory of paracontrolled distributions to obtain the necessary results in the 2D case with $-\Delta$ and white-in-space noise, and its proof is flexible so that the author \cite{Y25d} was able to extend it to the case of 1D Burgers' equation with $-\Delta$ and $\Lambda^{\frac{1}{2}} \xi$ with $\xi$ being the STWN. The 3D case has been covered in \cite{GUZ20} by Gubinelli, Ugurcan, and Zachhuber using the theory of paracontrolled distributions too. The difference is that while \cite{HR24, Y23c, Y25d} all had `$\nabla_{\text{sym}} \mathcal{L}_{\lambda_{t}} X w^{\mathcal{L}}$ as the ill-defined product of $\mathscr{C}^{-1-\kappa}(\mathbb{T}^{2})$ and $\mathscr{C}^{1-\kappa}(\mathbb{T}^{2})$ objects, we will face the ill-defined product of $\mathscr{C}^{-\frac{5}{4} -\kappa}(\mathbb{T}^{3})$ and $\mathscr{C}^{\frac{5}{4} - \kappa}(\mathbb{T}^{3})$ objects (see $\nabla_{\text{sym}} \mathcal{L}_{\lambda_{t}}X$ multiplied by $w^{\mathcal{L}}$ in \eqref{est 179}). The crux of the application of the theory of paracontrolled distributions in \cite{AC15} relies on the following result (see Lemma \ref{Burgers' Lemma A.3}), which is the Sobolev space analogue of \cite[Lemma 2.4]{GIP15}: If $\alpha \in (0,1), \beta, \gamma \in \mathbb{R}$ satisfy $\beta + \gamma < 0$, and $\alpha + \beta + \gamma > 0$, then $\mathcal{R}(f,g,h) \triangleq (f \circ g) \circ h - f (g \circ h)$ satisfies 
\begin{equation}\label{est 180}
\lVert \mathcal{R}(f,g,h) \rVert_{H^{\alpha + \beta + \gamma - \delta}} \lesssim \lVert f \rVert_{H^{\alpha}} \lVert g \rVert_{\mathscr{C}^{\beta}} \lVert h \rVert_{\mathscr{C}^{\gamma}} \hspace{3mm} \forall \hspace{1mm} f \in H^{\alpha}, g \in \mathscr{C}^{\beta}, \text{ and }h \in \mathscr{C}^{\gamma} \text{ and } \delta > 0. 
\end{equation}
The difficulty is that we will want $w^{\mathcal{L}}$ to play the role of ``$f$'' in \eqref{est 180} and have the regularity of $H^{\frac{5}{4} - \kappa}$; however, it violates the hypothesis of $\alpha \in (0,1)$.   

Fortunately, Labb$\acute{e}$ \cite{L19a} extended \cite{AC15} to the 3D case using the theory of RS. The theory of RS has seen generalization to fractional Laplacian in the past (e.g. \cite{BK17}) and diffusion of high order (e.g. $\Delta^{2}$ the Cahn-Hilliard equation in \cite[Equation (4b)]{H14a}); hence, this approach seems very hopeful. Unfortunately, Labb$\acute{\mathrm{e}}$ in \cite[p. 3204]{L19a} considered $P^{(a)}$ as the Green's function that solves $(-\Delta+ a) P^{(a)} = \delta$. In the 2D case there is no explicit form better than $P^{(a)}(x) = \frac{1}{2\pi} \text{Bess} (\sqrt{a} \lvert x \rvert)$ where Bess is the modified Bessel function of the second kind of index , but the following asymptotic behavior are known
\begin{equation}\label{Asymptotic of Bessel} 
\begin{cases}
D^{k} \text{Bess}(x) \sim (-1)^{k} \sqrt{\frac{\pi}{2x}} e^{-x} &\text{ as } \lvert x \rvert \to \infty,  \\
\text{Bess} (x) \sim \ln(x), \hspace{1mm} D^{k} \text{Bess}(x) \sim \frac{ (-1)^{k} (k-1)!}{x^{k}} &\text{ as } \lvert x \rvert \to 0,
\end{cases}
\end{equation}
and they played crucial role in the proof of \cite[Lemma 3.1]{L19a}. If we were to follow the analogous path, we would need to consider $P^{(a)}$ that solves for $x \in \mathbb{T}^{3}$, 
\begin{align*}
(\Lambda^{\frac{5}{2}} + a)Q^{(a)} = \delta; 
\end{align*}
however, at the time of writing this manuscript, the author could not determine if an asymptotic behavior of $Q^{(a)}$ that is analogous to \eqref{Asymptotic of Bessel} is known. 

Ultimately, we returned to the approach of \cite{AC15}. We had to decrease $\alpha$ by informally $\frac{1}{4}$ to obey the upper bound of one upon an application of \eqref{est 180}, which would decrease $\delta$ by $\frac{1}{4}$. This then creates the danger of violating the other condition of $\delta > 0$ in \eqref{est 180}; fortunately, we were able to find the appropriate range of parameters to make this work (see e.g. \eqref{Burgers' est 167}). We will see that the application of Lemma \ref{Burgers' Lemma A.2} is even more difficult. We will also face the same challenges of commutators \eqref{our commutator} (see \eqref{Burgers' Define B}). Again, we overcome this issue using the same approach in Remark \ref{Remark 2.2} (see \eqref{est 273}, \eqref{est 277}, and \eqref{est 276}).  
\end{remark}

\begin{remark}[Renormalization]\label{Remark 2.5}
As we described in \eqref{projection}-\eqref{alternative projection}, the 3D Leray projection has no simple form, and one of its major consequences is that the computations involving the renormalization constants increase significantly in comparison to the 2D case, often in an unexpected manner (see Remark \ref{Remark on renormalization constant}). E.g., in \eqref{est 0} we compute $\mathbb{E} [ \lvert \Delta_{m} (\nabla_{\text{sym}} \mathcal{L}_{\lambda} X \circlesign{\circ} P^{\lambda})_{(1,1)} (t) \rvert^{2} ]$ that informally consists of nine terms of the form $k_{i}k_{j}, i, j \in \{1,2,3\}$; we group $k_{2}^{2}$ and $k_{3}^{2}$ together with $k_{1}^{2}$ so that nine reduces to seven (see e.g. ``$4k_{1}k_{1}' + k_{2}k_{2}' + k_{3}k_{3}'$'' in \eqref{est 61}). Upon computing the second moment, it then produces 49 terms in \eqref{est 0}. The projection $\mathbb{P}_{L}$ in \eqref{projection} informally consists of $\Id - \frac{k\otimes k}{\lvert k \rvert^{2}}$ so that the product with itself produces four terms. Postponing the full descriptions of notations, further considering the product of $\nabla_{\text{sym}} \mathcal{L}_{\lambda} X$ with $P^{\lambda}$, each of these 49 terms produce $4\times 4 = 16$ terms instead of four (see \eqref{est 61}-\eqref{est 217}). At last, upon computing Wick products, each of the $49 \times 16$ terms creates three terms due to \eqref{Wick}. All of these considerations amount to exhaustive computations; we refer to Section \ref{Subsection 4.3} for more details. 
 \end{remark}

In what follows, we give preliminaries in Section \ref{Section 3}, prove Theorem \ref{Theorem 2.2} in Section \ref{Section 4}, Theorem \ref{Theorem 2.3} in Section \ref{Section 5}, and Proposition \ref{Proposition on Anderson Hamiltonian} in Section \ref{Proof of Proposition on Anderson Hamiltonian}. We leave additional details in the Appendix. 

\section{Preliminaries}\label{Section 3} 
\subsection{Further notations and assumptions}\label{Subsection 3.1}
\hfill\\ We write $\mathbb{M}^{n}$ to denote the set of all $n\times n$ matrices, and $C_{t}$ to indicate $\sup_{s\in [0,t]}$; e.g. $\lVert f \rVert_{C_{t}C_{x}} \triangleq \sup_{s\in[0,t]} \sup_{x\in \mathbb{T}^{3}} \lvert f(t,x) \rvert$. The heat kernel in our case will be denoted by $P_{t} \triangleq e^{ -\nu \Lambda^{\frac{5}{2}} t}$. Following \cite{HR24}, we define the symmetric tensor product as 
\begin{equation}\label{Define symmetric tensor}
u\otimes_{s} v \triangleq \frac{1}{2} ( u \otimes v + v \otimes u), 
\end{equation} 
so that $u\otimes_{s} v = v \otimes_{s} u$, and $(\nabla \phi)_{i,j} \triangleq \partial_{i} \phi_{j}$ for any $\phi \in C^{1}(\mathbb{T}^{3}; \mathbb{R}^{3})$, $i, j \in \{1,2, 3\}$, and 
\begin{equation}\label{symmetric gradient} 
(\nabla_{\text{sym}} \phi)_{i,j} = \frac{1}{2} (\partial_{i} \phi_{j} + \partial_{j} \phi_{i}). 
\end{equation} 

\subsection{Besov spaces and Bony's paraproducts}\label{Subsection 3.2}  
\hfill\\ We let $\chi$ and $\rho$ be smooth functions with compact support on $\mathbb{R}^{3}$ that are non-negative, and radial such that the support of $\chi$ is contained in a ball while that of $\rho$ in an annulus and 
\begin{align*}
& \chi(\xi) + \sum_{j\geq 0} \rho(2^{-j} \xi) = 1 \hspace{3mm} \forall \hspace{1mm} \xi \in \mathbb{R}^{3}, \\
&\supp (\chi) \cap  \supp (\rho(2^{-j} \cdot )) = \emptyset \hspace{1mm} \forall \hspace{1mm} j \in\mathbb{N}, \hspace{2mm} \supp (\rho(2^{-i}\cdot)) \cap \supp (\rho (2^{-j} \cdot)) = \emptyset \hspace{1mm} \text{ if } \lvert i-j \rvert > 1.
\end{align*}
Denoting by $\rho_{j}(\cdot) \triangleq \rho(2^{-j} \cdot)$, we define the Littlewood-Paley operators $\Delta_{j}$ for $j \in \mathbb{N}_{0} \cup \{-1\}$ by 
\begin{equation}
\Delta_{j} f \triangleq 
\begin{cases}
\mathcal{F}^{-1} (\chi) \ast f & \text{ if } j = -1, \\
\mathcal{F}^{-1} (\rho_{j}) \ast f & \text{ if } j \in \mathbb{N}_{0},
\end{cases}
\end{equation}
and inhomogeneous Besov spaces $B_{p,q}^{s} \triangleq \{f \in \mathcal{S}': \hspace{1mm} \lVert f \rVert_{B_{p,q}^{s}} < \infty \}$ where 
\begin{equation}
\lVert f \rVert_{B_{p,q}^{s}} \triangleq  \lVert 2^{sm} \lVert \Delta_{m} f \rVert_{L_{x}^{p}} \rVert_{l_{m}^{q}} \hspace{3mm} \forall \hspace{1mm} p, q \in [1,\infty], s \in \mathbb{R}. 
\end{equation} 
We define the low-frequency cut-off operator $S_{i} f \triangleq \sum_{-1  \leq j \leq i-1} \Delta_{j} f$ and Bony's paraproducts and remainder respectively as 
\begin{equation}  
f \prec g  \triangleq \sum_{i\geq -1} S_{i-1} f \Delta_{i} g   \text{ and }  f  \circ g  \triangleq \sum_{i\geq -1} \sum_{j: \lvert j\rvert \leq 1} \Delta_{i} f  \Delta_{i+j} g  
\end{equation}
so that $f g  = f  \prec g  + f  \succ g  + f  \circ g$, where $f  \succ g  = g  \prec f $ (see \cite[Sections 2.6.1 and 2.8.1]{BCD11}). We first extend such definitions to tensor products by 
\begin{subequations} 
\begin{align}
&f  \circlesign{\prec} g \triangleq \sum_{i \geq -1} S_{i-1} f  \otimes \Delta_{i} g, \\
&f  \circlesign{\succ} g \triangleq \sum_{i \geq -1} \Delta_{i} f  \otimes S_{i-1} g,\\
& f  \circlesign{\circ} g \triangleq \sum_{i \geq -1} \sum_{j: \lvert j \rvert \leq 1}\Delta_{i} f  \otimes \Delta_{i+j} g, 
\end{align}
\end{subequations} 
so that $f  \otimes g = f  \circlesign{\prec} g + f  \circlesign{\succ} g + f  \circlesign{\circ} g$. Following \cite{HR24}, we extend furthermore via 
\begin{equation}\label{est 92} 
f  \circlesign{\prec}_{s} g \triangleq \sum_{i \geq -1} S_{i-1} f  \otimes_{s} \Delta_{i} g \text{ and } f  \circlesign{\circ}_{s} g \triangleq \sum_{i \geq -1} \sum_{j: \lvert j \rvert \leq 1} \Delta_{i} f  \otimes_{s} \Delta_{i+j} g 
\end{equation} 
so that 
\begin{equation}\label{Bony's decomposition}
f  \otimes_{s} g = f  \circlesign{\prec}_{s} g + f  \circlesign{\succ}_{s} g + f  \circlesign{\circ}_{s} g 
\end{equation} 
where $f  \circlesign{\succ}_{s} g = g \circlesign{\prec}_{s} f $. Analogous definitions follow in cases with one or both of $f$ and $g$ are $\mathbb{M}^{2}$-valued (see \cite[p. 37]{HR24}). We recall from \cite{BCD11} that there exist constants $N_{1}, N_{2} \in \mathbb{N}$ such that 
\begin{equation}\label{est 152} 
\Delta_{m} (f \prec g) = \sum_{j: \lvert j-m \rvert \leq N_{1}} (S_{j-1} f) \Delta_{j} g \hspace{2mm} \text{ and } \hspace{2mm} \Delta_{m} ( f \circ g) = \Delta_{m}\sum_{i\geq m-N_{2}}  \sum_{j: \lvert j \rvert \leq 1} \Delta_{i} f \Delta_{i+j} g. 
\end{equation} 

For convenience we record some special cases of Bony's estimates. 
\begin{lemma}\label{Lemma 3.1} 
\rm{(\hspace{1sp}\cite[Proposition 3.1]{AC15} and \cite[Lemma A.1]{HR24} and \cite[p. 12]{GIP15})} Let $\alpha, \beta \in \mathbb{R}$. 
\begin{enumerate} 
\item  Then 
\begin{subequations}\label{Sobolev products}
\begin{align}
& \lVert f \prec g \rVert_{H^{\beta -\alpha}} \lesssim_{\alpha, \beta} \lVert f \rVert_{L^{2}} \lVert g \rVert_{\mathscr{C}^{\beta}}  \hspace{8mm}  \forall \hspace{1mm} f \in L^{2}, g \in \mathscr{C}^{\beta} \text{ if } \alpha > 0, \label{Sobolev products a} \\
& \lVert f \succ g \rVert_{H^{\alpha}} \lesssim_{\alpha} \lVert f \rVert_{H^{\alpha}} \lVert g \rVert_{L^{\infty}}  \hspace{11mm}  \forall \hspace{1mm} f \in H^{\alpha}, g \in L^{\infty},  \label{Sobolev products b} \\
& \lVert f \prec g \rVert_{H^{\alpha + \beta}} \lesssim_{\alpha, \beta} \lVert f \rVert_{H^{\alpha}} \lVert g \rVert_{\mathscr{C}^{\beta}} \hspace{7mm}  \forall \hspace{1mm} f \in H^{\alpha}, g \in \mathscr{C}^{\beta}  \text{ if } \alpha < 0,  \label{Sobolev products c}  \\
& \lVert f \succ g \rVert_{H^{\alpha + \beta}} \lesssim_{\alpha,\beta} \lVert f \rVert_{H^{\alpha}}  \lVert g \rVert_{\mathscr{C}^{\beta}}  \hspace{7mm}  \forall \hspace{1mm} f \in H^{\alpha}, g \in \mathscr{C}^{\beta} \text{ if } \beta < 0, \label{Sobolev products d} \\
& \lVert f \circ g \rVert_{H^{\alpha+ \beta}} \lesssim_{\alpha, \beta} \lVert f \rVert_{H^{\alpha}} \lVert g \rVert_{\mathscr{C}^{\beta}}  \hspace{8mm} \forall \hspace{1mm} f \in H^{\alpha}, g \in \mathscr{C}^{\beta}  \text{ if } \alpha + \beta > 0. \label{Sobolev products e} 
\end{align} 
\end{subequations} 
\item Let $p, q \in [1,\infty]$ such that $\frac{1}{r} = \frac{1}{p} + \frac{1}{q} \leq 1$. Then 
\begin{subequations}\label{Besov products} 
\begin{align}
& \lVert f \prec g \rVert_{B_{r, \infty}^{\alpha}} \lesssim_{\alpha,p,q,r} \lVert f \rVert_{L^{p}} \lVert g \rVert_{B_{q, \infty}^{\alpha}} \hspace{12mm} \forall \hspace{1mm} f \in L^{p}, g \in B_{q,\infty}^{\alpha}, \label{Besov products a}\\
& \lVert f \prec g \rVert_{B_{r,\infty}^{\alpha + \beta}} \lesssim_{\alpha, \beta, p, q, r} \lVert f \rVert_{B_{p,\infty}^{\beta}} \lVert g \rVert_{B_{q,\infty}^{\alpha}} \hspace{7mm} \forall \hspace{1mm} f \in B_{p,\infty}^{\beta}, g \in B_{q,\infty}^{\alpha} \text{ if } \beta < 0, \label{Besov products b}\\
& \lVert f \circ g \rVert_{B_{r,\infty}^{\alpha+ \beta}} \lesssim_{\alpha,\beta,p,q,r} \lVert f \rVert_{B_{p,\infty}^{\beta}} \lVert g \rVert_{B_{q,\infty}^{\alpha}} \hspace{8mm} \forall \hspace{1mm} f \in B_{p,\infty}^{\beta}, g \in B_{q,\infty}^{\alpha} \text{ if } \alpha + \beta > 0.\label{Besov products c}
\end{align}
\end{subequations}
One of the consequences is that $\lVert fg \rVert_{\mathscr{C}^{\min\{\alpha,\beta\}}} \lesssim \lVert f \rVert_{\mathscr{C}^{\alpha}} \lVert g \rVert_{\mathscr{C}^{\beta}}$ when $\alpha + \beta > 0$.
\end{enumerate} 
\end{lemma}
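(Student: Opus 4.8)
The plan is to reduce each inequality to a single-index estimate on Littlewood--Paley blocks and then resum, so that the only analytic inputs are H\"older's inequality, the uniform-in-$j$ boundedness of $S_{j}$ and $\Delta_{j}$ on every $L^{p}$, and the frequency-support bookkeeping recorded in \eqref{est 152}. For $f\prec g$ one has $\Delta_{m}(f\prec g)=\sum_{|j-m|\le N_{1}}(S_{j-1}f)\Delta_{j}g$, a finite sum of pieces each carrying Fourier support in an annulus of size $\sim 2^{m}$, so H\"older with $\tfrac1r=\tfrac1p+\tfrac1q$ gives $\|\Delta_{m}(f\prec g)\|_{L^{r}}\lesssim\sum_{|j-m|\le N_{1}}\|S_{j-1}f\|_{L^{p}}\|\Delta_{j}g\|_{L^{q}}$; distributing the weight $2^{m(\cdot)}$ then yields each of \eqref{Sobolev products a}, \eqref{Sobolev products c}, \eqref{Besov products a}, \eqref{Besov products b} according to which factor sits in an $L^{p}$-type space and which in a H\"older/Besov space. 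Because $\|S_{j-1}f\|_{L^{p}}\lesssim\|f\|_{L^{p}}$ uniformly, \eqref{Sobolev products b} and \eqref{Besov products a} need no sign condition, while for \eqref{Sobolev products a} putting $f\in L^{2}$, $g\in\mathscr{C}^{\beta}$ gives $2^{m(\beta-\alpha)}\|\Delta_{m}(f\prec g)\|_{L^{2}}\lesssim 2^{-m\alpha}\|f\|_{L^{2}}\|g\|_{\mathscr{C}^{\beta}}$, which lies in $\ell_{m}^{2}$ exactly when $\alpha>0$. In \eqref{Sobolev products c} and \eqref{Besov products b} the hypothesis $\alpha<0$ (resp.\ $\beta<0$) is precisely what lets the geometric bound $\|S_{j-1}f\|\lesssim\sum_{k\le j-2}\|\Delta_{k}f\|$ be reorganised, after extracting the factor $2^{-j\alpha}$ (resp.\ $2^{-j\beta}$), as a discrete convolution of an $\ell^{1}$ kernel with the $\ell^{2}$ (resp.\ $\ell^{\infty}$) sequence $(2^{k\alpha}\|\Delta_{k}f\|)_{k}$; Young's inequality then supplies the $\ell^{2}$ summability needed for the $H^{s}$ norm rather than a lossy $\ell^{2}\hookrightarrow\ell^{\infty}$ bound.

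The symmetric estimates \eqref{Sobolev products d} and, more generally, $f\succ g=g\prec f$ reduce at once to \eqref{Sobolev products c} (resp.\ \eqref{Besov products b}) by interchanging $f$ and $g$, which is exactly why the hypothesis migrates from $\alpha<0$ to $\beta<0$. I expect the resonant term to be the only genuine obstacle, because its structure is different: by \eqref{est 152}, $\Delta_{m}(f\circ g)=\Delta_{m}\sum_{i\ge m-N_{2}}\sum_{|j|\le1}\Delta_{i}f\,\Delta_{i+j}g$, and each summand now has Fourier support in a \emph{ball} of radius $\sim 2^{i}$, hence can feed frequency $\sim 2^{m}$ for \emph{every} $i\ge m-N_{2}$. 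H\"older gives $\|\Delta_{m}(f\circ g)\|_{L^{r}}\lesssim\sum_{i\ge m-N_{2}}\|\Delta_{i}f\|_{L^{p}}\|\Delta_{i}g\|_{L^{q}}$, and multiplying by $2^{m(\alpha+\beta)}$ recasts the right-hand side as $\sum_{i\ge m-N_{2}}2^{(m-i)(\alpha+\beta)}\bigl(2^{i\alpha}\|\Delta_{i}f\|_{L^{p}}\bigr)\bigl(2^{i\beta}\|\Delta_{i}g\|_{L^{q}}\bigr)$. The hypothesis $\alpha+\beta>0$ makes the kernel $i\mapsto 2^{(m-i)(\alpha+\beta)}1_{\{i\ge m-N_{2}\}}$ an $\ell^{1}$ sequence in $i$, whence \eqref{Sobolev products e} follows by Young's inequality (with $g\in\mathscr{C}^{\beta}$ supplying $2^{i\beta}\|\Delta_{i}g\|_{L^{\infty}}\lesssim\|g\|_{\mathscr{C}^{\beta}}$ and $f\in H^{\alpha}$ supplying the $\ell^{2}$ sequence) and \eqref{Besov products c} by taking a supremum over $m$; for $\alpha+\beta\le 0$ this resummation fails, consistently with the sharpness of the hypothesis.

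Finally, the product rule $\|fg\|_{\mathscr{C}^{\min\{\alpha,\beta\}}}\lesssim\|f\|_{\mathscr{C}^{\alpha}}\|g\|_{\mathscr{C}^{\beta}}$ for $\alpha+\beta>0$ follows by writing $fg=f\prec g+f\succ g+f\circ g$ and bounding the three pieces with the case $p=q=r=\infty$: $f\circ g\in\mathscr{C}^{\alpha+\beta}$ by \eqref{Besov products c}, while $f\prec g\in\mathscr{C}^{\beta}$ (from \eqref{Besov products b} when $\alpha<0$, and by a direct block estimate otherwise) and symmetrically $f\succ g=g\prec f\in\mathscr{C}^{\alpha}$; since $\alpha+\beta>0$ forces $\max\{\alpha,\beta\}>0$, one has $\alpha+\beta\ge\min\{\alpha,\beta\}$, so all three pieces embed into $\mathscr{C}^{\min\{\alpha,\beta\}}$. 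Throughout, the heavy lifting is the standard Bony calculus and the only care required is the bookkeeping of the $\ell^{1}$-versus-$\ell^{2}$ structure of the index sums; accordingly one may simply cite \cite{AC15,BCD11,GIP15} for the remaining details.
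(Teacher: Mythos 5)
Your proposal is correct and follows exactly the standard Bony-calculus argument (block decomposition via \eqref{est 152}, H\"older on each block, then $\ell^{1}\ast\ell^{2}$ or $\ell^{1}\ast\ell^{\infty}$ Young resummation, with the sign hypotheses $\alpha>0$, $\alpha<0$, $\beta<0$, $\alpha+\beta>0$ entering precisely where you place them); this is the proof in the references \cite{AC15,HR24,GIP15,BCD11} that the paper cites, as it offers no proof of its own. The only cosmetic point is that \eqref{Sobolev products d} is not a literal instance of \eqref{Sobolev products c} after swapping $f$ and $g$ (the function spaces attached to the low- and high-frequency factors are exchanged), but it is the same general paraproduct estimate and your block-level argument covers it.
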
 
The following well-known inequality is convenient for our estimates. 
\begin{lemma}\label{Lemma 3.2}
\rm{(\hspace{1sp}\cite[Lemma 2.5]{Y14b}, \cite[Lemma 3.2]{Y25d})} Suppose that $\sigma_{1}, \sigma_{2} < \frac{3}{2}$ satisfy $\sigma_{1} + \sigma_{2} > 0$. Then 
\begin{equation}
\lVert f g \rVert_{\dot{H}^{\sigma_{1} + \sigma_{2} - \frac{3}{2}}} \lesssim_{\sigma_{1}, \sigma_{2}} \lVert f \rVert_{\dot{H}^{\sigma_{1}}} \lVert g \rVert_{\dot{H}^{\sigma_{2}}} \hspace{3mm} \forall \hspace{1mm} f \in \dot{H}^{\sigma_{1}}(\mathbb{T}^{3}), g \in \dot{H}^{\sigma_{2}}(\mathbb{T}^{3}). 
\end{equation}     
 \end{lemma}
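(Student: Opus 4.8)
The plan is to decompose the product via Bony's paraproducts, $fg=f\prec g+f\succ g+f\circ g$, and to bound the three pieces separately, invoking the hypothesis $\sigma_{1}<\frac{3}{2}$ for $f\prec g$, the hypothesis $\sigma_{2}<\frac{3}{2}$ for $f\succ g=g\prec f$, and the hypothesis $\sigma_{1}+\sigma_{2}>0$ for the resonant term $f\circ g$. It suffices to treat mean-zero $f,g$ (the relevant case), so that every Littlewood--Paley sum runs over $j\geq-1$, Bernstein's inequality applies on each block, and $\lVert f\rVert_{\dot{H}^{\sigma_{1}}}^{2}=\sum_{m\geq-1}\big(2^{\sigma_{1}m}\lVert\Delta_{m}f\rVert_{L^{2}}\big)^{2}$, likewise for $g$.

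For $f\prec g$, recall from \eqref{est 152} that $\Delta_{m}(f\prec g)=\sum_{\lvert j-m\rvert\leq N_{1}}(S_{j-1}f)\,\Delta_{j}g$. By Bernstein's inequality, $\lVert\Delta_{k}f\rVert_{L^{\infty}}\lesssim 2^{\frac{3}{2}k}\lVert\Delta_{k}f\rVert_{L^{2}}=2^{(\frac{3}{2}-\sigma_{1})k}\,\big(2^{\sigma_{1}k}\lVert\Delta_{k}f\rVert_{L^{2}}\big)$; since $\frac{3}{2}-\sigma_{1}>0$, the Cauchy--Schwarz inequality gives $\lVert S_{j-1}f\rVert_{L^{\infty}}\leq\sum_{k\leq j-2}\lVert\Delta_{k}f\rVert_{L^{\infty}}\lesssim 2^{(\frac{3}{2}-\sigma_{1})j}\lVert f\rVert_{\dot{H}^{\sigma_{1}}}$. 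Hence $\lVert\Delta_{m}(f\prec g)\rVert_{L^{2}}\lesssim 2^{(\frac{3}{2}-\sigma_{1})m}\lVert f\rVert_{\dot{H}^{\sigma_{1}}}\lVert\Delta_{m}g\rVert_{L^{2}}$ up to the finite shift $\lvert j-m\rvert\leq N_{1}$, so that $2^{(\sigma_{1}+\sigma_{2}-\frac{3}{2})m}\lVert\Delta_{m}(f\prec g)\rVert_{L^{2}}\lesssim\lVert f\rVert_{\dot{H}^{\sigma_{1}}}\big(2^{\sigma_{2}m}\lVert\Delta_{m}g\rVert_{L^{2}}\big)$, which is square-summable in $m$ with $\ell^{2}$-norm $\lesssim\lVert f\rVert_{\dot{H}^{\sigma_{1}}}\lVert g\rVert_{\dot{H}^{\sigma_{2}}}$. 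This proves the bound for $f\prec g$, and by the symmetry $f\succ g=g\prec f$ (now using $\sigma_{2}<\frac{3}{2}$) also for $f\succ g$.

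For the resonant term, \eqref{est 152} shows that $\Delta_{q}(f\circ g)$ only involves products $\Delta_{j}f\,\widetilde{\Delta}_{j}g$ with $j\geq q-N_{2}$, where $\widetilde{\Delta}_{j}g\triangleq\sum_{\lvert l\rvert\leq1}\Delta_{j+l}g$; each such product is Fourier-localised in a ball of radius $\lesssim 2^{j}$. Applying $\Delta_{q}$ restricts to frequencies $\sim 2^{q}$, so Bernstein's inequality from $L^{1}$ to $L^{2}$ at frequency $2^{q}$, together with Hölder's inequality, yields $\lVert\Delta_{q}(\Delta_{j}f\,\widetilde{\Delta}_{j}g)\rVert_{L^{2}}\lesssim 2^{\frac{3}{2}q}\lVert\Delta_{j}f\rVert_{L^{2}}\lVert\widetilde{\Delta}_{j}g\rVert_{L^{2}}$. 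Writing $a_{j}\triangleq 2^{\sigma_{1}j}\lVert\Delta_{j}f\rVert_{L^{2}}/\lVert f\rVert_{\dot{H}^{\sigma_{1}}}$ and $b_{j}\triangleq 2^{\sigma_{2}j}\lVert\widetilde{\Delta}_{j}g\rVert_{L^{2}}/\lVert g\rVert_{\dot{H}^{\sigma_{2}}}$ (both in $\ell^{2}$ with norm $\lesssim1$), summation over $j\geq q-N_{2}$ gives
\[
2^{(\sigma_{1}+\sigma_{2}-\frac{3}{2})q}\lVert\Delta_{q}(f\circ g)\rVert_{L^{2}}\lesssim \lVert f\rVert_{\dot{H}^{\sigma_{1}}}\lVert g\rVert_{\dot{H}^{\sigma_{2}}}\sum_{j\geq q-N_{2}} 2^{(\sigma_{1}+\sigma_{2})(q-j)}\,a_{j}b_{j}.
\]
Since $\sigma_{1}+\sigma_{2}>0$, the kernel $\big(2^{(\sigma_{1}+\sigma_{2})n}\mathbf{1}_{\{n\leq N_{2}\}}\big)_{n}$ is summable, while $(a_{j}b_{j})_{j}\in\ell^{1}$ by the Cauchy--Schwarz inequality; hence the right-hand side, viewed as a sequence in $q$, is a finite superposition of shifts of an $\ell^{1}$-sequence, so its $\ell^{2}$-norm in $q$ is $\lesssim\lVert f\rVert_{\dot{H}^{\sigma_{1}}}\lVert g\rVert_{\dot{H}^{\sigma_{2}}}$. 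This gives the bound for $f\circ g$, and adding the three contributions completes the proof.

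The one delicate point is the resonant term: Bernstein's inequality must be applied using the \emph{output} frequency $2^{q}$ of $\Delta_{q}$, not the (possibly much larger) input frequency $2^{j}$ of $\Delta_{j}f\,\widetilde{\Delta}_{j}g$ --- this is exactly what makes the weak hypothesis $\sigma_{1}+\sigma_{2}>0$ (rather than $\sigma_{1}+\sigma_{2}>\frac{3}{2}$) sufficient --- after which the observation $(a_{j}b_{j})_{j}\in\ell^{1}$ upgrades the summation in $q$ from the crude $\ell^{\infty}$ bound back to $\ell^{2}$, which is what the Hilbert-space norm requires. Everything else is the standard bookkeeping of paraproduct estimates.
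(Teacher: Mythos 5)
Your proof is correct. The paper itself gives no proof of Lemma \ref{Lemma 3.2} (it is quoted from \cite{Y14b} and \cite{Y25d}), and your argument is precisely the standard one behind such product estimates: Bony decomposition, Bernstein on the low-frequency factor for the two paraproducts (using $\sigma_{i}<\frac{3}{2}$), and the output-frequency Bernstein inequality plus the $\ell^{1}$ summability of $(a_{j}b_{j})_{j}$ for the resonant term (using $\sigma_{1}+\sigma_{2}>0$), with the mean-zero convention making the homogeneous and inhomogeneous Littlewood--Paley characterizations comparable on $\mathbb{T}^{3}$.
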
 
 
The following commutator estimate is classical.   
\begin{lemma}\label{Lemma 3.3}
\rm{(\hspace{1sp}\cite{KP88} and \cite[Lemma 2.2]{CMZ07})} Let $p, p_{2}, p_{3} \in (1,\infty), p_{1}, p_{4} \in [1,\infty]$ satisfy $\frac{1}{p} = \frac{1}{p_{1}} + \frac{1}{p_{2}} = \frac{1}{p_{3}} + \frac{1}{p_{4}}$ and $s > 0$. Suppose that $f$ and $g$ are smooth and satisfy $\nabla f \in L^{p_{1}}, \Lambda^{s-1} g \in L^{p_{2}}, \Lambda^{s} f \in L^{p_{3}}$ and $g \in L^{p_{4}}$. Then 
\begin{equation}\label{Kato-Ponce}
\lVert \Lambda^{s}(fg) - f\Lambda^{s}g\rVert_{L^{p}} \lesssim (\lVert \nabla f\rVert_{L^{p_{1}}}\lVert \Lambda^{s-1}g\rVert_{L^{p_{2}}} + \lVert \Lambda^{s}f\rVert_{L^{p_{3}}}\lVert g\rVert_{L^{p_{4}}}).
\end{equation}
\end{lemma}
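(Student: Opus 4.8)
The plan is to establish \eqref{Kato-Ponce} by the classical Littlewood--Paley and Bony paraproduct argument, following \cite{KP88} (see also \cite[Lemma 2.2]{CMZ07}). Writing the commutator as $\Lambda^{s}(fg) - f\Lambda^{s}g$ and inserting Bony's decomposition into \emph{both} products, I would group
\begin{equation*}
\Lambda^{s}(fg) - f\Lambda^{s}g = [\Lambda^{s}(f\prec g) - f\prec\Lambda^{s}g] + [\Lambda^{s}(f\succ g) - f\succ\Lambda^{s}g] + [\Lambda^{s}(f\circ g) - f\circ\Lambda^{s}g] \eqqcolon P_{1} + P_{2} + P_{3}.
\end{equation*}
Here $P_{1}$ (low-frequency $f$, high-frequency $g$) is the only term for which the cancellation is essential: estimating $\Lambda^{s}(f\prec g)$ alone would cost $\|\nabla f\|_{L^{p_{1}}}\|\Lambda^{s}g\|_{L^{p_{2}}}$, one derivative worse on $g$ than what \eqref{Kato-Ponce} allows (and $f\prec\Lambda^{s}g$ need not even be in $L^{p}$ under the hypotheses), whereas for $P_{2}$ and $P_{3}$ the two halves of each bracket may be bounded separately with no cancellation needed.

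For $P_{1}$, using that low-frequency cutoffs commute with $\Lambda^{s}$, write $P_{1} = \sum_{i}[\Lambda^{s},S_{i-1}f]\Delta_{i}g$, with the $i$-th summand spectrally supported in a dyadic annulus of size $2^{i}$. On that annulus $\Lambda^{s}$ acts as convolution against a kernel $2^{si}2^{3i}\check{\psi}(2^{i}\,\cdot\,)$ with $\psi$ a fixed Schwartz function; writing $S_{i-1}f(y) - S_{i-1}f(x) = \int_{0}^{1} \nabla S_{i-1}f(x + \tau(y-x))\cdot(y-x)\,d\tau$ and absorbing the factor $(y-x)$ into the kernel lowers its homogeneity by one, giving the standard Littlewood--Paley commutator bound $\lvert [\Lambda^{s},S_{i-1}f]\Delta_{i}g(x) \rvert \lesssim 2^{(s-1)i} M(\nabla f)(x)\, M(\Delta_{i}g)(x)$, with $M$ the Hardy--Littlewood maximal operator (cf.\ \cite{BCD11}). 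Summing the almost-orthogonal pieces through the $L^{p}$ Littlewood--Paley square function, then H\"older's inequality with $\tfrac1p = \tfrac1{p_{1}} + \tfrac1{p_{2}}$ together with the Fefferman--Stein vector-valued maximal inequality, and finally the identity $\lVert (\sum_{i} 2^{2(s-1)i}\lvert\Delta_{i}g\rvert^{2})^{1/2} \rVert_{L^{p_{2}}} \approx \|\Lambda^{s-1}g\|_{L^{p_{2}}}$, yields $\|P_{1}\|_{L^{p}} \lesssim \|\nabla f\|_{L^{p_{1}}}\|\Lambda^{s-1}g\|_{L^{p_{2}}}$; this is the step that forces $p,p_{2}\in(1,\infty)$.

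For $P_{2}$ and $P_{3}$ one needs only that $\Lambda^{s}$ restricted to frequencies $\lesssim 2^{i}$ is an $L^{p}$-multiplier of norm $\lesssim 2^{si}$ (on $\mathbb{T}^{3}$ the zero mode is irrelevant since both sides annihilate constants). Thus each dyadic block of $\Lambda^{s}(f\succ g)$ is bounded by $2^{si}\|S_{i-1}g\|_{L^{p_{4}}}\|\Delta_{i}f\|_{L^{p_{3}}}$, while $f\succ\Lambda^{s}g = \sum_{i}(\Lambda^{s}S_{i-1}g)\Delta_{i}f$ obeys the same bound once $\|\Lambda^{s}S_{i-1}g\|_{L^{p_{4}}} \lesssim \sum_{k\le i} 2^{sk}\|\Delta_{k}g\|_{L^{p_{4}}} \lesssim 2^{si}\|g\|_{L^{p_{4}}}$ --- the geometric summation being exactly where $s>0$ is used. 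The resonant blocks in $P_{3}$ are supported in balls rather than annuli, so one first groups by output frequency, bounds $\|\Delta_{m}(\cdot)\|_{L^{p}}$ by $2^{sm}\sum_{i\ge m-N}\|\Delta_{i}f\|_{L^{p_{3}}}\|\Delta_{i+k}g\|_{L^{p_{4}}}$, and sums the resulting upper-triangular double series by a Schur-type estimate (again using $s>0$). Combining with $\lvert\Delta_{i}g\rvert \lesssim Mg$, the pointwise square function, and Fefferman--Stein gives $\|P_{2}\|_{L^{p}} + \|P_{3}\|_{L^{p}} \lesssim \|\Lambda^{s}f\|_{L^{p_{3}}}\|g\|_{L^{p_{4}}}$, and adding the three contributions proves \eqref{Kato-Ponce}.

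The main obstacle I anticipate is the $P_{1}$ term --- not the one-derivative gain, which is transparent once the kernel is set up, but the passage from the dyadic estimates to the final product of norms, which for $p\ne 2$ cannot be done by orthogonality alone and instead needs the full $L^{p}$ Littlewood--Paley toolkit (square functions together with Hardy--Littlewood and Fefferman--Stein maximal inequalities); this, with the low-frequency summations in $P_{2}$ and $P_{3}$, is precisely why $s>0$ and $p,p_{2},p_{3}\in(1,\infty)$ are imposed. As an alternative one could bypass the kernel bookkeeping via the Coifman--Meyer bilinear-multiplier theorem: split the symbol $\lvert\xi\rvert^{s} - \lvert\xi-\eta\rvert^{s}$ according to whether $\lvert\eta\rvert \lesssim \lvert\xi-\eta\rvert$ or not, factor out $\lvert\eta\rvert\langle\xi-\eta\rangle^{s-1}$ in the first region and $\langle\eta\rangle^{s}$ in the second, and check that the remaining symbols obey the Coifman--Meyer differential estimates.
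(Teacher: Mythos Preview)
The paper does not prove Lemma~\ref{Lemma 3.3}; it simply cites it from \cite{KP88} and \cite[Lemma 2.2]{CMZ07} as a known result. Your proposal outlines the standard Littlewood--Paley/Bony paraproduct proof of the Kato--Ponce commutator estimate, which is essentially the argument found in those references, and the sketch is correct with no gaps I can see.
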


\begin{lemma}\label{Lemma 3.4}
\rm{(\hspace{1sp}\cite[Lemma 3.6]{ZZ15}} Let $f \in \mathscr{C}^{\alpha}(\mathbb{T}^{3})$ for any $\alpha \in \mathbb{R}$. Then, $\lVert \mathbb{P}_{L} f \rVert_{\mathscr{C}^{\alpha}} \lesssim \lVert f \rVert_{\mathscr{C}^{\alpha}}$. 
\end{lemma}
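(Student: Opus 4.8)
The plan is to realize $\mathbb{P}_{L}$, on mean-zero distributions, as a Fourier multiplier whose matrix symbol is smooth and homogeneous of degree zero away from the origin, and then to run the standard Littlewood--Paley/Mikhlin argument. Since $\mathbb{P}_{L}f=\mathbb{P}_{L}\mathbb{P}_{\neq0}f$ and subtracting the mean only alters $\Delta_{-1}f$, I may assume $f$ is mean-zero, so that $\mathbb{P}_{L}$ is the Fourier multiplier on $\mathbb{T}^{3}$ with matrix symbol $\sigma(k)\triangleq\Id-\frac{k\otimes k}{|k|^{2}}$ for $k\in\mathbb{Z}^{3}\setminus\{0\}$; this $\sigma$ is the restriction of a matrix-valued function that is smooth on $\mathbb{R}^{3}\setminus\{0\}$ and homogeneous of degree $0$, hence satisfies the Mikhlin-type bounds $|\partial_{\xi}^{\beta}\sigma(\xi)|\lesssim_{\beta}|\xi|^{-|\beta|}$. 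Because $\|g\|_{\mathscr{C}^{\alpha}}=\sup_{m\ge-1}2^{\alpha m}\|\Delta_{m}g\|_{L^{\infty}}$, it suffices to prove $\|\Delta_{m}(\mathbb{P}_{L}f)\|_{L^{\infty}}\lesssim\|\Delta_{m}f\|_{L^{\infty}}$ uniformly in $m$.

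Since $\mathbb{P}_{L}$ and $\Delta_{m}$ commute, $\Delta_{m}(\mathbb{P}_{L}f)=\mathbb{P}_{L}(\Delta_{m}f)$. For $m\ge0$ I would fix a smooth $\tilde{\rho}$ supported on an annulus avoiding the origin with $\tilde{\rho}\equiv1$ on $\supp\rho$; on the Fourier support of $\Delta_{m}f$ the symbol $\sigma(k)$ coincides with $\sigma(k)\tilde{\rho}(2^{-m}k)$, so $\mathbb{P}_{L}\Delta_{m}f=K_{m}\ast\Delta_{m}f$ with $K_{m}(x)\triangleq\sum_{k\in\mathbb{Z}^{3}}\sigma(k)\tilde{\rho}(2^{-m}k)e^{i2\pi k\cdot x}$, and by Young's inequality $\|\mathbb{P}_{L}\Delta_{m}f\|_{L^{\infty}}\le\|K_{m}\|_{L^{1}(\mathbb{T}^{3})}\|\Delta_{m}f\|_{L^{\infty}}$. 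Thus the whole matter reduces to the uniform bound $\sup_{m\ge0}\|K_{m}\|_{L^{1}(\mathbb{T}^{3})}<\infty$. Since $g_{m}(\xi)\triangleq\sigma(\xi)\tilde{\rho}(2^{-m}\xi)\in C_{c}^{\infty}(\mathbb{R}^{3})$, its Euclidean inverse Fourier transform $G_{m}$ is Schwartz and Poisson summation gives $K_{m}(x)=\sum_{n\in\mathbb{Z}^{3}}G_{m}(x+n)$, so $\|K_{m}\|_{L^{1}(\mathbb{T}^{3})}\le\|G_{m}\|_{L^{1}(\mathbb{R}^{3})}$. By the degree-zero homogeneity of $\sigma$ one has $g_{m}(\xi)=h(2^{-m}\xi)$ with $h(\eta)\triangleq\sigma(\eta)\tilde{\rho}(\eta)$ independent of $m$, hence $G_{m}(x)=2^{3m}H(2^{m}x)$ with $H\triangleq\mathcal{F}^{-1}_{\mathbb{R}^{3}}h\in\mathcal{S}(\mathbb{R}^{3})$, and therefore $\|G_{m}\|_{L^{1}(\mathbb{R}^{3})}=\|H\|_{L^{1}(\mathbb{R}^{3})}$ independently of $m$.

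There remains only the block $m=-1$, where $\sigma$ is singular at the origin; but $\mathbb{P}_{L}$ discards the zero mode and $\Delta_{-1}f$ is a trigonometric polynomial with frequencies in a fixed finite set $F\subset\mathbb{Z}^{3}\setminus\{0\}$, so bounding its finitely many Fourier coefficients by a fixed multiple of $\|\Delta_{-1}f\|_{L^{\infty}}$ and using $\sup_{k\in F}|\sigma(k)|<\infty$ yields $\|\Delta_{-1}(\mathbb{P}_{L}f)\|_{L^{\infty}}\lesssim\|\Delta_{-1}f\|_{L^{\infty}}$. Putting the two cases together and multiplying by $2^{\alpha m}$ before taking the supremum over $m\ge-1$ finishes the proof. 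I do not expect a genuine obstacle here — the lemma is a routine Littlewood--Paley/Mikhlin multiplier estimate — and the only points needing mild care are the passage from periodic to Euclidean kernels together with the dyadic rescaling that makes the $L^{1}$ bound scale-independent, and the isolation of the zero-frequency singularity of $\sigma$, which is harmless on mean-zero vector fields. A shorter route would be to invoke a general boundedness theorem for Mikhlin--H\"ormander multipliers on $B^{\alpha}_{\infty,\infty}$ and simply verify the above symbol bounds.
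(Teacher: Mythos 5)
Your proof is correct: the dyadic reduction to a uniform $L^{1}$ bound on the periodized, rescaled kernels $K_{m}$ via Poisson summation and the degree-zero homogeneity of $\Id-\frac{k\otimes k}{|k|^{2}}$, together with the separate finite-frequency treatment of the $m=-1$ block, is exactly the standard Mikhlin-type argument for this multiplier on $B_{\infty,\infty}^{\alpha}(\mathbb{T}^{3})$. Note that the paper does not prove this lemma at all but simply cites \cite[Lemma 3.6]{ZZ15}, whose proof proceeds along the same lines, so your write-up is a valid self-contained substitute for the citation.
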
 
 
 \begin{define}\label{Definition 3.1} 
\rm{(\hspace{1sp}\cite[Definition 4.1]{HR24})} Let $\mathfrak{h}: \hspace{1mm} [0,\infty) \mapsto [0,\infty)$ be a smooth function such that 
\begin{equation}
\mathfrak{h}(r) \triangleq  
\begin{cases}
1 & \text{ if } r \geq 1, \\
0 & \text{ if } r \leq \frac{1}{2}, 
\end{cases} 
\hspace{5mm} \mathfrak{l} \triangleq 1- \mathfrak{h}.
\end{equation} 
Then, we consider for any $\lambda > 0$
\begin{equation}\label{Lower and higher frequencies}
\check{\mathfrak{h}}_{\lambda}(x) \triangleq \mathcal{F}^{-1} \left( \mathfrak{h} \left( \frac{ \lvert \cdot \rvert}{\lambda} \right) \right) (x), \hspace{5mm} \check{\mathfrak{l}}_{\lambda}(x) \triangleq \mathcal{F}^{-1} \left( \mathfrak{l} \left(\frac{ \lvert \cdot \rvert}{\lambda} \right) \right)(x) 
\end{equation} 
and then the projections onto higher and lower frequencies respectively by 
\begin{equation}
\mathcal{H}_{\lambda}: \hspace{1mm} \mathcal{S}' \mapsto \mathcal{S}' \text{ by } \mathcal{H}_{\lambda} f \triangleq \check{\mathfrak{h}}_{\lambda} \ast f \text{ and } \mathcal{L}_{\lambda}: \hspace{1mm} \mathcal{S}' \mapsto \mathcal{S} \text{ by } \mathcal{L}_{\lambda} f \triangleq f - \mathcal{H}_{\lambda} f = \check{\mathfrak{l}}_{\lambda} \ast f.  
\end{equation} 
\end{define} 
The following is a straight-forward generalization of \cite[Lemmas 4.2-4.3]{HR24}.  
\begin{lemma}\label{Lemma 3.5} 
For any $p, q \in [1,\infty]$, $\alpha, \beta \in \mathbb{R}$ such that $\beta \geq \alpha$, 
\begin{equation}
\lVert \mathcal{L}_{\lambda} f \rVert_{B_{p,q}^{\beta}} \lesssim \lambda^{\beta - \alpha} \lVert f \rVert_{B_{p,q}^{\alpha}} \hspace{2mm} \forall \hspace{1mm} f \in B_{p,q}^{\alpha} \text{ and } \lVert \mathcal{H}_{\lambda} f \rVert_{B_{p,q}^{\alpha}} \lesssim \lambda^{\alpha - \beta} \lVert f \rVert_{B_{p,q}^{\beta}} \hspace{2mm} \forall \hspace{1mm} f \in B_{p,q}^{\beta}.
\end{equation} 
\end{lemma}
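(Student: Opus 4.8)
The plan is to reduce both inequalities to the Fourier-support structure of $\mathcal{L}_{\lambda}$ and $\mathcal{H}_{\lambda}$ together with a single uniform-in-$\lambda$ Fourier-multiplier bound, following the two-dimensional argument of \cite[Lemmas 4.2--4.3]{HR24}. Since $\mathfrak{l} = 1 - \mathfrak{h}$ with $\mathfrak{h}(r) = 1$ for $r \geq 1$ and $\mathfrak{h}(r) = 0$ for $r \leq \tfrac{1}{2}$, the symbol $\mathfrak{l}(\lvert \cdot \rvert /\lambda)$ is supported in $\{\lvert \xi \rvert < \lambda\}$ while $\mathfrak{h}(\lvert \cdot \rvert /\lambda)$ is supported in $\{\lvert \xi \rvert \geq \lambda/2\}$; because $\supp \rho_{j} \subset \{\lvert \xi \rvert \sim 2^{j}\}$, one has $\Delta_{j}\mathcal{L}_{\lambda} = 0$ once $2^{j} \gtrsim \lambda$ and $\Delta_{j}\mathcal{H}_{\lambda} = 0$ once $2^{j} \lesssim \lambda$, up to fixed constants coming from the width of the annulus. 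Introducing $\tilde{\rho}_{j} \triangleq \sum_{\lvert k - j \rvert \leq 1}\rho_{k}$, which is $\equiv 1$ on $\supp \rho_{j}$, and $\tilde{\Delta}_{j} f \triangleq \mathcal{F}^{-1}(\tilde{\rho}_{j}\hat{f})$, the identity $\rho_{j} = \rho_{j}\tilde{\rho}_{j}$ gives $\Delta_{j}\mathcal{L}_{\lambda} f = \mathcal{F}^{-1}\bigl(\rho_{j}\,\mathfrak{l}(\lvert \cdot \rvert /\lambda)\bigr) \ast \tilde{\Delta}_{j} f$, and likewise with $\mathfrak{h}$ for $\mathcal{H}_{\lambda}$.

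The one analytic step I would carry out is the uniform estimate $\bigl\lVert \mathcal{F}^{-1}\bigl(\rho_{j}\,\mathfrak{l}(\lvert \cdot \rvert /\lambda)\bigr)\bigr\rVert_{L^{1}(\mathbb{T}^{3})} \lesssim 1$ and $\bigl\lVert \mathcal{F}^{-1}\bigl(\rho_{j}\,\mathfrak{h}(\lvert \cdot \rvert /\lambda)\bigr)\bigr\rVert_{L^{1}(\mathbb{T}^{3})} \lesssim 1$, uniform in $j$ and $\lambda$. This is a rescaling/Leibniz argument: $\mathfrak{h}'$ is supported in $r \in [\tfrac{1}{2}, 1]$, so $\partial^{\gamma}_{\xi}[\mathfrak{h}(\lvert \xi \rvert /\lambda)]$ vanishes unless $\lvert \xi \rvert \sim \lambda$, on which region $\lvert \partial^{\gamma}_{\xi}[\mathfrak{h}(\lvert \xi \rvert /\lambda)] \rvert \lesssim_{\gamma} \lvert \xi \rvert^{-\lvert \gamma \rvert}$; combining with $\lvert \partial^{\gamma}\rho_{j} \rvert \lesssim 2^{-j\lvert \gamma \rvert}$ and $\lvert \xi \rvert \sim 2^{j}$ on $\supp \rho_{j}$, Leibniz yields $\lvert \partial^{\gamma}_{\xi}[\rho_{j}\,\mathfrak{h}(\lvert \xi \rvert /\lambda)] \rvert \lesssim 2^{-j\lvert \gamma \rvert}$, so rescaling to unit frequency and integrating by parts bounds the inverse Fourier transform in $L^{1}(\mathbb{R}^{3})$ by $O(1)$, hence in $L^{1}(\mathbb{T}^{3})$ after periodization; the $\mathfrak{l}$-symbol is handled identically (and for $2^{j}$ far from $\lambda$ the localized symbol is either $\rho_{j}$ itself or $0$). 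Young's inequality then gives $\lVert \Delta_{j}\mathcal{L}_{\lambda} f \rVert_{L^{p}} \lesssim \lVert \tilde{\Delta}_{j} f \rVert_{L^{p}}$ and $\lVert \Delta_{j}\mathcal{H}_{\lambda} f \rVert_{L^{p}} \lesssim \lVert \tilde{\Delta}_{j} f \rVert_{L^{p}}$, uniformly in $\lambda$.

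Finally I would assemble the Besov norms. For the first bound only $j$ with $2^{j} \lesssim \lambda$ contribute, so using $\beta \geq \alpha$, hence $2^{j(\beta - \alpha)} \lesssim \lambda^{\beta - \alpha}$ on that range,
\[
\lVert \mathcal{L}_{\lambda} f \rVert_{B_{p,q}^{\beta}} = \bigl\lVert 2^{j\beta} \lVert \Delta_{j}\mathcal{L}_{\lambda} f \rVert_{L^{p}} \bigr\rVert_{l_{j}^{q}} \lesssim \bigl\lVert 2^{j(\beta - \alpha)}\, 2^{j\alpha} \lVert \tilde{\Delta}_{j} f \rVert_{L^{p}} \bigr\rVert_{l_{j:\, 2^{j} \lesssim \lambda}^{q}} \lesssim \lambda^{\beta - \alpha}\, \lVert f \rVert_{B_{p,q}^{\alpha}},
\]
the last step reindexing the finitely-overlapping $\tilde{\Delta}_{j}$-sums against the $\Delta_{k}$'s at the cost of an $(\alpha,\beta)$-dependent constant. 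For the second bound only $j$ with $2^{j} \gtrsim \lambda$ contribute, and $\alpha - \beta \leq 0$ now gives $2^{j(\alpha - \beta)} \lesssim \lambda^{\alpha - \beta}$ on that range, so the identical computation produces $\lVert \mathcal{H}_{\lambda} f \rVert_{B_{p,q}^{\alpha}} \lesssim \lambda^{\alpha - \beta} \lVert f \rVert_{B_{p,q}^{\beta}}$; the cases $q = \infty$ are the same with $\sup$ in place of the $l^{q}$-sum. I expect no genuine obstacle, the only point requiring care being the uniform-in-$\lambda$ multiplier bound of the middle step, which is exactly the two-dimensional computation of \cite[Lemmas 4.2--4.3]{HR24} with $\mathbb{T}^{2}$ replaced by $\mathbb{T}^{3}$, the spatial dimension entering nowhere essential.
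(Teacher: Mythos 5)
Your proof is correct and is exactly the argument the paper defers to by citing \cite[Lemmas 4.2--4.3]{HR24}: support localization of $\mathfrak{l}(\lvert\cdot\rvert/\lambda)$ and $\mathfrak{h}(\lvert\cdot\rvert/\lambda)$ against the dyadic blocks, a uniform-in-$\lambda$ $L^{1}$ multiplier bound via Mikhlin-type symbol estimates at scale $2^{j}$, and then $2^{j(\beta-\alpha)}\lesssim\lambda^{\beta-\alpha}$ (resp. $2^{j(\alpha-\beta)}\lesssim\lambda^{\alpha-\beta}$) on the surviving frequency range. The dimension indeed plays no role, so nothing further is needed.
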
 
 
Finally, we will need the following results in the proof of Proposition \ref{Proposition on Anderson Hamiltonian}. First, we define 
\begin{equation}\label{Burgers' Define sigma D} 
\sigma(D) f \triangleq \mathcal{F}^{-1} (\sigma \mathcal{F} f). 
\end{equation} 
\begin{lemma}\label{Burgers' Lemma A.1} 
\rm{(\hspace{1sp}\cite[Proposition 3.3]{AC15})} Let $\alpha, n \in \mathbb{R}$ and $\sigma: \hspace{1mm} \mathbb{R}^{d} \setminus \{0\} \mapsto \mathbb{R}$ be an infinitely differentiable function such that $\lvert D^{k} \sigma(x) \rvert \lesssim (1+ \lvert x \rvert)^{-n - k }$ for all $x \in \mathbb{R}^{d}$. For $f \in H^{\alpha}(\mathbb{T}^{d})$, we define $\sigma(D) f$ by \eqref{Burgers' Define sigma D}. Then $\sigma(D) f \in H^{\alpha + n} (\mathbb{T}^{d})$ and   
\begin{equation}\label{Burgers' Schauder}
\lVert \sigma(D) f \rVert_{H^{\alpha + n}} \lesssim_{\alpha, n} \lVert f \rVert_{H^{\alpha}}. 
\end{equation} 
\end{lemma}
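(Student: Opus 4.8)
The plan is to reduce the estimate to a pointwise comparison of Fourier weights, which on the $L^{2}$-based scale $H^{s}=B^{s}_{2,2}$ is all that is required. Since $\sigma$ is only defined on $\mathbb{R}^{d}\setminus\{0\}$, I would first fix the convention (consistent with \cite{AC15} and with the mean-zero functions to which this lemma is applied throughout the manuscript) that $\sigma(D)$ annihilates the zero mode, i.e. $\widehat{\sigma(D)f}(0)\triangleq 0$ and $\widehat{\sigma(D)f}(k)\triangleq\sigma(k)\hat{f}(k)$ for $k\in\mathbb{Z}^{d}\setminus\{0\}$. Because $\hat{f}$ has at most polynomial growth and, by the hypothesis at order $0$, $\lvert\sigma(k)\rvert\lesssim(1+\lvert k\rvert)^{-n}$ also has at most polynomial growth, the sequence $(\sigma(k)\hat{f}(k))_{k}$ defines an element of $\mathcal{S}'(\mathbb{T}^{d})$, so $\sigma(D)f$ is well defined to begin with.

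Next I would carry out the one-line computation: by Parseval's identity on the torus,
\begin{equation*}
\lVert\sigma(D)f\rVert_{H^{\alpha+n}}^{2}=\sum_{k\in\mathbb{Z}^{d}\setminus\{0\}}(1+\lvert k\rvert^{2})^{\alpha+n}\,\lvert\sigma(k)\rvert^{2}\,\lvert\hat{f}(k)\rvert^{2},
\end{equation*}
and then invoke the hypothesis in the single case $k=0$, namely $\lvert\sigma(x)\rvert\lesssim(1+\lvert x\rvert)^{-n}$ for all $x\in\mathbb{R}^{d}$. Since $\lvert k\rvert\geq 1$ for $k\in\mathbb{Z}^{d}\setminus\{0\}$, one has $1+\lvert k\rvert^{2}\approx(1+\lvert k\rvert)^{2}$, hence $\lvert\sigma(k)\rvert^{2}\lesssim_{n}(1+\lvert k\rvert^{2})^{-n}$ and therefore $(1+\lvert k\rvert^{2})^{\alpha+n}\lvert\sigma(k)\rvert^{2}\lesssim_{n}(1+\lvert k\rvert^{2})^{\alpha}$; substituting yields
\begin{equation*}
\lVert\sigma(D)f\rVert_{H^{\alpha+n}}^{2}\lesssim_{\alpha,n}\sum_{k\in\mathbb{Z}^{d}\setminus\{0\}}(1+\lvert k\rvert^{2})^{\alpha}\lvert\hat{f}(k)\rvert^{2}\leq\lVert f\rVert_{H^{\alpha}}^{2},
\end{equation*}
which is \eqref{Burgers' Schauder}; in particular $\sigma(D)f\in H^{\alpha+n}(\mathbb{T}^{d})$.

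I do not anticipate a genuine obstacle here: for $H^{s}=B^{s}_{2,2}$ the statement is exactly that $(1+\lvert k\rvert^{2})^{n/2}\sigma(k)$ is a bounded Fourier multiplier, which the order-zero bound supplies directly, and that is the only bound entering the argument. The higher-order decay assumptions $\lvert D^{k}\sigma(x)\rvert\lesssim(1+\lvert x\rvert)^{-n-k}$ would be needed only for the analogous statement on $B^{s}_{p,q}$ with $p\neq 2$, where one would pass through Littlewood--Paley blocks and a Mikhlin--H\"ormander type argument in place of Parseval; that generality is not needed in the sequel. The only point demanding any care is the treatment of the zero mode, which is disposed of by the mean-zero convention adopted above.
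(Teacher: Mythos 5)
Your argument is correct. Note that the paper does not prove this lemma at all: it is quoted verbatim from \cite[Proposition 3.3]{AC15}, so there is no in-paper proof to compare against. On the $L^{2}$-based scale your Plancherel computation is exactly the right reduction: since $\lVert g\rVert_{H^{s}}^{2}=\sum_{k}(1+\lvert k\rvert^{2})^{s}\lvert\hat{g}(k)\rvert^{2}$, the estimate is equivalent to the pointwise bound $(1+\lvert k\rvert^{2})^{n}\lvert\sigma(k)\rvert^{2}\lesssim_{n}1$, which follows from the order-zero hypothesis alone together with $1+\lvert k\rvert^{2}\approx(1+\lvert k\rvert)^{2}$. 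This is more elementary than the route in \cite{AC15}, which works on general Besov spaces $B_{p,q}^{s}$ via Littlewood--Paley blocks and therefore genuinely needs the derivative bounds $\lvert D^{k}\sigma\rvert\lesssim(1+\lvert x\rvert)^{-n-k}$ to control $\mathcal{F}^{-1}(\sigma\rho_{j})$ in $L^{1}$; your closing remark correctly identifies this as the only place where the higher-order hypotheses would enter. Your handling of the zero mode is a legitimate convention and harmless here (in the paper's actual applications, e.g. $\sigma(D)=-(1+\Lambda^{5/2})^{-1}$, the symbol extends continuously to $k=0$ with $\lvert\sigma(0)\rvert\lesssim1$, so the $k=0$ term can simply be retained and bounded the same way).
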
 

\begin{lemma}\label{Burgers' Lemma A.2} 
\rm{(\hspace{1sp}\cite[Proposition A.2]{AC15} and \cite[Lemma A.8]{GUZ20})} Let $\alpha \in (0,1), \beta \in \mathbb{R}$, and $f \in H^{\alpha} (\mathbb{T}^{d}), g \in \mathscr{C}^{\beta} (\mathbb{T}^{d})$, and $\sigma: \hspace{1mm} \mathbb{R}^{d} \setminus \{0\} \mapsto \mathbb{R}$ be an infinitely differentiable function such that $\lvert D^{k} \sigma(x) \rvert \lesssim (1+ \lvert x \rvert)^{-n-k}$ for all $x \in \mathbb{R}^{d}$ and $k \in \mathbb{N}_{0}^{d}$. Define 
\begin{equation}\label{Burgers' Define mathcal C}
\mathscr{C} ( f,g) \triangleq \sigma(D) (f\prec g) - f \prec \sigma(D) g.  
\end{equation} 
Then 
\begin{align}\label{Burgers' est 195}
\lVert \mathscr{C} (f,g) \rVert_{H^{\alpha + \beta + n - \delta}} \lesssim \lVert f \rVert_{H^{\alpha}} \lVert g \rVert_{\mathscr{C}^{\beta}} \hspace{3mm} \forall \hspace{1mm} \delta > 0. 
\end{align}
\end{lemma}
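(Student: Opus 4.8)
The plan is to follow the proof of \cite[Proposition A.2]{AC15} (see also \cite[Lemma A.8]{GUZ20}): decompose $\mathscr{C}(f,g)$ into frequency-localized pieces and extract the gain of one derivative that the paraproduct structure, together with the mean value theorem applied to $\sigma$, provides. Since $S_{i-1} \equiv 0$ for $i \leq 0$ and $\sigma(D)$ commutes with every $\Delta_i$, one has from \eqref{Burgers' Define mathcal C}
\begin{equation}
\mathscr{C}(f,g) = \sum_{i \geq 1} \mathscr{C}_i, \qquad \mathscr{C}_i \triangleq \sigma(D)\big( S_{i-1} f \, \Delta_i g \big) - S_{i-1} f \, \sigma(D) \Delta_i g .
\end{equation}
Standard support considerations show that $S_{i-1} f \, \Delta_i g$, and hence $\mathscr{C}_i$ (as $\sigma(D)$ preserves Fourier supports), is spectrally localized in a dyadic annulus of size $2^i$; in particular the relevant frequencies stay bounded away from the origin, so the singularity of $\sigma$ at $0$ is never seen. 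By the standard description of $H^{s} = B^{s}_{2,2}$ through annulus-supported summands, it then suffices to establish a bound
\begin{equation}\label{Burgers-plan-block}
\|\mathscr{C}_i\|_{L^2} \lesssim 2^{-i(\alpha+\beta+n)} \lambda_i \, \|g\|_{\mathscr{C}^\beta}, \qquad (\lambda_i)_{i} \in \ell^2, \quad \|(\lambda_i)\|_{\ell^2} \lesssim \|f\|_{H^\alpha},
\end{equation}
which, being $\ell^2$-summable against the weight $2^{i(\alpha+\beta+n)}$ even without any loss, yields $\mathscr{C}(f,g) \in H^{\alpha+\beta+n} \hookrightarrow H^{\alpha+\beta+n-\delta}$, hence \eqref{Burgers' est 195}.

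For the per-block bound \eqref{Burgers-plan-block} I would work on the Fourier side, where
\begin{equation}
\widehat{\mathscr{C}_i}(\zeta) = \int \widehat{S_{i-1} f}(\zeta - \eta) \, \big[ \sigma(\zeta) - \sigma(\eta) \big] \, \widehat{\Delta_i g}(\eta) \, d\eta ,
\end{equation}
and use $\sigma(\zeta) - \sigma(\eta) = (\zeta - \eta) \cdot \int_0^1 \nabla \sigma\big( \eta + \tau(\zeta - \eta) \big) \, d\tau$. On the support of the integrand $|\zeta - \eta|$ is a small fixed fraction of $|\eta| \approx 2^i$, so the whole segment lies in $\{ |\cdot| \approx 2^i \}$, whence $|\nabla\sigma| \lesssim 2^{-i(n+1)}$ there, while the prefactor $\zeta - \eta$ converts $S_{i-1} f$ into $\nabla S_{i-1} f$. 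Turning this into a genuine operator bound — rescaling $\sigma$ on the corona $\{ |\cdot| \approx 2^i\}$ to a fixed symbol and estimating the resulting convolution kernel by Young's inequality, exactly as in \cite{AC15} — yields
\begin{equation}\label{Burgers-plan-comm}
\|\mathscr{C}_i\|_{L^2} \lesssim 2^{-i(n+1)} \, \|\nabla S_{i-1} f\|_{L^2} \, \|\Delta_i g\|_{L^\infty} .
\end{equation}

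It then remains to insert the elementary estimates $\|\Delta_i g\|_{L^\infty} \lesssim 2^{-i\beta} \|g\|_{\mathscr{C}^\beta}$ and $\|\nabla S_{i-1} f\|_{L^2} \lesssim \sum_{j \leq i-2} 2^{j} \|\Delta_j f\|_{L^2} = \sum_{j \leq i-2} 2^{j(1-\alpha)} a_j$ with $a_j \triangleq 2^{j\alpha} \|\Delta_j f\|_{L^2}$. This is where the hypothesis $\alpha < 1$ enters: since $1 - \alpha > 0$ the weight $2^{j(1-\alpha)}$ peaks at $j \approx i$, so the last sum equals $2^{i(1-\alpha)} \lambda_i$ with $\lambda_i \triangleq \sum_{j \leq i-2} 2^{(j-i)(1-\alpha)} a_j$, a discrete convolution of the $\ell^2$ sequence $(a_j)_j$ (of norm $\|f\|_{H^\alpha}$) with the summable sequence $\big( 2^{k(1-\alpha)} \mathbf{1}_{\{ k \leq -2 \}} \big)_k$; hence $(\lambda_i) \in \ell^2$ with $\|(\lambda_i)\|_{\ell^2} \lesssim \|f\|_{H^\alpha}$. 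Combining with \eqref{Burgers-plan-comm} gives $\|\mathscr{C}_i\|_{L^2} \lesssim 2^{-i(n+1)} 2^{i(1-\alpha)} 2^{-i\beta} \lambda_i \|g\|_{\mathscr{C}^\beta} = 2^{-i(\alpha+\beta+n)} \lambda_i \|g\|_{\mathscr{C}^\beta}$, which is precisely \eqref{Burgers-plan-block}. The main obstacle is the rigorous justification of \eqref{Burgers-plan-comm} — converting the heuristic ``$\zeta - \eta$ differentiates $f$, and $\nabla\sigma$ restricted to frequencies $\approx 2^i$ costs $2^{-i(n+1)}$'' into an honest $L^2$ operator bound despite the multiplier $\nabla\sigma(\eta + \tau(\zeta-\eta))$ depending on both $\zeta$ and $\eta$; this is handled as in \cite[Proposition A.2]{AC15} by localizing and rescaling $\sigma$ on the corona and controlling the associated kernels. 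The rest — the frequency decomposition, the $H^s = B^s_{2,2}$ reconstruction, and the $\ell^2$ bookkeeping — is routine, and the $\delta > 0$ in \eqref{Burgers' est 195} is a harmless margin.
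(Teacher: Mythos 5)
Your argument is correct and is precisely the standard proof of this commutator estimate: the paper itself offers no proof of Lemma \ref{Burgers' Lemma A.2}, citing \cite[Proposition A.2]{AC15} and \cite[Lemma A.8]{GUZ20}, and your decomposition into annulus-localized blocks $\mathscr{C}_i$, the first-order Taylor expansion of $\sigma(\zeta)-\sigma(\eta)$ along a segment that stays in $\{\lvert\cdot\rvert\approx 2^{i}\}$, the rescaled-kernel operator bound, and the $\ell^{2}$-convolution step using $\alpha<1$ reproduce exactly the cited argument. Your version even yields the endpoint $\delta=0$, which is stronger than (and hence implies) \eqref{Burgers' est 195}.
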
 

\begin{lemma}\label{Burgers' Lemma A.3} 
\rm{(\hspace{1sp}\cite[Proposition 4.3]{AC15} and \cite[Proposition A.2]{GUZ20})} Let $\alpha \in (0,1), \beta, \gamma \in \mathbb{R}$ such that $\beta + \gamma < 0$ and $\alpha + \beta + \gamma > 0$. Define 
\begin{equation}\label{Burgers' Define R}
\mathcal{R}(f,g,h) \triangleq (f \prec g) \circ h - f (g \circ h)
\end{equation} 
for smooth functions. Then this trilinear operator can be extended to the product space $H^{\alpha} \times \mathscr{C}^{\beta} \times \mathscr{C}^{\gamma}$ and 
\begin{equation}\label{Burgers' Estimate on R} 
\lVert \mathcal{R} (f,g,h) \rVert_{H^{\alpha + \beta + \gamma - \delta}} \lesssim \lVert f \rVert_{H^{\alpha}} \lVert g \rVert_{\mathscr{C}^{\beta}} \lVert h \rVert_{\mathscr{C}^{\gamma}} \hspace{3mm} \forall \hspace{1mm} f \in H^{\alpha}, g \in \mathscr{C}^{\beta}, h \in \mathscr{C}^{\gamma}, \text{ and } \delta > 0. 
\end{equation} 
\end{lemma}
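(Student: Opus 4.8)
The plan, following \cite{AC15, GUZ20}, is to prove the estimate \eqref{Burgers' Estimate on R} first for sufficiently regular $f,g,h$ — for which all the Littlewood-Paley manipulations below are legitimate and $(f\prec g)\circ h$, $f(g\circ h)$ make sense individually — with a right-hand side depending only on $\lVert f\rVert_{H^\alpha}$, $\lVert g\rVert_{\mathscr{C}^\beta}$, $\lVert h\rVert_{\mathscr{C}^\gamma}$, and then to extend $\mathcal R$ to $H^\alpha\times\mathscr{C}^\beta\times\mathscr{C}^\gamma$ by trilinearity and continuity. The point to bear in mind is that when $\beta+\gamma<0$ the resonant products $g\circ h$ and $(f\prec g)\circ h$ are \emph{not} individually defined (cf. \eqref{Sobolev products e}), so the substance of the lemma is a cancellation inside the difference $\mathcal R$; and since smooth functions are dense in $\mathscr{C}^{\beta'}$, $\mathscr{C}^{\gamma'}$ only for $\beta'<\beta$, $\gamma'<\gamma$, the continuity/extension step (applied e.g. to $S_n g$, $S_n h$) costs a little in the exponent, which is the source of the arbitrarily small $\delta>0$.

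For the a priori bound I would unfold Bony's decomposition: writing $f\prec g=\sum_k S_{k-1}f\,\Delta_k g$ and using that $S_{k-1}f\,\Delta_k g$ has Fourier support in an annulus of radius $\sim 2^k$, one gets $(f\prec g)\circ h=\sum_k\sum_{|i-k|\le N}\sum_{|j-i|\le 1}\Delta_i(S_{k-1}f\,\Delta_k g)\,\Delta_j h$ for a fixed integer $N$, so all three frequency indices are comparable. The algebraic heart of the matter is the splitting
\[
\Delta_i(S_{k-1}f\,\Delta_k g)=S_{k-1}f\,\Delta_i\Delta_k g+[\Delta_i,S_{k-1}f]\Delta_k g .
\]
The commutator contribution $\sum[\Delta_i,S_{k-1}f]\Delta_k g\,\Delta_j h$ is controlled by the classical estimate $\lVert[\Delta_i,a]v\rVert_{L^2}\lesssim 2^{-i}\lVert\nabla a\rVert_{L^2}\lVert v\rVert_{L^\infty}$ together with $\lVert\nabla S_{k-1}f\rVert_{L^2}\lesssim\sum_{l\le k-2}2^{l}\lVert\Delta_l f\rVert_{L^2}\lesssim 2^{k(1-\alpha)}\lVert f\rVert_{H^\alpha}$, the last step being a geometric summation that is \emph{exactly} where the hypothesis $\alpha<1$ is used; since $i\sim k\sim j$, each summand is then bounded by $2^{-k(\alpha+\beta+\gamma)}c_k\lVert f\rVert_{H^\alpha}\lVert g\rVert_{\mathscr{C}^\beta}\lVert h\rVert_{\mathscr{C}^\gamma}$ with $c\in\ell^2$, and, these terms being Fourier-supported in a ball of radius $\sim 2^k$, the series sums in $H^{\alpha+\beta+\gamma-\delta}$ by a Schur-type (Young's inequality) argument that relies on $\alpha+\beta+\gamma>0$.

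It remains to handle the paraproduct part $\sum S_{k-1}f\,\Delta_i\Delta_k g\,\Delta_j h$ against $-f(g\circ h)=-\sum_k f\,\Delta_k g\,\widetilde{\Delta}_k h$, where $\widetilde{\Delta}_k\triangleq\sum_{|j-k|\le 1}\Delta_j$. Writing $f=S_{k-1}f+\sum_{l\ge k-1}\Delta_l f$ one isolates (i) a \emph{main difference} $M=-\sum_k\sum_{l\ge k-1}\Delta_l f\,\Delta_k g\,\widetilde{\Delta}_k h$, estimated directly by $L^{2}$-$L^{\infty}$ H\"older bounds — the sum over $k\le l$ converges because $\beta+\gamma<0$ makes $2^{-k(\beta+\gamma)}$ summable up to $k\sim l$, and the near-diagonal part $l\sim k$ converges because $\alpha+\beta+\gamma>0$; and (ii) \emph{reshuffling} terms $\sum_k S_{k-1}f\,\Delta_k g\,(\widetilde{\Delta}_i h-\widetilde{\Delta}_k h)$ coming from the mismatch of the dyadic windows of $h$. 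The reshuffling terms are the delicate part: a crude bound diverges, since no decay is available from the low-frequency factor $S_{k-1}f$ while $2^{-k(\beta+\gamma)}$ grows, but a partial summation (telescoping) in the frequency index $k$ replaces $S_{k-1}f$ by a single block $\Delta_k f$, after which the summand carries the decay $2^{-k(\alpha+\beta+\gamma)}$ and the series converges, again via $\alpha+\beta+\gamma>0$.

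The main obstacle I anticipate is precisely this bookkeeping: the decomposition has to be arranged so that the individually divergent pieces of $(f\prec g)\circ h$ and of $f(g\circ h)$ cancel, leaving the commutator term, the main difference $M$, and the telescoped reshuffling remainder, all of which have the ``two comparable frequencies, one lower'' structure; and one must verify that the commutator gain $2^{-i}$ is spent only against the $2^{k(1-\alpha)}$ growth of $\nabla S_{k-1}f$, so that only $\alpha<1$ is needed while the fully resonant sums close on the borderline permitted by $\alpha+\beta+\gamma>0$. The remaining ingredients — the $L^{2}$-$L^{\infty}$ splittings, the geometric summations, the Schur-type double-index summations, and the final density/continuity extension — are routine once this structure is in place.
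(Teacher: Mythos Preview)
The paper does not give its own proof of this lemma: it is stated in the preliminaries (Section~\ref{Subsection 3.2}) with citations to \cite[Proposition~4.3]{AC15} and \cite[Proposition~A.2]{GUZ20}, and is thereafter used as a black box (e.g.\ in \eqref{Burgers' est 167}, \eqref{est 278d}). So there is no in-paper argument to compare your proposal against.

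Your outline follows the approach of those cited references, which in turn adapt \cite[Lemma~2.4]{GIP15} from the H\"older--Besov to the Sobolev setting: the splitting of $(f\prec g)\circ h$ via $\Delta_i(S_{k-1}f\,\Delta_k g)=S_{k-1}f\,\Delta_i\Delta_k g+[\Delta_i,S_{k-1}f]\Delta_k g$, the commutator gain $2^{-i}$ spent against $\lVert\nabla S_{k-1}f\rVert_{L^2}\lesssim 2^{k(1-\alpha)}\lVert f\rVert_{H^\alpha}$ (this being exactly where $\alpha<1$ enters), and the remaining pieces closed via $\alpha+\beta+\gamma>0$ and $\beta+\gamma<0$ --- this is the standard route and your bookkeeping of the three groups (commutator, main difference, telescoped reshuffling) is correct.
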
 
 
\section{Proof of Theorem \ref{Theorem 2.2}}\label{Section 4}
\subsection{$L^{2}(\mathbb{T}^{3})$-Estimates}
\hfill\\ We introduce $Y$ as a solution to the following linear equation:
\begin{equation}\label{Equation of Y}
\partial_{t}Y + \mathbb{P}_{L} \divergence (2 X \otimes_{s} Y + X^{\otimes 2}) + \nu \Lambda^{\frac{5}{2}} Y= 0, \hspace{5mm} Y(0,\cdot) = 0
\end{equation} 
(recall Remark \ref{Remark 2.1}).  We define 
\begin{equation}\label{Define D}
D \triangleq 2(X+Y) 
\end{equation} 
so that $w \triangleq v - Y$ satisfies 
\begin{equation}\label{Equation of w}
\partial_{t} w + \mathbb{P}_{L} \divergence  (w^{\otimes 2} + D \otimes_{s} w + Y^{\otimes 2}) + \nu \Lambda^{\frac{5}{2}} w = 0, \hspace{5mm} w(0,\cdot) = u^{\text{in}}(\cdot). 
\end{equation} 
Considering \eqref{Regularity of X}, we see that 
\begin{equation}\label{Regularity of Y}
Y \in \mathscr{C}^{1 - 3 \kappa} (\mathbb{T}^{3}) 
\end{equation} 
$\mathbb{P}$-a.s. and thus the ill-defined terms upon $L^{2}(\mathbb{T}^{3})$-inner products on \eqref{Equation of w} with $w$ are   
\begin{equation}\label{Ill-defined products}
\langle w, \nu \Lambda^{\frac{5}{2}} w + \mathbb{P}_{L} \divergence (2X \otimes_{s} w) \rangle. 
\end{equation}

\begin{define}
Recall $P_{t} = e^{-\nu \Lambda^{\frac{5}{2}} t}$ from Section \ref{Subsection 3.1}. For any $\gamma > 0, T > 0$, and $\beta \in \mathbb{R}$, we define 
\begin{subequations}
\begin{align}
&\mathcal{M}_{T}^{\gamma} \mathscr{C}^{\beta} \triangleq \{ f: \hspace{1mm}t \mapsto t^{\gamma} \lVert f(t) \rVert_{\mathscr{C}^{\beta}} \text{ is continuous over } [0, T], \lVert f \rVert_{\mathcal{M}_{T}^{\gamma} \mathscr{C}^{\beta}} < \infty\},  \\
& \text{ where }\hspace{1mm}  \lVert f \rVert_{\mathcal{M}_{T}^{\gamma} \mathscr{C}^{\beta}} \triangleq \left\lVert t^{\gamma} \lVert f(t) \rVert_{\mathscr{C}^{\beta}} \right\rVert_{C_{T}}.  \label{MT gamma C beta norm}
\end{align} 
\end{subequations}  
Then $w \in \mathcal{M}_{T}^{\gamma} \mathscr{C}^{\beta}$ is a mild solution to \eqref{Equation of w} over $[0,T]$ if for all $t \in [0,T]$, 
\begin{equation}
w(t) = P_{t} u^{\text{in}} - \int_{0}^{t} P_{t-s} \mathbb{P}_{L} \divergence (w^{\otimes 2} + D \otimes_{s} w + Y^{\otimes 2}) (s) ds. 
\end{equation} 
\end{define} 

For any $\lambda \geq 1$ and $t \in [0, \infty)$, we define our enhanced noise by 
\begin{equation}
t \mapsto \left( \nabla_{\text{sym}} \mathcal{L}_{\lambda} X (t), \nabla_{\text{sym}} \mathcal{L}_{\lambda}X(t) \cdot P^{\lambda}(t) - r_{\lambda}(t) \Id \right)
\end{equation} 
where 
\begin{subequations}\label{Define P lambda and r lambda}
\begin{align}
& P^{\lambda}(t,x) \triangleq \left(1 + \frac{\nu  \Lambda^{\frac{5}{2}}}{2} \right)^{-1} \nabla_{\text{sym}} \mathcal{L}_{\lambda} X(t,x), \hspace{23mm} r_{\lambda} \triangleq r_{\lambda}^{1} + r_{\lambda}^{2}, \label{Define P lambda} \\
& r_{\lambda}^{1}(t) \triangleq \sum_{k\in\mathbb{Z}^{3} \setminus \{0\}} \frac{1}{4} \mathfrak{l} \left( \frac{\lvert k \rvert}{\lambda} \right)^{2} \left( \frac{1- e^{-2 \nu \lvert k \rvert^{\frac{5}{2}} t}}{2 \nu \lvert k \rvert^{\frac{1}{2}}} \right) \left(1+ \frac{ \nu \lvert k \rvert^{\frac{5}{2}}}{2} \right)^{-1},  \hspace{3mm} r_{\lambda}^{2}(t) \triangleq  \sum_{m=1}^{3} r_{\lambda}^{2,m}(t) \delta_{m,m} \label{Define r lambda1} \\
& \text{where } \hspace{1mm} r_{\lambda}^{2,m}(t) \triangleq  \sum_{k\in\mathbb{Z}^{3} \setminus \{0 \}} \frac{1}{4} \mathfrak{l} \left( \frac{\lvert k \rvert}{\lambda} \right)^{2} \left( \frac{1- e^{-2 \nu \lvert k \rvert^{\frac{5}{2}} t}}{2\nu \lvert k \rvert^{\frac{1}{2}}} \right) \left(1+ \frac{ \nu \lvert k \rvert^{\frac{5}{2}}}{2} \right)^{-1}\frac{ k_{m}^{2}}{\lvert k \rvert^{2}}, \label{Define r lambda2} \\
& \text{and } (\delta_{m,m} \Id)_{i,j} \triangleq 
\begin{cases}
1 & \text{ if } (i,j) = (m,j),\\
0 & \text{ otherwise}.
\end{cases} \label{Define delta m,m}
\end{align}
\end{subequations} 
For any $t \in [0,\infty)$, any $\kappa > 0$, and $\{\lambda^{i}\}_{i\in\mathbb{N}_{0}}$ to be defined in Definition \ref{Definition 4.2}, we define 
\begin{equation}\label{Define Lt kappa and Nt kappa}
L_{t}^{\kappa} \triangleq 1 + \lVert X \rVert_{C_{t} \mathscr{C}^{-\frac{1}{4} - \kappa}} + \lVert Y \rVert_{C_{t} \mathscr{C}^{1-\kappa}}, \hspace{3mm} N_{t}^{\kappa} \triangleq L_{t}^{\kappa} + \sup_{i\in\mathbb{N}_{0}} \lVert ( \nabla \mathcal{L}_{\lambda^{i}} X) \circlesign{\circ} P^{\lambda^{i}} - r_{\lambda^{i}}(t) \Id \rVert_{C_{t}\mathscr{C}^{-\kappa}}.
\end{equation} 
We will comment on the absence of $r_{\lambda}^{2}$ in the 2D case in Remark \ref{Remark on renormalization constant} (see \eqref{est 74}). 

\begin{proposition}\label{Proposition 4.1}
Recall that we fixed $(\Omega, \mathcal{F},\mathbb{P})$ upon introducing \eqref{STWN}-\eqref{Define STWN}. Then there exists a null set $\mathcal{N}'' \subset \Omega$ such that $N_{t}^{\kappa} (\omega) < \infty$ for all $\omega \in \Omega \setminus \mathcal{N}''$, all $t \geq 0$, and all $\kappa > 0$. 
\end{proposition}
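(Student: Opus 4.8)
The plan is to establish the almost-sure finiteness of $N_t^\kappa$ by controlling each of the three constituent pieces in \eqref{Define Lt kappa and Nt kappa} separately: the norm of $X$ in $C_t\mathscr{C}^{-\frac14-\kappa}$, the norm of $Y$ in $C_t\mathscr{C}^{1-\kappa}$, and the supremum over $i$ of the renormalized resonant product $(\nabla\mathcal{L}_{\lambda^i}X)\circlesign{\circ}P^{\lambda^i}-r_{\lambda^i}(t)\Id$ in $C_t\mathscr{C}^{-\kappa}$. The first piece is the standard regularity estimate for the stochastic convolution: $X$ solves the linear equation \eqref{Equation of X}, so $X(t)=\int_0^t P_{t-s}\mathbb{P}_L\mathbb{P}_{\neq 0}\,d\xi(s)$ is a Gaussian process, and by the Kolmogorov-type criterion (in the form of \cite[Lemma 10.2]{H14} or \cite[Lemma 4.1]{BK17}, already invoked to obtain \eqref{Regularity of X}) together with a Borel–Cantelli argument along a dyadic partition of $[0,\infty)$, one gets $X\in C_{\mathrm{loc}}\big([0,\infty);\mathscr{C}^{-\frac14-\kappa}(\mathbb{T}^3)\big)$ $\mathbb{P}$-a.s. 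The second piece follows \emph{deterministically} from the first: $Y$ solves the linear equation \eqref{Equation of Y} driven by $2X\otimes_s Y+X^{\otimes 2}$; since $X\in\mathscr{C}^{-\frac14-\kappa}$ and $X^{\otimes 2}$ is then classically well-defined in $\mathscr{C}^{-\frac14-\kappa}$ (the product needs no renormalization at this regularity), Schauder estimates for $\partial_t+\nu\Lambda^{5/2}$ give a gain of $\tfrac52$ derivatives modulo the usual loss, placing $Y$ in $\mathscr{C}^{1-3\kappa}$ — in fact one fixes the exponent pedantically as in \eqref{Regularity of Y}; a short fixed-point/continuity argument on a small time interval, iterated, yields $Y\in C_t\mathscr{C}^{1-\kappa}$ on every finite interval once $\kappa$ is relabeled.

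The genuinely stochastic heart of the proposition is the third term: the resonant product $(\nabla\mathcal{L}_{\lambda^i}X)\circlesign{\circ}P^{\lambda^i}$, where $P^{\lambda^i}=(1+\tfrac{\nu}{2}\Lambda^{5/2})^{-1}\nabla_{\mathrm{sym}}\mathcal{L}_{\lambda^i}X$, lives in the second Wiener chaos and is a priori only of negative regularity since $\nabla_{\mathrm{sym}}X\in\mathscr{C}^{-\frac54-\kappa}$ while $P^{\lambda^i}\in\mathscr{C}^{\frac54-\kappa}$, so $-\frac54+\frac54<0$ sits exactly at the borderline and the product diverges as $\lambda^i\to\infty$. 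The plan is: (1) compute $\mathbb{E}\big[\lvert\Delta_m(\nabla_{\mathrm{sym}}\mathcal{L}_{\lambda^i}X\circlesign{\circ}P^{\lambda^i})_{(p,q)}(t)\rvert^2\big]$ explicitly via the covariance \eqref{Define STWN} and the semigroup kernels, isolating the logarithmically divergent diagonal contribution — this is precisely the computation alluded to in \eqref{est 0} and Remark \ref{Remark 2.5}, which produces the renormalization constants $r_{\lambda}^{1},r_{\lambda}^{2,m}$ in \eqref{Define P lambda and r lambda} designed exactly to subtract the divergent part; (2) verify that after subtracting $r_{\lambda^i}(t)\Id$ the second moment of each Littlewood–Paley block $\Delta_m$ decays like $2^{-2m(-\kappa)}$ uniformly in $i$ and locally uniformly in $t$, so that by Gaussian hypercontractivity all $L^p(\Omega)$ moments are controlled for every $p$; (3) apply a Besov-embedding/Kolmogorov argument (the same machinery as in the $X$-estimate, plus a continuity-in-time estimate on increments $(\nabla\mathcal{L}_{\lambda^i}X)\circlesign{\circ}P^{\lambda^i}(t)-(\nabla\mathcal{L}_{\lambda^i}X)\circlesign{\circ}P^{\lambda^i}(s)$, again by moment bounds and hypercontractivity) to get an a.s. bound in $C_t\mathscr{C}^{-\kappa}$; (4) make the bound \emph{uniform in $i$} by choosing the sequence $\{\lambda^i\}$ (to be fixed in Definition \ref{Definition 4.2}) so that the residual after renormalization is summable, or by extracting an estimate with explicit, $i$-independent constant and invoking a single Borel–Cantelli over a countable dense set of times and dyadic frequencies. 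Intersecting the three null sets gives the desired $\mathcal{N}''$.

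I expect the main obstacle to be step (1)–(2) for the resonant term: the 3D Leray projection has the matrix form $\Id-\tfrac{k\otimes k}{\lvert k\rvert^2}$ rather than the scalar 2D form, so the symmetric-gradient contraction $\nabla_{\mathrm{sym}}\mathcal{L}_\lambda X\cdot P^\lambda$ unpacks into a large number of terms indexed by pairs of coordinates, and upon taking the second moment via Wick's theorem (using \eqref{Wick}) the count multiplies further — the $49\times 16\times 3$ bookkeeping flagged in Remark \ref{Remark 2.5}. The analytic content of each term is routine (a sum over $k$ of products of $\mathfrak{l}(\lvert k\rvert/\lambda)$, the exponential factors $\tfrac{1-e^{-2\nu\lvert k\rvert^{5/2}t}}{2\nu\lvert k\rvert^{1/2}}$, resolvent symbols $(1+\tfrac{\nu}{2}\lvert k\rvert^{5/2})^{-1}$, and the Littlewood–Paley cutoff $\rho_m$), but one must track which combinations produce the logarithmic divergence so as to match them exactly against $r_\lambda^1$ and $r_\lambda^{2,m}$, and confirm that the off-diagonal and genuinely convergent terms give the claimed $2^{-2m(-\kappa)}$ decay uniformly in $\lambda\in\{\lambda^i\}$. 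A secondary, milder difficulty is the time-regularity: one needs Hölder-in-time estimates for the chaos elements to upgrade the pointwise-in-$t$ bound to $C_t$, which requires estimating the second moment of time increments — standard but requiring care with the non-parabolic scaling of $\Lambda^{5/2}$, where the natural parabolic scale is $\lvert x\rvert\sim t^{2/5}$ rather than $t^{1/2}$.
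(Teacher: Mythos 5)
Your proposal is correct and follows essentially the same route as the paper: the regularity of $X$ and $Y$ is exactly \eqref{Regularity of X} and \eqref{Regularity of Y} (Gaussian/Kolmogorov for $X$, deterministic Schauder for the linear equation \eqref{Equation of Y} for $Y$), and the renormalized resonant product is handled precisely as you outline — explicit second-moment computations via \eqref{Wick}, cancellation against $r_\lambda^1+r_\lambda^{2,m}$, the uniform-in-$\lambda$ block bound \eqref{est 80}, and Gaussian hypercontractivity plus a.s. convergence — which is the content of the paper's Proposition \ref{Proposition 4.14}. Your identification of the $3\times 3$ Leray-projection bookkeeping as the main obstacle matches Remark \ref{Remark 2.5} and the computations \eqref{est 0}--\eqref{est 80}.
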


The following result can be proven similarly to \cite[Proposition 3.2]{HR24}, \cite[Proposition 4.1]{Y23c}, and \cite[Proposition 4.2]{Y25d}. 
\begin{proposition}\label{Proposition 4.2}
Fix $\kappa \in (0, \frac{1}{3})$ that satisfies $\gamma \triangleq \frac{5}{4} - \frac{\kappa}{2} > 0$. Suppose that $X$ and $Y$ satisfy $X \in C([0,\infty); \mathscr{C}^{-\frac{1}{4} + \kappa} (\mathbb{T}^{3}))$ and $Y \in C([0,\infty); \mathscr{C}^{1-\kappa} (\mathbb{T}^{3}))$. Then, for all $u^{\text{in}} \in \mathscr{C}^{-1+ 2 \kappa}(\mathbb{T}^{3})$ that is mean-zero and divergence-free, \eqref{Equation of w} has a unique mild solution
\begin{equation}
w \in \mathcal{M}_{T^{\max}}^{\frac{2\gamma}{5}} \mathscr{C}^{\frac{1}{4} + \frac{3\kappa}{2}} \hspace{1mm} \text{ where } \hspace{1mm} T^{\max} ( L_{t}^{\kappa}, u^{\text{in}}) \in (0,\infty]. 
\end{equation} 
\end{proposition}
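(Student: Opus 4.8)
Following \cite[Proposition 3.2]{HR24}, \cite[Proposition 4.1]{Y23c}, and \cite[Proposition 4.2]{Y25d}, the plan is to run a Banach fixed-point argument for the Duhamel map
\[
\mathcal{T}(w)(t) \triangleq P_{t} u^{\text{in}} - \int_{0}^{t} P_{t-s}\mathbb{P}_{L}\divergence\bigl( w^{\otimes 2} + D\otimes_{s} w + Y^{\otimes 2}\bigr)(s)\, ds
\]
on the ball of radius $R$ in $\mathcal{M}_{T}^{\gamma'}\mathscr{C}^{\beta}$, where throughout I abbreviate $\beta \triangleq \frac{1}{4} + \frac{3\kappa}{2}$ and $\gamma' \triangleq \frac{2\gamma}{5} = \frac{1}{2} - \frac{\kappa}{5}$, and $P_{t} = e^{-\nu\Lambda^{5/2}t}$ is as in Section \ref{Subsection 3.1}. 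Choosing $T > 0$ small and $R \approx \lVert u^{\text{in}}\rVert_{\mathscr{C}^{-1+2\kappa}} + (L_{T}^{\kappa})^{2}$, the contraction principle yields a unique local mild solution, and the usual continuation argument, restarted from $w(t_{0}) \in \mathscr{C}^{\beta} \hookrightarrow \mathscr{C}^{-1+2\kappa}$ at any $t_{0} > 0$, produces the maximal existence time $T^{\max} \in (0,\infty]$. Because every data-dependent quantity in the estimates enters only through $\lVert D\rVert_{C_{t}\mathscr{C}^{-1/4+\kappa}}$ and $\lVert Y\rVert_{C_{t}\mathscr{C}^{1-\kappa}}$, hence through $L_{t}^{\kappa}$, one obtains $T^{\max} = T^{\max}(L_{t}^{\kappa}, u^{\text{in}})$. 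The solution stays mean-zero and divergence-free since $P_{t}$, $\mathbb{P}_{L}$ and $\mathbb{P}_{\neq 0}$ commute and $u^{\text{in}}, X, Y$ already have these properties.

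The two analytic inputs are the Schauder estimates for $P_{t}$, namely $\lVert P_{t} f\rVert_{\mathscr{C}^{b}} \lesssim t^{-\frac{2}{5}(b-a)}\lVert f\rVert_{\mathscr{C}^{a}}$ for $b \geq a$, which together with Lemma \ref{Lemma 3.4} and the single derivative lost by $\divergence$ give $\lVert P_{t}\mathbb{P}_{L}\divergence f\rVert_{\mathscr{C}^{b}} \lesssim t^{-\frac{2}{5}(b-a+1)}\lVert f\rVert_{\mathscr{C}^{a}}$ as long as $b \geq a-1$; and the product estimates of Lemma \ref{Lemma 3.1}. Since $\beta > 0$, the latter give $\lVert w^{\otimes 2}\rVert_{\mathscr{C}^{\beta}} \lesssim \lVert w\rVert_{\mathscr{C}^{\beta}}^{2}$ and $\lVert Y^{\otimes 2}\rVert_{\mathscr{C}^{1-\kappa}} \lesssim \lVert Y\rVert_{\mathscr{C}^{1-\kappa}}^{2}$, while $-\frac{1}{4} + \kappa + \beta = \frac{5\kappa}{2} > 0$ (using $X \in \mathscr{C}^{-1/4+\kappa}$ and $Y \in \mathscr{C}^{1-\kappa}$, so $D \in \mathscr{C}^{-1/4+\kappa}$) gives $\lVert D\otimes_{s} w\rVert_{\mathscr{C}^{-1/4+\kappa}} \lesssim \lVert D\rVert_{\mathscr{C}^{-1/4+\kappa}}\lVert w\rVert_{\mathscr{C}^{\beta}}$.

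Inserting these into the Duhamel map and using $\lVert w(s)\rVert_{\mathscr{C}^{\beta}} \leq s^{-\gamma'}\lVert w\rVert_{\mathcal{M}_{T}^{\gamma'}\mathscr{C}^{\beta}}$, the three integral contributions are governed by $\int_{0}^{t}(t-s)^{-\frac{2}{5}}s^{-2\gamma'}\,ds$, $\int_{0}^{t}(t-s)^{-\frac{3}{5}-\frac{\kappa}{5}}s^{-\gamma'}\,ds$, and $\int_{0}^{t}(t-s)^{-\frac{1}{10}-\kappa}\,ds$; these converge because the time-kernel exponents $\frac{2}{5}$, $\frac{3}{5}+\frac{\kappa}{5}$, $\frac{1}{10}+\kappa$ are all $< 1$ (integrability near $s=t$) and $2\gamma' < 1$ (integrability of the $w$-weight near $s=0$), all valid for $\kappa \in (0,\frac{1}{3})$. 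Multiplying by $t^{\gamma'}$ one checks that each term picks up a strictly positive power of $t$ — for instance $t^{\frac{3}{5}-\gamma'} = t^{\frac{1}{10}+\frac{\kappa}{5}}$ for the quadratic term — so $\mathcal{T}$ maps the radius-$R$ ball into itself; the choice $\gamma' = \frac{2\gamma}{5}$ is precisely what makes the linear term satisfy $t^{\gamma'}\lVert P_{t} u^{\text{in}}\rVert_{\mathscr{C}^{\beta}} \lesssim \lVert u^{\text{in}}\rVert_{\mathscr{C}^{-1+2\kappa}}$, since $\frac{2}{5}(\beta + 1 - 2\kappa) = \gamma'$. Running the identical bounds on the differences $w^{\otimes 2} - \bar w^{\otimes 2} = (w-\bar w)\otimes_{s}(w+\bar w)$ and $D\otimes_{s}(w-\bar w)$ shows $\mathcal{T}$ is a strict contraction for $T$ small; uniqueness in the ball is then upgraded to uniqueness in all of $\mathcal{M}_{T^{\max}}^{\gamma'}\mathscr{C}^{\beta}$ by the standard comparison of two solutions on successive short subintervals.

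The main obstacle is the simultaneous balancing of the time-exponent constraints: the Schauder smoothing power $\frac{2}{5}(b-a+1)$ must remain below $1$, the weight singularity $2\gamma'$ produced by the quadratic term $w^{\otimes 2}$ must also remain below $1$, and after integrating and multiplying by $t^{\gamma'}$ one still needs a strictly positive net power of $T$; together these essentially force $\gamma < \frac{5}{4}$, which is exactly why $\gamma = \frac{5}{4} - \frac{\kappa}{2}$ is the admissible choice. This is the order-$\frac{5}{2}$ analogue of the computation behind \cite[Proposition 3.2]{HR24}, with the order-$2$ heat semigroup replaced throughout by $e^{-\nu\Lambda^{5/2}t}$, which accounts for the factor $\frac{2}{5}$ in every Schauder exponent. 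Crucially, no commutator of the type \eqref{our commutator} enters here — the fixed-point argument uses only Schauder and paraproduct bounds — so the delicate harmonic analysis of Remark \ref{Remark 2.2} is needed only later, in the energy estimates underlying Theorem \ref{Theorem 2.2}.
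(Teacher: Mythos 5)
Your proposal is correct and follows exactly the route the paper intends: the paper does not write out a proof of Proposition \ref{Proposition 4.2} but simply notes it "can be proven similarly to" \cite[Proposition 3.2]{HR24}, \cite[Proposition 4.1]{Y23c}, and \cite[Proposition 4.2]{Y25d}, which is precisely the Duhamel fixed-point argument you carry out, and your Schauder/paraproduct exponent bookkeeping (in particular $\frac{2}{5}(\beta+1-2\kappa)=\gamma'$ for the linear term, $2\gamma'<1$ for the quadratic term, and the three convergent time integrals) checks out for $\kappa\in(0,\tfrac{1}{3})$.
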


\begin{assume}\label{Assumption 4.3}
Because Proposition \ref{Proposition 4.2} guaranteed $w \in \mathcal{M}_{T^{\max}}^{\frac{2\gamma}{5}} \mathscr{C}^{\frac{1}{4} + \frac{3\kappa}{2}}$ for $\kappa \in (0, \frac{1}{3})$ and $\gamma = \frac{5}{4} - \frac{\kappa}{2}$, we assume hereafter that $u^{\text{in}} \in  L_{\sigma}^{2}  \cap \mathscr{C}^{-1+2\kappa}(\mathbb{T}^{3})$. 
\end{assume} 

We define $Q$ to solve 
\begin{equation}\label{Equation of Q}
( \partial_{t} + \nu \Lambda^{\frac{5}{2}})Q = 2 X, \hspace{3mm} Q(0,\cdot) = 0 
\end{equation} 
so that 
\begin{equation}\label{Regularity of Q}
\lVert Q(t) \rVert_{\mathscr{C}^{\gamma}} \lesssim \lVert X \rVert_{C_{t} \mathscr{C}^{-\frac{1}{4} - \kappa}} t^{\frac{9}{10} - \frac{2( \gamma + \kappa)}{5}} \hspace{3mm} \forall \hspace{1mm} \gamma < \frac{9}{4} - \kappa. 
\end{equation} 
We define $w^{\sharp}$ to satisfy 
\begin{equation}\label{Define w sharp}
w \triangleq - \mathbb{P}_{L} \divergence ( w \circlesign{\prec}_{s} Q) + w^{\sharp}
\end{equation} 
and  
\begin{equation}\label{Define commutator}
\mathcal{C}^{\circlesign{\prec}_{s}} (w, Q) \triangleq \partial_{t} (w \circlesign{\prec}_{s} Q) + \nu \Lambda^{\frac{5}{2}} (w \circlesign{\prec}_{s}Q) - w \circlesign{\prec}_{s} \partial_{t} Q - \nu w \circlesign{\prec}_{s} \Lambda^{\frac{5}{2}} Q 
\end{equation} 
so that 
\begin{equation}\label{Equation of w sharp}
\partial_{t} w^{\sharp} = - \nu \Lambda^{\frac{5}{2}} w^{\sharp} - \mathbb{P}_{L} \divergence \left( w^{\otimes 2} + D \otimes_{s} w - 2 X \circlesign{\succ}_{s} w - \mathcal{C}^{\circlesign{\prec}_{s}} (w, Q) + Y^{\otimes 2} \right). 
\end{equation} 
Let us also define 
\begin{equation}\label{Define QH, wH, and wL}
Q^{\mathcal{H}}(t) \triangleq \mathcal{H}_{\lambda_{t}} Q(t), \hspace{3mm} w^{\mathcal{H}}(t) \triangleq - \mathbb{P}_{L} \divergence ( w \circlesign{\prec}_{s} Q^{\mathcal{H}}), \hspace{3mm} w^{\mathcal{L}} \triangleq w - w^{\mathcal{H}}. 
\end{equation} 
Considering that $u^{\text{in}}$ is mean-zero from hypothesis of Theorem \ref{Theorem 2.2} and \eqref{Equation of w} implies that $w(t)$ is mean-zero for all $t > 0$, we will frequently make use of the fact that $w^{\mathcal{L}}(t) = w(t) + \mathbb{P}_{L} \divergence ( w \circlesign{\prec}_{s} Q^{\mathcal{H}})(t)$ is also mean-zero for all $t > 0$.  

\begin{define}\label{Definition 4.2}  
Fix any $\tau > 0$ and initial data $u^{\text{in}} \in L_{\sigma}^{2} \cap \mathscr{C}^{-1+ \kappa}(\mathbb{T}^{3})$ for some $\kappa > 0$. For $T^{\max} (\omega, u^{\text{in}})$ from Proposition \ref{Proposition 2.1}, we define a family of stopping times $\{T_{i}\}_{i\in\mathbb{N}_{0}}$ by 
\begin{equation}\label{Define T0 and Ti}
T_{0} \triangleq 0,  \hspace{5mm} T_{i+1} (\omega, u^{\text{in}}) \triangleq \inf\{t \geq T_{i}: \hspace{1mm} \lVert w(t) \rVert_{L^{2}} \geq i+1 \} \wedge T^{\max} (\omega, u^{\text{in}}). 
\end{equation} 
We set 
\begin{equation}\label{Define i0}
i_{0} (u^{\text{in}}) \triangleq \max\{i\in\mathbb{N}: \hspace{1mm} i \leq \lVert u^{\text{in}} \rVert_{L^{2}} \} 
\end{equation} 
so that $T_{i} = 0$ if and only if $i \leq i_{0} (u^{\text{in}})$.  We set $\lambda^{i} \triangleq (i+1)^{\tau}$, 
\begin{equation}\label{Define lambda t}
\lambda_{t} \triangleq 
\begin{cases}
( 1 + \lceil \lVert u^{\text{in}} \rVert_{L^{2}} \rceil )^{\tau} & \text{ if } t = 0, \\
(1+ \lVert w(T_{i}) \rVert_{L^{2}})^{\tau} & \text{ if } t > 0 \text{ and } t \in [T_{i}, T_{i+1}).
\end{cases}
\end{equation} 
As $u^{\text{in}} \in L_{\sigma}^{2}$, we have $i_{0} (u^{\text{in}}) < \infty$. Finally, $\lambda_{t} = \lambda^{i}$ for all $t \in [T_{i}, T_{i+1})$ such that $i \geq i_{0} (u^{\text{in}})$. 
\end{define}

\begin{proposition}\label{Proposition 4.4}
We fix any $\tau > 0$ from Definition \ref{Definition 4.2}, $\mathcal{N}$ from Proposition \ref{Proposition 2.1}, $\mathcal{N}''$ from Proposition \ref{Proposition 4.1}, and define $N_{t}^{\kappa}$ by \eqref{Define Lt kappa and Nt kappa}. Then, for any $\delta \geq 0$ and $\omega \in \Omega \setminus ( \mathcal{N}\cup \mathcal{N}'')$, there exists $C(\delta) > 0$ such that $w^{\mathcal{H}}$ defined by \eqref{Define QH, wH, and wL} satisfies 
\begin{equation}\label{Estimate on wH}
\lVert w^{\mathcal{H}}(t) \rVert_{H^{\frac{5}{4} - 2 \kappa - \delta}} \leq C(\delta) \left(1+ \lVert w(t) \rVert_{L^{2}} \right)^{1- \tau \delta} N_{t}^{\kappa} t^{\frac{\kappa}{5}} \hspace{3mm} \forall \hspace{1mm} t \in [0, T^{\max} (\omega, u^{\text{in}})).
\end{equation} 
\end{proposition}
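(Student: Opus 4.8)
The plan is to read \eqref{Estimate on wH} off the very definition $w^{\mathcal{H}} = -\mathbb{P}_{L}\divergence(w \circlesign{\prec}_{s} Q^{\mathcal{H}})$ from \eqref{Define QH, wH, and wL}: I would measure $w$ only in $L^{2}(\mathbb{T}^{3})$ (so as to avoid the singular weight $t^{-2\gamma/5}$ that $\lVert w(t)\rVert_{\mathscr{C}^{1/4 + 3\kappa/2}}$ carries near $t = 0$ by Proposition \ref{Proposition 4.2}) and trade the surplus regularity of $Q$ against the high-frequency cut-off $\mathcal{H}_{\lambda_{t}}$ for a negative power of $\lambda_{t}$. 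Since $\mathbb{P}_{L}$ is a Fourier multiplier with bounded symbol, hence bounded on every $H^{s}(\mathbb{T}^{3})$ (compare Lemma \ref{Lemma 3.4}), and $\divergence$ costs one derivative, the first step is
\begin{equation*}
\lVert w^{\mathcal{H}}(t)\rVert_{H^{\frac{5}{4} - 2\kappa - \delta}} \lesssim \lVert w(t) \circlesign{\prec}_{s} Q^{\mathcal{H}}(t)\rVert_{H^{\frac{9}{4} - 2\kappa - \delta}}.
\end{equation*}

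Next I would fix the auxiliary exponents $\beta_{0} \triangleq \frac{9}{4} - \frac{3\kappa}{2}$, $\alpha \triangleq \frac{\kappa}{4}$, and $\beta \triangleq \frac{9}{4} - 2\kappa - \delta + \alpha$, and invoke the tensor-valued analogue of the paraproduct bound \eqref{Sobolev products a} with the positive shift $\alpha > 0$, namely
\begin{equation*}
\lVert w \circlesign{\prec}_{s} Q^{\mathcal{H}}\rVert_{H^{\beta - \alpha}} = \lVert w \circlesign{\prec}_{s} Q^{\mathcal{H}}\rVert_{H^{\frac{9}{4} - 2\kappa - \delta}} \lesssim_{\delta} \lVert w\rVert_{L^{2}}\,\lVert Q^{\mathcal{H}}\rVert_{\mathscr{C}^{\beta}},
\end{equation*}
which is legitimate because $w(t) \in L^{2}(\mathbb{T}^{3})$ for every $t \in [0,T^{\max})$: at $t = 0$ this is Assumption \ref{Assumption 4.3}, and for $t > 0$ it follows from $\mathscr{C}^{1/4 + 3\kappa/2}(\mathbb{T}^{3}) \hookrightarrow L^{2}(\mathbb{T}^{3})$ together with Proposition \ref{Proposition 4.2}. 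Since $\beta \leq \beta_{0} < \frac{9}{4} - \kappa$ (the first inequality because $\delta \geq 0$ and $\kappa > 0$, the second because $\kappa > 0$), Lemma \ref{Lemma 3.5} applied to $Q^{\mathcal{H}} = \mathcal{H}_{\lambda_{t}} Q$ gives $\lVert Q^{\mathcal{H}}(t)\rVert_{\mathscr{C}^{\beta}} \lesssim_{\delta} \lambda_{t}^{\beta - \beta_{0}}\lVert Q(t)\rVert_{\mathscr{C}^{\beta_{0}}}$, and the Schauder-type bound \eqref{Regularity of Q} with $\gamma = \beta_{0}$ gives $\lVert Q(t)\rVert_{\mathscr{C}^{\beta_{0}}} \lesssim \lVert X\rVert_{C_{t}\mathscr{C}^{-1/4 - \kappa}}\,t^{\kappa/5}$, because $\frac{9}{10} - \frac{2(\beta_{0} + \kappa)}{5} = \frac{\kappa}{5}$ for this value of $\beta_{0}$.

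It then remains to convert the gain $\lambda_{t}^{\beta - \beta_{0}} = \lambda_{t}^{-(\delta + \kappa/4)}$ into the factor in \eqref{Estimate on wH}. As $\lambda_{t} \geq 1$ and $\delta + \frac{\kappa}{4} \geq \delta$ we have $\lambda_{t}^{\beta - \beta_{0}} \leq \lambda_{t}^{-\delta}$, and Definition \ref{Definition 4.2} yields $\lambda_{t} \geq 2^{-\tau}(1 + \lVert w(t)\rVert_{L^{2}})^{\tau}$ for all $t \in [0, T^{\max})$: on each $[T_{i}, T_{i+1})$ one has $\lVert w(t)\rVert_{L^{2}} < i+1$ while $\lambda_{t} = (i+1)^{\tau}$, and at $t = 0$ one has $\lambda_{0} \geq (1 + \lVert u^{\text{in}}\rVert_{L^{2}})^{\tau}$; hence $\lambda_{t}^{-\delta} \leq 2^{\tau\delta}(1 + \lVert w(t)\rVert_{L^{2}})^{-\tau\delta}$. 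Chaining the three displays together with $\lVert X\rVert_{C_{t}\mathscr{C}^{-1/4 - \kappa}} \leq L_{t}^{\kappa} \leq N_{t}^{\kappa}$ and $\lVert w(t)\rVert_{L^{2}} \leq 1 + \lVert w(t)\rVert_{L^{2}}$ produces \eqref{Estimate on wH} with $C(\delta)$ absorbing $2^{\tau\delta}$ and the $\delta$-dependent implicit constants from \eqref{Sobolev products a} and Lemma \ref{Lemma \ref{Lemma 3.5}}. The one genuinely delicate point — everything else is bookkeeping, pointwise in $(\omega, t)$ once $\omega \notin \mathcal{N} \cup \mathcal{N}''$ — is the simultaneous choice of $\beta_{0}, \alpha, \beta$: one needs $\alpha > 0$ for \eqref{Sobolev products a}, $\beta \leq \beta_{0} < \frac{9}{4} - \kappa$ for Lemma \ref{Lemma 3.5} and \eqref{Regularity of Q}, $\beta_{0} - \beta \geq \delta$ so that $\lambda_{t}^{\beta - \beta_{0}}$ dominates $\lambda_{t}^{-\delta}$, and $\frac{9}{10} - \frac{2(\beta_{0} + \kappa)}{5} \geq \frac{\kappa}{5}$ so that \eqref{Regularity of Q} yields the time weight $t^{\kappa/5}$; the value $\beta_{0} = \frac{9}{4} - \frac{3\kappa}{2}$ is exactly the one that saturates the last requirement, which is what pins it down.
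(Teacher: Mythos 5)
Your argument is correct and follows the paper's proof essentially verbatim: bound $w^{\mathcal{H}}$ by $\lVert w\circlesign{\prec}_{s}Q^{\mathcal{H}}\rVert_{H^{\frac{9}{4}-2\kappa-\delta}}$, apply \eqref{Sobolev products a} with $w$ in $L^{2}$, use Lemma \ref{Lemma 3.5} to extract $\lambda_{t}^{-\delta}$ from the high-frequency cut-off, invoke \eqref{Regularity of Q} at regularity $\frac{9}{4}-\frac{3\kappa}{2}$ for the $t^{\kappa/5}$ weight, and convert $\lambda_{t}^{-\delta}$ into $(1+\lVert w(t)\rVert_{L^{2}})^{-\tau\delta}$ via Definition \ref{Definition 4.2}. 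The only cosmetic difference is your choice $\alpha=\frac{\kappa}{4}$ where the paper implicitly uses $\alpha=\frac{\kappa}{2}$, which changes nothing.
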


\begin{remark}
The analogous estimate in \cite{HR24, Y23c, Y25d} were at the level $\lVert \cdot \rVert_{H^{1- 2 \kappa - \delta}}$. As we will see, this improvement to be able to bound $\lVert \cdot \rVert_{H^{\frac{5}{4} - 2 \kappa - \delta}}$ is crucial. 
\end{remark}

\begin{proof}[Proof of Proposition \ref{Proposition 4.4}]
We first estimate
\begin{equation}\label{est 63}
 \lVert w^{\mathcal{H}}(t) \rVert_{H^{\frac{5}{4} - 2 \kappa - \delta}}  \overset{\eqref{Define QH, wH, and wL}}{\lesssim} \lVert w \circlesign{\prec}_{s} Q^{\mathcal{H}} (t) \rVert_{H^{\frac{9}{4} - 2 \kappa - \delta}} \overset{\eqref{Sobolev products a}\eqref{Define QH, wH, and wL}}{\lesssim}  \lVert w(t) \rVert_{L^{2}} \lambda_{t}^{-\delta} \lVert Q(t) \rVert_{\mathscr{C}^{\frac{9}{4} - \frac{3\kappa}{2}}}.
\end{equation} 
Using the fact that $\lambda_{t}^{-\delta} \lesssim (1+ \lVert w(t) \rVert_{L^{2}})^{-\tau \delta}$ for all $t \in [0, T^{\max}(\omega, u^{\text{in}}))$, along with \eqref{Regularity of Q} and \eqref{Define Lt kappa and Nt kappa}, we can continue by  
\begin{align*}
\lVert w^{\mathcal{H}}(t) \rVert_{H^{\frac{5}{4} - 2 \kappa - \delta}} \lesssim  \lVert w(t) \rVert_{L^{2}} (1+ \lVert w(t) \rVert_{L^{2}})^{-\tau \delta} \lVert X \rVert_{C_{t} \mathscr{C}^{-\frac{1}{4} - \kappa}} t^{\frac{9}{10} - \frac{2 ( \frac{9}{4} - \frac{3\kappa}{2} + \kappa)}{5}} \lesssim (1+ \lVert w(t) \rVert_{L^{2}})^{1- \tau \delta} N_{t}^{\kappa} t^{\frac{\kappa}{5}} .
\end{align*}
This completes the proof of Proposition \ref{Proposition 4.4}. 
\end{proof}

Now we fix $i \in \mathbb{N}$ such that $i > i_{0} (u^{\text{in}})$, and $t \in [T_{i}, T_{i+1})$ for some $T_{i} < T_{i+1}$ and compute using \eqref{Define QH, wH, and wL}, \eqref{Equation of Q}, \eqref{Equation of w}, and \eqref{Define commutator}, 
\begin{equation}\label{Equation of wL}
\partial_{t} w^{\mathcal{L}} = - \nu \Lambda^{\frac{5}{2}} w^{\mathcal{L}}   - \mathbb{P}_{L} \divergence \left( w^{\otimes 2} + D \otimes_{s} w - 2 ( \mathcal{H}_{\lambda_{t}} X) \circlesign{\succ}_{s} w  - \mathcal{C}^{\circlesign{\prec}_{s}} (w, Q^{\mathcal{H}}) + Y^{\otimes 2} \right). 
\end{equation} 
Thus, by relying on \eqref{Define D} and \eqref{Define QH, wH, and wL}, we see that the $L^{2}(\mathbb{T}^{3})$-estimate of $w^{\mathcal{L}}$ can take the form of,
\begin{equation}\label{est 96} 
 \partial_{t} \lVert w^{\mathcal{L}} (t) \rVert_{L^{2}}^{2} = \sum_{k=1}^{4} \RomanI_{k}, 
\end{equation} 
where 
\begin{subequations}\label{Define Ik}
\begin{align}
& \RomanI_{1} \triangleq 2 \left\langle w^{\mathcal{L}}, - \nu \Lambda^{\frac{5}{2}} w^{\mathcal{L}} - \divergence  \left( 2 ( \mathcal{L}_{\lambda_{t}} X) \otimes_{s} w^{\mathcal{L}} \right) \right\rangle (t),  \label{Define I1} \\
& \RomanI_{2} \triangleq -2 \left\langle w^{\mathcal{L}}, \divergence \left( 2 ( \mathcal{H}_{\lambda_{t}} X) \otimes_{s} w^{\mathcal{L}} - 2 ( \mathcal{H}_{\lambda_{t}} X) \circlesign{\succ}_{s} w^{\mathcal{L}} \right) \right\rangle(t), \label{Define I2} \\
& \RomanI_{3} \triangleq -2 \left\langle w^{\mathcal{L}}, \divergence \left( 2X \otimes_{s} w^{\mathcal{H}} - 2 (\mathcal{H}_{\lambda_{t}} X) \circlesign{\succ}_{s} w^{\mathcal{H}} \right) \right\rangle(t), \label{Define I3}\\
& \RomanI_{4} \triangleq -2 \left\langle w^{\mathcal{L}}, \divergence \left( w^{\otimes 2} + 2 Y \otimes_{s} w - \mathcal{C}^{\circlesign{\prec}_{s}} (w, Q^{\mathcal{H}}) + Y^{\otimes 2} \right) \right\rangle (t).  \label{Define I4}
\end{align}
\end{subequations} 
By taking advantage of $\nabla\cdot \mathcal{L}_{\lambda_{t}} X = 0$, we can rewrite from \eqref{Define I1}
\begin{equation}\label{est 179}
\RomanI_{1}= - \nu \lVert w^{\mathcal{L}}(t) \rVert_{\dot{H}^{\frac{5}{4}}}^{2} - 2 \left\langle w^{\mathcal{L}}, [ \frac{\nu \Lambda^{\frac{5}{2}}}{2}  \Id + \nabla_{\text{sym}} \mathcal{L}_{\lambda_{t}} X ] w^{\mathcal{L}} \right\rangle (t). 
\end{equation} 
Thus, if we define 
\begin{equation}
\mathcal{A}_{t} \triangleq - \frac{\nu \Lambda^{\frac{5}{2}}}{2}  \Id - \nabla_{\text{sym}} X(t) - \infty \hspace{3mm} \forall \hspace{1mm} t \geq 0 
\end{equation} 
as the limit $\lambda \nearrow + \infty$ of 
\begin{equation}\label{Define At lambda}
\mathcal{A}_{t}^{\lambda} \triangleq - \frac{\nu \Lambda^{\frac{5}{2}}}{2}  \Id - \nabla_{\text{sym}} \mathcal{L}_{\lambda} X(t)  - \Id r_{\lambda}(t), 
\end{equation} 
we can finally obtain
\begin{equation}\label{est 97}
\RomanI_{1} = - \nu \lVert w^{\mathcal{L}}(t) \rVert_{\dot{H}^{\frac{5}{4}}}^{2} + 2 \langle w^{\mathcal{L}}, \mathcal{A}_{t}^{\lambda_{t}} w^{\mathcal{L}} \rangle (t) + r_{\lambda}(t) \lVert w^{\mathcal{L}}(t) \rVert_{L^{2}}^{2}. 
\end{equation} 
We proceed with estimates of $\RomanI_{2}, \RomanI_{3},$ and $\RomanI_{4}$ one by one. In what follows, the selection of parameters $\tau$ and $\eta$ are delicate requiring care, although the choice of $\kappa_{0}$ is straight-forward only under the requirement of sufficient smallness; we will elaborate on this issue in Remark \ref{Remark 4.2}. 

\begin{proposition}\label{Proposition 4.5}
Let $t \in [T_{i}, T_{i+1})$ and fix $\lambda_{t}$ with $\tau \in [1,\infty)$. Then, for any $\kappa_{0} \in (0,1)$, $\eta \in [\frac{5}{8} + \frac{\kappa_{0}}{2},1)$, and all $\kappa \in (0, \kappa_{0}]$, $\RomanI_{2}$ from \eqref{Define I2} satisfies 
\begin{equation}\label{est 98} 
\lvert \RomanI_{2} \rvert \lesssim \lVert w^{\mathcal{L}}(t) \rVert_{\dot{H}^{\eta}} N_{t}^{\kappa}. 
\end{equation}  
\end{proposition}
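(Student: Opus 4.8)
The plan is to reduce $\RomanI_{2}$ to a paraproduct plus a resonant term in which the low-order factor is the high-frequency part $\mathcal{H}_{\lambda_{t}}X\in\mathscr{C}^{-\frac14-\kappa}$ of $X$, and then to pay for the loss of regularity with the $\dot H^{\eta}$ norm of $w^{\mathcal{L}}$. By \eqref{Bony's decomposition}, $\mathcal{H}_{\lambda_{t}}X\otimes_{s}w^{\mathcal{L}}-\mathcal{H}_{\lambda_{t}}X\circlesign{\succ}_{s}w^{\mathcal{L}}=\mathcal{H}_{\lambda_{t}}X\circlesign{\prec}_{s}w^{\mathcal{L}}+\mathcal{H}_{\lambda_{t}}X\circlesign{\circ}_{s}w^{\mathcal{L}}$, so from \eqref{Define I2},
\begin{equation*}
\RomanI_{2}=-4\left\langle w^{\mathcal{L}},\divergence\bigl(\mathcal{H}_{\lambda_{t}}X\circlesign{\prec}_{s}w^{\mathcal{L}}+\mathcal{H}_{\lambda_{t}}X\circlesign{\circ}_{s}w^{\mathcal{L}}\bigr)\right\rangle(t).
\end{equation*}
Subtracting $\mathcal{H}_{\lambda_{t}}X\circlesign{\succ}_{s}w^{\mathcal{L}}=w^{\mathcal{L}}\circlesign{\prec}_{s}\mathcal{H}_{\lambda_{t}}X$ is exactly what removes the borderline piece in which $w^{\mathcal{L}}$ is the low-frequency modulation — that piece is carried by $\mathcal{A}_{t}^{\lambda_{t}}$ inside $\RomanI_{1}$ and requires renormalization — so what survives enjoys a regularity surplus. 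Moreover, in both remaining summands the coefficient $S_{i-1}\mathcal{H}_{\lambda_{t}}X$ (resp.\ $\Delta_{i}\mathcal{H}_{\lambda_{t}}X$) vanishes unless $2^{i}\gtrsim\lambda_{t}$, so both are spectrally supported on $\{|k|\gtrsim\lambda_{t}\}$; for the resonant term only $\mathcal{H}_{c\lambda_{t}}w^{\mathcal{L}}$ for a fixed $c\in(0,1)$ contributes.

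Next I would estimate each summand by $\dot H^{\eta}$--$\dot H^{-\eta}$ duality, $\lvert\langle w^{\mathcal{L}},\divergence g\rangle\rvert\leq\lVert w^{\mathcal{L}}\rVert_{\dot H^{\eta}}\lVert g\rVert_{\dot H^{1-\eta}}$, and spend the surplus on $g$. Writing $\mathcal{H}_{\lambda_{t}}X\circlesign{\prec}_{s}w^{\mathcal{L}}=w^{\mathcal{L}}\circlesign{\succ}_{s}\mathcal{H}_{\lambda_{t}}X$ and using \eqref{Sobolev products d} with $\mathcal{H}_{\lambda_{t}}X\in\mathscr{C}^{-\frac14-\kappa}$ gives $\lVert\mathcal{H}_{\lambda_{t}}X\circlesign{\prec}_{s}w^{\mathcal{L}}\rVert_{\dot H^{1-\eta}}\lesssim\lVert w^{\mathcal{L}}\rVert_{H^{\frac54-\eta+\kappa}}\lVert\mathcal{H}_{\lambda_{t}}X\rVert_{\mathscr{C}^{-\frac14-\kappa}}$; the exponent $\frac54-\eta+\kappa$ lies $\leq\eta$ precisely when $\eta\geq\frac58+\frac{\kappa}{2}$, which holds because $\eta\geq\frac58+\frac{\kappa_{0}}{2}$ and $\kappa\leq\kappa_{0}$, so — $w^{\mathcal{L}}(t)$ being mean-zero — one passes to $\lVert w^{\mathcal{L}}\rVert_{\dot H^{\eta}}$. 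The resonant term is handled the same way except that the resonant bound in H\"older--Besov spaces is unavailable ($w^{\mathcal{L}}\in\dot H^{\eta}\hookrightarrow\mathscr{C}^{\eta-\frac32}$ is too rough for $\eta<1$ — the essential $3$D criticality obstruction), so one invokes the Sobolev resonant bound \eqref{Sobolev products e}, $\lVert w^{\mathcal{L}}\circlesign{\circ}_{s}\mathcal{H}_{\lambda_{t}}X\rVert_{H^{\eta-\frac14-\kappa}}\lesssim\lVert w^{\mathcal{L}}\rVert_{H^{\eta}}\lVert\mathcal{H}_{\lambda_{t}}X\rVert_{\mathscr{C}^{-\frac14-\kappa}}$, legitimate since $\eta-\frac14-\kappa>0$, with $H^{\eta-\frac14-\kappa}\hookrightarrow\dot H^{1-\eta}$ under the same threshold $\eta\geq\frac58+\frac{\kappa}{2}$. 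In both cases the coefficient is controlled by $\lVert X\rVert_{C_{t}\mathscr{C}^{-\frac14-\kappa}}\leq N_{t}^{\kappa}$; more generally, placing $\mathcal{H}_{\lambda_{t}}X$ in $\mathscr{C}^{-\frac14-\kappa-\theta}$ for $0\leq\theta\leq2\eta-\frac54-\kappa$ and applying Lemma \ref{Lemma 3.5} produces an extra gain $\lambda_{t}^{-\theta}\lesssim(1+\lVert w(t)\rVert_{L^{2}})^{-\tau\theta}$.

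The remaining — and most delicate — step is to absorb the inner occurrence of $w^{\mathcal{L}}$ into the data quantity $N_{t}^{\kappa}$, so that only the single power $\lVert w^{\mathcal{L}}(t)\rVert_{\dot H^{\eta}}$ asserted in \eqref{est 98} survives. For this one returns to the stopping-time construction of Definition \ref{Definition 4.2}: on $[T_{i},T_{i+1})$ one has $\lambda_{t}\gtrsim(1+\lVert w(t)\rVert_{L^{2}})^{\tau}$, while Proposition \ref{Proposition 4.4} applied with $\delta=\frac54-2\kappa$ (so that $1-\tau\delta<0$ since $\tau\geq1$ and $\kappa$ is small) gives $\lVert w^{\mathcal{H}}(t)\rVert_{L^{2}}\lesssim N_{t}^{\kappa}$ and hence $\lVert w^{\mathcal{L}}(t)\rVert_{L^{2}}\lesssim(1+\lVert w(t)\rVert_{L^{2}})(1+N_{t}^{\kappa})$; interpolating the inner factor $\lVert w^{\mathcal{L}}\rVert_{H^{\frac54-\eta+\kappa+\theta}}$ between $L^{2}$ and $\dot H^{\eta}$ and matching the emerging power of $1+\lVert w(t)\rVert_{L^{2}}$ against $\lambda_{t}^{-\theta}$ collapses the inner factor to $\lesssim N_{t}^{\kappa}$, yielding \eqref{est 98}. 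I expect this balancing to be the main obstacle: $\theta$, $\tau$ (hence the defining scale $\lambda_{t}$), and the interpolation weight must be chosen compatibly so that the net exponent on $1+\lVert w(t)\rVert_{L^{2}}$ is non-positive, which is precisely where both the endpoint $\eta=\frac58+\frac{\kappa_{0}}{2}$ and the freedom to take $\tau$ large are used, in the spirit of Remark \ref{Remark 4.2}.
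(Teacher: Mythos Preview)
Your first two paragraphs are exactly the paper's argument and already deliver
\[
|\RomanI_{2}|\lesssim \lVert w^{\mathcal{L}}\rVert_{\dot H^{\eta}}^{2}\,N_{t}^{\kappa},
\]
which is what the paper's proof in fact establishes: the final displayed line of that proof ends with $\lVert w^{\mathcal{L}}\rVert_{\dot H^{\eta}}^{2}N_{t}^{\kappa}$, and this quadratic term is absorbed into the $(N_t^{\kappa})^{2}\bigl(\lVert w^{\mathcal{L}}\rVert_{\dot H^{1-\frac{3\kappa}{2}}}+\lVert w^{\mathcal{L}}\rVert_{\dot H^{1-\frac{3\kappa}{2}}}^{2}\bigr)$ contribution of \eqref{est 99}. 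The single power on the right of \eqref{est 98} is a typo.

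Your third paragraph is therefore chasing that typo, and the reduction you outline does not work. Interpolating the inner factor $\lVert w^{\mathcal{L}}\rVert_{H^{\frac54-\eta+\kappa+\theta}}$ between $L^{2}$ and $\dot H^{\eta}$ always leaves a residual $\lVert w^{\mathcal{L}}\rVert_{\dot H^{\eta}}^{b}$ with $b=(\tfrac54-\eta+\kappa+\theta)/\eta>0$ (since $\theta\geq0$ forces the numerator to stay $\geq\tfrac54-\eta+\kappa>0$), and there is no a~priori control on $\lVert w^{\mathcal{L}}\rVert_{\dot H^{\eta}}$ at this point of the argument --- it is precisely the quantity the energy estimate is meant to propagate. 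Collapsing the inner factor to $N_{t}^{\kappa}$ would require $b=0$, i.e.\ $\theta=\eta-\tfrac54-\kappa<0$, a contradiction. Even the $L^{2}$ portion alone cannot be absorbed uniformly over the stated range $\tau\in[1,\infty)$: you would need $\tau\theta\geq 1$ with $\theta\leq 2\eta-\tfrac54-\kappa<\tfrac34$, hence $\tau>\tfrac43$, contrary to the proposition's hypothesis. Stop after your second paragraph; the quadratic bound is what is proved and what is used downstream.
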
 

\begin{proof}[Proof of Proposition \ref{Proposition 4.5}]
We rely on \eqref{Bony's decomposition}, \eqref{Sobolev products d}, \eqref{Sobolev products e}, and \eqref{Define Lt kappa and Nt kappa} to estimate 
\begin{align}
\lvert \RomanI_{2} \rvert \lesssim& \lVert w^{\mathcal{L}} \rVert_{\dot{H}^{\eta}} \lVert (\mathcal{H}_{\lambda_{t}} X) \circlesign{\prec}_{s} w^{\mathcal{L}} + (\mathcal{H}_{\lambda_{t}} X) \circlesign{\circ}_{s} w^{\mathcal{L}} \rVert_{\dot{H}^{1-\eta}} \nonumber \\
\lesssim&  \lVert w^{\mathcal{L}} \rVert_{\dot{H}^{\eta}} \lVert \mathcal{H}_{\lambda_{t}} X \rVert_{\mathscr{C}^{-\frac{1}{4} - \kappa}} \lVert w^{\mathcal{L}} \rVert_{H^{\frac{5}{4} - \eta + \kappa}} \lesssim \lVert w^{\mathcal{L}} \rVert_{\dot{H}^{\eta}}^{2} N_{t}^{\kappa}. 
\end{align}
This completes the proof of Proposition \ref{Proposition 4.5}. 
\end{proof}

\begin{proposition}\label{Proposition 4.6}
Let $t \in [T_{i}, T_{i+1})$ and fix $\lambda_{t}$ from \eqref{Define lambda t}. 
\begin{enumerate}[label=(\alph*)]
\item Let $\tau \in [2,\infty)$, $\kappa_{0} \in (0, \frac{1}{6})$, and $\eta \in [\frac{1}{\tau} + 3 \kappa_{0}, 1)$. Then, for all $\kappa \in (0, \kappa_{0}]$, $\RomanI_{3}$ from \eqref{Define I3} satisfies 
\begin{equation}\label{est 83}
\lvert \RomanI_{3} \rvert \lesssim (N_{t}^{\kappa})^{2} \lVert w^{\mathcal{L}}(t) \rVert_{\dot{H}^{\eta}} \lambda_{t}^{\frac{5}{4} + 2 \kappa - \eta}. 
\end{equation} 
\item Let $\tau \in [\frac{25}{12}, \infty)$,  $\kappa_{0} \in (0, \frac{1}{200})$, and $\eta \in \left( \max\{ \frac{1}{2\tau} + \frac{5}{8} + \kappa_{0}, \frac{2}{\tau} + 4 \kappa_{0} \}, 1 \right)$. Then, for all $\kappa \in (0, \kappa_{0}]$, $- \langle w^{\mathcal{L}}, \divergence (w^{\otimes 2}) \rangle$ within $\RomanI_{4}$ of \eqref{Define I4} satisfies 
\begin{equation}\label{est 84}
2 \lvert \langle w^{\mathcal{L}}, \divergence (w^{\otimes 2}) \rangle(t) \rvert \lesssim \lVert w^{\mathcal{L}}(t) \rVert_{\dot{H}^{\eta}} (\lVert w^{\mathcal{L}} (t) \rVert_{\dot{H}^{\eta}} + N_{t}^{\kappa}) N_{t}^{\kappa}. 
\end{equation} 
\end{enumerate} 
\end{proposition}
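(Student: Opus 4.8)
Throughout, the common strategy is to integrate by parts so as to move the derivative in $\divergence$ onto $w^{\mathcal{L}}$, using that $w$ --- and hence $w^{\mathcal{L}}=w-w^{\mathcal{H}}$, since $w$ is divergence-free by \eqref{Equation of w} and $\nabla\cdot u^{\text{in}}=0$ while $w^{\mathcal{H}}=-\mathbb{P}_{L}\divergence(w\circlesign{\prec}_{s}Q^{\mathcal{H}})$ is divergence-free by construction --- is divergence-free and mean-zero, so that $|\langle w^{\mathcal{L}},\divergence F\rangle|\lesssim\|w^{\mathcal{L}}(t)\|_{\dot{H}^{\eta}}\|F\|_{\dot{H}^{1-\eta}}$; the remaining product $F$ is then controlled by the paraproduct and resonance bounds of Lemma \ref{Lemma 3.1}, the bilinear estimate of Lemma \ref{Lemma 3.2}, the gain/loss bounds of Lemma \ref{Lemma 3.5}, and the bound on $w^{\mathcal{H}}$ from Proposition \ref{Proposition 4.4} with the free parameter there taken equal to $1/\tau$, so that $1-\tau\delta=0$, hence $(1+\|w(t)\|_{L^{2}})^{1-\tau\delta}=1$ and $\|w^{\mathcal{H}}(t)\|_{H^{5/4-2\kappa-1/\tau}}\lesssim N_{t}^{\kappa}$ (the harmless factor $t^{\kappa/5}$ being absorbed). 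All noise-dependent quantities are finite and absorbed into $N_{t}^{\kappa}$ via Proposition \ref{Proposition 4.1}. It is the improved regularity $5/4-2\kappa-1/\tau$ of $w^{\mathcal{H}}$, which in the 2D case was only of the form $1-2\kappa-\delta$ (cf.\ the remark after Proposition \ref{Proposition 4.4}), that makes the mixed terms below tractable.

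For (a), I would insert $X=\mathcal{L}_{\lambda_{t}}X+\mathcal{H}_{\lambda_{t}}X$ and Bony-decompose $2X\otimes_{s}w^{\mathcal{H}}=2X\circlesign{\prec}_{s}w^{\mathcal{H}}+2X\circlesign{\succ}_{s}w^{\mathcal{H}}+2X\circlesign{\circ}_{s}w^{\mathcal{H}}$, so that subtracting $2(\mathcal{H}_{\lambda_{t}}X)\circlesign{\succ}_{s}w^{\mathcal{H}}$ leaves
\[
2X\circlesign{\prec}_{s}w^{\mathcal{H}}+2(\mathcal{L}_{\lambda_{t}}X)\circlesign{\succ}_{s}w^{\mathcal{H}}+2X\circlesign{\circ}_{s}w^{\mathcal{H}},
\]
and after integrating by parts, $|\RomanI_{3}|\lesssim\|w^{\mathcal{L}}(t)\|_{\dot{H}^{\eta}}$ times the $\dot{H}^{1-\eta}$-norm of this sum. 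The paraproduct term, rewritten as $w^{\mathcal{H}}\circlesign{\succ}_{s}X$, is estimated via \eqref{Sobolev products d} from $w^{\mathcal{H}}\in H^{5/4-2\kappa-1/\tau}$ and $X\in\mathscr{C}^{-1/4-\kappa}$, landing in $H^{1-3\kappa-1/\tau}\hookrightarrow\dot{H}^{1-\eta}$ whenever $\eta\geq 1/\tau+3\kappa$; the resonance $X\circlesign{\circ}_{s}w^{\mathcal{H}}$ is estimated via \eqref{Sobolev products e}, which needs $(5/4-2\kappa-1/\tau)+(-1/4-\kappa)>0$, i.e.\ $1/\tau<1-3\kappa$; and, writing $(\mathcal{L}_{\lambda_{t}}X)\circlesign{\succ}_{s}w^{\mathcal{H}}=w^{\mathcal{H}}\circlesign{\prec}_{s}\mathcal{L}_{\lambda_{t}}X$, the third term is estimated via \eqref{Sobolev products a} with $w^{\mathcal{H}}\in L^{2}$ and $\mathcal{L}_{\lambda_{t}}X\in\mathscr{C}^{1-\eta+\alpha}$ for some $\alpha\in(0,\kappa)$, where Lemma \ref{Lemma 3.5} gives $\|\mathcal{L}_{\lambda_{t}}X\|_{\mathscr{C}^{1-\eta+\alpha}}\lesssim\lambda_{t}^{5/4-\eta+\alpha+\kappa}\|X\|_{\mathscr{C}^{-1/4-\kappa}}$; since $\lambda_{t}\geq1$ and $\eta<1$, this produces the factor $\lambda_{t}^{5/4+2\kappa-\eta}$ of \eqref{est 83}. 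Collecting the three bounds and absorbing noise factors into $N_{t}^{\kappa}$ gives \eqref{est 83}; the hypotheses $\tau\in[2,\infty)$, $\kappa_{0}\in(0,1/6)$, $\eta\in[1/\tau+3\kappa_{0},1)$ are exactly what guarantee, for every $\kappa\leq\kappa_{0}$, that $\eta\geq 1/\tau+3\kappa$, that $1/\tau\leq 1/2<1-3\kappa$, that $5/4-2\kappa-1/\tau\geq0$, and that the $\eta$-window is non-empty.

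For (b), I would write $w=w^{\mathcal{L}}+w^{\mathcal{H}}$ and expand $w^{\otimes2}$; the purely-low piece vanishes identically because $w^{\mathcal{L}}$ is divergence-free, namely $\langle w^{\mathcal{L}},\divergence(w^{\mathcal{L}}\otimes w^{\mathcal{L}})\rangle=\langle w^{\mathcal{L}},(w^{\mathcal{L}}\cdot\nabla)w^{\mathcal{L}}\rangle=\tfrac12\int_{\mathbb{T}^{3}}w^{\mathcal{L}}\cdot\nabla|w^{\mathcal{L}}|^{2}\,dx=0$, so only $\langle w^{\mathcal{L}},\divergence(w^{\mathcal{L}}\otimes w^{\mathcal{H}}+w^{\mathcal{H}}\otimes w^{\mathcal{L}})\rangle$ and $\langle w^{\mathcal{L}},\divergence(w^{\mathcal{H}}\otimes w^{\mathcal{H}})\rangle$ remain. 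For the first, integration by parts and Lemma \ref{Lemma 3.2} with exponents $\eta$ and $5/2-2\eta$ (both $<3/2$ since $1/2<\eta<1$, summing to $5/2-\eta>0$) give a bound $\lesssim\|w^{\mathcal{L}}\|_{\dot{H}^{\eta}}^{2}\|w^{\mathcal{H}}\|_{\dot{H}^{5/2-2\eta}}$, and $\|w^{\mathcal{H}}\|_{\dot{H}^{5/2-2\eta}}\lesssim\|w^{\mathcal{H}}\|_{H^{5/4-2\kappa-1/\tau}}\lesssim N_{t}^{\kappa}$ provided $5/2-2\eta\leq 5/4-2\kappa-1/\tau$, i.e.\ $\eta\geq 5/8+\kappa+1/(2\tau)$; this contributes $\lesssim\|w^{\mathcal{L}}\|_{\dot{H}^{\eta}}^{2}N_{t}^{\kappa}$. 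For the second, integration by parts and Lemma \ref{Lemma 3.2} with the symmetric exponent $5/4-\eta/2<3/2$ give a bound $\lesssim\|w^{\mathcal{L}}\|_{\dot{H}^{\eta}}\|w^{\mathcal{H}}\|_{\dot{H}^{5/4-\eta/2}}^{2}\lesssim\|w^{\mathcal{L}}\|_{\dot{H}^{\eta}}(N_{t}^{\kappa})^{2}$ provided $5/4-\eta/2\leq 5/4-2\kappa-1/\tau$, i.e.\ $\eta\geq 4\kappa+2/\tau$. Summing and using $(N_{t}^{\kappa})^{2}\leq(\|w^{\mathcal{L}}\|_{\dot{H}^{\eta}}+N_{t}^{\kappa})N_{t}^{\kappa}$ yields \eqref{est 84}; the hypotheses $\tau\in[25/12,\infty)$, $\kappa_{0}\in(0,1/200)$, and $\eta\in(\max\{1/(2\tau)+5/8+\kappa_{0},\,2/\tau+4\kappa_{0}\},1)$ are exactly what guarantee, for every $\kappa\leq\kappa_{0}$, the two regularity comparisons and the non-emptiness of the $\eta$-window.

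The genuine obstacle, in both items, is that in three dimensions the cubic energy flux carries no cancellation beyond the $L^{2}$-identity for $w^{\mathcal{L}}$ alone, so everything hinges on $w^{\mathcal{H}}$ being appreciably more regular than its 2D analogue; concretely the resonance $X\circlesign{\circ}_{s}w^{\mathcal{H}}$ in (a) and the product $w^{\mathcal{H}}\otimes w^{\mathcal{H}}$ in (b) are the tight spots, and they close only because Proposition \ref{Proposition 4.4} delivers $H^{5/4-2\kappa-\delta}$ rather than $H^{1-2\kappa-\delta}$. Taking $\delta=1/\tau$ to neutralize the $(1+\|w(t)\|_{L^{2}})$-growth then pushes $\eta$ close to $1$, which is why the admissible $\eta$-window shrinks toward $1$; the only delicate part of the argument is the simultaneous bookkeeping of the competing smallness requirements on $1/\tau$, $3\kappa_{0}$ and $4\kappa_{0}$ --- keeping that window non-empty while respecting the positivity hypotheses of Lemmas \ref{Lemma 3.1} and \ref{Lemma 3.2}.
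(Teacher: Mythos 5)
Your proof is correct and follows essentially the same route as the paper: the same Bony decomposition of $\RomanI_{3}$ (merely grouped slightly differently between the $\mathcal{L}_{\lambda_{t}}X$ and $\mathcal{H}_{\lambda_{t}}X$ pieces), the same use of Lemma \ref{Lemma 3.5} on $\mathcal{L}_{\lambda_{t}}X$ to produce the factor $\lambda_{t}^{\frac{5}{4}+2\kappa-\eta}$, and in (b) the same low--low cancellation followed by Lemma \ref{Lemma 3.2} with exponents $(\eta,\tfrac{5}{2}-2\eta)$ and $(\tfrac{5}{4}-\tfrac{\eta}{2},\tfrac{5}{4}-\tfrac{\eta}{2})$ combined with Proposition \ref{Proposition 4.4}. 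Your parameter bookkeeping ($\eta\geq\tfrac{1}{\tau}+3\kappa$, $\eta\geq\tfrac{5}{8}+\kappa+\tfrac{1}{2\tau}$, $\eta\geq\tfrac{2}{\tau}+4\kappa$) coincides with the conditions the paper's proof implicitly uses.
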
 

\begin{proof}[Proof of Proposition \ref{Proposition 4.6}]  
\hfill\\ (a) First, we rewrite from \eqref{Define I3}
\begin{equation}\label{Split I3}
\lvert \RomanI_{3} \rvert \leq \RomanI_{31} +\RomanI_{32} 
\end{equation} 
where 
\begin{subequations}\label{Define I31 and I32}
\begin{align}
\RomanI_{31} \triangleq& 4 \left\lvert \langle w^{\mathcal{L}}, \divergence ( (\mathcal{L}_{\lambda_{t}} X) \otimes_{s} w^{\mathcal{H}} ) \rangle \right\rvert, \\
\RomanI_{32} \triangleq& 4 \left\lvert \langle w^{\mathcal{L}}, \divergence ( (\mathcal{H}_{\lambda_{t}}X)  \circlesign{\prec}_{s} w^{\mathcal{H}} + (\mathcal{H}_{\lambda_{t}} X) \circlesign{\circ}_{s} w^{\mathcal{H}} ) \rangle \right\rvert. 
\end{align}
\end{subequations}
For $\RomanI_{31}$, we estimate using \eqref{Bony's decomposition}
\begin{equation}\label{est 82}
\RomanI_{31} \lesssim \lVert w^{\mathcal{L}} \rVert_{\dot{H}^{\eta}} [ \lVert (\mathcal{L}_{\lambda_{t}} X) \circlesign{\succ}_{s} w^{\mathcal{H}} \rVert_{\dot{H}^{1-\eta}} +  \lVert (\mathcal{L}_{\lambda_{t}}X) \circlesign{\prec}_{s} w^{\mathcal{H}} \rVert_{\dot{H}^{1-\eta}} + \lVert (\mathcal{L}_{\lambda_{t}}X) \circlesign{\circ}_{s} w^{\mathcal{H}} \rVert_{\dot{H}^{1-\eta}} ]
\end{equation} 
where we can further estimate by \eqref{Sobolev products c},  \eqref{Sobolev products e}, \eqref{Define Lt kappa and Nt kappa}, and  \eqref{Estimate on wH}
\begin{subequations}\label{est 81}
\begin{align}
& \lVert (\mathcal{L}_{\lambda_{t}}X) \circlesign{\succ}_{s} w^{\mathcal{H}} \rVert_{\dot{H}^{1-\eta}} + \lVert (\mathcal{L}_{\lambda_{t}}X) \circlesign{\circ}_{s} w^{\mathcal{H}} \rVert_{\dot{H}^{1-\eta}} \nonumber \\
& \hspace{27mm}   \lesssim \lVert \mathcal{L}_{\lambda_{t}}X \rVert_{\mathscr{C}^{1+ \kappa - \eta}} \lVert w^{\mathcal{H}} \rVert_{H^{-\kappa}} \lesssim \lambda_{t}^{\frac{5}{4} + 2 \kappa - \eta} (N_{t}^{\kappa})^{2}, \\
& \lVert (\mathcal{L}_{\lambda_{t}}X) \circlesign{\prec}_{s} w^{\mathcal{H}} \rVert_{\dot{H}^{1-\eta}}  \lesssim \lVert \mathcal{L}_{\lambda_{t}}X \rVert_{\mathscr{C}^{-\frac{1}{4} - \kappa}} \lVert w^{\mathcal{H}} \rVert_{H^{\frac{5}{4} - \eta + \kappa}} \lesssim  (N_{t}^{\kappa})^{2}.
\end{align}
\end{subequations} 
Applying \eqref{est 81} to \eqref{est 82} gives us 
\begin{equation}\label{Estimate on I31}
\RomanI_{31}  \lesssim \lVert w^{\mathcal{L}} \rVert_{\dot{H}^{\eta}}  \lambda_{t}^{\frac{5}{4} + 2 \kappa - \eta} (N_{t}^{\kappa})^{2}. 
\end{equation} 
On the other hand, we estimate by \eqref{Sobolev products d}, \eqref{Sobolev products e}, and \eqref{Estimate on wH}, 
\begin{equation}\label{Estimate on I32}
\RomanI_{32}  \lesssim \lVert w^{\mathcal{L}} \rVert_{\dot{H}^{\eta}} \lVert \mathcal{H}_{\lambda_{t}}X \rVert_{\mathscr{C}^{-\frac{1}{4} - \kappa}} \lVert w^{\mathcal{H}} \rVert_{H^{\frac{5}{4} - \eta + \kappa}} \lesssim \lVert w^{\mathcal{L}} \rVert_{\dot{H}^{\eta}} (N_{t}^{\kappa})^{2}.
\end{equation} 
Applying \eqref{Estimate on I31} and \eqref{Estimate on I32} to \eqref{Split I3} we conclude 
\begin{equation}
\lvert \RomanI_{3} \rvert  \lesssim (N_{t}^{\kappa})^{2} \lVert w^{\mathcal{L}} \rVert_{\dot{H}^{\eta}} \lambda_{t}^{\frac{5}{4} + 2 \kappa - \eta}, 
\end{equation} 
which verifies \eqref{est 83}.  

(b) We first simplify using the divergence-free property and write 
\begin{equation}\label{est 86}
\langle w^{\mathcal{L}}, \divergence ( w^{\otimes 2}) \rangle = \langle w^{\mathcal{L}}, (w^{\mathcal{L}} \cdot\nabla) w^{\mathcal{H}} \rangle + \langle w^{\mathcal{L}}, \divergence (w^{\mathcal{H}})^{\otimes 2} \rangle. 
\end{equation} 
We estimate by Lemma \ref{Lemma 3.2} and \eqref{Estimate on wH}, 
\begin{subequations}\label{est 85} 
\begin{align}
& \lvert \langle w^{\mathcal{L}}, (w^{\mathcal{L}} \cdot \nabla) w^{\mathcal{H}} \rangle \rvert \lesssim \lVert w^{\mathcal{L}} \rVert_{\dot{H}^{\eta}}^{2} \lVert w^{\mathcal{H}} \rVert_{\dot{H}^{\frac{5}{2} - 2 \eta}} \lesssim  \lVert w^{\mathcal{L}} \rVert_{\dot{H}^{\eta}}^{2} N_{t}^{\kappa}, \\
& \lvert \langle w^{\mathcal{L}}, \divergence (w^{\mathcal{H}})^{\otimes 2} \rangle \rvert \lesssim \lVert w^{\mathcal{L}} \rVert_{\dot{H}^{\eta}} \lVert w^{\mathcal{H}} \rVert_{\dot{H}^{\frac{5}{4} - \frac{\eta}{2}}}^{2} \lesssim  \lVert w^{\mathcal{L}} \rVert_{\dot{H}^{\eta}} (N_{t}^{\kappa})^{2}. 
\end{align}
\end{subequations}
Applying \eqref{est 85} to \eqref{est 86}, we conclude that 
\begin{align*}
\langle w^{\mathcal{L}}, \divergence ( w^{\otimes 2}) \rangle \lesssim  \lVert w^{\mathcal{L}} \rVert_{\dot{H}^{\eta}} \left( \lVert w^{\mathcal{L}} \rVert_{\dot{H}^{\eta}} + N_{t}^{\kappa}  \right) N_{t}^{\kappa}, 
\end{align*}
which verifies \eqref{est 84}. This completes the proof of Proposition \ref{Proposition 4.6}. 
\end{proof}

The following estimate will involve the commutator we described in Remark \ref{Remark 2.2}. 
\begin{remark}\label{Remark 4.2}
At the step of reaching \eqref{est 91} from \eqref{est 89} due to \eqref{est 90}, we will need $\tau (2\kappa - \eta + \frac{1}{2}) + \frac{7}{5} + \frac{2\kappa}{5} \leq 0$ or equivalently 
\begin{equation}\label{lower bound of tau}
\left( \frac{1}{\eta - 2 \kappa - \frac{1}{2}} \right) \left( \frac{7}{5} + \frac{2\kappa}{5} \right) \leq \tau 
\end{equation} 
assuming that $\eta > \frac{1}{2} + 2 \kappa$, which is reasonable considering the hypothesis $\eta > \frac{1}{2\tau}+  \frac{5}{8} + \kappa_{0}$ in Proposition \ref{Proposition 4.6} (b). On the other hand, we can already see from \eqref{est 83} that we will need $\tau \left( \frac{5}{4} + 2 \kappa - \eta \right) \leq 1$ considering \eqref{Define lambda t} to be able to close the estimate (see \eqref{est 103}), or equivalently
\begin{equation}\label{upper bound of tau}
\tau \leq \frac{1}{ \frac{5}{4} + 2 \kappa - \eta} 
\end{equation}  
assuming that $\frac{5}{4} + 2 \kappa - \eta > 0$, which is reasonable considering the hypothesis $\eta < 1$ in Proposition \ref{Proposition 4.6} (b). To be able to find a suitable $\tau$ that satisfies both \eqref{lower bound of tau} and \eqref{upper bound of tau}, one can first temporarily assume $\kappa =0$ and realize that the requirement for $\eta$ is then $\frac{15}{16} \leq \eta$.  With this in mind, we choose $\eta \geq \frac{31}{32}$. With this lower bound fixed, it follows that $\kappa < \frac{1}{200}$ can be seen to admit the existence of $\tau$ that satisfies both \eqref{lower bound of tau}-\eqref{upper bound of tau}. We incorporate these considerations into the hypothesis of the following Proposition \ref{Proposition 4.7}. 
\end{remark}

\begin{proposition}\label{Proposition 4.7}
Let $t \in [T_{i}, T_{i+1})$ and fix $\lambda_{t}$ from \eqref{Define lambda t}. Let $\tau \in [\frac{40}{13}, \infty)$, $\eta \in [ \frac{31}{32}, 1)$, and $\kappa_{0} \in (0, \frac{1}{200})$. Then, for all $\kappa \in (0, \kappa_{0} ]$, $\RomanI_{4}$ from \eqref{Define I4} satisfies 
\begin{align}
\RomanI_{4} + 2 \langle w^{\mathcal{L}}, \divergence (w^{\otimes 2}) \rangle  \lesssim& N_{t}^{\kappa} \lVert w^{\mathcal{L}} \rVert_{\dot{H}^{1- \frac{3\kappa}{2}}} ( \lVert w^{\mathcal{L}} \rVert_{\dot{H}^{2\kappa}} + N_{t}^{\kappa}) + (N_{t}^{\kappa})^{2} \lVert w^{\mathcal{L}} \rVert_{\dot{H}^{1- \frac{3\kappa}{2}}}  \nonumber \\
&+ (N_{t}^{\kappa})^{3} \lVert w^{\mathcal{L}} \rVert_{\dot{H}^{\eta}} \lVert w^{\mathcal{L}} \rVert_{\dot{H}^{\frac{5}{4}}}^{\frac{3}{5} - \frac{2\kappa}{5}} + (N_{t}^{\kappa})^{2} \lVert w^{\mathcal{L}} \rVert_{\dot{H}^{1}} ( \lVert w^{\mathcal{L}} \rVert_{\dot{H}^{\frac{1}{4} + 3\kappa}} + N_{t}^{\kappa} ). \label{est 95} 
\end{align}
\end{proposition}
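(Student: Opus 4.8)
The plan is to cancel the $w^{\otimes2}$ contribution against $2\langle w^{\mathcal{L}},\divergence(w^{\otimes2})\rangle$, reducing via \eqref{Define I4} to
\begin{equation*}
\RomanI_{4}+2\langle w^{\mathcal{L}},\divergence(w^{\otimes2})\rangle=-4\langle w^{\mathcal{L}},\divergence(Y\otimes_{s}w)\rangle+2\langle w^{\mathcal{L}},\divergence\mathcal{C}^{\circlesign{\prec}_{s}}(w,Q^{\mathcal{H}})\rangle-2\langle w^{\mathcal{L}},\divergence(Y^{\otimes2})\rangle,
\end{equation*}
and to estimate the three groups separately. Throughout I write $w=w^{\mathcal{L}}+w^{\mathcal{H}}$ and apply Proposition \ref{Proposition 4.4} with the critical exponent $\delta=\frac{1}{\tau}$, so that $\lVert w^{\mathcal{H}}(t)\rVert_{H^{\frac{5}{4}-2\kappa-\frac{1}{\tau}}}\lesssim N_{t}^{\kappa}\,t^{\kappa/5}$ carries \emph{no} power of $(1+\lVert w(t)\rVert_{L^{2}})$; note $\frac{5}{4}-2\kappa-\frac{1}{\tau}>0$ because $\tau\ge\frac{40}{13}$ and $\kappa<\frac{1}{200}$, and that $\lambda_{t}\approx(1+\lVert w(t)\rVert_{L^{2}})^{\tau}$ on each $[T_{i},T_{i+1})$, which is what lets $\lambda_{t}$-gains be traded against powers of $(1+\lVert w\rVert_{L^{2}})$. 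I will use repeatedly that $w^{\mathcal{L}},w^{\mathcal{H}},Y$ are divergence-free and mean-zero.

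The term $Y^{\otimes2}$ is the easiest: $Y\in\mathscr{C}^{1-\kappa}\hookrightarrow L^{\infty}$ gives $Y^{\otimes2}\in L^{2}$ with $\lVert Y^{\otimes2}\rVert_{L^{2}}\lesssim\lVert Y\rVert_{C_{t}\mathscr{C}^{1-\kappa}}^{2}\le(N_{t}^{\kappa})^{2}$, hence $\lvert\langle w^{\mathcal{L}},\divergence(Y^{\otimes2})\rangle\rvert=\lvert\langle\nabla w^{\mathcal{L}},Y^{\otimes2}\rangle\rvert\lesssim(N_{t}^{\kappa})^{2}\lVert w^{\mathcal{L}}\rVert_{\dot{H}^{1}}$, which lands in the last bracket of \eqref{est 95}. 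For the transport term I would use $\divergence Y=\divergence w=0$ to write $\divergence(Y\otimes_{s}w)=\frac{1}{2}[(Y\cdot\nabla)w+(w\cdot\nabla)Y]$; the piece $\langle w^{\mathcal{L}},(Y\cdot\nabla)w^{\mathcal{L}}\rangle$ vanishes by antisymmetry, and after one more integration by parts what remains is a finite sum of pairings of $w^{\mathcal{L}}$ or $\nabla w^{\mathcal{L}}$ against $(Y\text{ or }\nabla Y)\,w^{\mathcal{H}}$, against $Y\,\nabla w^{\mathcal{L}}$, and against $\nabla Y$ times $w^{\mathcal{L}}\otimes w^{\mathcal{L}}$ (from $\langle w^{\mathcal{L}},(w^{\mathcal{L}}\cdot\nabla)Y\rangle=\langle w^{\mathcal{L}}\otimes w^{\mathcal{L}},\nabla Y\rangle$). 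Each is controlled by Bony's decomposition \eqref{Bony's decomposition} together with Lemma \ref{Lemma 3.1} and Lemma \ref{Lemma 3.2} — and, for the pairing against $\nabla Y\in\mathscr{C}^{-\kappa}$, a product estimate placing $w^{\mathcal{L}}\otimes w^{\mathcal{L}}$ in $B^{\frac{3\kappa}{2}}_{1,\infty}$ — always assigning to $w^{\mathcal{H}}$ no more regularity than $H^{\frac{5}{4}-2\kappa-\frac{1}{\tau}}$, which is possible exactly because $\eta\ge\frac{31}{32}$. This produces the terms $N_{t}^{\kappa}\lVert w^{\mathcal{L}}\rVert_{\dot{H}^{1-\frac{3\kappa}{2}}}(\lVert w^{\mathcal{L}}\rVert_{\dot{H}^{2\kappa}}+N_{t}^{\kappa})$ and $(N_{t}^{\kappa})^{2}\lVert w^{\mathcal{L}}\rVert_{\dot{H}^{1-\frac{3\kappa}{2}}}$ of \eqref{est 95}.

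The commutator term is the core of the argument. Inserting \eqref{Equation of w} for $\partial_{t}w$ into \eqref{Define commutator} (on $[T_{i},T_{i+1})$, where $\lambda_{t}$ is constant so $Q^{\mathcal{H}}$ is differentiable in $t$), the time-derivative pieces cancel and the $\nu\Lambda^{5/2}$ pieces rearrange, leaving
\begin{equation*}
\mathcal{C}^{\circlesign{\prec}_{s}}(w,Q^{\mathcal{H}})=-\bigl(\mathbb{P}_{L}\divergence(w^{\otimes2}+D\otimes_{s}w+Y^{\otimes2})\bigr)\circlesign{\prec}_{s}Q^{\mathcal{H}}+\nu\,\mathfrak{C}(w,Q^{\mathcal{H}}),
\end{equation*}
where $\mathfrak{C}(w,Q^{\mathcal{H}})\triangleq\Lambda^{5/2}(w\circlesign{\prec}_{s}Q^{\mathcal{H}})-(\Lambda^{5/2}w)\circlesign{\prec}_{s}Q^{\mathcal{H}}-w\circlesign{\prec}_{s}\Lambda^{5/2}Q^{\mathcal{H}}$ is the paraproduct analogue of the commutator \eqref{our commutator} of Remark \ref{Remark 2.2}. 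For the paraproduct summand I would split $w=w^{\mathcal{L}}+w^{\mathcal{H}}$ inside each quadratic term, estimate $\mathbb{P}_{L}\divergence(w^{\otimes2}+D\otimes_{s}w+Y^{\otimes2})$ in the appropriate (negative- or $L^{2}$-type) Sobolev space by Lemma \ref{Lemma 3.2}, the product estimates, and Proposition \ref{Proposition 4.4} with $\delta=\frac{1}{\tau}$, and then use that $Q^{\mathcal{H}}=\mathcal{H}_{\lambda_{t}}Q$ supplies a gain $\lambda_{t}^{-c}$ through Lemma \ref{Lemma 3.5} and \eqref{Regularity of Q}; interpolating the surviving $w^{\mathcal{L}}$-norms between $L^{2}$ (or $\dot{H}^{\eta}$) and $\dot{H}^{5/4}$ and converting the residual powers of $(1+\lVert w\rVert_{L^{2}})$ via $\lambda_{t}\approx(1+\lVert w\rVert_{L^{2}})^{\tau}$ yields the term $(N_{t}^{\kappa})^{3}\lVert w^{\mathcal{L}}\rVert_{\dot{H}^{\eta}}\lVert w^{\mathcal{L}}\rVert_{\dot{H}^{5/4}}^{\frac{3}{5}-\frac{2\kappa}{5}}$ together with the remaining contributions to the last bracket — and this is precisely the step (cf.\ Remark \ref{Remark 4.2}, \eqref{lower bound of tau}--\eqref{upper bound of tau}) that forces $\tau\ge\frac{40}{13}$, $\eta\ge\frac{31}{32}$, $\kappa<\frac{1}{200}$. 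For $\langle w^{\mathcal{L}},\divergence\mathfrak{C}(w,Q^{\mathcal{H}})\rangle$ I would integrate by parts, place $\lVert w^{\mathcal{L}}\rVert_{\dot{H}^{1}}$ on the outer factor, and reduce to an estimate of the form $\lVert\mathfrak{C}(w,Q^{\mathcal{H}})\rVert_{L^{2}}\lesssim\lVert w\rVert_{\dot{H}^{\frac{1}{4}+3\kappa}}\lVert Q^{\mathcal{H}}\rVert_{\mathscr{C}^{\beta}}$ with $\beta\in(\frac{9}{4}-3\kappa,\frac{9}{4})$ — crucially $\beta<\frac{9}{4}$, so that \eqref{Regularity of Q} applies with no $\lambda_{t}$-loss; this commutator estimate is proved by the splitting $\mathfrak{C}=J_{1}+J_{2}+J_{3}$ with $J_{2}=J_{21}+J_{22}$ of Remark \ref{Remark 2.2}, using the classical commutator estimate Lemma \ref{Lemma 3.3} for $J_{1}$, the classical product estimate for $J_{3}$ and $J_{22}$, and the refinement of \cite{W25} for the residual $J_{21}$, with the $w^{\mathcal{H}}$-part of $w$ again controlled through Proposition \ref{Proposition 4.4} with $\delta=\frac{1}{\tau}$. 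This yields the term $(N_{t}^{\kappa})^{2}\lVert w^{\mathcal{L}}\rVert_{\dot{H}^{1}}(\lVert w^{\mathcal{L}}\rVert_{\dot{H}^{\frac{1}{4}+3\kappa}}+N_{t}^{\kappa})$.

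Collecting the contributions and consolidating exponents via $\lVert w^{\mathcal{L}}\rVert_{\dot{H}^{a}}\le\lVert w^{\mathcal{L}}\rVert_{\dot{H}^{b}}$ for $a\le b$ and $N_{t}^{\kappa}\ge1$ gives \eqref{est 95}. The main obstacle is the pure-space commutator $\mathfrak{C}(w,Q^{\mathcal{H}})$: since the order $\frac{5}{2}$ of the fractional Laplacian exceeds $2$, none of the off-the-shelf commutator inequalities recalled in Remark \ref{Remark 2.2} apply, and one must simultaneously keep the regularity demanded of $Q^{\mathcal{H}}$ strictly below $\frac{9}{4}$ and never spend more than $\dot{H}^{5/4}$ on $w^{\mathcal{L}}$; this is exactly what the $J_{1},J_{2},J_{3}$ decomposition together with the bound of \cite{W25} for $J_{21}$ achieves. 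A secondary, but still delicate, difficulty is the bookkeeping of the $\lambda_{t}$-powers and of the $(1+\lVert w\rVert_{L^{2}})^{1-\tau\delta}$ factors from Proposition \ref{Proposition 4.4}: the choice $\delta=\frac{1}{\tau}$ must be simultaneously compatible with $\frac{5}{4}-2\kappa-\frac{1}{\tau}>0$ and with the admissible window for $\tau$ determined in Remark \ref{Remark 4.2}, whose non-emptiness under $\eta\ge\frac{31}{32}$ and $\kappa<\frac{1}{200}$ is the reason those bounds appear in the hypotheses.
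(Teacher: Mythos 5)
Your overall architecture matches the paper's: cancel the $w^{\otimes2}$ contribution, estimate the $Y\otimes_{s}w$ and $Y^{\otimes2}$ terms by Bony decomposition, rewrite $\mathcal{C}^{\circlesign{\prec}_{s}}(w,Q^{\mathcal{H}})$ via the equation for $w$ into paraproduct terms plus the pure-space commutator, and handle the paraproduct terms by interpolation together with the trade $\lambda_{t}^{-\delta}\lesssim(1+\lVert w\rVert_{L^{2}})^{-\tau\delta}$. The target bound you state for the space commutator, $\lVert\mathfrak{C}(w,Q^{\mathcal{H}})\rVert_{L^{2}}\lesssim\lVert w\rVert_{\dot{H}^{\frac{1}{4}+3\kappa}}\lVert Q^{\mathcal{H}}\rVert_{\mathscr{C}^{\beta}}$ with $\beta<\frac{9}{4}$, is also the right one.

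However, the route you propose to prove that bound is a genuine gap. You invoke the splitting $\mathfrak{C}=J_{1}+J_{2}+J_{3}$, $J_{2}=J_{21}+J_{22}$, with the estimate of \cite{W25} for $J_{21}$. That is exactly the ``third idea'' of Remark \ref{Remark 2.2}(c), which the paper records as a \emph{failed} attempt: the available bound for $J_{21}$ costs $\lVert g\rVert_{H^{3+\delta+\alpha}}$ on the smooth factor, and even in the $L^{2}$-case ($\alpha=0$) this demands regularity $3+\delta$ of $Q^{\mathcal{H}}$, far beyond the $\mathscr{C}^{\frac{9}{4}-\kappa}$ ceiling imposed by \eqref{Regularity of Q}. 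So as written, your commutator step does not close.

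The point you are missing is that $\mathfrak{C}$ is not a full product commutator but a \emph{paraproduct} commutator, and the $\circlesign{\prec}_{s}$ structure must be exploited before any Kato--Ponce-type inequality is applied. The paper first peels off $(\Lambda^{\frac{5}{2}}w)\circlesign{\prec}_{s}Q^{\mathcal{H}}$, which is bounded directly in $L^{2}$ by \eqref{Sobolev products c} (the paraproduct prevents $\Lambda^{\frac{5}{2}}w$ from seeing high frequencies of itself), giving $\lVert w\rVert_{\dot{H}^{\frac{1}{4}+\frac{3\kappa}{2}}}N_{t}^{\kappa}$. The remaining piece $\Lambda^{\frac{5}{2}}(w\circlesign{\prec}_{s}Q^{\mathcal{H}})-w\circlesign{\prec}_{s}\Lambda^{\frac{5}{2}}Q^{\mathcal{H}}$ is written block by block as $\sum_{m}[\Lambda^{\frac{5}{2}}(S_{m-1}w\otimes_{s}\Delta_{m}Q^{\mathcal{H}})-S_{m-1}w\otimes_{s}\Lambda^{\frac{5}{2}}\Delta_{m}Q^{\mathcal{H}}]$, and Lemma \ref{Lemma 3.3} is applied to each block: the frequency truncation $S_{m-1}$ lets $\lVert\Lambda^{\frac{5}{2}}S_{m-1}w\rVert_{L^{2}}$ and $\lVert\nabla S_{m-1}w\rVert_{L^{6/(3-2\kappa)}}$ be resummed against $\lVert\Delta_{m}Q^{\mathcal{H}}\rVert_{L^{\infty}}$, $\lVert\Lambda^{3/2}\Delta_{m}Q^{\mathcal{H}}\rVert_{L^{3/\kappa}}$ by Young's convolution inequality, so that $Q^{\mathcal{H}}$ is never asked for more than $\mathscr{C}^{\frac{9}{4}-\frac{3\kappa}{2}}$ and $w$ ends up in $\dot{H}^{\frac{1}{4}+3\kappa}$. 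Without this block-wise use of the paraproduct localization, no off-the-shelf commutator estimate reaches order $\frac{5}{2}$, and your proof of the last term of \eqref{est 95} is incomplete.
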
 

\begin{proof}[Proof of Proposition \ref{Proposition 4.7}]
Due to \eqref{Define I4} we have 
\begin{equation}\label{est 88}  
\RomanI_{4}+  2 \langle w^{\mathcal{L}}, \divergence (w^{\otimes 2}) \rangle = -2 \langle w^{\mathcal{L}}, \divergence (2 Y \otimes_{s} w- \mathcal{C}^{\circlesign{\prec}_{s}}(w, Q^{\mathcal{H}}) + Y^{\otimes 2} ) \rangle.
\end{equation} 
We first estimate the terms other than $\mathscr{C}^{\circlesign{\prec}}_{s} (w, Q^{\mathcal{H}})$ that involves a commutator:
\begin{subequations}\label{est 93}  
\begin{align}
&2 \lvert \langle w^{\mathcal{L}}, \divergence (2 Y \otimes_{s} w ) \rangle \rvert \lesssim \lVert w^{\mathcal{L}} \rVert_{\dot{H}^{1- \frac{3\kappa}{2}}} \lVert Y \otimes_{s} w \rVert_{\dot{H}^{\frac{3\kappa}{2}}} \nonumber \\
&\overset{\eqref{Bony's decomposition}\eqref{Sobolev products}}{\lesssim} \lVert w^{\mathcal{L}} \rVert_{\dot{H}^{1- \frac{3\kappa}{2}}} \lVert Y \rVert_{\mathscr{C}^{1-\kappa}} \lVert w \rVert_{H^{2\kappa}} \overset{\eqref{Define Lt kappa and Nt kappa} \eqref{Estimate on wH}}{\lesssim} N_{t}^{\kappa} \lVert w^{\mathcal{L}} \rVert_{\dot{H}^{1- \frac{3\kappa}{2}}} ( \lVert w^{\mathcal{L}} \rVert_{\dot{H}^{2\kappa}} + N_{t}^{\kappa}), \\
& -2 \langle w^{\mathcal{L}}, \divergence ( Y^{\otimes 2}) \rangle  \lesssim \lVert w^{\mathcal{L}} \rVert_{\dot{H}^{1- \frac{3\kappa}{2}}} \left(\lVert Y \circlesign{\prec} Y \rVert_{\dot{H}^{\frac{3\kappa}{2}}} + \lVert Y \circlesign{\circ}_{s} Y \rVert_{\dot{H}^{\frac{3\kappa}{2}}} \right) \lesssim (N_{t}^{\kappa})^{2} \lVert w^{\mathcal{L}} \rVert_{\dot{H}^{1- \frac{3\kappa}{2}}}.
\end{align}
\end{subequations} 
Next, we first rewrite $\mathcal{C}^{\circlesign{\prec}_{s}} (w, Q^{\mathcal{H}})$ from \eqref{Define commutator} by utilizing \eqref{Equation of w} as
\begin{subequations}  
\begin{align}
 \mathcal{C}^{\circlesign{\prec}_{s}} (w, Q^{\mathcal{H}}) =& \left( \partial_{t} w + \nu \Lambda^{\frac{5}{2}} w \right) \circlesign{\prec}_{s} Q^{\mathcal{H}}  \nonumber \\
 &+ \nu \left( \Lambda^{\frac{5}{2}} (w \circlesign{\prec}_{s} Q^{\mathcal{H}} ) - (\Lambda^{\frac{5}{2}} w) \circlesign{\prec}_{s} Q^{\mathcal{H}} - w \circlesign{\prec}_{s} \Lambda^{\frac{5}{2}} Q^{\mathcal{H}} \right) \label{est 167}\\
=& -  \left[ \mathbb{P}_{L} \divergence (w^{\otimes 2} + D \otimes_{s} w + Y^{\otimes 2} ) \right] \circlesign{\prec}_{s} Q^{\mathcal{H}}  \nonumber \\
& \hspace{10mm} + \nu \left( \Lambda^{\frac{5}{2}} (w \circlesign{\prec}_{s} Q^{\mathcal{H}}) -  ( \Lambda^{\frac{5}{2}} w) \circlesign{\prec}_{s} Q^{\mathcal{H}} - w \circlesign{\prec}_{s} \Lambda^{\frac{5}{2}} Q^{\mathcal{H}} \right), \label{est 87}
\end{align}
\end{subequations}   
where as we emphasized in \eqref{HR24 commutator}-\eqref{our commutator} and Remark \ref{Remark 2.2}, Leibniz rule in time derivative can be used but not in space. We will use the identity \eqref{est 87} in this proof of Proposition \ref{Proposition 4.7} while \eqref{est 167} will be useful in the proof of Proposition \ref{Proposition 5.2}. 

Considering \eqref{est 87} in \eqref{est 88}, we see that our remaining task now is to estimate 
\begin{equation}\label{Split to Ck}
2 \langle w^{\mathcal{L}}, \divergence C^{\circlesign{\prec}_{s}}(w, Q^{\mathcal{H}}) \rangle = \sum_{k=1}^{4} C_{k},
\end{equation}
where 
\begin{subequations}\label{Define Ck}
\begin{align}
& C_{1} \triangleq -2 \langle w^{\mathcal{L}}, \divergence \left( [ \mathbb{P}_{L} \divergence (w^{\otimes 2} ) ] \circlesign{\prec}_{s} Q^{\mathcal{H}}  \right) \rangle, \label{Define C1}\\
& C_{2} \triangleq -2 \langle w^{\mathcal{L}}, \divergence \left( [ \mathbb{P}_{L} \divergence (D \otimes_{s} w) ] \circlesign{\prec}_{s} Q^{\mathcal{H}} \right) \rangle,  \label{Define C2}\\
& C_{3} \triangleq -2 \langle w^{\mathcal{L}}, \divergence \left( [ \mathbb{P}_{L} \divergence (Y^{\otimes 2}) ] \circlesign{\prec}_{s} Q^{\mathcal{H}}\right) \rangle,  \label{Define C3}\\
& C_{4} \triangleq 2 \langle w^{\mathcal{L}}, \divergence \left( \Lambda^{\frac{5}{2}} (w \circlesign{\prec}_{s} Q^{\mathcal{H}}) - (\Lambda^{\frac{5}{2}} w) \circlesign{\prec}_{s} Q^{\mathcal{H}} - w \circlesign{\prec}_{s} \Lambda^{\frac{5}{2}} Q^{\mathcal{H}} \right) \rangle. \label{Define C4}
\end{align}
\end{subequations} 

Let us first work on $C_{1}$ as follows: using $2 \kappa - \eta + \frac{1}{2} \leq 0$ due to $\eta \geq \frac{5}{8}$ from hypothesis, Lemma \ref{Lemma 3.2}, $\lambda_{t}^{-\delta} \lesssim (1+ \lVert w(t) \rVert_{L^{2}})^{-\tau \delta}$ for all $\delta \geq 0$ from the proof of Proposition \ref{Proposition 4.4}, and the Gagliardo-Nirenberg inequality of $\lVert f \rVert_{\dot{H}^{\frac{3}{8} - \frac{\kappa}{4}}}  \lesssim \lVert f \rVert_{L^{2}}^{\frac{7}{10} + \frac{\kappa}{5}} \lVert f \rVert_{\dot{H}^{\frac{5}{4}}}^{\frac{3}{10} - \frac{\kappa}{5}}$, we obtain 
\begin{align} 
&C_{1} \lesssim \lVert w^{\mathcal{L}} \rVert_{\dot{H}^{\eta}} \lVert \mathbb{P}_{L} \divergence (w^{\otimes 2}) \rVert_{H^{-\frac{7}{4} - \frac{\kappa}{2}}} \lVert Q^{\mathcal{H}} \rVert_{\mathscr{C}^{\frac{11}{4} - \eta + \frac{\kappa}{2}}}  \nonumber\\
&\hspace{3mm}\lesssim \lVert w^{\mathcal{L}} \rVert_{\dot{H}^{\eta}} \lVert w^{\otimes 2} \rVert_{H^{-\frac{3}{4} - \frac{\kappa}{2}}} \lambda_{t}^{2\kappa - \eta + \frac{1}{2}} \lVert Q \rVert_{\mathscr{C}^{\frac{9}{4} - \frac{3\kappa}{2}}} \label{est 89}\\
&\overset{\eqref{Regularity of Q} \eqref{Define Lt kappa and Nt kappa} \eqref{Estimate on wH}}{\lesssim} N_{t}^{\kappa} \lVert w^{\mathcal{L}} \rVert_{\dot{H}^{\eta}}  \left( \lVert w^{\mathcal{L}} \rVert_{\dot{H}^{\frac{3}{8} - \frac{\kappa}{4}}} + (1+ \lVert w \rVert_{L^{2}})^{1- \tau [\frac{7}{8} - \frac{7 \kappa}{4}]}N_{t}^{\kappa}\right)^{2} (1+ \lVert w  \rVert_{L^{2}})^{\tau (2\kappa - \eta + \frac{1}{2})}   \nonumber \\
& \lesssim N_{t}^{\kappa} \lVert w^{\mathcal{L}} \rVert_{\dot{H}^{\eta}} \lVert w^{\mathcal{L}} \rVert_{\dot{H}^{\frac{5}{4}}}^{\frac{3}{5} - \frac{2\kappa}{5}} \lVert w^{\mathcal{L}} \rVert_{L^{2}}^{\frac{7}{5} + \frac{2\kappa}{5}} (1+ \lVert w \rVert_{L^{2}})^{\tau (2 \kappa - \eta + \frac{1}{2})} + (N_{t}^{\kappa})^{3} \lVert w^{\mathcal{L}} \rVert_{\dot{H}^{\eta}} (1+ \lVert w \rVert_{L^{2}})^{\tau (2\kappa - \eta + \frac{1}{2})}. \nonumber 
\end{align}
Then, utilizing the fact that $N_{t}^{\kappa} \geq 1$ from \eqref{Define Lt kappa and Nt kappa} and 
\begin{equation}\label{est 90}
 \tau(2\kappa - \eta + \frac{1}{2}) + \frac{7}{5} + \frac{2\kappa}{5}  \leq \frac{40}{13} (2 \kappa - \frac{31}{32} + \frac{1}{2}) + \frac{7}{5} + \frac{2\kappa}{5} \leq \left(\frac{426}{65}\right) \frac{1}{200} - \frac{11}{260}  \leq 0 
\end{equation} 
due to the hypothesis of $\eta \geq \frac{40}{13}, \eta \geq \frac{31}{32}$, and $\kappa < \frac{1}{200}$, we conclude 
\begin{equation}\label{est 91}
C_{1}\lesssim (N_{t}^{\kappa})^{3} \lVert w^{\mathcal{L}} \rVert_{\dot{H}^{\eta}} \lVert w^{\mathcal{L}} \rVert_{\dot{H}^{\frac{5}{4}}}^{\frac{3}{5} - \frac{2\kappa}{5}}. 
\end{equation}
 
Second, we rewrite using \eqref{Define D} from \eqref{Define C2}
\begin{equation}\label{Split C2}
C_{2} = \sum_{k=1}^{2} C_{2k} 
\end{equation} 
where 
\begin{subequations}\label{Define C21 and C22}
\begin{align}
& C_{21} \triangleq - 4 \langle w^{\mathcal{L}}, \divergence \left( [ \mathbb{P}_{L} \divergence (Y \otimes_{s} w) ] \circlesign{\prec}_{s} Q^{\mathcal{H}} \right) \rangle, \\
& C_{22}  \triangleq - 4 \langle w^{\mathcal{L}}, \divergence \left( [ \mathbb{P}_{L} \divergence (X \otimes_{s} w) ] \circlesign{\prec}_{s} Q^{\mathcal{H}} \right) \rangle.
\end{align}
\end{subequations}
For a subsequent purpose, we point out that we can estimate for any $\eta \in (0,1)$ and $\kappa_{0} \in (0, \frac{1}{200})$, 
\begin{align}
&C_{21} \lesssim  \lVert w^{\mathcal{L}} \rVert_{\dot{H}^{\eta}} \lVert  [ \mathbb{P}_{L} \divergence ( Y \otimes_{s} w) ] \circlesign{\prec}_{s} Q^{\mathcal{H}} \rVert_{\dot{H}^{1-\eta}} \nonumber  \\
&\overset{\eqref{Sobolev products c} \eqref{Regularity of Q}\eqref{Define Lt kappa and Nt kappa} \eqref{Bony's decomposition}}{\lesssim} N_{t}^{\kappa} \lVert w^{\mathcal{L}} \rVert_{\dot{H}^{\eta}} [ \lVert Y \circlesign{\succ}_{s} w \rVert_{H^{-\frac{1}{4} - \eta + \frac{3\kappa}{2}}} + \lVert Y \circlesign{\prec}_{s} w \rVert_{H^{-\frac{1}{4} - \eta + \frac{3\kappa}{2}}} + \lVert Y \circlesign{\circ}_{s} w \rVert_{H^{\kappa}}]  \nonumber \\
&\overset{\eqref{Sobolev products}  \eqref{Define Lt kappa and Nt kappa} \eqref{Estimate on wH}}{\lesssim} (N_{t}^{\kappa})^{2} \lVert w^{\mathcal{L}} \rVert_{\dot{H}^{\eta}} [ \lVert w^{\mathcal{L}} \rVert_{\dot{H}^{\kappa}} + (1+ \lVert w \rVert_{L^{2}})^{1- \tau (\frac{5}{4} - 3 \kappa)} N_{t}^{\kappa}], \label{est 120}   
\end{align} 
and therefore, our current hypothesis on $\tau$ and $\kappa$ that implies $1- \tau (\frac{5}{4} - 3 \kappa) < 0$ gives us 
\begin{equation}\label{Estimate on C21}
C_{21}  \lesssim (N_{t}^{\kappa})^{2} \lVert w^{\mathcal{L}} \rVert_{\dot{H}^{\eta}} ( \lVert w^{\mathcal{L}} \rVert_{H^{\kappa}} + N_{t}^{\kappa}).
\end{equation} 
Next, for a subsequent purpose again we point out that we can estimate for any $\eta \in (0, 1)$ and $\kappa_{0} \in (0, \frac{1}{200})$, 
\begin{align}
C_{22}  \lesssim& \lVert w^{\mathcal{L}} \rVert_{\dot{H}^{\eta}} \lVert [ \mathbb{P}_{L} \divergence ( X \otimes_{s} w) ] \circlesign{\prec}_{s} Q^{\mathcal{H}} \rVert_{\dot{H}^{1-\eta}} \nonumber \\
\overset{\eqref{Regularity of Q} \eqref{Define Lt kappa and Nt kappa} \eqref{Sobolev products}}{\lesssim}&  N_{t}^{\kappa} \lVert w^{\mathcal{L}} \rVert_{\dot{H}^{\eta}} [ \lVert X \rVert_{\mathscr{C}^{-\frac{1}{4} - \kappa}} \lVert w \rVert_{H^{-\eta + \frac{5\kappa}{2}}} + \lVert X \rVert_{\mathscr{C}^{-\frac{1}{4} - \kappa}} \lVert w \rVert_{H^{-\eta + \frac{5\kappa}{2}}} + \lVert X \rVert_{\mathscr{C}^{-\frac{1}{4} - \kappa}} \lVert w \rVert_{H^{\frac{1}{4} + \frac{3\kappa}{2}}} ]  \nonumber \\
\overset{\eqref{Estimate on wH}}{\lesssim}& (N_{t}^{\kappa})^{2} \lVert w^{\mathcal{L}} \rVert_{\dot{H}^{\eta}} [ \lVert w^{\mathcal{L}} \rVert_{\dot{H}^{\frac{1}{4} + \frac{3\kappa}{2}}} + (1+ \lVert w \rVert_{L^{2}})^{1- \tau (1- \frac{7\kappa}{2})} N_{t}^{\kappa} t^{\frac{\kappa}{5}} ]; \label{est 122}
\end{align}
utilizing that $1 - \tau (1- \frac{7\kappa}{2}) \leq 0$ due to our current hypothesis, we conclude that 
\begin{equation}
C_{22}  \lesssim (N_{t}^{\kappa})^{2} \lVert w^{\mathcal{L}} \rVert_{\dot{H}^{\eta}} [ \lVert w^{\mathcal{L}} \rVert_{\dot{H}^{\frac{1}{4} + \frac{3\kappa}{2}}} + N_{t}^{\kappa}].\label{Estimate on C22}
\end{equation} 
Applying \eqref{Estimate on C21} and \eqref{Estimate on C22} to \eqref{Split C2} allows us to conclude
\begin{equation}\label{Estimate on C2}
C_{2}  \lesssim (N_{t}^{\kappa})^{2} \lVert w^{\mathcal{L}} \rVert_{\dot{H}^{\eta}} [ \lVert w^{\mathcal{L}} \rVert_{\dot{H}^{\frac{1}{4} + \frac{3\kappa}{2}}} + N_{t}^{\kappa} ].
\end{equation} 

Next, we estimate $C_{3}$; for a subsequence purpose again, we point out that this estimate is valid for any $\eta \in (0,1)$ and $\kappa_{0} \in (0, \frac{1}{200})$: 
\begin{align}\label{Estimate on C3}
C_{3}  \lesssim&  \lVert w^{\mathcal{L}} \rVert_{\dot{H}^{\eta}} \lVert [ \mathbb{P}_{L} \divergence (Y^{\otimes 2} ) ] \circlesign{\prec}_{s} Q^{\mathcal{H}} \rVert_{\dot{H}^{1-\eta}} \overset{\eqref{Sobolev products d}}{\lesssim} \lVert w^{\mathcal{L}} \rVert_{\dot{H}^{\eta}} \lVert \mathbb{P}_{L} \divergence (Y^{\otimes 2}) \rVert_{H^{-\frac{5}{4} - \eta + \frac{3\kappa}{2}}} \lVert Q^{\mathcal{H}}  \rVert_{\mathscr{C}^{\frac{9}{4} - \frac{3\kappa}{2}}} \nonumber \\
& \hspace{20mm} \overset{\eqref{Regularity of Q}}{\lesssim} \lVert w^{\mathcal{L}} \rVert_{\dot{H}^{\eta}} \lVert Y^{\otimes 2} \rVert_{H^{-\frac{1}{4} - \eta + \frac{3\kappa}{2}}}   \lVert X \rVert_{C_{t} \mathscr{C}^{-\frac{1}{4} - \kappa}} t^{\frac{\kappa}{5}}  \overset{\eqref{Define Lt kappa and Nt kappa}}{\lesssim} (N_{t}^{\kappa})^{3} \lVert w^{\mathcal{L}} \rVert_{\dot{H}^{\eta}}.
\end{align}

We finally come to $C_{4}$ of \eqref{Define C4}. After all the failed attempts we described in Remark \ref{Remark 2.2}, what we were able to do is to make a careful observation of ``$\circlesign{\prec}_{s}$'' throughout in our commutator \eqref{Define C4}, use Besov space techniques combined with Lemma \ref{Lemma 3.3} to deduce a sufficient estimate. First, we split to 
\begin{equation}\label{Split C4}
C_{4} \lesssim \lVert w^{\mathcal{L}} \rVert_{\dot{H}^{1}} \lVert \Lambda^{\frac{5}{2}} (w \circlesign{\prec}_{s} Q^{\mathcal{H}}) - (\Lambda^{\frac{5}{2}} w) \circlesign{\prec}_{s} Q^{\mathcal{H}} - w \circlesign{\prec}_{s} \Lambda^{\frac{5}{2}} Q^{\mathcal{H}}  \rVert_{L^{2}}  \lesssim \sum_{k=1}^{2}  \lVert w^{\mathcal{L}} \rVert_{\dot{H}^{1}}   C_{4k}
\end{equation} 
where 
\begin{equation}\label{Define C41 and C42}
C_{41} \triangleq \lVert ( \Lambda^{\frac{5}{2}} w) \circlesign{\prec}_{s} Q^{\mathcal{H}} \rVert_{L^{2}} \hspace{2mm} \text{ and } \hspace{2mm} 
C_{42} \triangleq \lVert \Lambda^{\frac{5}{2}} (w \circlesign{\prec}_{s} Q^{\mathcal{H}}) - w \circlesign{\prec}_{s} \Lambda^{\frac{5}{2}} Q^{\mathcal{H}} \rVert_{L^{2}}.  
\end{equation} 

We estimate 
\begin{equation}\label{Estimate on C41}
C_{41} \overset{\eqref{Sobolev products c}\eqref{Regularity of Q}}{\lesssim} \lVert w \rVert_{\dot{H}^{\frac{1}{4} + \frac{3\kappa}{2}}}  \lVert X \rVert_{C_{t} \mathscr{C}^{-\frac{1}{4} - \kappa}}  t^{\frac{\kappa}{5}}  \overset{\eqref{Define Lt kappa and Nt kappa}}{\lesssim} \lVert w \rVert_{\dot{H}^{\frac{1}{4} + \frac{3\kappa}{2}}}  N_{t}^{\kappa}.
\end{equation} 
On the other hand, we first write by \eqref{est 92}, 
\begin{equation*}
 \Lambda^{\frac{5}{2}} (w \circlesign{\prec}_{s} Q^{\mathcal{H}}) - w \circlesign{\prec}_{s} \Lambda^{\frac{5}{2}} Q^{\mathcal{H}} =  \sum_{m\geq -1} [\Lambda^{\frac{5}{2}} \left((S_{m-1} w) \otimes_{s} \Delta_{m} Q^{\mathcal{H}} \right) -  (S_{m-1} w) \otimes_{s} \Delta_{m} \Lambda^{\frac{5}{2}} Q^{\mathcal{H}} ],
\end{equation*} 
rely on \eqref{Kato-Ponce} and Sobolev embedding of $\dot{H}^{\kappa} (\mathbb{T}^{3}) \hookrightarrow L^{\frac{6}{3-2\kappa}} (\mathbb{T}^{3})$, and Young's inequality for convolution to estimate 
\begin{align}
C_{42}\lesssim& \sum_{m\geq -1} \lVert S_{m-1} w \rVert_{\dot{H}^{1+ \kappa}} \lVert \Lambda^{\frac{3}{2}} \Delta_{m} Q^{\mathcal{H}} \rVert_{L^{\frac{3}{\kappa}}} + \lVert S_{m-1} w \rVert_{\dot{H}^{\frac{5}{2}}} \lVert \Delta_{m} Q^{\mathcal{H}} \rVert_{L^{\infty}} \nonumber  \\
\lesssim& \left( \sum_{m\geq -1} 2^{- \lvert m \rvert (\frac{3}{4} - \frac{3\kappa}{2})} \ast_{m} 2^{m( \frac{1}{4} + \frac{5\kappa}{2})} \lVert \Delta_{m} w \rVert_{L^{2}} \right) \lVert Q^{\mathcal{H}}  \rVert_{\mathscr{C}^{\frac{9}{4} - \frac{3\kappa}{2}}}   \nonumber \\
& + \left( \sum_{m\geq -1} 2^{- \lvert m \rvert ( \frac{9}{4} - \frac{3\kappa}{2} )} \ast_{m} 2^{m ( \frac{1}{4} + \frac{3\kappa}{2})} \lVert \Delta_{m} w \rVert_{L^{2}} \right) \lVert Q^{\mathcal{H}} \rVert_{\mathscr{C}^{\frac{9}{4} - \frac{3\kappa}{2}}}   \nonumber \\
\overset{\eqref{Regularity of Q}}{\lesssim}&  \lVert X \rVert_{C_{t} \mathscr{C}^{-\frac{1}{4} - \kappa}} t^{\frac{\kappa}{5}}  \lVert 2^{- \lvert m \rvert (\frac{3}{4} - \frac{3\kappa}{2})} \rVert_{l_{m \geq -1}^{1}} \lVert w \rVert_{B_{2,1}^{\frac{1}{4} + \frac{5\kappa}{2}}} \overset{\eqref{Define Lt kappa and Nt kappa}}{\lesssim} N_{t}^{\kappa} \lVert w \rVert_{\dot{H}^{\frac{1}{4} + 3 \kappa}},  \label{Estimate on C42} 
\end{align}
where the last inequality used the fact that $B_{p,q}^{s+\epsilon} \subset B_{p,\infty}^{s}$ for any $q \in [1,\infty]$ as long as $\epsilon > 0$.  Applying \eqref{Estimate on C41} and \eqref{Estimate on C42} to \eqref{Split C4} allows us to estimate 
\begin{align}
C_{4}  \lesssim& \lVert w^{\mathcal{L}} \rVert_{\dot{H}^{1}}   \left( \lVert w \rVert_{\dot{H}^{\frac{1}{4} + \frac{3\kappa}{2}}}  N_{t}^{\kappa} + N_{t}^{\kappa} \lVert w \rVert_{\dot{H}^{\frac{1}{4} + 3 \kappa}}  \right) \label{Estimate on C4} \\
\overset{\eqref{Estimate on wH}}{\lesssim}& N_{t}^{\kappa} \lVert w^{\mathcal{L}} \rVert_{\dot{H}^{1}} \left( \lVert w^{\mathcal{L}} \rVert_{\dot{H}^{\frac{1}{4} + 3\kappa}} + (1+ \lVert w \rVert_{L^{2}})^{1- \tau (1- \frac{7\kappa}{2})}  N_{t}^{\kappa} t^{\frac{\kappa}{5}} \right) \lesssim N_{t}^{\kappa} \lVert w^{\mathcal{L}} \rVert_{\dot{H}^{1}} [ \lVert w^{\mathcal{L}} \rVert_{\dot{H}^{\frac{1}{4} + 3 \kappa}} + N_{t}^{\kappa}] \nonumber 
\end{align} 
where the last inequality relied on the fact that $1-\tau (1-5 \kappa) \leq 0$ due to our hypothesis of $\tau \geq \frac{40}{13}$ and $\kappa < \frac{1}{200}$. 
We can now conclude by applying \eqref{est 91}, \eqref{Estimate on C2}, \eqref{Estimate on C3}, and \eqref{Estimate on C4} to \eqref{Split to Ck} that 
\begin{equation}\label{est 94}
2 \langle w^{\mathcal{L}}, \divergence C^{\circlesign{\prec}_{s}}(w, Q^{\mathcal{H}}) \rangle \lesssim ( N_{t}^{\kappa})^{3} \lVert w^{\mathcal{L}} \rVert_{\dot{H}^{\eta}} \lVert w^{\mathcal{L}} \rVert_{\dot{H}^{\frac{5}{4}}}^{\frac{3}{5} - \frac{2\kappa}{5}} + (N_{t}^{\kappa})^{2} \lVert w^{\mathcal{L}} \rVert_{\dot{H}^{1}} [ \lVert w^{\mathcal{L}} \rVert_{\dot{H}^{\frac{1}{4} + 3 \kappa}} + N_{t}^{\kappa} ]. 
\end{equation} 
At last, applying \eqref{est 93} and \eqref{est 94} to \eqref{est 88} gives us 
\begin{align}
\RomanI_{4}+  2 \langle w^{\mathcal{L}}, \divergence (w^{\otimes 2}) \rangle  \lesssim& N_{t}^{\kappa} \lVert w^{\mathcal{L}} \rVert_{\dot{H}^{1- \frac{3\kappa}{2}}} ( \lVert w^{\mathcal{L}} \rVert_{\dot{H}^{2\kappa}} + N_{t}^{\kappa}) + (N_{t}^{\kappa})^{2} \lVert w^{\mathcal{L}} \rVert_{\dot{H}^{1- \frac{3\kappa}{2}}}  \nonumber \\
&+  ( N_{t}^{\kappa})^{3} \lVert w^{\mathcal{L}} \rVert_{\dot{H}^{\eta}} \lVert w^{\mathcal{L}} \rVert_{\dot{H}^{\frac{5}{4}}}^{\frac{3}{5} - \frac{2\kappa}{5}} + (N_{t}^{\kappa})^{2} \lVert w^{\mathcal{L}} \rVert_{\dot{H}^{1}} [ \lVert w^{\mathcal{L}} \rVert_{\dot{H}^{\frac{1}{4} + 3 \kappa}} + N_{t}^{\kappa} ],
\end{align}
which verifies \eqref{est 95}. This completes the proof of Proposition \ref{Proposition 4.7}.  
\end{proof}

Applying \eqref{est 97}, \eqref{est 98}, \eqref{est 83}, \eqref{est 84}, and \eqref{est 95} to \eqref{est 96}, we have the following corollary. 

\begin{corollary}
Fix $\lambda_{t}$ from \eqref{Define lambda t} with $\tau = \frac{40}{13}$ and $\kappa_{0} \in (0, \frac{1}{200})$. Then there exists a constant $C >0$ such that for all $\kappa \in (0, \kappa_{0}]$, all $i \in \mathbb{N}_{0}$, and all $t \in [T_{i}, T_{i+1})$, 
\begin{align}
\partial_{t} \lVert w^{\mathcal{L}}(t) \rVert_{L^{2}}^{2} \leq& - \nu \lVert w^{\mathcal{L}} \rVert_{\dot{H}^{\frac{5}{4}}}^{2} + 2 \langle w^{\mathcal{L}}, \mathcal{A}_{t}^{\lambda_{t}} w^{\mathcal{L}} \rangle (t) + r_{\lambda} (t) \lVert w^{\mathcal{L}}(t) \rVert_{L^{2}}^{2} \nonumber \\
&+ C  \Bigg( ( N_{t}^{\kappa})^{2}  \lambda_{t}^{\frac{9}{32}} \lVert w^{\mathcal{L}}(t) \rVert_{\dot{H}^{\frac{31}{32} + 2 \kappa}} + (N_{t}^{\kappa})^{2} ( \lVert w^{\mathcal{L}} \rVert_{\dot{H}^{1- \frac{3\kappa}{2}}} + \lVert w^{\mathcal{L}} \rVert_{\dot{H}^{1-\frac{3\kappa}{2}}}^{2})  \nonumber \\
& \hspace{5mm} + N_{t}^{\kappa} \lVert w^{\mathcal{L}} \rVert_{\dot{H}^{\frac{31}{32} + 2 \kappa}} \lVert w^{\mathcal{L}} \rVert_{\dot{H}^{\frac{5}{4}}}^{\frac{3}{5} - \frac{2\kappa}{5}} + (N_{t}^{\kappa})^{2} \lVert w^{\mathcal{L}} \rVert_{\dot{H}^{1}} [ \lVert w^{\mathcal{L}} (t) \rVert_{\dot{H}^{\frac{1}{4} + 3\kappa}} + N_{t}^{\kappa} ] \Bigg). \label{est 99}
\end{align} 
\end{corollary}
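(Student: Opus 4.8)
This is a bookkeeping corollary: one inserts the estimates of Propositions~\ref{Proposition 4.5}--\ref{Proposition 4.7}, together with the exact identity \eqref{est 97} for $\RomanI_{1}$, into the decomposition \eqref{est 96} and collects terms. The first thing I would do is pin down a choice of the free parameters that makes all three propositions applicable simultaneously -- this is precisely the compatibility discussed in Remark~\ref{Remark 4.2}. I take $\tau = \frac{40}{13}$ and $\eta = \frac{31}{32} + 2\kappa$, with $\kappa_{0} \in (0,\frac{1}{200})$ and $\kappa \in (0,\kappa_{0}]$ as in the statement. Then $\kappa_{0} < \frac{1}{200}$ forces $2\kappa < \frac{1}{32}$, hence $\eta \in [\frac{31}{32},1)$, so the hypotheses of Proposition~\ref{Proposition 4.7} hold; since $\frac{31}{32} > \frac{5}{8} + \frac{\kappa_{0}}{2}$ and $\tau \geq 1$, Proposition~\ref{Proposition 4.5} applies; since $\tau \geq 2$ and $\eta > \frac{1}{\tau} + 3\kappa_{0}$, Proposition~\ref{Proposition 4.6}(a) applies; and since $\tau \geq \frac{25}{12}$ and $\eta > \max\{\frac{1}{2\tau} + \frac{5}{8} + \kappa_{0},\, \frac{2}{\tau} + 4\kappa_{0}\}$, Proposition~\ref{Proposition 4.6}(b) applies.

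Next I would carry out the substitution. Writing $\partial_{t}\lVert w^{\mathcal{L}}(t)\rVert_{L^{2}}^{2} = \RomanI_{1} + \RomanI_{2} + \RomanI_{3} + \RomanI_{4}$ as in \eqref{est 96}, the identity \eqref{est 97} for $\RomanI_{1}$ produces the first line of \eqref{est 99} (the $-\nu\lVert w^{\mathcal{L}}\rVert_{\dot{H}^{\frac{5}{4}}}^{2}$, $2\langle w^{\mathcal{L}},\mathcal{A}_{t}^{\lambda_{t}}w^{\mathcal{L}}\rangle$, and $r_{\lambda}(t)\lVert w^{\mathcal{L}}\rVert_{L^{2}}^{2}$ terms). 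I bound $\RomanI_{2}$ by \eqref{est 98} and $\RomanI_{3}$ by \eqref{est 83}; with the chosen $\eta$ the exponent of $\lambda_{t}$ in \eqref{est 83} is $\frac{5}{4} + 2\kappa - \eta = \frac{9}{32}$, which gives the leading term $(N_{t}^{\kappa})^{2}\lambda_{t}^{\frac{9}{32}}\lVert w^{\mathcal{L}}(t)\rVert_{\dot{H}^{\frac{31}{32} + 2\kappa}}$. For $\RomanI_{4}$ I write $\RomanI_{4} = \big(\RomanI_{4} + 2\langle w^{\mathcal{L}},\divergence(w^{\otimes 2})\rangle\big) - 2\langle w^{\mathcal{L}},\divergence(w^{\otimes 2})\rangle$, bound the first bracket by \eqref{est 95} (Proposition~\ref{Proposition 4.7}) and the subtracted term by \eqref{est 84} (Proposition~\ref{Proposition 4.6}(b)).

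It then remains to simplify. Since $w^{\mathcal{L}}(t)$ is mean-zero, the embedding $\dot{H}^{s}(\mathbb{T}^{3}) \hookrightarrow \dot{H}^{s'}(\mathbb{T}^{3})$ for $s \geq s'$ applies to it, and for $\kappa$ small one has $2\kappa \leq \frac{31}{32} + 2\kappa \leq 1 - \frac{3\kappa}{2}$; using this together with $N_{t}^{\kappa} \geq 1$ and $\lambda_{t} \geq 1$, the subcritical quadratic pieces -- namely $\lVert w^{\mathcal{L}}\rVert_{\dot{H}^{\eta}}^{2}N_{t}^{\kappa}$ coming from \eqref{est 98}, the analogous contribution of \eqref{est 84}, and the first block of \eqref{est 95} -- all collapse into $(N_{t}^{\kappa})^{2}\big(\lVert w^{\mathcal{L}}\rVert_{\dot{H}^{1-\frac{3\kappa}{2}}} + \lVert w^{\mathcal{L}}\rVert_{\dot{H}^{1-\frac{3\kappa}{2}}}^{2}\big)$, while the remaining two blocks of \eqref{est 95} supply the last two terms of the bracket in \eqref{est 99}. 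For $i < i_{0}(u^{\text{in}})$ the interval $[T_{i},T_{i+1})$ is empty so the bound is vacuous, and for $i \geq i_{0}(u^{\text{in}})$ the computation above applies verbatim with $\lambda_{t}$ as in \eqref{Define lambda t}. The only delicate point in all of this is the parameter choice: the several competing lower bounds on $\eta$ coming from the three propositions must be reconcilable with the constraint $\eta < 1$, which is exactly why $\kappa_{0}$ is taken below $\frac{1}{200}$ and $\tau$ is taken to be the minimal admissible value $\frac{40}{13}$.
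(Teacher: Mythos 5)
Your proposal is correct and follows essentially the same route as the paper: the corollary is obtained exactly by inserting \eqref{est 97}, \eqref{est 98}, \eqref{est 83}, \eqref{est 84}, and \eqref{est 95} into the decomposition \eqref{est 96} with the choice $\eta = \tfrac{31}{32}+2\kappa$ (so that $\tfrac{5}{4}+2\kappa-\eta = \tfrac{9}{32}$), and then absorbing the subcritical quadratic pieces into the $\dot H^{1-\frac{3\kappa}{2}}$ terms using $N_t^{\kappa}\geq 1$ and the mean-zero property of $w^{\mathcal{L}}$. Your explicit verification of the parameter compatibility across Propositions \ref{Proposition 4.5}--\ref{Proposition 4.7} is exactly the content of Remark \ref{Remark 4.2} and adds nothing beyond what the paper intends.
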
 

\begin{proposition}\label{Proposition 4.9}
Fix $\lambda_{t}$ from \eqref{Define lambda t} with $\tau = \frac{40}{13}$ and $\kappa_{0} \in (0, \frac{1}{200})$. Then, there exists a constant $C_{1} > 0$ and increasing continuous functions $C_{2}$ and $C_{3}$ from $\mathbb{R}_{\geq 0}$ to $\mathbb{R}_{\geq 0}$, specifically 
\begin{align}\label{Define C2 and C3} 
C_{2}(N_{t}^{\kappa}) \approx (N_{t}^{\kappa})^{5} + \mathbf{m}(N_{t}^{\kappa}) \hspace{1mm} \text{ and } C_{3}(N_{t}^{\kappa}) \approx (N_{t}^{\kappa})^{\frac{10}{1+ 6 \kappa}}  
\end{align}
where $\mathbf{m}$ is the map from Proposition \ref{Proposition on Anderson Hamiltonian}, such that for all $\kappa \in (0, \kappa_{0}]$, all $i \in \mathbb{N}_{0}$ such that $i \geq i_{0} (u^{\text{in}})$ for $i_{0}(u^{\text{in}})$ from \eqref{Define i0}, and all $t \in [T_{i}, T_{i+1})$, 
\begin{align}
\partial_{t} \lVert w^{\mathcal{L}}(t) \rVert_{L^{2}}^{2} \leq& - \frac{\nu}{2} \lVert w^{\mathcal{L}}(t) \rVert_{\dot{H}^{\frac{5}{4}}}^{2}  \nonumber  \\
&+ \left( C_{1} \ln(\lambda_{t}) + C_{2} (N_{t}^{\kappa}) \right) [ \lVert w^{\mathcal{L}} (t) \rVert_{L^{2}}^{2} + \lVert w^{\mathcal{L}} (T_{i}) \rVert_{L^{2}}^{2} ] + C_{3} (N_{t}^{\kappa}), \label{est 105}
\end{align}  
so that 
\begin{equation}\label{est 106}
\sup_{t \in [T_{i}, T_{i+1})} \lVert w^{\mathcal{L}}(t) \rVert_{L^{2}}^{2} + \frac{\nu}{2} \int_{T_{i}}^{T_{i+1}} \lVert w^{\mathcal{L}} (s) \rVert_{\dot{H}^{\frac{5}{4}}}^{2} ds \leq e^{\mu (T_{i+1} - T_{i})}[  \lVert w^{\mathcal{L}}(T_{i}) \rVert_{L^{2}}^{2} + C_{3} (N_{T_{i+1}}^{\kappa})], 
\end{equation} 
where 
\begin{equation}\label{Define mu}
\mu \triangleq C_{1} \ln(\lambda_{T_{i}}) + C_{2} (N_{T_{i+1}}^{\kappa}).
\end{equation}
\end{proposition}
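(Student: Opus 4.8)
The plan is to run a Gr\"onwall argument on the interval $[T_{i},T_{i+1})$ starting from the differential inequality \eqref{est 99} of the preceding Corollary, after disposing of its three structurally distinct ingredients: the quadratic form $2\langle w^{\mathcal{L}},\mathcal{A}_{t}^{\lambda_{t}}w^{\mathcal{L}}\rangle$, the renormalization term $r_{\lambda}(t)\lVert w^{\mathcal{L}}(t)\rVert_{L^{2}}^{2}$, and the $C(\cdots)$-bracket of mixed-norm contributions. The bookkeeping principle throughout is that the dissipation $-\nu\lVert w^{\mathcal{L}}(t)\rVert_{\dot{H}^{5/4}}^{2}$ already present in \eqref{est 99} must be large enough to swallow \emph{all} the fractional Sobolev norms the other three ingredients generate, so that exactly $-\tfrac{\nu}{2}\lVert w^{\mathcal{L}}(t)\rVert_{\dot{H}^{5/4}}^{2}$ survives in \eqref{est 105}.

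For the quadratic form I would invoke Proposition \ref{Proposition on Anderson Hamiltonian}: it provides, \emph{uniformly} in the discrete family $\{\lambda^{i}\}_{i\in\mathbb{N}_{0}}$ of cut-offs from Definition \ref{Definition 4.2} and for $\mathcal{A}_{t}^{\lambda}$ of \eqref{Define At lambda}, a bound of the shape $2\langle w^{\mathcal{L}},\mathcal{A}_{t}^{\lambda_{t}}w^{\mathcal{L}}\rangle(t)\leq\tfrac{\nu}{4}\lVert w^{\mathcal{L}}(t)\rVert_{\dot{H}^{5/4}}^{2}+\mathbf{m}(N_{t}^{\kappa})\lVert w^{\mathcal{L}}(t)\rVert_{L^{2}}^{2}$, i.e.\ the $\Lambda^{5/2}$-analogue of \eqref{est 62} in which a little of the $\dot{H}^{5/4}$-norm is retained to control the rough product; this is the source of the $\mathbf{m}(N_{t}^{\kappa})$ summand of $C_{2}$ in \eqref{Define C2 and C3}. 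For the renormalization constant I would estimate $r_{\lambda}(t)$ directly from the explicit series \eqref{Define r lambda1}--\eqref{Define r lambda2}: since $\mathfrak{l}(\lvert k\rvert/\lambda)$ is supported on $\{\lvert k\rvert<\lambda\}$, $\lvert e^{-2\nu\lvert k\rvert^{5/2}t}\rvert\leq1$, and $k_{m}^{2}/\lvert k\rvert^{2}\leq1$, both $r_{\lambda}^{1}(t)$ and each $r_{\lambda}^{2,m}(t)$ are bounded by a constant times $\sum_{0<\lvert k\rvert<\lambda}\lvert k\rvert^{-1/2}\cdot\lvert k\rvert^{-5/2}=\sum_{0<\lvert k\rvert<\lambda}\lvert k\rvert^{-3}$, which in $d=3$ is $\lesssim\ln(\lambda)$ (with $\ln 1=0$, the smooth factors $\mathfrak{l}(\lvert k\rvert/\lambda)^{2}=O((\lambda-1)^{2})$ handling $\lambda$ near $1$). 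Hence $r_{\lambda}(t)\lVert w^{\mathcal{L}}(t)\rVert_{L^{2}}^{2}\leq C_{1}\ln(\lambda_{t})\lVert w^{\mathcal{L}}(t)\rVert_{L^{2}}^{2}$, which is precisely the logarithmic room anticipated in \eqref{logarithmically supercritical}--\eqref{needed logarithmic renormalization}.

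The bracket is the delicate part. Every term there has the form $(N_{t}^{\kappa})^{a}\lambda_{t}^{b}\lVert w^{\mathcal{L}}\rVert_{\dot{H}^{s_{1}}}\lVert w^{\mathcal{L}}\rVert_{\dot{H}^{s_{2}}}^{p}$ with $s_{1},s_{2}\in[0,\tfrac{5}{4})$ and $b\geq0$; I would interpolate each factor by Gagliardo--Nirenberg, $\lVert w^{\mathcal{L}}\rVert_{\dot{H}^{s}}\lesssim\lVert w^{\mathcal{L}}\rVert_{L^{2}}^{1-4s/5}\lVert w^{\mathcal{L}}\rVert_{\dot{H}^{5/4}}^{4s/5}$, and then apply Young's inequality so the resulting power of $\lVert w^{\mathcal{L}}\rVert_{\dot{H}^{5/4}}$ — strictly less than $2$ because each $s_{j}<\tfrac{5}{4}$ — is swallowed by $\tfrac{\nu}{4}\lVert w^{\mathcal{L}}\rVert_{\dot{H}^{5/4}}^{2}$, leaving $(N_{t}^{\kappa})^{a'}\lambda_{t}^{b'}\lVert w^{\mathcal{L}}(t)\rVert_{L^{2}}^{c}$ with $c<2$. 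Here the specific choice $\tau=\tfrac{40}{13}$, $\eta=\tfrac{31}{32}$, $\kappa_{0}<\tfrac{1}{200}$ (motivated in Remark \ref{Remark 4.2}) is exactly what guarantees $\tau b'\leq2$, so $\lambda_{t}^{b'}=(1+\lVert w(T_{i})\rVert_{L^{2}})^{\tau b'}\lesssim(1+\lVert w(T_{i})\rVert_{L^{2}})^{2}$; combining with Proposition \ref{Proposition 4.4} (taking the exponent there to saturate and using $\tau(\tfrac{5}{4}-2\kappa)>1$, which gives $\lVert w^{\mathcal{H}}(T_{i})\rVert_{L^{2}}\lesssim N_{T_{i}}^{\kappa}T_{i}^{\kappa/5}$ and so lets $\lVert w(T_{i})\rVert_{L^{2}}$ be replaced by $\lVert w^{\mathcal{L}}(T_{i})\rVert_{L^{2}}$ up to an $N$-dependent term), and then $\lVert w^{\mathcal{L}}(t)\rVert_{L^{2}}^{c}\lesssim1+\lVert w^{\mathcal{L}}(t)\rVert_{L^{2}}^{2}$, produces precisely the two-point right-hand side $C_{2}(N_{t}^{\kappa})[\lVert w^{\mathcal{L}}(t)\rVert_{L^{2}}^{2}+\lVert w^{\mathcal{L}}(T_{i})\rVert_{L^{2}}^{2}]+C_{3}(N_{t}^{\kappa})$ of \eqref{est 105}, with $C_{3}$ collecting the genuinely inhomogeneous pieces. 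Tracking the exponents through these Young steps — the $\dot{H}^{5/4}$-power $\tfrac{3}{5}-\tfrac{2\kappa}{5}$ of the $C_{1}$-type term in \eqref{Define Ck} and the $\dot{H}^{1-3\kappa/2}$-squared term originating from $Y\otimes_{s}w$ in \eqref{est 93} being the worst — yields the powers recorded in \eqref{Define C2 and C3}.

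Finally \eqref{est 106} is a routine Gr\"onwall conclusion: on $[T_{i},T_{i+1})$ both $\lambda_{t}=\lambda_{T_{i}}$ and $N_{t}^{\kappa}\leq N_{T_{i+1}}^{\kappa}$ are frozen (Definition \ref{Definition 4.2}, using $i\geq i_{0}(u^{\text{in}})$), so with $\mu$ from \eqref{Define mu} the estimate \eqref{est 105} becomes the scalar inequality $\partial_{t}y(t)\leq-\tfrac{\nu}{2}\lVert w^{\mathcal{L}}(t)\rVert_{\dot{H}^{5/4}}^{2}+\mu\bigl(y(t)+y(T_{i})\bigr)+C_{3}(N_{T_{i+1}}^{\kappa})$ for $y(t):=\lVert w^{\mathcal{L}}(t)\rVert_{L^{2}}^{2}$; dropping the dissipative term and applying Gr\"onwall on $[T_{i},t]$ (with $y(T_{i})=\lVert w^{\mathcal{L}}(T_{i})\rVert_{L^{2}}^{2}$ and $\mu\gtrsim1$) gives $y(t)\leq e^{\mu(T_{i+1}-T_{i})}[\lVert w^{\mathcal{L}}(T_{i})\rVert_{L^{2}}^{2}+C_{3}(N_{T_{i+1}}^{\kappa})]$, and integrating the inequality once more over $[T_{i},T_{i+1})$ moves the dissipative term to the left with the same bound. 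The main obstacle is the third step: making the frozen cut-off powers $\lambda_{t}^{b'}=(1+\lVert w(T_{i})\rVert_{L^{2}})^{\tau b'}$ land inside the degree-$\leq2$ budget in $\lVert w^{\mathcal{L}}(T_{i})\rVert_{L^{2}}$ after the Gagliardo--Nirenberg and Young losses, which is the pinch point that dictates the awkward constants $\tau=\tfrac{40}{13}$, $\eta=\tfrac{31}{32}$, $\kappa_{0}<\tfrac{1}{200}$ and reflects how much less slack the three-dimensional energy estimate has than its two-dimensional counterpart in \cite{HR24} (cf.\ Remark \ref{Remark 2.1}).
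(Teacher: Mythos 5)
Your proposal is correct and follows essentially the same route as the paper's proof: the spectral bound $\langle w^{\mathcal{L}},\mathcal{A}_{t}^{\lambda_{t}}w^{\mathcal{L}}\rangle\leq\mathbf{m}(N_{t}^{\kappa})\lVert w^{\mathcal{L}}\rVert_{L^{2}}^{2}$ from Proposition \ref{Proposition on Anderson Hamiltonian}, the logarithmic bound $r_{\lambda}(t)\lesssim\ln\lambda$ (which the paper simply quotes as \eqref{logarithmic growth}), Gagliardo--Nirenberg plus Young on each bracket term with $\lambda_{t}^{9/32}\lesssim1+\lVert w^{\mathcal{L}}(t)\rVert_{L^{2}}$ via $\tau\cdot\tfrac{9}{32}<1$, and Gr\"onwall on $[T_{i},T_{i+1})$. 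The only differences are cosmetic bookkeeping (retaining a fraction of the dissipation in the Anderson bound, and attributing the $\lVert w^{\mathcal{L}}(T_{i})\rVert_{L^{2}}^{2}$ term to the frozen cut-off rather than adding it for free), neither of which affects the argument.
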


\begin{proof}[Proof of Proposition \ref{Proposition 4.9}]
We fix an arbitrary $t \in [T_{i}, T_{i+1})$. In contrast to \cite{HR24, Y23c, Y25d}, we have an extra $r_{\lambda}^{2}(t)$ of \eqref{Define r lambda1} as part of the $r_{\lambda}(t)$ inside our estimate \eqref{est 99}; in any event, every entry of $r_{\lambda}(t) \Id$ is non-negative. Therefore, applying Proposition \ref{Proposition on Anderson Hamiltonian}, similarly to \cite[p. 23]{HR24} (also \cite[Equation (120a)]{Y23c}), for $\mathbf{m}$ from Proposition \ref{Proposition on Anderson Hamiltonian} (2), 
\begin{equation}\label{est 102}
 \langle w^{\mathcal{L}}, \mathcal{A}_{t}^{\lambda_{t}}w^{\mathcal{L}} \rangle(t) \leq \mathbf{m}(N_{t}^{\kappa}) \lVert w^{\mathcal{L}}\rVert_{L^{2}}^{2}. 
\end{equation} 
Additionally, our hypothesis of $\tau = \frac{40}{13}$ crucially indicates that $\tau (\frac{9}{32}) = \frac{45}{52} < 1$ so that  
\begin{equation}\label{est 103}
\lambda_{t}^{\frac{9}{32}} \lesssim \lVert w^{\mathcal{L}} (t) \rVert_{L^{2}} + 1 
\end{equation} 
due to \eqref{Estimate on wH}. Applying \eqref{est 102}, \eqref{est 103}, and \eqref{logarithmic growth} to \eqref{est 99} gives us for the constant $c> 0$ from \eqref{logarithmic growth},  
\begin{align}
\partial_{t} \lVert w^{\mathcal{L}}(t) \rVert_{L^{2}}^{2} \leq& - \nu \lVert w^{\mathcal{L}} \rVert_{\dot{H}^{\frac{5}{4}}}^{2} + 2\mathbf{m}(N_{t}^{\kappa}) \lVert w^{\mathcal{L}} \rVert_{L^{2}}^{2} + c \ln(\lambda_{t})\lVert w^{\mathcal{L}}(t) \rVert_{L^{2}}^{2} \nonumber \\
&+ C  \Bigg( ( N_{t}^{\kappa})^{2} [\lVert w^{\mathcal{L}}(t) \rVert_{L^{2}} +1] \lVert w^{\mathcal{L}}(t) \rVert_{\dot{H}^{\frac{31}{32} + 2 \kappa}} + (N_{t}^{\kappa})^{2} ( \lVert w^{\mathcal{L}} \rVert_{\dot{H}^{1- \frac{3\kappa}{2}}} + \lVert w^{\mathcal{L}} \rVert_{\dot{H}^{1-\frac{3\kappa}{2}}}^{2})  \nonumber \\
& \hspace{5mm} + N_{t}^{\kappa} \lVert w^{\mathcal{L}} \rVert_{\dot{H}^{\frac{31}{32} + 2 \kappa}} \lVert w^{\mathcal{L}} \rVert_{\dot{H}^{\frac{5}{4}}}^{\frac{3}{5} - \frac{2\kappa}{5}} + (N_{t}^{\kappa})^{2} \lVert w^{\mathcal{L}} \rVert_{\dot{H}^{1}} [ \lVert w^{\mathcal{L}} (t) \rVert_{\dot{H}^{\frac{1}{4} + 3\kappa}} + N_{t}^{\kappa} ] \Bigg). \label{est 104}
\end{align}
Now we apply a series of Gagliardo-Nirenberg inequalities, namely 
\begin{align*}
& \lVert f(t) \rVert_{\dot{H}^{\frac{31}{32} + 2 \kappa}} \lesssim \lVert f(t) \rVert_{L^{2}}^{\frac{9 - 64 \kappa}{40}} \lVert f (t) \rVert_{\dot{H}^{\frac{5}{4}}}^{\frac{31 + 64 \kappa}{40}}, \hspace{3mm} \lVert f(t) \rVert_{\dot{H}^{1-\frac{3\kappa}{2}}} \lesssim  \lVert f(t) \rVert_{L^{2}}^{\frac{1+ 6 \kappa}{5}} \lVert f (t) \rVert_{\dot{H}^{\frac{5}{4}}}^{\frac{4-6\kappa}{5}}, \\
&\lVert f(t) \rVert_{\dot{H}^{1}}  \lesssim \lVert f(t) \rVert_{L^{2}}^{\frac{1}{5}} \lVert f(t) \rVert_{\dot{H}^{\frac{5}{4}}}^{\frac{4}{5}}, \hspace{13mm} \lVert f(t) \rVert_{\dot{H}^{\frac{1}{4} + 3 \kappa}}  \lesssim  \lVert f(t) \rVert_{L^{2}}^{\frac{4}{5} - \frac{12 \kappa}{5}} \lVert f (t) \rVert_{\dot{H}^{\frac{5}{4}}}^{\frac{1 + 12 \kappa}{5}},
\end{align*}
and Young's inequalities to to \eqref{est 104} to deduce \eqref{est 105} as follows:
\begin{align*}
\partial_{t} \lVert w^{\mathcal{L}}(t) \rVert_{L^{2}}^{2} \leq&  - \frac{\nu}{2} \lVert w^{\mathcal{L}} \rVert_{\dot{H}^{\frac{5}{4}}}^{2} + 2\mathbf{m}(N_{t}^{\kappa}) \lVert w^{\mathcal{L}} \rVert_{L^{2}}^{2} + c \ln(\lambda_{t})\lVert w^{\mathcal{L}}(t) \rVert_{L^{2}}^{2} \nonumber \\
&+ C  \Bigg(   \left( N_{t}^{\kappa} \right)^{\frac{160}{49 - 64 \kappa}}\left[ \lVert w^{\mathcal{L}}(t) \rVert_{L^{2}}^{2} + 1 \right]  +   \lVert w^{\mathcal{L}}(t) \rVert_{L^{2}}^{2} + (N_{t}^{\kappa})^{2( \frac{5}{1+ 6 \kappa})}  \nonumber \\
& \hspace{5mm} + (N_{t}^{\kappa})^{\frac{80}{25 - 48 \kappa}} ( \lVert w^{\mathcal{L}}(t) \rVert_{L^{2}}^{2} + 1) + ( N_{t}^{\kappa})^{5} \left( \lVert w^{\mathcal{L}}(t) \rVert_{L^{2}}^{2} + 1 \right) \Bigg)\\
\leq& -\frac{\nu}{2} \lVert w^{\mathcal{L}}(t) \rVert_{\dot{H}^{\frac{5}{4}}}^{2} + \left( C_{1} \ln(\lambda_{t}) + C_{2}(N_{t}^{\kappa}) \right) [ \lVert w^{\mathcal{L}} (t) \rVert_{L^{2}}^{2} + \lVert w^{\mathcal{L}} (T_{i}) \rVert_{L^{2}}^{2} ] + C_{3}(N_{t}^{\kappa}). 
\end{align*}
Next, using the fact that $\mu \geq 1$, for all $t \in [T_{i}, T_{i+1})$,
\begin{equation}\label{est 107}
\lVert w^{\mathcal{L}}(t) \rVert_{L^{2}}^{2}  + \frac{\nu}{2} \int_{T_{i}}^{t} \lVert w^{\mathcal{L}}(s) \rVert_{\dot{H}^{\frac{5}{4}}}^{2} ds \leq e^{\mu (T_{i+1} - T_{i})} [  \lVert w^{\mathcal{L}}(T_{i}) \rVert_{L^{2}}^{2} + C_{3} (N_{T_{i+1}}^{\kappa})], 
\end{equation} 
on which taking supremum over all $t \in [T_{i}, T_{i+1})$ gives us \eqref{est 106}. This completes the proof of Proposition \ref{Proposition 4.9}. 
\end{proof} 

\begin{proposition}\label{Proposition 4.10}
Fix $\lambda_{t}$ from \eqref{Define lambda t} with $\tau = \frac{40}{13}$ and $\kappa_{0} \in (0, \frac{1}{200})$. If $T_{i+1} < T^{\max} \wedge t$ for any $i \in \mathbb{N}$ such that $i \geq i_{0} (u^{\text{in}})$ with $i_{0}(u^{\text{in}})$ from \eqref{Define i0} and any $t > 0$, then, for all $\kappa \in (0, \kappa_{0}]$, there exist 
\begin{equation}
C(N_{t}^{\kappa}) = 2 C N_{t}^{\kappa} \text{ and } \tilde{C}(N_{t}^{\kappa}) \triangleq \max \left\{ C_{1} (\frac{40}{13}), 2C_{2}(N_{t}^{\kappa}), C N_{t}^{\kappa} + C^{2} (N_{t}^{\kappa})^{2} + C_{3} (N_{t}^{\kappa}) \right\}
\end{equation} 
where $C$ is the universal constant from \eqref{Estimate on wH} and $C_{1}, C_{2}, C_{3}$ from Proposition \ref{Proposition 4.9}, such that 
\begin{equation}\label{est 110}
T_{i+1} - T_{i} \geq \frac{1}{ \tilde{C} (N_{t}^{\kappa}) (\ln(1+i) + 1)} \ln \left( \frac{i^{2} + 2i - C(N_{t}^{\kappa})}{i^{2} + \tilde{C} (N_{t}^{\kappa} )} \right). 
\end{equation} 
\end{proposition}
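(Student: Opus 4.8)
The plan is to extract \eqref{est 110} directly from the Gr\"onwall estimate \eqref{est 106} already established in Proposition \ref{Proposition 4.9}: one only needs to (i) translate the data the stopping times see, namely $\|w(T_i)\|_{L^2}=i$ and $\lim_{s\uparrow T_{i+1}}\|w(s)\|_{L^2}=i+1$, into two‑sided bounds on $\|w^{\mathcal L}(T_i)\|_{L^2}^2$ and $\|w^{\mathcal L}(T_{i+1})\|_{L^2}^2$, and (ii) observe that the exponent $\mu$ of \eqref{Define mu} is frozen on $[T_i,T_{i+1})$ and dominated by $\tilde C(N_t^{\kappa})(\ln(1+i)+1)$. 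Throughout I fix $i>i_0(u^{\text{in}})$, so that $T_i>0$, $\|w(T_i)\|_{L^2}=i$, and, since $T_{i+1}<T^{\max}$, $\lim_{s\uparrow T_{i+1}}\|w(s)\|_{L^2}=i+1$ by continuity of $\|w(\cdot)\|_{L^2}$ on $(0,T^{\max})$; the borderline value $i=i_0$, where $T_{i_0}=0$ and $w^{\mathcal H}(0)=0$ because $Q(0)=0$, is handled identically with $\|u^{\text{in}}\|_{L^2}$ in place of $i_0$.

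First I would control $w^{\mathcal H}$ at the two endpoints using Proposition \ref{Proposition 4.4} with $\delta=\tfrac{13}{20}$: since $\kappa<\tfrac1{200}$ one has $\tfrac54-2\kappa-\delta=\tfrac35-2\kappa>0$ (hence $H^{\frac35-2\kappa}(\mathbb{T}^3)\hookrightarrow L^2$), while $\tau=\tfrac{40}{13}$ gives $1-\tau\delta=-1$, so that for all $s\in[0,T^{\max})$,
\[
\|w^{\mathcal H}(s)\|_{L^2}\lesssim\|w^{\mathcal H}(s)\|_{H^{\frac35-2\kappa}}\lesssim\frac{N_s^{\kappa}\,s^{\kappa/5}}{1+\|w(s)\|_{L^2}}\leq\frac{C(N_t^{\kappa})}{1+\|w(s)\|_{L^2}}\qquad(s\leq t),
\]
using that $s\mapsto N_s^{\kappa}$ is non-decreasing. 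Evaluating this at $s=T_i$ and $s=T_{i+1}$, inserting $\|w(T_i)\|_{L^2}=i$ and $\|w(T_{i+1})\|_{L^2}=i+1$, using $w=w^{\mathcal L}+w^{\mathcal H}$ and expanding the squares yields
\[
\|w^{\mathcal L}(T_i)\|_{L^2}^2\leq i^2+\tilde C(N_t^{\kappa}),\qquad \|w^{\mathcal L}(T_{i+1})\|_{L^2}^2\geq i^2+2i-C(N_t^{\kappa}),
\]
the cross terms $2i\,\|w^{\mathcal H}\|_{L^2}=O(N_t^{\kappa})$ and the squares $\|w^{\mathcal H}\|_{L^2}^2=O((N_t^{\kappa})^2)$ being absorbed into $C(N_t^{\kappa})$, respectively into the $\max$ defining $\tilde C(N_t^{\kappa})$.

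Next I would freeze the coefficient. By Definition \ref{Definition 4.2}, $\lambda_{T_i}=\lambda^i=(i+1)^{40/13}$ for $i\geq i_0(u^{\text{in}})$, so $C_1\ln\lambda_{T_i}=C_1\tfrac{40}{13}\ln(1+i)$; together with $N_{T_{i+1}}^{\kappa}\leq N_t^{\kappa}$ (as $T_{i+1}<t$) and monotonicity of $C_2$ this gives $\mu\leq C_1\tfrac{40}{13}\ln(1+i)+C_2(N_t^{\kappa})\leq\tilde C(N_t^{\kappa})(\ln(1+i)+1)$. I then feed the endpoint bounds of the previous step into \eqref{est 106}: dropping the nonnegative dissipation integral, its left side dominates $\|w^{\mathcal L}(T_{i+1})\|_{L^2}^2$ (let $s\uparrow T_{i+1}$ in the supremum and use continuity), while on its right side $\|w^{\mathcal L}(T_i)\|_{L^2}^2+C_3(N_{T_{i+1}}^{\kappa})\leq i^2+\tilde C(N_t^{\kappa})$ since $C_3\leq\tilde C$. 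Hence
\[
i^2+2i-C(N_t^{\kappa})\;\leq\; e^{\mu(T_{i+1}-T_i)}\big(i^2+\tilde C(N_t^{\kappa})\big)\;\leq\; e^{\tilde C(N_t^{\kappa})(\ln(1+i)+1)(T_{i+1}-T_i)}\big(i^2+\tilde C(N_t^{\kappa})\big);
\]
if $i^2+2i-C(N_t^{\kappa})>i^2+\tilde C(N_t^{\kappa})$, taking logarithms and dividing by $\tilde C(N_t^{\kappa})(\ln(1+i)+1)>0$ gives exactly \eqref{est 110}, and otherwise the right side of \eqref{est 110} is $\leq0<T_{i+1}-T_i$, so nothing is to prove.

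The only genuinely delicate point is the first step: I must verify that the admissible window for $\delta$ in Proposition \ref{Proposition 4.4} is nonempty, i.e.\ that one can choose $\delta$ with $\delta\leq\tfrac54-2\kappa$ (to embed into $L^2$) and simultaneously $\tau\delta\geq2$, so that the prefactor $(1+\|w\|_{L^2})^{1-\tau\delta}$ actually \emph{decays} like $(1+\|w\|_{L^2})^{-1}$ — this is what forces the endpoint errors to be uniformly $O(N_t^{\kappa})$ rather than growing with $i$. It works precisely because $\tau=\tfrac{40}{13}$ is large and $\kappa<\tfrac1{200}$ is small, and it is exactly here that the improved regularity $H^{\frac54-2\kappa-\delta}$ of $w^{\mathcal H}$ furnished by Proposition \ref{Proposition 4.4} (as opposed to the $H^{1-2\kappa-\delta}$ of the two-dimensional theory) enters. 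Everything else — the freezing of $\lambda$ and $N^{\kappa}$ on $[T_i,T_{i+1})$ and the final logarithmic rearrangement — is routine bookkeeping of the constants already packaged into $C(N_t^{\kappa})$ and $\tilde C(N_t^{\kappa})$.
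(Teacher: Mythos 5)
Your proposal is correct and follows essentially the same route as the paper: the paper likewise combines the Grönwall bound \eqref{est 107}--\eqref{Define mu} with the choice of $\delta$ in Proposition \ref{Proposition 4.4} making the prefactor $(1+\lVert w\rVert_{L^{2}})^{1-\tau\delta}$ decay like $(1+\lVert w\rVert_{L^{2}})^{-1}$, yielding exactly the endpoint bounds $\lVert w^{\mathcal{L}}(T_{i+1}-)\rVert_{L^{2}}\geq i+1-C(i+1)^{-1}N_{t}^{\kappa}$ and $\lVert w^{\mathcal{L}}(T_{i})\rVert_{L^{2}}\leq i+CN_{t}^{\kappa}/i$ before rearranging the logarithm. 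Your additional remarks (nonemptiness of the admissible $\delta$-window, the degenerate case where the logarithm's argument is at most one, and the monotonicity of $N_{s}^{\kappa}$) only make explicit what the paper leaves implicit.
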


\begin{proof}[Proof of Proposition \ref{Proposition 4.10}]
We compute from \eqref{est 107}, using \eqref{Define mu}, 
\begin{equation}\label{est 108}
T_{i+1} - T_{i} \geq \frac{1}{C_{1} \ln(\lambda_{T_{i}}) + C_{2} (N_{t}^{\kappa})} \ln \left( \frac{ \lVert w^{\mathcal{L}} (T_{i+1} - ) \rVert_{L^{2}}^{2}}{  \lVert w^{\mathcal{L}} (T_{i}) \rVert_{L^{2}}^{2} + C_{3} (N_{t}^{\kappa} ) } \right). 
\end{equation} 
Moreover, applications of \eqref{Estimate on wH} and \eqref{Define T0 and Ti} give us 
\begin{equation}\label{est 109} 
\lVert w^{\mathcal{L}}(T_{i+1} - ) \rVert_{L^{2}} \geq i +1 - C(i+1)^{-1} N_{t}^{\kappa}  \hspace{3mm} \text{ and } \hspace{3mm} \lVert w^{\mathcal{L}}\left(T_{i}) \rVert_{L^{2}} \leq i + C ( \frac{N_{t}^{\kappa}}{i} \right). 
\end{equation} 
Applying \eqref{est 109} to \eqref{est 108} gives us the desired \eqref{est 110}. This completes the proof of Proposition \ref{Proposition 4.10}. 
\end{proof}

\subsection{Higher-order estimates}
We come to the $\dot{H}^{\epsilon}(\mathbb{T}^{3})$-estimate of $w^{\mathcal{L}}$ next. 
\begin{remark}\label{Remark 4.3}
In Proposition \ref{Proposition 4.7}, we have already achieved an estimate on the difficult commutator term. The analogous proof of $\dot{H}^{\epsilon}(\mathbb{T}^{2})$-estimate in \cite[Lemma 5.3]{HR24} for the 2D Navier-Stokes equations forced by STWN follows its $L^{2}(\mathbb{T}^{2})$-estimate similarly. Therefore, we are tempted to feel that $\dot{H}^{\epsilon}(\mathbb{T}^{3})$-estimate should impose no significant difficulties. In fact, as we elaborated in Remark \ref{Remark 2.1}, extending the local solution to global-in-time required only $\epsilon > 0$ in the 2D case while it will require $\epsilon > \frac{1}{2}$ in the 3D case. A close inspection of the proof of \cite[Lemma 5.3]{HR24} shows that their restriction of ``$\epsilon \in (0, \kappa)$'' stems from the low regularity of their ``$Y \in \mathscr{C}^{2\kappa}$'' on \cite[p. 8]{HR24} which in turn is rooted from their additional force $\zeta$. As can be seen in \eqref{Regularity of Y}, we have $Y \in \mathscr{C}^{1 - 3 \kappa} (\mathbb{T}^{3})$ for any $\kappa > 0$. This suggests that, if all other terms do not create major difficulties, we can expect to obtain $\dot{H}^{\epsilon}(\mathbb{T}^{3})$-estimate for $\epsilon < 1$.  

Unfortunately, once again, the commutator stands in our way. Recall that we only had to consider ``$\lVert w^{\mathcal{L}} \rVert_{\dot{H}^{1}} \lVert \Lambda^{\frac{5}{2}} (w \circlesign{\prec}_{s} Q^{\mathcal{H}}) - (\Lambda^{\frac{5}{2}} w) \circlesign{\prec}_{s} Q^{\mathcal{H}} - w \circlesign{\prec}_{s} \Lambda^{\frac{5}{2}} Q^{\mathcal{H}}  \rVert_{L^{2}}$'' in \eqref{Split C4}. In the $\dot{H}^{\epsilon}(\mathbb{T}^{3})$-estimate, informally we have diffusion of $\lVert w^{\mathcal{L}} \rVert_{\dot{H}^{\frac{5}{4} + \epsilon}}^{2}$ and a nonlinear term of 
\begin{align*}
 2 \langle \Lambda^{2\epsilon} w^{\mathcal{L}}, \divergence \left( \Lambda^{\frac{5}{2}} (w \circlesign{\prec}_{s} Q^{\mathcal{H}}) - (\Lambda^{\frac{5}{2}} w) \circlesign{\prec}_{s} Q^{\mathcal{H}} - w \circlesign{\prec}_{s} \Lambda^{\frac{5}{2}} Q^{\mathcal{H}} \right) \rangle 
\end{align*}
(cf. \eqref{Define C4}). If we bound this by 
\begin{align*}
 \lVert w^{\mathcal{L}} \rVert_{\dot{H}^{1+2\epsilon}} \lVert \Lambda^{\frac{5}{2}} (w \circlesign{\prec}_{s} Q^{\mathcal{H}}) - (\Lambda^{\frac{5}{2}} w) \circlesign{\prec}_{s} Q^{\mathcal{H}} - w \circlesign{\prec}_{s} \Lambda^{\frac{5}{2}} Q^{\mathcal{H}}  \rVert_{L^{2}},
\end{align*}
then the diffusion $\lVert w^{\mathcal{L}} \rVert_{\dot{H}^{\frac{5}{4} + \epsilon}}^{2}$ will demand $1+ 2 \epsilon \leq \frac{5}{4} + \epsilon$ or equivalently $\epsilon \leq \frac{1}{4}$ which is not good enough as we need $\epsilon >\frac{1}{2}$. This shows that we have no choice but to apply some derivative, at least $\Lambda^{\alpha}$ for $\alpha > \frac{1}{4}$, on $\divergence \left( \Lambda^{\frac{5}{2}} (w \circlesign{\prec}_{s} Q^{\mathcal{H}}) - (\Lambda^{\frac{5}{2}} w) \circlesign{\prec}_{s} Q^{\mathcal{H}} - w \circlesign{\prec}_{s} \Lambda^{\frac{5}{2}} Q^{\mathcal{H}} \right)$, although we are not aware of \eqref{Li and D'Ancona} in $\dot{H}^{s}$-norm in the literature, as we elaborated in Remark \ref{Remark 2.2} (a)-(b).  We overcome this obstacle again via careful interpolation of Besov space taking advantage of ``$\circlesign{\prec}_{s}$'' that is present inside the commutator throughout, and obtain the necessary estimate for any $\epsilon < \frac{9}{10}$. With our heuristic in mind, we believe that  $\frac{9}{10}$ is not sharp and we may be able to improve such a restriction of $\epsilon < \frac{9}{10}$ to $\epsilon < 1$; however, we choose to not pursue this direction because any $\epsilon > \frac{1}{2}$ suffices for our subsequent purpose. 
\end{remark} 

\begin{proposition}\label{Proposition 4.11}
Fix $\lambda_{t}$ from \eqref{Define lambda t} with $\tau =\frac{40}{13}$ and $\kappa_{0} \in (0, \frac{1}{200})$. Then the following holds for any $\kappa \in (0, \kappa_{0})$ and $\epsilon \in (0, \frac{9}{10})$. Suppose that there exist $M > 1$ and $T > 0$ such that 
\begin{equation}\label{Define M}
\lVert w^{\mathcal{L}}(0) \rVert_{\dot{H}^{\epsilon}}^{2} +  \sup_{t \in [0, T \wedge T^{\max}]} \lVert w^{\mathcal{L}} (t) \rVert_{L^{2}}^{2} +\frac{\nu}{2} \int_{0}^{T \wedge T^{\max}} \lVert w^{\mathcal{L}} (s) \rVert_{\dot{H}^{\frac{5}{4}}}^{2} ds \leq M. 
\end{equation} 
Then, there exists $C(T, M, N_{T}^{\kappa}) \in (0,\infty)$ such that 
\begin{equation}\label{est 139}
\sup_{t\in [0, T \wedge T^{\max} ]} \lVert w^{\mathcal{L}} (t) \rVert_{\dot{H}^{\epsilon}}^{2} \leq C( T, M, N_{T}^{\kappa}). 
\end{equation} 
\end{proposition}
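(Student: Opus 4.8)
The plan is to perform the $\dot H^{\epsilon}(\mathbb{T}^{3})$-energy estimate on the equation \eqref{Equation of wL} for $w^{\mathcal{L}}$, mirroring the structure of the $L^{2}$-estimate carried out in Propositions \ref{Proposition 4.5}--\ref{Proposition 4.7} but now testing against $\Lambda^{2\epsilon}w^{\mathcal{L}}$ rather than $w^{\mathcal{L}}$. Applying $\langle \Lambda^{2\epsilon}w^{\mathcal{L}}, \cdot\rangle$ to \eqref{Equation of wL} produces the diffusive gain $-\nu\lVert w^{\mathcal{L}}\rVert_{\dot H^{5/4+\epsilon}}^{2}$, the Anderson-Hamiltonian term $\langle \Lambda^{2\epsilon}w^{\mathcal{L}}, \mathcal{A}_{t}^{\lambda_{t}}w^{\mathcal{L}}\rangle$ plus the renormalization contribution $r_{\lambda}(t)\lVert w^{\mathcal{L}}\rVert_{\dot H^{\epsilon}}^{2}$ (handled exactly as in \eqref{est 102}--\eqref{est 103} since every entry of $r_{\lambda}(t)\Id$ is non-negative and $\tau(9/32)<1$), the high-frequency corrections $\RomanI_{2}$, $\RomanI_{3}$-type terms, and the nonlinear/commutator block $\RomanI_{4}$. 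First I would dispatch the $\RomanI_{2}$- and $\RomanI_{3}$-analogues using \eqref{Bony's decomposition}, \eqref{Sobolev products}, Lemma \ref{Lemma 3.2}, together with Proposition \ref{Proposition 4.4} (now crucially invoked at regularity up to $H^{5/4-2\kappa-\delta}$, which is why the improved exponent in \eqref{Estimate on wH} matters) and the low-frequency decay \eqref{Lemma 3.5}: since $\tau=\frac{40}{13}$ and $\epsilon<\frac{9}{10}$, the powers of $\lambda_{t}$ that appear are under control via $\lambda_{t}^{\theta}\lesssim(1+\lVert w(t)\rVert_{L^{2}})^{\tau\theta}$ and the already-established $L^{2}$ and $\dot H^{5/4}$ bounds from \eqref{Define M}. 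The purely polynomial nonlinear terms $\divergence(w^{\otimes 2})$, $\divergence(Y\otimes_{s}w)$, $\divergence(Y^{\otimes 2})$, and the $C_{1},C_{2},C_{3}$-analogues of \eqref{Define Ck} are then estimated exactly as in the proof of Proposition \ref{Proposition 4.7}, now splitting a factor $\dot H^{5/4+\epsilon}$ off $w^{\mathcal{L}}$ and distributing the remaining regularity via Lemma \ref{Lemma 3.2} and the Gagliardo--Nirenberg inequalities interpolating between $L^{2}$, $\dot H^{5/4}$, and $\dot H^{5/4+\epsilon}$.

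The main obstacle, as flagged in Remark \ref{Remark 4.3}, is the commutator term $C_{4}$-analogue
\begin{equation*}
2\langle \Lambda^{2\epsilon}w^{\mathcal{L}}, \divergence\left( \Lambda^{\frac{5}{2}}(w\circlesign{\prec}_{s}Q^{\mathcal{H}}) - (\Lambda^{\frac{5}{2}}w)\circlesign{\prec}_{s}Q^{\mathcal{H}} - w\circlesign{\prec}_{s}\Lambda^{\frac{5}{2}}Q^{\mathcal{H}}\right)\rangle.
\end{equation*}
Splitting off $\lVert w^{\mathcal{L}}\rVert_{\dot H^{5/4+\epsilon}}$ leaves the commutator to be measured in $\dot H^{\epsilon-1/4}$, i.e.\ with $\alpha\triangleq\epsilon-\frac14>\frac14$ derivatives on it, and $\Lambda^{\alpha}$ does not commute through the paraproduct the way Leibniz did in \cite{HR24}. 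The plan here is to follow the strategy announced in Remark \ref{Remark 2.2}(c) and executed in the $L^{2}$-case via \eqref{Split C4}--\eqref{Estimate on C4}: using $\Lambda^{2}=-\sum_{j}\partial_{j}^{2}$ to peel $\Lambda^{5/2}=\Lambda^{1/2}\Lambda^{2}$, write the commutator blockwise over the frequency-localized pieces $S_{m-1}w\otimes_{s}\Delta_{m}Q^{\mathcal{H}}$, and apply the classical Kato--Ponce commutator estimate Lemma \ref{Lemma 3.3} together with the classical product estimate. Because $Q^{\mathcal{H}}\in\mathscr{C}^{9/4-3\kappa/2}$ with norm controlled by \eqref{Regularity of Q} and $t^{\kappa/5}\lVert X\rVert_{C_{t}\mathscr{C}^{-1/4-\kappa}}$, and because the low-frequency-cutoff inside $\circlesign{\prec}_{s}$ forces $S_{m-1}w$ to absorb at most $\dot H^{1/4+3\kappa}$-type regularity, the geometric factor $\sum_{m}2^{-|m|(\frac{3}{4}-\cdots)}$ remains summable precisely as long as $\epsilon<\frac{9}{10}$; this is the arithmetic constraint that pins down the upper bound on $\epsilon$.

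Having obtained a differential inequality of the schematic form
\begin{equation*}
\partial_{t}\lVert w^{\mathcal{L}}(t)\rVert_{\dot H^{\epsilon}}^{2} \leq -\frac{\nu}{2}\lVert w^{\mathcal{L}}(t)\rVert_{\dot H^{5/4+\epsilon}}^{2} + \big(C_{1}\ln(\lambda_{t})+C_{2}(N_{t}^{\kappa})\big)\big(\lVert w^{\mathcal{L}}(t)\rVert_{\dot H^{\epsilon}}^{2}+1\big) + \mathcal{G}(t),
\end{equation*}
where all Gagliardo--Nirenberg-generated excess powers of $\dot H^{5/4}$ and $\dot H^{5/4+\epsilon}$ are absorbed (Young's inequality) into the diffusive term and into $L^{2}$-norms already bounded by $M$ in \eqref{Define M}, and where $\mathcal{G}\in L^{1}_{t}$ is built from $N_{T}^{\kappa}$, $M$, and $\int_{0}^{T}\lVert w^{\mathcal{L}}\rVert_{\dot H^{5/4}}^{2}\,ds\le M$, I would close the argument by Grönwall. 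The coefficient $C_{1}\ln(\lambda_{t})+C_{2}(N_{t}^{\kappa})$ is integrable on $[0,T\wedge T^{\max}]$ because $\lambda_{t}\lesssim(1+\sup_{s\le t}\lVert w(s)\rVert_{L^{2}})^{\tau}\le(1+M^{1/2})^{\tau}$ is bounded on that interval by \eqref{Define M} and Definition \ref{Definition 4.2}, and $N_{t}^{\kappa}\le N_{T}^{\kappa}<\infty$. Grönwall then yields $\sup_{t\le T\wedge T^{\max}}\lVert w^{\mathcal{L}}(t)\rVert_{\dot H^{\epsilon}}^{2}\le C(T,M,N_{T}^{\kappa})$, which is \eqref{est 139}. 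The only delicate bookkeeping is to ensure each occurrence of $\dot H^{5/4+\epsilon}$ appears with total power at most $2$ and can be Young-absorbed with a small constant against $\frac{\nu}{2}\lVert w^{\mathcal{L}}\rVert_{\dot H^{5/4+\epsilon}}^{2}$; this is exactly the role played by the constraint $\epsilon<\frac{9}{10}$ in keeping the interpolation exponents admissible.
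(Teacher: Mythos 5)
Your overall architecture --- testing \eqref{Equation of wL} against $(-\Delta)^{\epsilon}w^{\mathcal{L}}$, treating the commutator by Kato--Ponce on the frequency-localized pieces $S_{m-1}w\otimes_{s}\Delta_{m}Q^{\mathcal{H}}$, and closing via Gagliardo--Nirenberg, Young absorption into $\frac{\nu}{2}\lVert w^{\mathcal{L}}\rVert_{\dot H^{5/4+\epsilon}}^{2}$, and Gronwall using $\int_{0}^{T}\lVert w^{\mathcal{L}}\rVert_{\dot H^{5/4}}^{2}\,ds\le M$ --- matches the paper. But one step as you state it would fail: you propose to handle the low-frequency noise term by extracting $\langle\Lambda^{2\epsilon}w^{\mathcal{L}},\mathcal{A}_{t}^{\lambda_{t}}w^{\mathcal{L}}\rangle+r_{\lambda}(t)\lVert w^{\mathcal{L}}\rVert_{\dot H^{\epsilon}}^{2}$ and controlling it ``exactly as in \eqref{est 102}--\eqref{est 103}.'' The form bound \eqref{est 102} comes from Proposition \ref{Proposition on Anderson Hamiltonian}, which is a resolvent statement for a self-adjoint operator on $L^{2}$: it controls $\langle\phi,\mathcal{A}_{t}^{\lambda}\phi\rangle$ by $\mathbf{m}(N_{t}^{\kappa})\lVert\phi\rVert_{L^{2}}^{2}$ and nothing more. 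Inserting $\Lambda^{2\epsilon}$ destroys the symmetry, since $\Lambda^{2\epsilon}$ does not commute with multiplication by $\nabla_{\text{sym}}\mathcal{L}_{\lambda}X$, so no $\dot H^{\epsilon}$-form bound is available, and producing one would require a genuinely new Anderson--Hamiltonian result that neither the paper nor your proposal supplies.

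The repair is simpler than your route and is what the paper does in \eqref{est 113}--\eqref{est 135}: under the hypothesis \eqref{Define M}, the cutoff $\lambda_{t}$ from \eqref{Define lambda t} is bounded by $(1+M^{1/2})^{40/13}$, so $\mathcal{L}_{\lambda_{t}}X$ is an honest $L^{\infty}$ function with $\lVert\mathcal{L}_{\lambda_{t}}X\rVert_{L^{\infty}}\le C(M,N_{t}^{\kappa})$; consequently $\RomanII_{1}$ requires no renormalization and no Anderson Hamiltonian at all, only direct paraproduct bounds such as \eqref{Sobolev products b} and \eqref{Sobolev products e}. A second, smaller correction: the constraint $\epsilon<\frac{9}{10}$ is not pinned down by the summability of the commutator (that part, carried out in \eqref{est 125}--\eqref{est 127} with $\alpha=\frac34-2\kappa$, tolerates $\epsilon$ up to $1-2\kappa$); it comes from the high--low interaction $\RomanII_{3,1}$ in \eqref{est 114}, where the Gagliardo--Nirenberg exponent $\frac{2(9-10\epsilon)}{25}$ must be non-negative.
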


\begin{proof}[Proof of Proposition \ref{Proposition 4.11}]
Identically to the derivation of \eqref{est 96}-\eqref{Define Ik}, we obtain  
\begin{equation}\label{est 111} 
 \partial_{t} \lVert w^{\mathcal{L}} (t) \rVert_{\dot{H}^{\epsilon}}^{2} = \sum_{k=1}^{4} \RomanII_{k}, 
\end{equation} 
where 
\begin{subequations}\label{Define IIk}
\begin{align}
& \RomanII_{1} \triangleq 2 \left\langle (-\Delta)^{\epsilon} w^{\mathcal{L}}, - \nu \Lambda^{\frac{5}{2}} w^{\mathcal{L}} - \divergence  \left( 2 ( \mathcal{L}_{\lambda_{t}} X) \otimes_{s} w^{\mathcal{L}} \right) \right\rangle (t),  \label{Define II1} \\
& \RomanII_{2} \triangleq -2 \left\langle (-\Delta)^{\epsilon}w^{\mathcal{L}}, \divergence \left( 2 ( \mathcal{H}_{\lambda_{t}} X) \otimes_{s} w^{\mathcal{L}} - 2 ( \mathcal{H}_{\lambda_{t}} X) \circlesign{\succ}_{s} w^{\mathcal{L}} \right) \right\rangle(t), \label{Define II2} \\
& \RomanII_{3} \triangleq -2 \left\langle (-\Delta)^{\epsilon}w^{\mathcal{L}}, \divergence \left( 2X \otimes_{s} w^{\mathcal{H}} - 2 (\mathcal{H}_{\lambda_{t}} X) \circlesign{\succ}_{s} w^{\mathcal{H}} \right) \right\rangle(t), \label{Define II3}\\
& \RomanII_{4} \triangleq -2 \left\langle (-\Delta)^{\epsilon}w^{\mathcal{L}}, \divergence \left( w^{\otimes 2} + 2 Y \otimes_{s} w - \mathcal{C}^{\circlesign{\prec}_{s}} (w, Q^{\mathcal{H}}) + Y^{\otimes 2} \right) \right\rangle (t).  \label{Define II4}
\end{align}
\end{subequations}
We start with our estimate on $\RomanII_{1}$ from \eqref{Define II1} using \eqref{Bony's decomposition}:
\begin{align}
\RomanII_{1} \leq& - 2 \nu \lVert w^{\mathcal{L}} \rVert_{\dot{H}^{\frac{5}{4} + \epsilon}}^{2}  \nonumber \\
&+ C \lVert w^{\mathcal{L}} \rVert_{\dot{H}^{1}}  \left( \lVert \mathcal{L}_{\lambda_{t}}X  \circlesign{\succ}_{s} w^{\mathcal{L}} \rVert_{\dot{H}^{2\epsilon}} + \lVert \mathcal{L}_{\lambda_{t}}X \circlesign{\prec}_{s} w^{\mathcal{L}} \rVert_{\dot{H}^{2\epsilon}}  + \lVert \mathcal{L}_{\lambda_{t}}X \circlesign{\circ}_{s} w^{\mathcal{L}} \rVert_{\dot{H}^{2\epsilon}} \right). \label{est 113}
\end{align}
We can compute using the definition of $\lambda_{t}$ from \eqref{Define lambda t}, via Gagliardo-Nirenberg inequality of $\lVert f \rVert_{\dot{H}^{2\epsilon}} \lesssim \lVert f \rVert_{\dot{H}^{\epsilon}}^{1- \frac{4\epsilon}{5}} \lVert f \rVert_{\dot{H}^{\frac{5}{4} + \epsilon}}^{\frac{4\epsilon}{5}}$, 
\begin{subequations}\label{est 112} 
\begin{align}
& \lVert \mathcal{L}_{\lambda_{t}}X \circlesign{\succ}_{s} w^{\mathcal{L}} \rVert_{\dot{H}^{2\epsilon}} \overset{\eqref{Sobolev products c}}{\lesssim}  \lambda_{t}^{\frac{13}{40} + 2 \epsilon} \lVert X(t) \rVert_{\mathscr{C}^{-\frac{1}{4} - \kappa}} \lVert w^{\mathcal{L}} (t) \rVert_{H^{-\frac{3}{40} + \kappa}} 
\overset{ \eqref{Define Lt kappa and Nt kappa} \eqref{Define M} \eqref{Estimate on wH}}{\leq}  C(M, N_{t}^{\kappa}), \\
& \lVert \mathcal{L}_{\lambda_{t}}X \circlesign{\prec}_{s} w^{\mathcal{L}} \rVert_{\dot{H}^{2\epsilon}} + \lVert \mathcal{L}_{\lambda_{t}}X \circlesign{\circ}_{s} w^{\mathcal{L}} \rVert_{\dot{H}^{2\epsilon}}  \overset{\eqref{Sobolev products b}\eqref{Sobolev products e}}{\lesssim} \lVert \mathcal{L}_{\lambda_{t}}X \rVert_{L^{\infty}} \lVert w^{\mathcal{L}} \rVert_{\dot{H}^{2\epsilon}}  \\
& \hspace{10mm} \overset{\eqref{Define lambda t}\eqref{Define Lt kappa and Nt kappa}}{\lesssim} (1+ \lVert w(T_{i})  \rVert_{L^{2}})N_{t}^{\kappa} \lVert w^{\mathcal{L}} \rVert_{\dot{H}^{\epsilon}}^{1- \frac{4\epsilon}{5}} \lVert w^{\mathcal{L}} \rVert_{\dot{H}^{\frac{5}{4} + \epsilon}}^{\frac{4\epsilon}{5}} \overset{\eqref{Define M}}{\leq} C(M, N_{t}^{\kappa}) \lVert w^{\mathcal{L}} \rVert_{\dot{H}^{\epsilon}}^{1- \frac{4\epsilon}{5}} \lVert w^{\mathcal{L}} \rVert_{\dot{H}^{\frac{5}{4} + \epsilon}}^{\frac{4\epsilon}{5}}. \nonumber 
\end{align}
\end{subequations} 
By applying \eqref{est 112} to \eqref{est 113} we conclude
\begin{equation}\label{est 135}
\RomanII_{1} \leq - \frac{31 \nu}{16} \lVert w^{\mathcal{L}} \rVert_{\dot{H}^{\frac{5}{4} + \epsilon}}^{2} + C(M, N_{t}^{\kappa}) (1+ \lVert w^{\mathcal{L}} \rVert_{\dot{H}^{\epsilon}}^{2}).
\end{equation} 

Next, we rewrite $\RomanII_{2}$ in \eqref{Define II2} as 
\begin{align*}
\RomanII_{2} = -2 \left\langle (-\Delta)^{\epsilon}w^{\mathcal{L}}, \divergence \left( 2 ( \mathcal{H}_{\lambda_{t}} X) \circlesign{\prec}_{s} w^{\mathcal{L}} + 2 ( \mathcal{H}_{\lambda_{t}} X) \circlesign{\circ}_{s} w^{\mathcal{L}} \right) \right\rangle(t)
\end{align*}
and estimate using \eqref{Sobolev products d}, \eqref{Sobolev products e}, \eqref{Define Lt kappa and Nt kappa}, and \eqref{Define M}, along with interpolation inequalities of $\lVert f \rVert_{\dot{H}^{1+ \epsilon}} \lesssim \lVert f \rVert_{L^{2}}^{\frac{1+ \epsilon}{(\frac{5}{4} + \epsilon)}} \lVert f \rVert_{\dot{H}^{\frac{5}{4} + \epsilon}}^{\frac{1+ \epsilon}{(\frac{5}{4} + \epsilon)}}$ and $\lVert f \rVert_{\dot{H}^{\frac{1}{4} + \kappa + \epsilon}} \lesssim \lVert f \rVert_{L^{2}}^{\frac{1- \kappa}{(\frac{5}{4} + \epsilon)}} \lVert f \rVert_{\dot{H}^{\frac{5}{4} + \epsilon}}^{\frac{\frac{1}{4} + \kappa + \epsilon}{(\frac{5}{4} + \epsilon)}}$, 
\begin{align}
\RomanII_{2}  \lesssim& \lVert w^{\mathcal{L}} \rVert_{\dot{H}^{1+ \epsilon}} \lVert \mathcal{H}_{\lambda_{t}}X \rVert_{\mathscr{C}^{-\frac{1}{4} - \kappa}} \lVert w^{\mathcal{L}} \rVert_{H^{\frac{1}{4} + \kappa + \epsilon}} \lesssim N_{t}^{\kappa} \lVert w^{\mathcal{L}} \rVert_{L^{2}}^{\frac{\frac{1}{4}}{(\frac{5}{4} + \epsilon)}} \lVert w^{\mathcal{L}} \rVert_{\dot{H}^{\frac{5}{4} + \epsilon}}^{\frac{1+ \epsilon}{(\frac{5}{4} + \epsilon)}} \lVert w^{\mathcal{L}} \rVert_{L^{2}}^{\frac{1-\kappa}{(\frac{5}{4} + \epsilon)}} \lVert w^{\mathcal{L}} \rVert_{\dot{H}^{\frac{5}{4} + \epsilon}}^{\frac{\frac{1}{4} + \kappa + \epsilon}{(\frac{5}{4} + \epsilon)}}   \nonumber \\
& \hspace{10mm} \leq C(M, N_{t}^{\kappa}) \lVert w^{\mathcal{L}} \rVert_{\dot{H}^{\frac{5}{4} + \epsilon}}^{\frac{\frac{5}{4} + \kappa + 2 \epsilon}{(\frac{5}{4} + \epsilon)}}\leq \frac{\nu}{16} \lVert w^{\mathcal{L}} \rVert_{\dot{H}^{\frac{5}{4} + \epsilon}}^{2} + C(M, N_{t}^{\kappa}). \label{est 136}
\end{align} 

Next, concerning $\RomanII_{3}$ in \eqref{Define II3}, we first rewrite as 
\begin{equation}\label{Split II3}
\RomanII_{3}  = \sum_{k=1}^{2} \RomanII_{3,k}, 
\end{equation} 
where
\begin{subequations}
\begin{align}
& \RomanII_{3,1} \triangleq - 4 \langle (-\Delta)^{\epsilon} w^{\mathcal{L}}, \divergence \left( ( \mathcal{L}_{\lambda_{t}}X) \otimes_{s} w^{\mathcal{H}} \right) \rangle (t), \label{Define II3,1}\\
& \RomanII_{3,2} \triangleq - 4 \langle (-\Delta)^{\epsilon} w^{\mathcal{L}}, \divergence  \left( ( \mathcal{H}_{\lambda_{t}}X) \circlesign{\prec}_{s} w^{\mathcal{H}} + (\mathcal{H}_{\lambda_{t}} X) \circlesign{\circ}_{s} w^{\mathcal{H}} \right) \rangle (t). \label{Define II3,2}
\end{align}
\end{subequations}
To estimate $\RomanII_{3}$ from \eqref{Define II3,1}, we recall Proposition \ref{Proposition 4.6} (a) that led to $\lVert ( \mathcal{L}_{\lambda_{t}} X) \otimes_{s} w^{\mathcal{H}} \rVert_{\dot{H}^{1-\eta}} \lesssim \lambda_{t}^{\frac{5}{4} + 2 \kappa - \eta} (N_{t}^{\kappa})^{2}$ due to \eqref{est 81} under its hypothesis of ``$\eta \in [\frac{1}{\tau} + 3 \kappa_{0}, 1)$.'' Considering that we have $\eta = \frac{40}{13}$ in our current hypothesis, we can choose ``$\eta$'' $= \frac{7}{20}$ so that 
\begin{align*}
\lVert (\mathcal{L}_{\lambda_{t}}X \otimes_{s} w^{\mathcal{H}} \rVert_{\dot{H}^{\frac{13}{20}}} \lesssim \lambda_{t}^{\frac{9}{10} + 2 \kappa} (N_{t}^{\kappa})^{2},  
\end{align*}
and estimate using Gagliardo-Nirenberg inequality of $\lVert f \rVert_{\dot{H}^{\frac{7}{20} + 2\epsilon}} \lesssim \lVert f \rVert_{\dot{H}^{\epsilon}}^{\frac{2(9 - 10 \epsilon)}{25}} \lVert f \rVert_{\dot{H}^{\frac{5}{4} + \epsilon}}^{\frac{7+ 20 \epsilon}{25}}$ since $\epsilon < \frac{9}{10}$ by hypothesis, 
\begin{align}
&\RomanI_{3,1} \lesssim \lVert w^{\mathcal{L}} \rVert_{\dot{H}^{\frac{7}{20} + 2 \epsilon}} \lVert (\mathcal{L}_{\lambda_{t}}X) \otimes_{s} w^{\mathcal{H}} \rVert_{\dot{H}^{\frac{13}{20}}} \lesssim  \lVert w^{\mathcal{L}} \rVert_{\dot{H}^{\epsilon}}^{\frac{2(9- 10 \epsilon)}{25}} \lVert w^{\mathcal{L}} \rVert_{\dot{H}^{\frac{5}{4} + \epsilon}}^{\frac{7+ 20 \epsilon}{25}} \lambda_{t}^{\frac{9}{10} + 2 \kappa} (N_{t}^{\kappa})^{2} \label{est 114} \\ 
& \hspace{6mm} \overset{\eqref{Define lambda t} \eqref{Define M} \eqref{Estimate on wH}}{\lesssim}   \lVert w^{\mathcal{L}} \rVert_{\dot{H}^{\epsilon}}^{\frac{2(9- 10 \epsilon)}{25}} \lVert w^{\mathcal{L}} \rVert_{\dot{H}^{\frac{5}{4} + \epsilon}}^{\frac{7+ 20 \epsilon}{25}} C(M, N_{t}^{\kappa})  \leq \frac{\nu}{32} \lVert w^{\mathcal{L}} \rVert_{\dot{H}^{\frac{5}{4} + \epsilon}}^{2} + C(M, N_{t}^{\kappa}) ( \lVert w^{\mathcal{L}} \rVert_{\dot{H}^{\epsilon}}^{2} + 1). \nonumber 
\end{align}
Next, using the Gagliardo-Nirenberg inequality of $\lVert f \rVert_{\dot{H}^{2\epsilon + 3 \kappa}} \lesssim \lVert f \rVert_{\dot{H}^{\epsilon}}^{1- \frac{4}{5} (\epsilon + 3 \kappa)} \lVert f \rVert_{\dot{H}^{\frac{5}{4} + \epsilon}}^{\frac{4}{5} (\epsilon + 3 \kappa)}$, we estimate from \eqref{Define II3,2} 
\begin{align}
\RomanI_{3,2} \overset{\eqref{Sobolev products d} \eqref{Sobolev products e}}{\lesssim}& \lVert w^{\mathcal{L}} \rVert_{\dot{H}^{2\epsilon + 3 \kappa}} \lVert \mathcal{H}_{\lambda_{t}} X \rVert_{\mathscr{C}^{-\frac{1}{4} - \kappa}} \lVert w^{\mathcal{H}} \rVert_{H^{\frac{5}{4} - 2 \kappa}}  \label{est 115}\\
\overset{\eqref{Estimate on wH} \eqref{Define Lt kappa and Nt kappa} \eqref{Define M} }{\leq}& C(M, N_{t}^{\kappa}) \lVert w^{\mathcal{L}} \rVert_{\dot{H}^{\epsilon}}^{1 - \frac{4}{5} (\epsilon + 3 \kappa)} \lVert w^{\mathcal{L}} \rVert_{\dot{H}^{\frac{5}{4} + \epsilon}}^{\frac{4}{5} (\epsilon + 3 \kappa)} \leq \frac{\nu}{32} \lVert w^{\mathcal{L}} \rVert_{\dot{H}^{\frac{5}{4} + \epsilon}}^{2} + C(M, N_{t}^{\kappa}) (\lVert w^{\mathcal{L}} \rVert_{\dot{H}^{\epsilon}}^{2} + 1). \nonumber 
\end{align}
Applying \eqref{est 114} and \eqref{est 115} to \eqref{Split II3} gives us 
\begin{equation}\label{est 137}
\RomanII_{3} \leq \frac{\nu}{16} \lVert w^{\mathcal{L}} \rVert_{\dot{H}^{\frac{5}{4} + \epsilon}}^{2} + C(M, N_{t}^{\kappa}) ( \lVert w^{\mathcal{L}} \rVert_{\dot{H}^{\epsilon}}^{2} + 1).
\end{equation} 

Next, within $\RomanII_{4}$, we first work on $- 2 \langle (-\Delta)^{\epsilon} w^{\mathcal{L}}, \divergence (w^{\otimes 2}) \rangle (t)$, which we rewrite as 
\begin{align}
&- 2 \langle (-\Delta)^{\epsilon} w^{\mathcal{L}}, \divergence (w^{\otimes 2}) \rangle (t)   \nonumber \\
\overset{\eqref{Define symmetric tensor}}{=}&  -2 \langle (-\Delta)^{\epsilon} w^{\mathcal{L}}, \divergence \left( (w^{\mathcal{L}})^{\otimes 2} + 2w^{\mathcal{L}} \otimes_{s} w^{\mathcal{H}} + (w^{\mathcal{H}})^{\otimes 2} \right) \rangle (t).  \label{est 117}
\end{align}
Concerning the product of lower-frequency terms, we can use the Kato-Ponce commutator estimate \eqref{Kato-Ponce} and Sobolev embeddings of $\dot{H}^{\frac{5}{4}}(\mathbb{T}^{3}) \hookrightarrow L^{12} (\mathbb{T}^{3})$ and $\dot{H}^{\frac{1}{4}}(\mathbb{T}^{3}) \hookrightarrow L^{\frac{12}{5}} (\mathbb{T}^{3})$ to compute 
\begin{align}
&  -2 \langle (-\Delta)^{\epsilon} w^{\mathcal{L}}, \divergence \left( (w^{\mathcal{L}} )^{\otimes 2} \right) \rangle (t) = -2 \int_{\mathbb{T}^{3}} (-\Delta)^{\frac{\epsilon}{2}} w^{\mathcal{L}} \cdot [ (-\Delta)^{\frac{\epsilon}{2}}, (w^{\mathcal{L}} \cdot\nabla) ] w^{\mathcal{L}} dx  \nonumber\\
\overset{\eqref{Kato-Ponce}}{\lesssim}&  \lVert w^{\mathcal{L}} \rVert_{\dot{H}^{\frac{5}{4} + \epsilon}} \lVert w^{\mathcal{L}} \rVert_{\dot{H}^{\epsilon}} \lVert w^{\mathcal{L}} \rVert_{\dot{H}^{\frac{5}{4}}} \leq \frac{\nu}{32} \lVert w^{\mathcal{L}} \rVert_{\dot{H}^{\frac{5}{4} + \epsilon}}^{2} + C \lVert w^{\mathcal{L}} \rVert_{\dot{H}^{\epsilon}}^{2} \lVert w^{\mathcal{L}} \rVert_{\dot{H}^{\frac{5}{4}}}^{2};  \label{est 116}
\end{align}
we note that this estimate demonstrates the energy-criticality (recall \eqref{Energy-critical}) of our case and new ideas will be necessary for further improvements beyond our case. Next, we estimate the high-low frequency term by Lemma \ref{Lemma 3.2} and the Gagliardo-Nirenberg inequality of $\lVert f \rVert_{\dot{H}^{\frac{37}{40} + \epsilon + \kappa}} \lesssim \lVert f \rVert_{\dot{H}^{\epsilon}}^{\frac{13 - 40 \kappa}{50}} \lVert f \rVert_{\dot{H}^{\frac{5}{4} + \epsilon}}^{\frac{37 + 40 \kappa}{50}}$, 
\begin{align}
& -4 \langle (-\Delta)^{\epsilon} w^{\mathcal{L}}, \divergence \left( w^{\mathcal{L}} \otimes_{s} w^{\mathcal{H}}  \right) \rangle (t) \lesssim  \lVert w^{\mathcal{L}} \rVert_{\dot{H}^{\frac{37}{40} + \epsilon + \kappa}} \lVert w^{\mathcal{L}} \rVert_{\dot{H}^{\frac{1}{2} + \epsilon + \kappa}} \lVert w^{\mathcal{H}} \rVert_{\dot{H}^{\frac{43}{40} - 2 \kappa}}  \nonumber \\
& \hspace{15mm} \overset{\eqref{Estimate on wH}}{\lesssim} C(N_{t}^{\kappa}, M) \lVert w^{\mathcal{L}} \rVert_{\dot{H}^{\epsilon}}^{\frac{43 - 80 \kappa}{50}} \lVert w^{\mathcal{L}} \rVert_{\dot{H}^{\frac{5}{4} + \epsilon}}^{\frac{57 + 80 \kappa}{50}} \leq \frac{\nu}{32} \lVert w^{\mathcal{L}} \rVert_{\dot{H}^{\frac{5}{4} + \epsilon}}^{2} + C(N_{t}^{\kappa}, M) \lVert w^{\mathcal{L}} \rVert_{\dot{H}^{\epsilon}}^{2}. \label{est 118}
\end{align}
Next, we rely on Lemma \ref{Lemma 3.2} and the Gagliardo-Nirenberg inequality of $\lVert f \rVert_{\dot{H}^{\frac{1}{4} + 2 \epsilon + 4 \kappa}} \lesssim \lVert f \rVert_{\dot{H}^{\epsilon}}^{\frac{4 - 4 (\epsilon + 4 \kappa)}{5}} \lVert f \rVert_{\dot{H}^{\frac{5}{4} + \epsilon}}^{\frac{1+ 4( \epsilon + 4 \kappa)}{5}}$ to estimate the  higher frequency term by 
\begin{align}
& -2 \langle (-\Delta)^{\epsilon} w^{\mathcal{L}}, \divergence (w^{\mathcal{H}})^{\otimes 2} \rangle (t) \lesssim \lVert w^{\mathcal{L}} \rVert_{\dot{H}^{\frac{1}{4} + 2 \epsilon + 4 \kappa}} \lVert (w^{\mathcal{H}})^{\otimes 2} \rVert_{\dot{H}^{\frac{3}{4} - 4 \kappa}}  \nonumber \\
&  \hspace{15mm} \lesssim  \lVert w^{\mathcal{L}} \rVert_{\dot{H}^{\epsilon}}^{\frac{4- 4 (\epsilon + 4 \kappa)}{5}} \lVert w^{\mathcal{L}} \rVert_{\dot{H}^{\frac{5}{4} + \epsilon}}^{\frac{1+ 4 (\epsilon + 4 \kappa)}{5}} \lVert w^{\mathcal{H}} \rVert_{\dot{H}^{\frac{9}{8} - 2 \kappa}}^{2}  \overset{\eqref{Estimate on wH}  \eqref{Define M} }{\lesssim} C(N_{t}^{\kappa})  \lVert w^{\mathcal{L}} \rVert_{\dot{H}^{\epsilon}}^{\frac{4- 4 (\epsilon + 4 \kappa)}{5}} \lVert w^{\mathcal{L}} \rVert_{\dot{H}^{\frac{5}{4} + \epsilon}}^{\frac{1+ 4 (\epsilon + 4 \kappa)}{5}}  \nonumber\\
& \hspace{15mm} \leq \frac{\nu}{32} \lVert w^{\mathcal{L}} \rVert_{\dot{H}^{\frac{5}{4} + \epsilon}}^{2} + C(M, N_{t}^{\kappa}) (\lVert w^{\mathcal{L}} \rVert_{\dot{H}^{\epsilon}}^{2} + 1).\label{est 119}
\end{align}
In sum, we are ready to conclude by applying \eqref{est 116}, \eqref{est 118}, and \eqref{est 119} to \eqref{est 117} to conclude
\begin{equation}\label{est 133} 
- 2 \langle (-\Delta)^{\epsilon} w^{\mathcal{L}}, \divergence (w^{\otimes 2}) \rangle (t)  \leq  \frac{3\nu}{32} \lVert w^{\mathcal{L}} \rVert_{\dot{H}^{\frac{5}{4} + \epsilon}}^{2} + C(M, N_{t}^{\kappa}) ( 1+ \lVert w^{\mathcal{L}} \rVert_{\dot{H}^{\frac{5}{4}}}^{2}) \lVert w^{\mathcal{L}} \rVert_{\dot{H}^{\epsilon}}^{2}. 
\end{equation} 
Next, we estimate the last non-commutator term in $\RomanII_{4}$ by relying on \eqref{Sobolev products}, \eqref{Estimate on wH}, and the Gagliardo-Nirenberg inequalities of $\lVert f  \rVert_{\dot{H}^{\frac{3}{40} + 2 \epsilon + 3 \kappa}} \lesssim \lVert f \rVert_{\dot{H}^{\epsilon}}^{\frac{47}{50} - \frac{4\epsilon}{5} - \frac{12\kappa}{5}} \lVert f \rVert_{\dot{H}^{\frac{5}{4} + \epsilon}}^{\frac{3}{50} + \frac{4\epsilon}{5} + \frac{12 \kappa}{5}}$ and $\lVert f \rVert_{\dot{H}^{\frac{37}{40} - 2 \kappa}} \lesssim \lVert f \rVert_{L^{2}}^{\frac{13 + 80 \kappa}{50}} \lVert f \rVert_{\dot{H}^{\frac{5}{4}}}^{\frac{37 - 80 \kappa}{50}}$, 
\begin{align}
& -2 \langle (-\Delta)^{\epsilon} w^{\mathcal{L}}, \divergence \left( 2 Y \otimes_{s} w + Y^{\otimes 2} \right) \rangle(t) \lesssim \lVert w^{\mathcal{L}} \rVert_{\dot{H}^{\frac{3}{40} + 2 \epsilon + 3 \kappa}} \left( \lVert Y \otimes_{s} w \rVert_{\dot{H}^{\frac{37}{40} - 3 \kappa}} + \lVert Y^{\otimes 2} \rVert_{\dot{H}^{\frac{37}{40}-3\kappa}} \right) \nonumber \\
& \hspace{20mm} \lesssim  \lVert w^{\mathcal{L}} \rVert_{\dot{H}^{\epsilon}}^{\frac{47}{50} - \frac{4\epsilon}{5} - \frac{12\kappa}{5}} \lVert w^{\mathcal{L}} \rVert_{\dot{H}^{\frac{5}{4} + \epsilon}}^{\frac{3}{50} + \frac{4\epsilon}{5} + \frac{12 \kappa}{5}} \left[ \lVert Y \rVert_{\mathscr{C}^{1-\kappa}} ( \lVert w^{\mathcal{L}} \rVert_{H^{\frac{37}{40} - 2 \kappa}} + \lVert w^{\mathcal{H}} \rVert_{H^{\frac{37}{40} - 2 \kappa}} ) + (N_{t}^{\kappa})^{2} \right]  \nonumber  \\ 
& \hspace{20mm}  \leq \frac{\nu}{32} \lVert w^{\mathcal{L}} \rVert_{\dot{H}^{\frac{5}{4} + \epsilon}}^{2} + C(M, N_{t}^{\kappa})  \left( 1+ \lVert w^{\mathcal{L}} \rVert_{\dot{H}^{\epsilon}}^{2} \right) \left( 1+ \lVert w^{\mathcal{L}} \rVert_{\dot{H}^{\frac{5}{4}}}^{2} \right). \label{est 134} 
\end{align}
At last, we come to the commutator involving $\mathcal{C}^{\circlesign{\prec}_{s}} (w, Q^{\mathcal{H}})$ in \eqref{Define II4}, which, using \eqref{est 87} again, we rewrite it as 
\begin{align}\label{Split tilde Ck}
2 \left\langle (-\Delta)^{\epsilon}w^{\mathcal{L}}, \divergence \left( \mathcal{C}^{\circlesign{\prec}_{s}} (w, Q^{\mathcal{H}}) \right) \right\rangle (t)  = \sum_{k=1}^{4} \tilde{C}_{k} 
\end{align}
where 
\begin{subequations}\label{Define tilde Ck}
\begin{align}
& \tilde{C}_{1} \triangleq -2 \langle (-\Delta)^{\epsilon} w^{\mathcal{L}}, \divergence \left( [ \mathbb{P}_{L} \divergence (w^{\otimes 2} ) ] \circlesign{\prec}_{s} Q^{\mathcal{H}}  \right) \rangle, \label{Define tilde C1}\\
& \tilde{C}_{2} \triangleq -2 \langle  (-\Delta)^{\epsilon}w^{\mathcal{L}}, \divergence \left( [ \mathbb{P}_{L} \divergence (D \otimes_{s} w) ] \circlesign{\prec}_{s} Q^{\mathcal{H}} \right) \rangle,  \label{Define tilde C2}\\
& \tilde{C}_{3} \triangleq -2 \langle  (-\Delta)^{\epsilon}w^{\mathcal{L}}, \divergence \left( [ \mathbb{P}_{L} \divergence (Y^{\otimes 2}) ] \circlesign{\prec}_{s} Q^{\mathcal{H}}\right) \rangle,  \label{Define tilde C3}\\
& \tilde{C}_{4} \triangleq 2 \langle  (-\Delta)^{\epsilon}w^{\mathcal{L}}, \divergence \left( \Lambda^{\frac{5}{2}} (w \circlesign{\prec}_{s} Q^{\mathcal{H}}) - (\Lambda^{\frac{5}{2}} w) \circlesign{\prec}_{s} Q^{\mathcal{H}} - w \circlesign{\prec}_{s} \Lambda^{\frac{5}{2}} Q^{\mathcal{H}} \right) \rangle. \label{Define tilde C4}
\end{align}
\end{subequations} 
First, we estimate $\tilde{C}_{1}$ from \eqref{Define tilde C1} using the Gagliardo-Nirenberg inequalities of $\lVert f \rVert_{\dot{H}^{2\epsilon}} \lesssim \lVert f \rVert_{\dot{H}^{\epsilon}}^{\frac{5- 4 \epsilon}{5}} \lVert f \rVert_{\dot{H}^{\frac{5}{4} + \epsilon}}^{\frac{4\epsilon}{5}}$ and $\lVert f \rVert_{\dot{H}^{\frac{5+ 6 \kappa}{8}}} \lesssim\lVert f \rVert_{L^{2}}^{\frac{5+ 6 \kappa}{10}} \lVert f \rVert_{\dot{H}^{\frac{5}{4}}}^{\frac{5-6\kappa}{10}}$ as follows:
\begin{align}
&\tilde{C}_{1} \overset{\eqref{Sobolev products c}}{\lesssim} \lVert w^{\mathcal{L}} \rVert_{\dot{H}^{\epsilon}}^{\frac{5- 4 \epsilon}{5}} \lVert w^{\mathcal{L}} \rVert_{\dot{H}^{\frac{5}{4} + \epsilon}}^{\frac{4\epsilon}{5}} \lVert \mathbb{P}_{L} \divergence (w^{\otimes 2}) \rVert_{H^{-\frac{5}{4} + \frac{3\kappa}{2}}} \lVert Q^{\mathcal{H}} \rVert_{\mathscr{C}^{\frac{9}{4} - \frac{3\kappa}{2}}} \nonumber \\
&\overset{\eqref{Regularity of Q} \eqref{Define Lt kappa and Nt kappa}}{\lesssim} C(N_{t}^{\kappa}) \lVert w^{\mathcal{L}} \rVert_{\dot{H}^{\epsilon}}^{\frac{5- 4 \epsilon}{5}} \lVert w^{\mathcal{L}} \rVert_{\dot{H}^{\frac{5}{4} + \epsilon}}^{\frac{4\epsilon}{5}} \lVert w \rVert_{\dot{H}^{\frac{5+ 6 \kappa}{8}}}^{2}  \nonumber \\
&\overset{ \eqref{Estimate on wH} \eqref{Define M}}{\lesssim} C(M, N_{t}^{\kappa})  \lVert w^{\mathcal{L}} \rVert_{\dot{H}^{\epsilon}}^{\frac{5-4\epsilon}{5}} \lVert w^{\mathcal{L}} \rVert_{\dot{H}^{\frac{5}{4} + \epsilon}}^{\frac{4\epsilon}{5}} \left( \lVert w^{\mathcal{L}} \rVert_{\dot{H}^{\frac{5}{4}}}^{\frac{5-6\kappa}{10}} + N_{t}^{\kappa} \right)^{2} \nonumber \\
& \hspace{15mm} \leq \frac{\nu}{32} \lVert w^{\mathcal{L}} \rVert_{\dot{H}^{\frac{5}{4} + \epsilon}}^{2} + C(M, N_{t}^{\kappa}) (1+\lVert w^{\mathcal{L}} \rVert_{\dot{H}^{\epsilon}}^{2}) (1+ \lVert w^{\mathcal{L}} \rVert_{\dot{H}^{\frac{5}{4}}}^{2} ). \label{est 128}
\end{align}
For $\tilde{C}_{2}$ from \eqref{Define tilde C2}, we first rewrite it by \eqref{Define D} as 
\begin{equation}\label{Split tilde C2}
\tilde{C}_{2} = \sum_{k=1}^{2} \tilde{C}_{2k}
\end{equation} 
where 
\begin{subequations}\label{est 121}
\begin{align} 
& \tilde{C}_{21} = -4 \langle  (-\Delta)^{\epsilon}w^{\mathcal{L}}, \divergence \left( [ \mathbb{P}_{L} \divergence (Y \otimes_{s} w) ] \circlesign{\prec}_{s} Q^{\mathcal{H}} \right) \rangle, \label{est 121a}\\
& \tilde{C}_{22} = -4 \langle  (-\Delta)^{\epsilon}w^{\mathcal{L}}, \divergence \left( [ \mathbb{P}_{L} \divergence (X \otimes_{s} w) ] \circlesign{\prec}_{s} Q^{\mathcal{H}} \right). \rangle \label{est 121b}
\end{align}
\end{subequations} 
Now, we recall the estimate of ``$\lVert  [ \mathbb{P}_{L} \divergence ( Y \otimes_{s} w) ] \circlesign{\prec}_{s} Q^{\mathcal{H}} \rVert_{\dot{H}^{1-\eta}}  \lesssim (N_{t}^{\kappa})^{2} [ \lVert w^{\mathcal{L}} \rVert_{\dot{H}^{\kappa}} + (1+ \lVert w \rVert_{L^{2}})^{1- \tau (\frac{5}{4} - 3 \kappa)} N_{t}^{\kappa}]$'' within \eqref{est 120} that was valid for any $\eta \in (0,1)$ and $\kappa_{0} \in (0, \frac{1}{200})$. We use it with ``$\eta$'' = $\kappa$ so that 
\begin{align*}
\lVert  [ \mathbb{P}_{L} \divergence ( Y \otimes_{s} w) ] \circlesign{\prec}_{s} Q^{\mathcal{H}} \rVert_{\dot{H}^{1-\kappa}}  \lesssim (N_{t}^{\kappa})^{2} [ \lVert w^{\mathcal{L}} \rVert_{\dot{H}^{\kappa}} + (1+ \lVert w \rVert_{L^{2}})^{1- \frac{40}{13}(\frac{5}{4} - 3 \kappa)} N_{t}^{\kappa}]
\end{align*}
and estimate $\tilde{C}_{21}$ from \eqref{est 121a}, along with the Gagliardo-Nirenberg inequality of $\lVert f \rVert_{\dot{H}^{2\epsilon + \kappa}} \lesssim \lVert f \rVert_{\dot{H}^{\epsilon}}^{1- \frac{4(\epsilon + \kappa)}{5}} \lVert f \rVert_{\dot{H}^{\frac{5}{4} + \epsilon}}^{\frac{4( \epsilon + \kappa)}{5}}$ and $\lVert f \rVert_{\dot{H}^{\kappa}} \lesssim \lVert f \rVert_{L^{2}}^{1- \frac{4\kappa}{5}} \lVert f \rVert_{\dot{H}^{\frac{5}{4}}}^{\frac{4\kappa}{5}}$,  by  
\begin{align}
&  \tilde{C}_{21} \lesssim \lVert w^{\mathcal{L}} \rVert_{\dot{H}^{2\epsilon + \kappa}} \lVert [ \mathbb{P}_{L} \divergence ( Y \otimes_{s} w) ] \circlesign{\prec}_{s} Q^{\mathcal{H}} \rVert_{\dot{H}^{1-\kappa}} \nonumber \\
& \hspace{25mm} \overset{\eqref{est 120}}{\lesssim} (N_{t}^{\kappa})^{2} \lVert w^{\mathcal{L}} \rVert_{\dot{H}^{\epsilon}}^{1- \frac{4(\epsilon + \kappa)}{5}} \lVert w^{\mathcal{L}} \rVert_{\dot{H}^{\frac{5}{4} + \epsilon}}^{\frac{4(\epsilon + \kappa)}{5}} \left( \lVert w^{\mathcal{L}} \rVert_{L^{2}}^{1- \frac{4\kappa}{5}} \lVert w^{\mathcal{L}} \rVert_{\dot{H}^{\frac{5}{4}}}^{\frac{4\kappa}{5}} + N_{t}^{\kappa} \right)  \nonumber\\
&\hspace{25mm} \overset{\eqref{Define M}}{\leq}  \frac{\nu}{64} \lVert w^{\mathcal{L}} \rVert_{\dot{H}^{\frac{5}{4} + \epsilon}}^{2} + C(M, N_{t}^{\kappa}) (1+\lVert w^{\mathcal{L}} \rVert_{\dot{H}^{\epsilon}}^{2}) (1+ \lVert w^{\mathcal{L}} \rVert_{\dot{H}^{\frac{5}{4}}}^{2} ). \label{est 124} 
\end{align}
Similarly, we recall the estimate of ``$\lVert [ \mathbb{P}_{L} \divergence ( X \otimes_{s} w) ] \circlesign{\prec}_{s} Q^{\mathcal{H}} \rVert_{\dot{H}^{1-\eta}}  \lesssim  (N_{t}^{\kappa})^{2}  [ \lVert w^{\mathcal{L}} \rVert_{\dot{H}^{\frac{1}{4} + \frac{3\kappa}{2}}} + (1+ \lVert w \rVert_{L^{2}})^{1- \tau (1- \frac{7\kappa}{2})} N_{t}^{\kappa} t^{\frac{\kappa}{5}} ]$'' from \eqref{est 122} that is valid for any $\eta \in (0,1)$ and $\kappa_{0} \in (0, \frac{1}{200})$. Thus, we choose ``$\eta$'' = $\kappa$ so that 
\begin{align*}
\lVert [ \mathbb{P}_{L} \divergence ( X \otimes_{s} w) ] \circlesign{\prec}_{s} Q^{\mathcal{H}} \rVert_{\dot{H}^{1-\kappa}}  \lesssim  (N_{t}^{\kappa})^{2}  [ \lVert w^{\mathcal{L}} \rVert_{\dot{H}^{\frac{1}{4} + \frac{3\kappa}{2}}} + (1+ \lVert w \rVert_{L^{2}})^{1- \frac{40}{13} (1- \frac{7\kappa}{2})} N_{t}^{\kappa} t^{\frac{\kappa}{5}} ]
\end{align*}
and use it to estimate $\tilde{C}_{22}$ of \eqref{est 121b}, along with the Gagliardo-Nirenberg inequalities of $\lVert f \rVert_{\dot{H}^{2\epsilon + \kappa}} \lesssim \lVert f \rVert_{\dot{H}^{\epsilon}}^{1- \frac{4(\epsilon + \kappa)}{5}} \lVert f \rVert_{\dot{H}^{\frac{5}{4} + \epsilon}}^{\frac{4( \epsilon + \kappa)}{5}}$ and $\lVert f \rVert_{\dot{H}^{\frac{1}{4} + \frac{3\kappa}{2}}} \lesssim \lVert f \rVert_{L^{2}}^{\frac{4-6 \kappa}{5}} \lVert f \rVert_{\dot{H}^{\frac{5}{4}}}^{\frac{1+ 6 \kappa}{5}}$ by 
\begin{align}
&  \tilde{C}_{22} \lesssim \lVert w^{\mathcal{L}} \rVert_{\dot{H}^{2\epsilon + \kappa}} \lVert [ \mathbb{P}_{L} \divergence (X \otimes_{s} w) ] \circlesign{\prec}_{s} Q^{\mathcal{H}} \rVert_{\dot{H}^{1-\kappa}}  \nonumber \\
& \hspace{20mm} \overset{\eqref{est 122}}{\lesssim} \lVert w^{\mathcal{L}} \rVert_{\dot{H}^{2\epsilon + \kappa}} (N_{t}^{\kappa})^{2} [ \lVert w^{\mathcal{L}} \rVert_{\dot{H}^{\frac{1}{4} + \frac{3\kappa}{2}}} + (1+ \lVert w \rVert_{L^{2}})^{1- \tau (1- \frac{7\kappa}{2})} N_{t}^{\kappa} t^{\frac{\kappa}{5}} ]   \nonumber\\
& \hspace{20mm} \overset{\eqref{Define M}}{\lesssim} C(N_{t}^{\kappa}, M) \lVert w^{\mathcal{L}} \rVert_{\dot{H}^{\epsilon}}^{1- \frac{4}{5} (\epsilon + \kappa)} \lVert w^{\mathcal{L}} \rVert_{\dot{H}^{\frac{5}{4} + \epsilon}}^{\frac{4}{5} (\epsilon + \kappa)} [  \lVert w^{\mathcal{L}} \rVert_{\dot{H}^{\frac{5}{4}}}^{\frac{1+  6 \kappa}{5}} + 1]  \nonumber \\ 
& \hspace{20mm} \leq  \frac{\nu}{64} \lVert w^{\mathcal{L}} \rVert_{\dot{H}^{\frac{5}{4} + \epsilon}}^{2} + C(M, N_{t}^{\kappa})   (\lVert w^{\mathcal{L}} \rVert_{\dot{H}^{\epsilon}}^{2} + 1) (\lVert w^{\mathcal{L}} \rVert_{\dot{H}^{\frac{5}{4}}}^{2} + 1).  \label{est 123}
\end{align}
Applying \eqref{est 124} and \eqref{est 123} to \eqref{Split tilde C2}, we conclude that 
\begin{equation}\label{est 129}
\tilde{C}_{2} \leq  \frac{\nu}{32} \lVert w^{\mathcal{L}} \rVert_{\dot{H}^{\frac{5}{4} + \epsilon}}^{2} + C(M, N_{t}^{\kappa}) ( \lVert w^{\mathcal{L}} \rVert_{\dot{H}^{\epsilon}}^{2} + 1) ( \lVert w^{\mathcal{L}} \rVert_{\dot{H}^{\frac{5}{4}}}^{2} + 1).
\end{equation} 
Next, we recall the estimate of ``$\lVert [ \mathbb{P}_{L} \divergence (Y^{\otimes 2} ) ] \circlesign{\prec}_{s} Q^{\mathcal{H}} \rVert_{\dot{H}^{1-\eta}} \lesssim  (N_{t}^{\kappa})^{3}$'' for any $\eta \in (0,1)$ and $\kappa_{0} \in (0, \frac{1}{200})$ from \eqref{Estimate on C3} and utilize it with ``$\eta$'' = $\kappa$ so that 
\begin{align*}
\lVert [ \mathbb{P}_{L} \divergence (Y^{\otimes 2} ) ] \circlesign{\prec}_{s} Q^{\mathcal{H}} \rVert_{\dot{H}^{1-\kappa}} \lesssim  (N_{t}^{\kappa})^{3}
\end{align*}
to estimate from \eqref{Define tilde C3}, along with the Gagliardo-Nirenberg inequality of $\lVert f \rVert_{\dot{H}^{2\epsilon + \kappa}} \lesssim \lVert f \rVert_{\dot{H}^{\epsilon}}^{1- \frac{4(\epsilon + \kappa)}{5}} \lVert f \rVert_{\dot{H}^{\frac{5}{4} + \epsilon}}^{\frac{4( \epsilon + \kappa)}{5}}$
\begin{align}
\tilde{C}_{3}\lesssim& \lVert w^{\mathcal{L}} \rVert_{\dot{H}^{2\epsilon + \kappa}} \lVert [ \mathbb{P}_{L} \divergence (Y^{\otimes 2}) ] \circlesign{\prec}_{s} Q^{\mathcal{H}} \rVert_{\dot{H}^{1-\kappa}} \overset{\eqref{Estimate on C3}}{\lesssim} \lVert w^{\mathcal{L}} \rVert_{\dot{H}^{\epsilon}}^{1- \frac{4(\epsilon + \kappa)}{5}} \lVert w^{\mathcal{L}} \rVert_{\dot{H}^{\frac{5}{4} + \epsilon}}^{\frac{4(\epsilon + \kappa)}{5}} (N_{t}^{\kappa})^{3}  \nonumber \\
& \hspace{35mm}  \leq \frac{\nu}{32} \lVert w^{\mathcal{L}} \rVert_{\dot{H}^{\frac{5}{4} + \epsilon}}^{2} + C(N_{t}^{\kappa}) ( \lVert w^{\mathcal{L}} \rVert_{\dot{H}^{\epsilon}}^{2} + 1).  \label{est 130}
\end{align}
We now come to $\tilde{C}_{4}$ in \eqref{Define tilde C4}. Similarly to \eqref{Define C41 and C42}, we treat the second term within the commutator first using the Gagliardo-Nirenberg inequality of $\lVert f \rVert_{\dot{H}^{\epsilon+ \frac{3\kappa}{2}}} \lesssim \lVert f \rVert_{\dot{H}^{\epsilon}}^{1- \frac{6\kappa}{5}} \lVert f \rVert_{\dot{H}^{\frac{5}{4} + \epsilon}}^{\frac{6\kappa}{5}}$:
\begin{align}
& -2 \langle (-\Delta)^{\epsilon} w^{\mathcal{L}}, \divergence \left( ( \Lambda^{\frac{5}{2}} w) \circlesign{\prec}_{s} Q^{\mathcal{H}} \right) \rangle \overset{\eqref{Sobolev products c}}{\lesssim} \lVert w^{\mathcal{L}} \rVert_{\dot{H}^{\frac{5}{4} + \epsilon}} \lVert \Lambda^{\frac{5}{2}} w \rVert_{H^{-\frac{5}{2} + \epsilon + \frac{3\kappa}{2}}} \lVert Q^{\mathcal{H}} \rVert_{\mathscr{C}^{\frac{9}{4} - \frac{3\kappa}{2}}}  \nonumber \\ 
& \hspace{25mm} \overset{\eqref{Regularity of Q} \eqref{Estimate on wH} }{\lesssim} \lVert w^{\mathcal{L}} \rVert_{\dot{H}^{\frac{5}{4} + \epsilon}} \left( \lVert w^{\mathcal{L}} \rVert_{\dot{H}^{\epsilon}}^{1- \frac{6\kappa}{5}} \lVert w^{\mathcal{L}} \rVert_{\dot{H}^{\frac{5}{4} + \epsilon}}^{\frac{6\kappa}{5}} + ( 1+ \lVert w(t) \rVert_{L^{2}})^{1- \frac{40}{13} (\frac{5}{4} - \epsilon - \frac{7\kappa}{2})} N_{t}^{\kappa} t^{\frac{\kappa}{5}} \right) \nonumber \\
& \hspace{25mm} \overset{\eqref{Define Lt kappa and Nt kappa} \eqref{Define M}}{\leq} \frac{\nu}{64} \lVert w^{\mathcal{L}} \rVert_{\dot{H}^{\frac{5}{4} + \epsilon}}^{2} + C(M, N_{t}^{\kappa}) ( \lVert w^{\mathcal{L}} \rVert_{\dot{H}^{\epsilon}}^{2} + 1 ). \label{est 126}
\end{align} 
Next, for $\alpha> 0$ to be determined subsequently, we estimate for $N_{1} \in \mathbb{N}$ from \eqref{est 152}, 
\begin{align}
&\lVert   \Lambda^{\frac{5}{2}} (w \circlesign{\prec}_{s} Q^{\mathcal{H}} ) - w \circlesign{\prec}_{s} \Lambda^{\frac{5}{2}} Q^{\mathcal{H}}  \rVert_{H^{\alpha}}^{2}    \nonumber \\
\overset{\eqref{est 152}}{=}& \sum_{j\geq -1} 2^{2j\alpha} \left\lVert \sum_{m: \lvert j-m \rvert \leq N_{1}}  \left(\Lambda^{\frac{5}{2}} \left( (S_{m-1} w) \Delta_{m} Q^{\mathcal{H}} \right) - (S_{m-1} w) \Lambda^{\frac{5}{2}} \Delta_{m} Q^{\mathcal{H}}  \right) \right\rVert_{L^{2}}^{2} \nonumber  \\
\overset{\eqref{Kato-Ponce}}{\lesssim}&  \sum_{j\geq -1} 2^{2j\alpha} \left[ \lVert \nabla S_{j-1} w \rVert_{L^{\frac{6}{3-2\kappa}}} \lVert \Lambda^{\frac{3}{2}} \Delta_{j} Q^{\mathcal{H}} \rVert_{L^{\frac{3}{\kappa}}} + \lVert \Lambda^{\frac{5}{2}} S_{j-1} w \rVert_{L^{2}} \lVert \Delta_{j} Q^{\mathcal{H}} \rVert_{L^{\infty}} \right]^{2}\nonumber  \\
\lesssim& \lVert Q^{\mathcal{H}} \rVert_{\mathscr{C}^{\frac{9}{4} - \frac{3\kappa}{2}}}^{2}  \Bigg[ \left\lVert w^{-\lvert j \rvert (\frac{3}{4} - \alpha - \frac{3\kappa}{2})} \ast_{j} 2^{j(\frac{1}{4} + \alpha + \frac{5\kappa}{2})} \lVert \Delta_{j} w \rVert_{L^{2}} \right\rVert_{l^{2}_{j\geq -1}}  \nonumber \\
& \hspace{10mm} + \left\lVert 2^{-\lvert j \rvert (\frac{9}{4} - \alpha - \frac{3\kappa}{2})} \ast_{j} 2^{j(\frac{1}{4} + \alpha  + \frac{3\kappa}{2})} \lVert \Delta_{j} w\rVert_{L^{2}} \right\rVert_{l^{2}_{j\geq -1}} \Bigg]^{2}   \overset{\eqref{Regularity of Q} \eqref{Define Lt kappa and Nt kappa}}{\lesssim} (N_{t}^{\kappa})^{2} \lVert w \rVert_{\dot{H}^{\frac{1}{4} + \alpha + \frac{5\kappa}{2}}}^{2}, \label{est 125} 
\end{align}
where the last inequality used Young's inequality for convolution and we see the requirement of $\alpha < \frac{3}{4} - \frac{3\kappa}{2}$. Thus, we select $\alpha = \frac{3}{4} - 2 \kappa$ and utilize \eqref{est 125} to estimate the rest of the terms in $\tilde{C}_{4}$ using the Gagliardo-Nirenberg inequalities of $\lVert f \rVert_{\dot{H}^{\frac{1}{4} + 2 \epsilon + 2\kappa}} \lesssim \lVert f \rVert_{\dot{H}^{\epsilon}}^{\frac{4 - 4(\epsilon + 2\kappa)}{5}} \lesssim \lVert f \rVert_{\dot{H}^{\frac{5}{4} + \epsilon}}^{\frac{1+ 4(\epsilon + 2 \kappa)}{5}}$ and $\lVert f \rVert_{\dot{H}^{1+ \frac{\kappa}{2}}} \lesssim \lVert f \rVert_{L^{2}}^{\frac{1-2\kappa}{5}} \lVert f \rVert_{\dot{H}^{\frac{5}{4}}}^{\frac{4+ 2 \kappa}{5}}$ by  
\begin{align}
& 2 \langle (-\Delta)^{\epsilon} w^{\mathcal{L}}, \divergence \left( \Lambda^{\frac{5}{2}} (w \circlesign{\prec}_{s} Q^{\mathcal{H}} ) - w \circlesign{\prec}_{s} \Lambda^{\frac{5}{2}} Q^{\mathcal{H}}  \right) \rangle  \nonumber \\
\lesssim& \lVert w^{\mathcal{L}} \rVert_{H^{\frac{1}{4} + 2 \epsilon + 2 \kappa}} \lVert  (\Lambda^{\frac{5}{2}} (w \circlesign{\prec}_{s} Q^{\mathcal{H}} ) - w \circlesign{\prec}_{s} \Lambda^{\frac{5}{2}} Q^{\mathcal{H}}  \rVert_{H^{\frac{3}{4} - 2\kappa}}   \nonumber \\
\overset{\eqref{est 125}}{\lesssim}& \lVert w^{\mathcal{L}} \rVert_{\dot{H}^{\frac{1}{4} + 2 \epsilon + 2 \kappa}} N_{t}^{\kappa} \lVert w \rVert_{\dot{H}^{1+ \frac{\kappa}{2}}}  \lesssim N_{t}^{\kappa} \lVert w^{\mathcal{L}} \rVert_{\dot{H}^{\epsilon}}^{\frac{4-4 (\epsilon + 2 \kappa)}{5}} \lVert w^{\mathcal{L}} \rVert_{\dot{H}^{\frac{5}{4} + \epsilon}}^{\frac{1+ 4(\epsilon + 2 \kappa)}{5}} [ \lVert w^{\mathcal{L}} \rVert_{\dot{H}^{1+ \frac{\kappa}{2}}} +  \lVert w^{\mathcal{H}} \rVert_{\dot{H}^{1+ \frac{\kappa}{2}}} ]  \nonumber \\
& \hspace{15mm} \overset{\eqref{Estimate on wH} \eqref{Define M}}{\leq}   \frac{\nu}{64} \lVert w^{\mathcal{L}} \rVert_{\dot{H}^{\frac{5}{4} + \epsilon}}^{2} + C(M, N_{t}^{\kappa}) ( \lVert w^{\mathcal{L}} \rVert_{\dot{H}^{\epsilon}}^{2} + 1) ( \lVert w^{\mathcal{L}} \rVert_{\dot{H}^{\frac{5}{4}}}^{2} + 1). \label{est 127}
\end{align}
Applying \eqref{est 126} and \eqref{est 127} to \eqref{Define tilde C4} gives us 
\begin{equation}\label{est 131}
\tilde{C}_{4} \leq  \frac{\nu}{32} \lVert w^{\mathcal{L}} \rVert_{\dot{H}^{\frac{5}{4} + \epsilon}}^{2} + C(M, N_{t}^{\kappa}) ( \lVert w^{\mathcal{L}} \rVert_{\dot{H}^{\epsilon}}^{2} + 1) ( \lVert w^{\mathcal{L}} \rVert_{\dot{H}^{\frac{5}{4}}}^{2} + 1).
\end{equation} 

At last, applying \eqref{est 128}, \eqref{est 129}, \eqref{est 130}, and \eqref{est 131} to \eqref{Split tilde Ck} gives us 
\begin{align}
&2 \left\langle (-\Delta)^{\epsilon}w^{\mathcal{L}}, \divergence \left( \mathcal{C}^{\circlesign{\prec}_{s}} (w, Q^{\mathcal{H}}) \right) \right\rangle (t)  \nonumber\\
\leq& \frac{\nu}{8} \lVert w^{\mathcal{L}} \rVert_{\dot{H}^{\frac{5}{4} + \epsilon}}^{2} + C(M, N_{t}^{\kappa}) ( \lVert w^{\mathcal{L}} \rVert_{\dot{H}^{\epsilon}}^{2} + 1) ( \lVert w^{\mathcal{L}} \rVert_{\dot{H}^{\frac{5}{4}}}^{2} + 1). \label{est 132}
\end{align} 
Applying \eqref{est 133}, \eqref{est 134}, and \eqref{est 132} to \eqref{Define II4}, 
\begin{equation}\label{est 138}
\RomanII_{4}\leq \frac{\nu}{4} \lVert w^{\mathcal{L}} \rVert_{\dot{H}^{\frac{5}{4} + \epsilon}}^{2} + C(M, N_{t}^{\kappa}) ( \lVert w^{\mathcal{L}} \rVert_{\dot{H}^{\epsilon}}^{2} + 1) ( \lVert w^{\mathcal{L}} \rVert_{\dot{H}^{\frac{5}{4}}}^{2} + 1).
\end{equation} 
Finally, applying \eqref{est 135}, \eqref{est 136}, \eqref{est 137}, and \eqref{est 138} to \eqref{est 111} gives us 
\begin{align*}
 \partial_{t} \lVert w^{\mathcal{L}} (t) \rVert_{\dot{H}^{\epsilon}}^{2}  \leq - \frac{25\nu}{16} \lVert w^{\mathcal{L}} \rVert_{\dot{H}^{\frac{5}{4} + \epsilon}}^{2}   +  C(M, N_{t}^{\kappa}) ( \lVert w^{\mathcal{L}} \rVert_{\dot{H}^{\epsilon}}^{2} + 1) ( \lVert w^{\mathcal{L}} \rVert_{\dot{H}^{\frac{5}{4}}}^{2} + 1)
\end{align*}
so that Gronwall's inequality implies the desired result \eqref{est 139} and completes the proof of Proposition \ref{Proposition 4.11}. 
\end{proof} 

\begin{proposition}\label{Proposition 4.12}
Suppose that $u^{\text{in}} \in L_{\sigma}^{2}   \cap \mathscr{C}^{-1+ 2 \kappa}(\mathbb{T}^{3})$ for some $\kappa > 0$ (recall Assumption \ref{Assumption 4.3}). If $T^{\max} < \infty$, then $\limsup_{t\nearrow T^{\max}} \lVert w(t) \rVert_{L^{2}} = \infty$.  
\end{proposition}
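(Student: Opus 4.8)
The plan is a standard blow-up (continuation) argument, powered entirely by the $L^{2}(\mathbb{T}^{3})$- and $\dot{H}^{\epsilon}(\mathbb{T}^{3})$-bounds already obtained. Suppose, towards a contradiction, that $T^{\max} < \infty$ yet $M_{1} \triangleq \sup_{t \in [0,T^{\max})} \lVert w(t) \rVert_{L^{2}} < \infty$; throughout we work on the event where $N_{T^{\max}}^{\kappa} < \infty$, which has full probability by Proposition \ref{Proposition 4.1}. First I would observe that only finitely many stopping times precede $T^{\max}$: by \eqref{Define T0 and Ti}, whenever $i+1 > M_{1}$ the defining infimum is over the empty set, so $T_{i+1} = T^{\max}$; hence there is a smallest $i^{\ast}$ with $T_{i^{\ast}} < T^{\max} = T_{i^{\ast}+1}$, and since $T_{j} = 0$ for $j \le i_{0}(u^{\text{in}})$ (see \eqref{Define i0}) while $T_{i^{\ast}+1} > 0$, one gets $i^{\ast} \ge i_{0}(u^{\text{in}})$. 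By \eqref{Define lambda t} the cutoff $\lambda_{t}$ is \emph{constant}, equal to $\lambda^{i^{\ast}}$, on $[T_{i^{\ast}}, T^{\max})$.

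Next I would run the $L^{2}$-estimate \eqref{est 106} of Proposition \ref{Proposition 4.9} with $i = i^{\ast}$. Since $T^{\max} - T_{i^{\ast}} < \infty$, $\mu = C_{1}\ln(\lambda_{T_{i^{\ast}}}) + C_{2}(N_{T^{\max}}^{\kappa}) < \infty$, and $\lVert w^{\mathcal{L}}(T_{i^{\ast}}) \rVert_{L^{2}} < \infty$ (from Assumption \ref{Assumption 4.3} if $T_{i^{\ast}}=0$, and from $w \in \mathcal{M}_{T^{\max}}^{2\gamma/5}\mathscr{C}^{\frac14+\frac{3\kappa}{2}}$ together with \eqref{Define QH, wH, and wL} if $T_{i^{\ast}}>0$), this yields a finite $M_{2}$ with
\[
\sup_{t \in [T_{i^{\ast}}, T^{\max})} \lVert w^{\mathcal{L}}(t) \rVert_{L^{2}}^{2} + \frac{\nu}{2}\int_{T_{i^{\ast}}}^{T^{\max}} \lVert w^{\mathcal{L}}(s) \rVert_{\dot{H}^{\frac54}}^{2}\, ds \le M_{2}.
\]
In particular the integral being finite, I can pick $t_{0} \in (T_{i^{\ast}}, T^{\max})$ with $\lVert w^{\mathcal{L}}(t_{0}) \rVert_{\dot{H}^{5/4}} < \infty$; as $w^{\mathcal{L}}(t_{0})$ is mean-zero this forces $w^{\mathcal{L}}(t_{0}) \in \dot{H}^{\epsilon}$ for any $\epsilon < \tfrac54$. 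Then, fixing $\epsilon \in (\tfrac12 + 2\kappa, \tfrac{9}{10})$ (nonempty for $\kappa$ small), the hypothesis \eqref{Define M} holds on $[t_{0}, T^{\max}]$ for a large enough $M>1$, and repeating the proof of Proposition \ref{Proposition 4.11} after translating the time origin to $t_{0}$ (legitimate because on $[t_{0},T^{\max})$ the cutoff $\lambda_{t}$ is constant and $X,Y$ are merely shifted, still with bounded norms) gives $\sup_{t \in [t_{0}, T^{\max})} \lVert w^{\mathcal{L}}(t) \rVert_{\dot{H}^{\epsilon}}^{2} < \infty$.

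To close, I would bootstrap this into a $\mathscr{C}^{-1+2\kappa}$-bound: by Proposition \ref{Proposition 4.4} with $\delta>0$ so small that $\tfrac54 - 2\kappa - \delta \ge \epsilon$, and using $\lVert w(t)\rVert_{L^{2}} \le M_{1}$ and $N_{t}^{\kappa} \le N_{T^{\max}}^{\kappa}$, the component $w^{\mathcal{H}}$ stays bounded in $H^{5/4-2\kappa-\delta} \subset H^{\epsilon}$ on $[0,T^{\max})$; combined with the previous step, $w = w^{\mathcal{L}} + w^{\mathcal{H}}$ is bounded in $H^{\epsilon}(\mathbb{T}^{3})$ on $[t_{0}, T^{\max})$, and since $\epsilon - \tfrac32 \ge -1+2\kappa$ the Sobolev embedding $H^{\epsilon}(\mathbb{T}^{3}) \hookrightarrow \mathscr{C}^{\epsilon-3/2} \hookrightarrow \mathscr{C}^{-1+2\kappa}$ yields $\sup_{t\in[t_{0},T^{\max})} \lVert w(t)\rVert_{\mathscr{C}^{-1+2\kappa}} =: K < \infty$, hence also $\sup_{t\in[t_{0},T^{\max})} \lVert v(t)\rVert_{\mathscr{C}^{-1+2\kappa}} < \infty$ via \eqref{Regularity of Y} and continuity of $Y$. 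Finally, the local well-posedness underlying Proposition \ref{Proposition 4.2} (equivalently Proposition \ref{Proposition 2.1}) produces, from data of size $\le K$ in $\mathscr{C}^{-1+2\kappa}$ with $L^{\kappa}$ controlled by $L_{T^{\max}+1}^{\kappa} < \infty$, a mild solution on a time interval whose length is bounded below by some $T_{\ast}=T_{\ast}(K,\omega)>0$ independent of the starting time; restarting at any $t_{1}\in(t_{0},T^{\max})$ with $T^{\max}-t_{1}<T_{\ast}$ and invoking uniqueness extends $v$ past $T^{\max}$, contradicting maximality. I expect the \textbf{main obstacle} to be Step~2 — obtaining a \emph{propagated} $\dot{H}^{\epsilon}$-bound with $\epsilon>\tfrac12$, which is exactly where the new commutator estimates of Propositions \ref{Proposition 4.7} and \ref{Proposition 4.11} are essential (cf. Remarks \ref{Remark 2.1}, \ref{Remark 2.2}, \ref{Remark 4.3}); the remaining bookkeeping (finiteness of the relevant $\{T_{i}\}$, the choice of $t_{0}$, and pinning down $\epsilon,\delta$ so the embedding chain $\dot{H}^{5/4}\supset\dot{H}^{\epsilon}$, $H^{\epsilon}\hookrightarrow\mathscr{C}^{-1+2\kappa}$ closes) is routine.
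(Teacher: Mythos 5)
Your proposal is correct and follows essentially the same route as the paper's proof: assume the $L^{2}$-norm stays bounded, invoke the $L^{2}$- and propagated $\dot{H}^{\epsilon}$-bounds of Propositions \ref{Proposition 4.9} and \ref{Proposition 4.11} together with the embedding \eqref{needed embedding} to place $w$ in $\mathscr{C}^{-1+2\kappa}(\mathbb{T}^{3})$ up to $T^{\max}$, and contradict maximality by restarting the local theory of Proposition \ref{Proposition 4.2}. The only cosmetic difference is how the initial datum for Proposition \ref{Proposition 4.11} is supplied: you select a time $t_{0}$ from the finiteness of $\int \lVert w^{\mathcal{L}}\rVert_{\dot{H}^{5/4}}^{2}\,ds$, whereas the paper asserts the pointwise bound \eqref{est 140} directly from the mild-solution regularity; both are legitimate within the same strategy.
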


\begin{proof}[Proof of Proposition \ref{Proposition 4.12}]
By Proposition \ref{Proposition 4.2}, for any initial data $u^{\text{in}} \in \mathscr{C}^{-1+ 2 \kappa} \cap L_{\sigma}^{2}$, we know that there exists $T^{\max} = T^{\max} (\{L_{t}^{\kappa}\}_{t\geq 0}, u^{\text{in}}) \in (0,\infty]$ and a unique mild solution $w \in \mathcal{M}_{T^{\max}}^{\frac{2\gamma}{5}} \mathscr{C}^{\frac{1}{4} + \frac{3\kappa}{2}}$ with $\gamma = \frac{5}{4} - \frac{\kappa}{2}$ so that $\sup_{t\in [0,T^{\max} ]} t^{\frac{1}{2} - \frac{\kappa}{5}} \lVert w^{\mathcal{L}} (t) \rVert_{\mathscr{C}^{\frac{1}{4} + \frac{3\kappa}{2}}} < \infty$. It follows that 
\begin{equation}\label{est 140}
\lVert w^{\mathcal{L}}(t) \rVert_{H^{\zeta}} < \infty \hspace{3mm}\forall \hspace{1mm} t \in [0, T^{\max}), \zeta < \frac{5}{4} - 2\kappa. 
\end{equation} 
Suppose that there exists some $i_{\max} \in \mathbb{N}_{0}$ such that $T_{i} = T^{\max}$ for all $i \geq i_{\max}$. Then, due to the Sobolev embedding of  
\begin{equation}\label{needed embedding} 
H^{\frac{9}{10} - \kappa}(\mathbb{T}^{3}) \hookrightarrow \mathscr{C}^{-1+ 2\kappa} (\mathbb{T}^{3}),
\end{equation} 
and Proposition \ref{Proposition 4.11}, we can reach a contradiction to $T^{\max}$ and conclude that $T_{i} < T^{\max}$ for all $i\in\mathbb{N}$. This completes the proof of Proposition \ref{Proposition 4.12}.
\end{proof}

We are now ready to prove Theorem \ref{Theorem 2.2}. Its proof is independent of spatial dimension or diffusion, and hence similar to previous works, mainly \cite[Theorem 2.5]{HR24}. Therefore, we leave it in the Appendix \ref{Proof of Theorem 2.2}. 

\subsection{Anderson Hamiltonian}\label{Subsection 4.2}
\hfill\\ We define 
\begin{equation}\label{Burgers' est 21} 
\mathcal{E}^{-\frac{5}{4} - \kappa} \triangleq \mathscr{C}^{-\frac{5}{4} - \kappa} (\mathbb{T}^{3}; \mathbb{M}^{3}) \times \mathscr{C}^{-2\kappa} (\mathbb{T}^{3}; \mathbb{M}^{3})
\end{equation} 
and 
\begin{equation}\label{Define K}
\mathcal{K}^{-\frac{5}{4} - \kappa} \triangleq \overline{\{ ( \Theta_{1}, P \circlesign{\circ} \Theta_{1}  - c: \hspace{1mm} \Theta_{1} \in C^{\infty} (\mathbb{T}^{3}; \mathbb{M}^{3}), c \in \mathbb{R} \}}
\end{equation} 
where the closure is taken w.r.t. $\mathcal{E}^{-\frac{5}{4} - \kappa}$-topology. In order to define 
\begin{equation}\label{Define U}
\mathcal{U} (\Theta) \triangleq -\frac{\nu \Lambda^{\frac{5}{2}}}{2} \Id - \Theta_{1} \hspace{3mm} \text{ for any } \Theta = (\Theta_{1}, \Theta_{2}) \in \mathcal{K}^{-\frac{5}{4} - \kappa} 
\end{equation} 
for any $\kappa > 0$, we define the space of strongly paracontrolled distributions 
\begin{subequations}\label{Burgers' est 24}
\begin{align}
& \chi_{\kappa}(\Theta) \triangleq \{ \phi \in H^{\frac{5}{4}-\kappa}: \hspace{1mm} \phi^{\sharp} \triangleq \phi + \phi \prec P \in H^{\frac{9}{4}-2\kappa} \} \hspace{1mm} \text{ where } \hspace{1mm} P \triangleq \left( 1+  \frac{\nu \Lambda^{\frac{5}{2}}}{2} \right)^{-1} \Theta_{1}, \label{Burgers' est 24a} \\
& \lVert \phi \rVert_{\chi_{\kappa}} \triangleq \lVert \phi \rVert_{H^{\frac{5}{4}-\kappa}} + \lVert \phi^{\sharp} \rVert_{H^{\frac{9}{4}-2\kappa}}, \label{Burgers' est 24b}
\end{align}
\end{subequations}
and rely on the following result; we postpone its proof to  Section \ref{Proof of Proposition on Anderson Hamiltonian} for completeness. 

\begin{proposition}\label{Proposition on Anderson Hamiltonian}
\rm{(\hspace{1sp}\cite[Proposition 6.1]{HR24}, \cite[Proposition 2.4]{Y25d})} Define $\mathcal{K}^{-\frac{5}{4} - \kappa}$ by  \eqref{Define K} and then $\mathcal{K} \triangleq \cup_{0 < \kappa < \kappa_{0}}$ for any $\kappa_{0} > 0$, $C_{\text{op}}$ to be the space of all closed self-adjoint operators with the graph distance where the convergence in this distance is implied by the convergence in the resolvent sense. Then there exist $\kappa_{0} > 0$ and a unique map $\mathcal{U}: \hspace{1mm} \mathcal{K} \mapsto C_{\text{op}}$ such that the following holds.
\begin{enumerate}
\item For any $\Theta = (\Theta_{1}, \Theta_{2}) \in \left(C^{\infty} (\mathbb{T}^{3})\right)^{2} \cap \mathcal{K}$ and $\phi \in H^{\frac{5}{2}}(\mathbb{T}^{3})$, 
\begin{equation}\label{Burgers' est 25}
\mathcal{U} (\Theta) \phi = - \frac{\nu \Lambda^{\frac{5}{2}}}{2} \phi - \Theta_{1} \circlesign{\prec} \phi - \Theta_{1} \circlesign{\succ} \phi - \Theta_{1} \circlesign{\circ} \phi^{\sharp} - \phi \circlesign{\prec} \Theta_{2} - C^{0} (\phi, P, \Theta_{1})
\end{equation} 
where 
\begin{equation}\label{Burgers' est 26}
C^{0} ( \phi, P, \Theta_{1}) \triangleq \Theta_{1} \circlesign{\circ} ( \phi \circlesign{\prec} P) - \phi \circlesign{\prec} (P \circlesign{\circ} \Theta_{1}).
\end{equation} 
In particular, if $\Theta_{2}= P \circlesign{\circ} \Theta_{1}$, then $\mathcal{U} (\Theta) \phi = - \frac{\nu \Lambda^{\frac{5}{2}}}{2}\phi - \Theta_{1} \phi$ where $P \triangleq \left(1+ \frac{\nu \Lambda^{\frac{5}{2}}}{2}  \right)^{-1} \Theta_{1}$.
\item For any $\{\Theta^{n} \}_{n\in\mathbb{N}} \subset C^{\infty} (\mathbb{T}^{3}; \mathbb{M}^{3}) \times C^{\infty} (\mathbb{T}^{3}; \mathbb{M}^{3})$ such that $\Theta^{n} \to \Theta$ in $\mathcal{K}^{-\frac{5}{4} - \kappa}$ as $n\nearrow + \infty$ for some $\kappa \in (0, \kappa_{0})$ and $\Theta \in \mathcal{K}^{-\frac{5}{4} - \kappa}$, $\mathcal{U}(\Theta^{n})$ converges to $\mathcal{U}(\Theta)$ in resolvent sense. Moreover, for any $\kappa \in (0, \kappa_{0})$, there exist two continuous maps $\mathbf{m}, \mathbf{c}: \hspace{1mm} \mathcal{K}^{-\frac{5}{4} - \kappa} \mapsto \mathbb{R}_{+}$ such that 
\begin{equation}\label{est 100}
[ \mathbf{m} (\Theta), \infty) \subset \rho( \mathcal{U} (\Theta)) \hspace{3mm} \forall \hspace{1mm} \Theta \in \mathcal{K}^{-\frac{5}{4} - \kappa}, 
\end{equation} 
where $\rho( \mathcal{U} (\Theta))$ is the resolvent set of $\mathcal{U}(\Theta)$ that satisfies 
\begin{equation}\label{est 65}
\lVert \left( \mathcal{U} (\Theta) +  m \right)^{-1} \phi \rVert_{\chi_{\kappa}} \leq \mathbf{c}(\Theta) \lVert \phi \rVert_{L^{2}} \hspace{3mm} \forall \hspace{1mm} m \geq \mathbf{m}(\Theta). 
\end{equation} 
 \end{enumerate} 
\end{proposition}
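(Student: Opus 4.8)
\textbf{Proof strategy for Proposition \ref{Proposition on Anderson Hamiltonian}.}
The plan is to follow the Allez--Chouk construction \cite{AC15} (as adapted in \cite{HR24} and \cite{Y25d}), working entirely within the paracontrolled framework, but with the analytic budget recalibrated for $\Lambda^{\frac{5}{2}}$ on $\mathbb{T}^{3}$. The central object is the \emph{paracontrolled resolvent equation}: given $\Theta=(\Theta_{1},\Theta_{2})\in\mathcal{K}^{-\frac{5}{4}-\kappa}$, $m\geq 0$, and data $\psi\in L^{2}$, one seeks $\phi\in\chi_{\kappa}(\Theta)$ solving $\bigl(\mathcal{U}(\Theta)+m\bigr)\phi=\psi$, i.e., using the ansatz $\phi=\phi^{\sharp}-\phi\prec P$ of \eqref{Burgers' est 24a}, one rewrites the equation for $\phi^{\sharp}$ and reads off that $\phi^{\sharp}$ must solve a fixed-point equation of the schematic form
\begin{equation*}
\Bigl(\tfrac{\nu\Lambda^{5/2}}{2}+m\Bigr)\phi^{\sharp}
= \psi - \Theta_{1}\circlesign{\succ}\phi - \Theta_{1}\circlesign{\circ}\phi^{\sharp} - \phi\circlesign{\prec}\Theta_{2} - C^{0}(\phi,P,\Theta_{1}) + \bigl(\text{commutator}\bigr),
\end{equation*}
where the commutator term is exactly $\mathcal{C}^{\circlesign{\prec}}$-type, $\Lambda^{5/2}(\phi\prec P)-(\Lambda^{5/2}\phi)\prec P-\phi\prec\Lambda^{5/2}P$ (recall \eqref{our commutator}). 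I would first define $\mathcal{U}(\Theta)$ on smooth $\Theta$ by the explicit formula \eqref{Burgers' est 25}, verify that when $\Theta_{2}=P\circlesign{\circ}\Theta_{1}$ all the paraproducts reassemble (via $fg=f\prec g+f\succ g+f\circ g$ and the definition of $P$) into $-\tfrac{\nu\Lambda^{5/2}}{2}\phi-\Theta_{1}\phi$, which is part (1), and check symmetry and closability.

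Next, for the quantitative core — part (2), estimates \eqref{est 100}--\eqref{est 65} — I would run a contraction in $\chi_{\kappa}$ for $m$ large. The Schauder estimate for $(\tfrac{\nu\Lambda^{5/2}}{2}+m)^{-1}$ from Lemma \ref{Burgers' Lemma A.1} gains $\frac{5}{2}$ derivatives and supplies a factor $m^{-\theta}$ for a suitable $\theta>0$, which is what forces $m\geq\mathbf{m}(\Theta)$. Each source term must land in $H^{\frac{9}{4}-2\kappa-5/2}=H^{-\frac{1}{4}-2\kappa}$ or better so that after inversion $\phi^{\sharp}\in H^{\frac{9}{4}-2\kappa}$: the terms $\Theta_{1}\circlesign{\succ}\phi$, $\phi\circlesign{\prec}\Theta_{2}$, $\Theta_{1}\circlesign{\circ}\phi^{\sharp}$ are handled by \eqref{Sobolev products}, using $\Theta_{1}\in\mathscr{C}^{-\frac{5}{4}-\kappa}$, $\Theta_{2}\in\mathscr{C}^{-2\kappa}$, $\phi\in H^{\frac{5}{4}-\kappa}$, $\phi^{\sharp}\in H^{\frac{9}{4}-2\kappa}$; the resonant product $\Theta_{1}\circlesign{\circ}\phi^{\sharp}$ needs $-\frac{5}{4}-\kappa+\frac{9}{4}-2\kappa>0$, which holds. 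The term $C^{0}(\phi,P,\Theta_{1})$ is exactly the $\mathcal{R}(f,g,h)$-type object of Lemma \ref{Burgers' Lemma A.3}: here I would take $f=\phi$ (so $\alpha=\frac{5}{4}-\kappa$, \emph{outside} $(0,1)$), $g=P\in\mathscr{C}^{1-\kappa}$, $h=\Theta_{1}\in\mathscr{C}^{-\frac{5}{4}-\kappa}$. As flagged in Remark \ref{Remark 2.4}, the hypothesis $\alpha\in(0,1)$ is violated, so I would instead lower the regularity assigned to $\phi$ by $\frac{1}{4}$: apply Lemma \ref{Burgers' Lemma A.3} with $\alpha=1-\kappa'$, using $\phi\in H^{\frac{5}{4}-\kappa}\hookrightarrow H^{1-\kappa'}$, which is legitimate as long as the resulting $\delta$-budget $\alpha+\beta+\gamma-\delta = 1-\kappa' + (1-\kappa) + (-\frac{5}{4}-\kappa) - \delta$ still lies above $-\frac{1}{4}-2\kappa$; this gives room $\frac{1}{4}$ minus small multiples of $\kappa$ for $\delta>0$, so it closes for $\kappa_{0}$ small (this is the ``room within \cite{AC15}'' alluded to, cf.\ \eqref{Burgers' est 167}). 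The commutator $\mathcal{C}^{\circlesign{\prec}}$-term is controlled by the $J_{1}$--$J_{2}$--$J_{3}$ decomposition sketched in Remark \ref{Remark 2.2}(c): rewrite $\Lambda^{5/2}=\Lambda^{1/2}\Lambda^{2}=-\Lambda^{1/2}\sum_{j}\partial_{j}^{2}$, peel off one derivative onto $\phi$ using Lemma \ref{Lemma 3.3} (Kato--Ponce) and the classical product estimate, so that $P$ picks up at worst $\mathscr{C}^{\alpha+3/2}$-type norms with $\alpha$ small; since $P\in\mathscr{C}^{1-\kappa}$ one has more than enough regularity here because $P$ is the \emph{lifted}, not the rough, object. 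Equivalently one can import the already-proven bounds \eqref{Estimate on C42}/\eqref{est 125} verbatim, which is cleaner.

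Assembling: the contraction closes for $m$ beyond a threshold $\mathbf{m}(\Theta)$ that is polynomial in the $\mathcal{E}^{-\frac{5}{4}-\kappa}$-norm of $\Theta$, giving \eqref{est 100}; the fixed point $\phi$ satisfies $\|\phi\|_{\chi_{\kappa}}\leq\mathbf{c}(\Theta)\|\psi\|_{L^{2}}$ with $\mathbf{c}(\Theta)$ likewise continuous in $\Theta$, which is \eqref{est 65}. Continuity of $\mathcal{U}$ in the resolvent sense under $\Theta^{n}\to\Theta$ in $\mathcal{K}^{-\frac{5}{4}-\kappa}$ follows from the local Lipschitz dependence of all the above multilinear estimates on $\Theta$ together with the uniform bound \eqref{est 65}; uniqueness of the map $\mathcal{U}$ is then forced because it agrees with the explicit formula on the dense class of smooth $\Theta$ and is resolvent-continuous. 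Finally, to get self-adjointness of the limiting operator one checks that each approximant $\mathcal{U}(\Theta^{n})$ is symmetric and semibounded (the renormalization constant $c$ in \eqref{Define K} is real and the resonant-product subtraction is diagonal), and self-adjointness persists under norm-resolvent limits. \emph{The main obstacle} is precisely the $C^{0}$/commutator pair: one must simultaneously (i) respect the hard ceiling $\alpha<1$ in Lemma \ref{Burgers' Lemma A.3} after surrendering $\frac{1}{4}$ of regularity, without letting the $\delta>0$ room collapse, and (ii) defeat the genuine $\Lambda^{5/2}$-commutator that no Kato--Ponce-type estimate covers at order $\frac{5}{2}$ — both of which survive only because, in the Anderson-Hamiltonian setting, the object paired against the rough noise is the smooth lift $P\in\mathscr{C}^{1-\kappa}$ (or $\phi^{\sharp}\in H^{\frac{9}{4}-2\kappa}$) rather than the rough $\phi$ itself, leaving exactly enough margin. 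Bookkeeping the admissible window for $(\kappa,\delta)$ is the delicate part; everything else is a standard fixed-point/approximation argument as in \cite{AC15, HR24, Y25d}.
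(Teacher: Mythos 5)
Your overall strategy is the one the paper actually follows (the Allez--Chouk paracontrolled route: resolvent equation as a fixed point, Lemma \ref{Burgers' Lemma A.3} applied after surrendering roughly $\tfrac14$ of regularity so that $\alpha\in(0,1)$, and a paraproduct-aware treatment of the $\Lambda^{\frac52}$-commutator), and your parameter bookkeeping for the $\mathcal{R}$-term closes. But two points need repair. First, you propose to control the commutator ``by the $J_{1}$--$J_{2}$--$J_{3}$ decomposition sketched in Remark \ref{Remark 2.2}(c)'' and call the import of \eqref{Estimate on C42}/\eqref{est 125} an ``equivalent'' alternative. These are not equivalent: Remark \ref{Remark 2.2}(c) is presented in the paper precisely as a \emph{failed} attempt, because the $J_{21}$ piece forces a norm of type $\lVert g\rVert_{H^{3+\delta+\alpha}}$ on the second factor, and $\sigma(D)\eta$ (equivalently $P$, which lies in $\mathscr{C}^{\frac54-\kappa}$, not $\mathscr{C}^{1-\kappa}$ as you write) is nowhere near that regular. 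What actually works --- and what the paper uses in \eqref{est 273}--\eqref{est 277} --- is the blockwise Kato--Ponce estimate that exploits the paraproduct structure $S_{m-1}f\,\Delta_{m}\sigma(D)\eta$ so that only $\mathscr{C}^{\alpha+\frac52}$-regularity of $\sigma(D)\eta$ is consumed; keep that route and drop the $J$-decomposition.

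Second, your construction stops at the first-order remainder $\phi^{\sharp}\in H^{\frac94-2\kappa}$, which only yields $\mathcal{U}(\Theta)\phi\in H^{-\frac14-2\kappa}$. To realize $\mathcal{U}(\Theta)$ as a self-adjoint unbounded operator on $L^{2}$ (and to make the resolvent identity $(\mathcal{U}(\Theta)+m)^{-1}\psi=\phi$ meaningful in $L^{2}$), the paper introduces a second-order paracontrolled expansion $f^{\flat}=f-f\prec\sigma(D)\eta-B(f,\Theta)\in H^{\frac52}$ (Definition \ref{Burgers' Definition 6.4}, with the bilinear map $B$ of \eqref{Burgers' Define B} absorbing precisely the commutator and the paraproduct remainders), and then proves $\mathcal{H}:\mathcal{D}_{\Theta}\mapsto L^{2}$, symmetry, and compactness of the resolvent (Propositions \ref{Burgers' Proposition 6.5}--\ref{Burgers' Proposition 6.9}). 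Your appeal to ``self-adjointness persists under norm-resolvent limits'' is a legitimate endgame, but without the $f^{\flat}$-layer you have not identified a domain on which the operator is $L^{2}$-valued, so the symmetric-operator and resolvent statements are not yet well posed; this second expansion is the missing ingredient rather than a routine detail.
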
 

\subsection{Renormalization}\label{Subsection 4.3}
\hfill\\ We define $\{\beta_{j}(k)\}_{k \in \mathbb{Z}^{3} \setminus \{0\}}$ to be a family of $\mathbb{C}$-valued two-sided Brownian motions such that 
\begin{equation}\label{Brownian motion} 
\mathbb{E} [ \partial_{t} \beta_{i}(t,k) \partial_{t} \beta_{j}(s, k') ] = \delta(t-s) 1_{\{ k = - k'\}} 1_{\{ i = j \}}
\end{equation} 
and 
\begin{equation}\label{Define ek}
e_{k}(x) \triangleq e^{i2 \pi k \cdot x} \left( \Id - \frac{k\otimes k}{\lvert k \rvert^{2}} \right), 
\end{equation} 
so that we can rewrite, considering \eqref{projection}, 
\begin{equation}\label{est 66} 
\mathbb{P}_{L} \mathbb{P}_{\neq 0} \xi(t,x) = \sum_{k\in\mathbb{Z}^{3} \setminus \{0\}} e_{k}(x) \partial_{t} \beta(t,k).
\end{equation} 
We define 
\begin{equation}\label{Define F and F lambda}
F(t,k) \triangleq \int_{0}^{t} e^{- \nu \lvert k \rvert^{\frac{5}{2}} (t-s)} d \beta(s,k), \hspace{3mm} F^{\lambda}(t,k) \triangleq \int_{0}^{t} e^{-\nu \lvert k \rvert^{\frac{5}{2}}(t-s)} \mathfrak{l} \left( \frac{\lvert k \rvert}{\lambda} \right) d \beta(s,k)
\end{equation} 
so that 
\begin{equation}\label{est 67}
\mathcal{L}_{\lambda} X(t,x) = \sum_{k\in\mathbb{Z}^{3} \setminus \{0\}} e_{k}(x) F^{\lambda}(t,k).
\end{equation}

\begin{proposition}\label{Proposition 4.14}
For any $\kappa > 0$, we define $\mathcal{K}^{-\frac{5}{4} - \kappa}$ by \eqref{Define K}, $P^{\lambda}$ and $r_{\lambda}$ by \eqref{Define P lambda and r lambda}. Then, for any $t \geq 0$, there exists a distribution $\nabla_{\text{sym}} X(t) \diamondsuit P_{t} \in \mathscr{C}^{-\kappa} (\mathbb{T}^{3}; \mathbb{M}^{3})$ such that 
\begin{equation}\label{est 68} 
\left( \nabla_{\text{sym}} \mathcal{L}_{\lambda^{n}}X, \nabla_{\text{sym}} \mathcal{L}_{\lambda^{n}} X \circlesign{\circ} P^{\lambda^{n}} - r_{\lambda}(t) \Id \right) \to \left( \nabla_{\text{sym}} X, \nabla_{\text{sym}} X \diamondsuit P \right) 
\end{equation}  
as $n\nearrow + \infty$ both in $L^{p} (\Omega; C_{\text{loc}} (\mathbb{R}_{+}; \mathcal{K}^{-\frac{5}{4} - \kappa} ) )$ for any $p \in [1,\infty)$ and $\mathbb{P}$-a.s. Finally, there exists a constant $c > 0$ such that for all $\lambda \geq 1$, 
\begin{equation}\label{logarithmic growth}
r_{\lambda}(t) \leq c \ln (\lambda) 
\end{equation} 
uniformly over all $t \geq 0$. 
\end{proposition}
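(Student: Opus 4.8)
The plan is to realize $\mathcal{L}_{\lambda}X(t)$ and its enhancement as elements of a fixed Wiener chaos and run the standard Kolmogorov-type argument, combined with an explicit estimate of the renormalization constant $r_\lambda(t)$. First I would use the series representation \eqref{est 67} together with \eqref{Define F and F lambda}: since each $F^{\lambda}(t,k)$ is a (complex) centered Gaussian with $\mathbb{E}[|F^{\lambda}(t,k)|^{2}] = \mathfrak{l}(|k|/\lambda)^{2}\frac{1-e^{-2\nu|k|^{5/2}t}}{2\nu|k|^{1/2}}$, the first component $\nabla_{\mathrm{sym}}\mathcal{L}_{\lambda}X(t)$ lives in the first homogeneous Wiener chaos and the ``resonant'' product $\nabla_{\mathrm{sym}}\mathcal{L}_{\lambda}X(t)\circlesign{\circ}P^{\lambda}(t) - r_{\lambda}(t)\mathrm{Id}$ lives in the sum of the zeroth and second chaos (the constant $r_\lambda$ is precisely the Wick-ordering correction, cf.\ the Wick formula \eqref{Wick} and the matrix bookkeeping described in Remark \ref{Remark 2.5}). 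On each fixed chaos, $L^p(\Omega)$-norms are equivalent to $L^2(\Omega)$-norms (Gaussian hypercontractivity), so it suffices to compute second moments of Littlewood--Paley blocks. Concretely, I would estimate $\mathbb{E}[\|\Delta_m(\nabla_{\mathrm{sym}}\mathcal{L}_{\lambda}X(t))\|_{L^2}^2]$ and $\mathbb{E}[\|\Delta_m((\nabla_{\mathrm{sym}}\mathcal{L}_\lambda X\circlesign{\circ}P^\lambda)(t) - r_\lambda(t)\mathrm{Id})\|_{L^2}^2]$, using $|k|^{-1/2}$ decay per factor and the constraint $|k|\sim 2^m$ on the support, to obtain bounds of order $2^{m(2(1/4+\kappa))}$ and $2^{m\cdot 2(2\kappa)}$ respectively, which gives the claimed regularities $\mathscr{C}^{-1/4-\kappa}$ and $\mathscr{C}^{-\kappa}$ by the Besov characterization. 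For convergence as $n\to\infty$ I would estimate the same quantities for differences $\nabla_{\mathrm{sym}}(\mathcal{L}_{\lambda^n}-\mathcal{L}_{\lambda^{n'}})X$, extracting a small factor $2^{-m\epsilon}$ (or a negative power of $\lambda^{n}$) from the mismatch of the cutoffs $\mathfrak{l}(|k|/\lambda^n)$; together with continuity in $t$ (estimating $\mathbb{E}[\|\Delta_m(\cdots)(t)-\Delta_m(\cdots)(s)\|_{L^2}^2]\lesssim |t-s|^{\theta}2^{\cdots}$ via the explicit exponential kernels) and Kolmogorov's continuity theorem, this yields convergence in $L^p(\Omega;C_{\mathrm{loc}}(\mathbb{R}_+;\mathcal{K}^{-5/4-\kappa}))$; the $\mathbb{P}$-a.s.\ convergence along the subsequence $\lambda^n=(n+1)^\tau$ then follows from a Borel--Cantelli / Chebyshev argument since the $L^p$ rates are summable.

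For the logarithmic bound \eqref{logarithmic growth}, I would work directly from the definitions \eqref{Define r lambda1}--\eqref{Define r lambda2}. Bounding $1-e^{-2\nu|k|^{5/2}t}\le 1$, $\mathfrak{l}(|k|/\lambda)^2\le 1$ and $\mathfrak{l}(|k|/\lambda)=0$ unless $|k|\le\lambda$, and using $(1+\nu|k|^{5/2}/2)^{-1}\lesssim |k|^{-5/2}$, each summand in $r_\lambda^1(t)$ is $\lesssim |k|^{-1/2}\cdot|k|^{-5/2}=|k|^{-3}$, so
\begin{equation*}
r_\lambda^1(t) \lesssim \sum_{0<|k|\le\lambda}|k|^{-3}\lesssim \ln(\lambda),
\end{equation*}
uniformly in $t\ge 0$, where the last step is the standard estimate for the sum of $|k|^{-d}$ over a ball of radius $\lambda$ in $\mathbb{Z}^d$ with $d=3$. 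The same bound applies verbatim to each $r_\lambda^{2,m}(t)$ since $k_m^2/|k|^2\le 1$, and hence to $r_\lambda^2(t)=\sum_{m=1}^3 r_\lambda^{2,m}(t)\delta_{m,m}$ entrywise; adding the two pieces gives $r_\lambda(t)\le c\ln(\lambda)$ for a universal $c>0$.

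I expect the main obstacle to be the second-moment computation for the resonant product and the verification that the divergent part is \emph{exactly} cancelled by $r_\lambda(t)\mathrm{Id}$ (including the correct matrix structure, i.e.\ that the diagonal correction $r_\lambda^2$ carrying the factors $k_m^2/|k|^2$ is genuinely needed — this is the phenomenon flagged in Remark \ref{Remark 2.5} and Remark \ref{Remark on renormalization constant}, absent in the 2D case). Here the Leray projector contributes factors $(\mathrm{Id}-k\otimes k/|k|^2)$ on both legs of the contraction, the symmetric gradient contributes factors $k_i k_j$, and the operator $(1+\nu\Lambda^{5/2}/2)^{-1}$ contributes $(1+\nu|k|^{5/2}/2)^{-1}$; after Wick-pairing (which forces the two frequencies to be antipodal up to the paraproduct band), the surviving contraction is a sum over $k$ of products of these symbols times $|k|^{-1}$ from the two Ornstein--Uhlenbeck variances, and one must check that subtracting $r_\lambda(t)\mathrm{Id}$ removes precisely the $m$-independent divergence while leaving a remainder that is summable against $2^{m\cdot 2\kappa}$. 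Once the bookkeeping of these $49\times 16\times 3$ terms (in the notation of Remark \ref{Remark 2.5}) is organized — grouping $k_2^2,k_3^2$ with $k_1^2$ as indicated — the actual analytic estimates are routine; the remaining routine details (continuity in $t$, the difference estimates, Kolmogorov and Borel--Cantelli) are standard and I would carry them out in Section \ref{Subsection 4.3}.
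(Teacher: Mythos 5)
Your proposal follows essentially the same route as the paper: explicit Fourier/Wick computation of the resonant product, verification that $r_{\lambda}=r_{\lambda}^{1}+r_{\lambda}^{2}$ (including the diagonal $k_{m}^{2}/\lvert k\rvert^{2}$ correction forced by the 3D Leray projector) exactly cancels the zeroth-chaos divergence, second-moment bounds on Littlewood--Paley blocks upgraded by Gaussian hypercontractivity, and Kolmogorov/Borel--Cantelli for the convergence statements, with $\ln\lambda$ obtained from $\sum_{0<\lvert k\rvert\le\lambda}\lvert k\rvert^{-3}$. Only note that the first component $\nabla_{\text{sym}}\mathcal{L}_{\lambda}X$ has regularity $\mathscr{C}^{-\frac{5}{4}-\kappa}$ (not $\mathscr{C}^{-\frac{1}{4}-\kappa}$, which is the regularity of $X$ itself), and the paper's block estimate \eqref{est 80} is in fact uniform in $m$, which is what one needs to land in $\mathscr{C}^{-\kappa}$ after the Besov embedding.
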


\begin{proof}[Proof of Proposition \ref{Proposition 4.14}]  
We focus on the convergence of $\nabla_{\text{sym}} \mathcal{L}_{\lambda^{n}} X \diamondsuit P^{\lambda^{n}} - r_{\lambda}(t) \Id \to \nabla_{\text{sym}} X \diamondsuit P$ as $n\nearrow \infty$ in $L^{p}(\Omega; C_{\text{loc}}(\mathbb{R}_{+}; \mathscr{C}^{-2\kappa} (\mathbb{T}^{3}; \mathbb{M}^{3})))$ for any $p\in[1,\infty)$ because the convergence of $\nabla_{\text{sym}} \mathcal{L}_{\lambda^{n}} X$ to $\nabla_{\text{sym}} X$ is easier to show. We denote $X_{\lambda} \triangleq \mathcal{L}_{\lambda} X$ for brevity and compute from \eqref{Define P lambda}
\begin{align}
4 \nabla_{\text{sym}} \mathcal{L}_{\lambda} X \circlesign{\circ} P^{\lambda} =& \begin{pmatrix}
\partial_{1} X_{\lambda}^{1} + \partial_{1} X_{\lambda}^{1} & \partial_{1} X_{\lambda}^{2} + \partial_{2} X_{\lambda}^{1} & \partial_{1} X_{\lambda}^{3} + \partial_{3} X_{\lambda}^{1} \\
\partial_{2} X_{\lambda}^{1} + \partial_{1} X_{\lambda}^{2} & \partial_{2} X_{\lambda}^{2} + \partial_{2} X_{\lambda}^{2} & \partial_{2} X_{\lambda}^{3} + \partial_{3} X_{\lambda}^{2} \\
\partial_{3} X_{\lambda}^{1} + \partial_{1} X_{\lambda}^{3} & \partial_{3} X_{\lambda}^{2} + \partial_{2} X_{\lambda}^{3} & \partial_{3} X_{\lambda}^{3} + \partial_{3} X_{\lambda}^{3} 
\end{pmatrix}   \nonumber \\
&  \circlesign{\circ} \left(1+ \frac{\nu \Lambda^{\frac{5}{2}}}{2}  \right)^{-1}   \begin{pmatrix}
\partial_{1} X_{\lambda}^{1} + \partial_{1} X_{\lambda}^{1} & \partial_{1} X_{\lambda}^{2} + \partial_{2} X_{\lambda}^{1} & \partial_{1} X_{\lambda}^{3} + \partial_{3} X_{\lambda}^{1} \\
\partial_{2} X_{\lambda}^{1} + \partial_{1} X_{\lambda}^{2} & \partial_{2} X_{\lambda}^{2} + \partial_{2} X_{\lambda}^{2} & \partial_{2} X_{\lambda}^{3} + \partial_{3} X_{\lambda}^{2} \\
\partial_{3} X_{\lambda}^{1} + \partial_{1} X_{\lambda}^{3} & \partial_{3} X_{\lambda}^{2} + \partial_{2} X_{\lambda}^{3} & \partial_{3} X_{\lambda}^{3} + \partial_{3} X_{\lambda}^{3} 
\end{pmatrix}  \label{est 72} 
\end{align}
so that each entry consists of four terms of the form 
\begin{align}
& \partial_{i} X_{\lambda}^{j} \circ \left( 1 + \frac{\nu \Lambda^{\frac{5}{2}}}{2}  \right)^{-1} \partial_{l} X_{\lambda}^{m}(x)  \nonumber \\
=& - \sum_{k,k' \in \mathbb{Z}^{3}: k' \neq 0, k \neq k'} \sum_{c,d \geq -1:  \lvert c-d \rvert \leq 1} e^{i 2 \pi k \cdot x} \rho_{c}(k-k') \rho_{d}(k') \mathfrak{l} \left( \frac{ \lvert k-k' \rvert}{\lambda} \right) \mathfrak{l} \left( \frac{\lvert k' \rvert}{\lambda} \right) \left(1+ \frac{\nu \lvert k' \rvert^{\frac{5}{2}}}{2} \right)^{-1} (k-k')_{i} k_{l}'  \nonumber \\
& \times  \left[ F_{j} (k-k') - \frac{(k-k')_{j}}{\lvert k-k' \rvert^{2}} \sum_{r=1}^{3} (k-k')_{r} F_{r} (k-k') \right] \left[ F_{m} (k') - \frac{k_{m}'}{\lvert k'\rvert^{2}} \sum_{r=1}^{3} k_{r}' F_{r}(k') \right]\label{est 71}
\end{align}
where we used \eqref{est 67} and \eqref{Define ek}. Now for all $\alpha, \gamma \in \{1,2,3\}$, we can compute using \eqref{Define F and F lambda} and \eqref{Brownian motion}
\begin{equation}\label{Expectation F}
 \mathbb{E} [ F_{\alpha}(t, k-k') F_{\gamma} (t,k') ] = \frac{1- e^{-2 \nu \lvert k' \rvert^{\frac{5}{2}} t}}{2 \nu \lvert k' \rvert^{\frac{5}{2}}} 1_{\{ k - k' = -k' \}} 1_{\{ \alpha = \gamma \}}
\end{equation} 
so that taking mathematical expectation on \eqref{est 71} gives us 
\begin{align}
& \mathbb{E} \left[  \partial_{i} X_{\lambda}^{j} \circ \left( 1 + \frac{\nu \Lambda^{\frac{5}{2}}}{2}  \right)^{-1} \partial_{l} X_{\lambda}^{m}  \right] (t,x)   \nonumber \\
=& \sum_{k \in \mathbb{Z}^{3} \setminus \{0\}} \mathfrak{l} \left( \frac{ \lvert k \rvert}{\lambda} \right)^{2} \left( \frac{1- e^{-2 \nu \lvert k \rvert^{\frac{5}{2}} t}}{2 \nu \lvert k \rvert^{\frac{5}{2}}} \right) \left(1+ \frac{ \nu \lvert k \rvert^{\frac{5}{2}}}{2} \right)^{-1} k_{i} k_{l} \left[ 1_{\{j=m \}} - \frac{k_{j} k_{m}}{\lvert k \rvert^{2}} \right].  \label{est 73} 
\end{align}
We are now ready to compute $(i,j)$-entry of \eqref{est 72}, $i, j \in \{1,2,3\}$, thanks to \eqref{est 73}. E.g., 
\begin{subequations}
\begin{align}
&\mathbb{E} [ \left( 4 \nabla_{\text{sym}} \mathcal{L}_{\lambda} X \circlesign{\circ} P^{\lambda} \right)_{1,1} ]  = \sum_{k \in \mathbb{Z}^{3} \setminus \{0\}} \mathfrak{l} \left( \frac{ \lvert k \rvert}{\lambda} \right)^{2} \left( \frac{1- e^{-2 \nu \lvert k \rvert^{\frac{5}{2}} t}}{2\nu \lvert k \rvert^{\frac{5}{2}}} \right) \left(1+ \frac{\nu \lvert k \rvert^{\frac{5}{2}}}{2} \right)^{-1} [k_{1}^{2} + \lvert k \rvert^{2}], \label{est 76a}\\
& \mathbb{E} [ \left( 4 \nabla_{\text{sym}} \mathcal{L}_{\lambda} X \circlesign{\circ} P^{\lambda} \right)_{1,2} ]  = \sum_{k \in \mathbb{Z}^{3} \setminus \{0\}} \mathfrak{l} \left( \frac{\lvert k \rvert}{\lambda} \right)^{2} \left( \frac{1- e^{ - 2 \nu \lvert k \rvert^{\frac{5}{2}} t}}{2 \nu \lvert k \rvert^{\frac{5}{2}}} \right) \left(1+ \frac{ \nu \lvert k \rvert^{\frac{5}{2}}}{2} \right)^{-1}  k_{1} k_{2},\label{est 76b}
\end{align}
\end{subequations} 
and other terms can be computed similarly, which lead to 
\begin{equation}\label{est 75}
\mathbb{E} [  4 \nabla_{\text{sym}} \mathcal{L}_{\lambda} X \circlesign{\circ} P^{\lambda}  ] =  \sum_{k \in \mathbb{Z}^{3} \setminus \{0\}} \mathfrak{l} \left( \frac{\lvert k \rvert}{\lambda} \right)^{2} \left( \frac{1- e^{ - 2 \nu \lvert k \rvert^{\frac{5}{2}} t}}{2 \nu \lvert k \rvert^{\frac{1}{2}}} \right) \left(1+ \frac{ \nu \lvert k \rvert^{\frac{5}{2}}}{2} \right)^{-1}  \left[\Id + \frac{k \otimes k}{\lvert k \rvert^{2}} \right].
\end{equation}  

\begin{remark}\label{Remark on renormalization constant}
In contrast to \eqref{est 75}, the analogous computation for the 2D Navier-Stokes equations yields 
\begin{align}\label{est 74}
\mathbb{E} [  4 \nabla_{\text{sym}} \mathcal{L}_{\lambda} X \circlesign{\circ} P^{\lambda}  ] =  \sum_{k \in \mathbb{Z}^{2} \setminus \{0\}} \mathfrak{l} \left( \frac{\lvert k \rvert}{\lambda} \right)^{2} \left( \frac{1- e^{ - 2 \nu \lvert k \rvert^{2} t}}{2 \nu} \right) \left(1+ \frac{ \nu \lvert k \rvert^{2}}{2} \right)^{-1} \Id
\end{align}  
(see \cite[pp. 31--32]{HR24}). Considering that the 2D case is the 3D case with zero third velocity component and independence of $x_{3}$, one may expect \eqref{est 74} with the $2\times 2$ identity matrix simply replaced by $3 \times 3$ case, and hence be surprised to see the additional $\frac{k\otimes k}{\lvert k \rvert^{2}}$ in \eqref{est 75}. In short, for $(1,2)$-entry for example, ``$k_{i} k_{l} \left[ 1_{\{j=m \}} - \frac{k_{j} k_{m}}{\lvert k \rvert^{2}} \right]$'' in \eqref{est 73} in the 3D case results in 
\begin{align*}
& \Bigg[ 2 k_{1} k_{1} [ 1_{\{ 1 = 2 \}} - \frac{k_{1}k_{2}}{\lvert k \rvert^{2}} ] + 2 k_{1} k_{2} [ 1_{\{ 1 =1 \}} - \frac{k_{1}k_{1}}{\lvert k \rvert^{2}} ] + 2 k_{1} k_{2} [ 1_{\{ 2 = 2 \}} - \frac{k_{2}k_{2}}{\lvert k \rvert^{2}} ] + 2 k_{2} k_{2} [ 1_{\{ 1 = 2 \}} - \frac{k_{1}k_{2}}{\lvert k \rvert^{2}} ] \\
& \hspace{5mm}  + k_{1} k_{3} [ 1_{\{ 3 =2 \}} - \frac{k_{3} k_{2}}{\lvert k \rvert^{2}} ] + k_{1}k_{2} [ 1_{\{ 3=3 \}} - \frac{k_{3}k_{3}}{\lvert k \rvert^{2}} ] + k_{3}k_{3} [ 1_{\{ 1 =2 \}} - \frac{k_{1}k_{2}}{\lvert k \rvert^{2}} ] + k_{3} k_{2} [ 1_{\{ 1= 3 \}} - \frac{k_{1} k_{3}}{\lvert k \rvert^{2}} ] \Bigg]
\end{align*}
which simplifies to $k_{1}k_{2}$; on the other hand, the analogous term in the 2D case becomes  
\begin{align*}
\Bigg[ 2 k_{1} k_{1} [ 1_{\{ 1 = 2 \}} - \frac{k_{1}k_{2}}{\lvert k \rvert^{2}} ] + 2 k_{1} k_{2} [ 1_{\{ 1 =1 \}} - \frac{k_{1}k_{1}}{\lvert k \rvert^{2}} ] + 2 k_{1} k_{2} [ 1_{\{ 2 = 2 \}} - \frac{k_{2}k_{2}}{\lvert k \rvert^{2}} ] + 2 k_{2} k_{2} [ 1_{\{ 1 = 2 \}} - \frac{k_{1}k_{2}}{\lvert k \rvert^{2}} ],
\end{align*}
which vanishes, verifying that both \eqref{est 74} in the 2D case and \eqref{est 75} in the 3D case are correct. 
\end{remark} 
We further see that the the non-diagonal entries in $\frac{ k \otimes k}{\lvert k \rvert^{2}}$ of \eqref{est 75} cancel out because $ \mathfrak{l} \left( \frac{\lvert k \rvert}{\lambda} \right)^{2} \left( \frac{1- e^{ - 2 \nu \lvert k \rvert^{\frac{5}{2}} t}}{2 \nu \lvert k \rvert^{\frac{1}{2}}} \right) \left(1+ \frac{ \nu \lvert k \rvert^{\frac{5}{2}}}{2} \right)^{-1}$ is radial, allowing us to conclude using \eqref{Define P lambda and r lambda} that 
\begin{equation}\label{est 219}
\mathbb{E} [  4 \nabla_{\text{sym}} \mathcal{L}_{\lambda} X \circlesign{\circ} P^{\lambda} ] = r_{\lambda}(t) \Id. 
\end{equation} 
We define 
\begin{equation}\label{Define psi0}
\psi_{0}(k,k') \triangleq \sum_{c, d \geq -1: \lvert c-d \rvert \leq 1} \rho_{c}(k) \rho_{d}(k').
\end{equation} 
In what follows, we consider the $(1,1)$-entry among the nine entries due to similarity; this implies according to \eqref{Define r lambda2} that we consider 
\begin{align}
& \mathbb{E} [ \lvert \Delta_{m} \left( ( \nabla_{\text{sym}} \mathcal{L}_{\lambda} X \circlesign{\circ} P^{\lambda} )_{1,1} - (r_{\lambda}^{1} + r_{\lambda}^{2,1} ) \right) (t) \rvert^{2} ]  \nonumber \\
=& \mathbb{E} [\lvert \Delta_{m} ( \nabla_{\text{sym}} \mathcal{L}_{\lambda} X \circlesign{\circ} P^{\lambda} )_{1,1} (t) \rvert^{2} ] - \Delta_{m} (r_{\lambda}^{1} + r_{\lambda}^{2,1})^{2}(t) 1_{\{ m = -1 \}}.  \label{est 78}
\end{align} 
Focusing on the first term, we can compute using \eqref{est 71}, \eqref{Define F and F lambda}, and \eqref{Define psi0}
\begin{subequations}\label{est 0}
\begin{align}
&  \mathbb{E} [ \lvert \Delta_{m} ( \nabla_{\text{sym}} \mathcal{L}_{\lambda} X \circlesign{\circ} P^{\lambda})_{1,1} (t) \rvert^{2} ]  = \sum_{i,j=1}^{7} \RomanI_{i,j}, \\
& \RomanI_{i,j} \triangleq \frac{1}{4} \mathbb{E} \Bigg[ \sum_{k, k', \tilde{k},\tilde{k}' \in \mathbb{Z}^{3} \setminus \{0\}} e^{i2 \pi (k+k') \cdot x} \overline{ e^{i2\pi(\tilde{k} + \tilde{k}') \cdot x}} \rho_{m} (k+k') \rho_{m}( \tilde{k} + \tilde{k}') \psi_{0} (k, k') \psi_{0} (\tilde{k}, \tilde{k}') \nonumber \\
&\hspace{10mm} \times \mathfrak{l} \left( \frac{\lvert k \rvert}{\lambda} \right)   \mathfrak{l} \left( \frac{\lvert k' \rvert}{\lambda} \right)   \mathfrak{l} \left( \frac{\lvert \tilde{k} \rvert}{\lambda} \right)   \mathfrak{l} \left( \frac{\lvert \tilde{k}' \rvert}{\lambda} \right) \left( \frac{1}{\nu \lvert k'\rvert^{\frac{5}{2}} + 2} \right) \left( \frac{1}{\nu \lvert \tilde{k}' \rvert^{\frac{5}{2}} + 2} \right)   \tilde{\RomanI}_{i,j} \Bigg] 
\end{align}
\end{subequations} 
for all $i, j \in \{1, \hdots, 7 \}$, and e.g. 
\begin{subequations} 
\begin{align}
\tilde{\RomanI}_{1,1} \triangleq& (4k_{1}k_{1}' + k_{2}k_{2}' + k_{3}k_{3}') \Bigg[ \int_{0}^{t}\int_{0}^{t} e^{- \nu \lvert k \rvert^{\frac{5}{2}} (t-s) - \nu \lvert k' \rvert^{\frac{5}{2}} (t-s')} d \beta_{1}(s,k) d \beta_{1}(s', k') \label{est 61} \\
& \hspace{20mm} - \frac{k_{1}'}{\lvert k ' \rvert^{2}} \sum_{r=1}^{3}k_{r}' \int_{0}^{t} \int_{0}^{t} e^{- \nu \lvert k \rvert^{\frac{5}{2}} (t-s) - \nu \lvert k' \rvert^{\frac{5}{2}} (t-s')} d \beta_{1}(s,k) d \beta_{r} (s', k')\nonumber \\
& \hspace{20mm} - \frac{k_{1}}{\lvert k \rvert^{2}} \sum_{r=1}^{3} k_{r} \int_{0}^{t} \int_{0}^{t} e^{- \nu \lvert k' \rvert^{\frac{5}{2}}(t-s') - \nu \lvert k \rvert^{\frac{5}{2}} (t-s)} d \beta_{1}(s', k') d \beta_{r}(s, k)\nonumber \\
& \hspace{20mm} + \frac{k_{1}k_{1}'}{\lvert k \rvert^{2} \lvert k' \rvert^{2}} \sum_{r=1}^{3} \sum_{w=1}^{3} k_{r} k_{w}' \int_{0}^{t} \int_{0}^{t} e^{- \nu \lvert k \rvert^{\frac{5}{2}}(t-s) - \nu \lvert k' \rvert^{\frac{5}{2}}(t-s')} d \beta_{r}(s,k) d \beta_{w}(s', k')  \Bigg]\nonumber \\
& \times (4\tilde{k}_{1}\tilde{k}_{1}' + \tilde{k}_{2}\tilde{k}_{2}' + \tilde{k}_{3}\tilde{k}_{3}') \Bigg[ \int_{0}^{t}\int_{0}^{t} e^{- \nu \lvert \tilde{k} \rvert^{\frac{5}{2}} (t-\tilde{s}) - \nu \lvert \tilde{k}' \rvert^{\frac{5}{2}} (t-\tilde{s}')} \overline{d \beta_{1}(\tilde{s},\tilde{k}) d \beta_{1}(\tilde{s}', \tilde{k}')} \nonumber\\
& \hspace{20mm} - \frac{\tilde{k}_{1}'}{\lvert \tilde{k} ' \rvert^{2}} \sum_{r=1}^{3}\tilde{k}_{r}' \int_{0}^{t} \int_{0}^{t} e^{- \nu \lvert \tilde{k} \rvert^{\frac{5}{2}} (t-\tilde{s}) - \nu \lvert \tilde{k}' \rvert^{\frac{5}{2}} (t-\tilde{s}')} \overline{d \beta_{1}(\tilde{s},\tilde{k}) d \beta_{r} (\tilde{s}', \tilde{k}')}\nonumber \\
& \hspace{20mm} - \frac{\tilde{k}_{1}}{\lvert \tilde{k} \rvert^{2}} \sum_{r=1}^{3} \tilde{k}_{r} \int_{0}^{t} \int_{0}^{t} e^{ - \nu \lvert \tilde{k} \rvert^{\frac{5}{2}} (t-\tilde{s}) - \nu \lvert \tilde{k}' \rvert^{\frac{5}{2}}(t-\tilde{s}') } \overline{d \beta_{1}(\tilde{s}', \tilde{k}') d \beta_{r}(\tilde{s}, \tilde{k})}  \nonumber \\
& \hspace{20mm} + \frac{\tilde{k}_{1}\tilde{k}_{1}'}{\lvert \tilde{k} \rvert^{2} \lvert \tilde{k}' \rvert^{2}} \sum_{r=1}^{3} \sum_{w=1}^{3} \tilde{k}_{r} \tilde{k}_{w}' \int_{0}^{t} \int_{0}^{t} e^{- \nu \lvert \tilde{k} \rvert^{\frac{5}{2}}(t-\tilde{s}) - \nu \lvert \tilde{k}' \rvert^{\frac{5}{2}}(t-\tilde{s}')} \overline{d \beta_{r}(\tilde{s},\tilde{k}) d \beta_{w}(\tilde{s}', \tilde{k}')}  \Bigg], \nonumber \\ 
\tilde{\RomanI}_{1,2} \triangleq& (4k_{1}k_{1}' + k_{2}k_{2}' + k_{3}k_{3}') \Bigg[ \int_{0}^{t}\int_{0}^{t} e^{- \nu \lvert k \rvert^{\frac{5}{2}} (t-s) - \nu \lvert k' \rvert^{\frac{5}{2}} (t-s')} d \beta_{1}(s,k) d \beta_{1}(s', k') \label{est 217}\\
& \hspace{20mm} - \frac{k_{1}'}{\lvert k ' \rvert^{2}} \sum_{r=1}^{3}k_{r}' \int_{0}^{t} \int_{0}^{t} e^{- \nu \lvert k \rvert^{\frac{5}{2}} (t-s) - \nu \lvert k' \rvert^{\frac{5}{2}} (t-s')} d \beta_{1}(s,k) d \beta_{r} (s', k') \nonumber \\
& \hspace{20mm} - \frac{k_{1}}{\lvert k \rvert^{2}} \sum_{r=1}^{3} k_{r} \int_{0}^{t} \int_{0}^{t} e^{- \nu \lvert k' \rvert^{\frac{5}{2}}(t-s') - \nu \lvert k \rvert^{\frac{5}{2}} (t-s)} d \beta_{1}(s', k') d \beta_{r}(s, k) \nonumber \\
& \hspace{20mm} + \frac{k_{1}k_{1}'}{\lvert k \rvert^{2} \lvert k' \rvert^{2}} \sum_{r=1}^{3} \sum_{w=1}^{3} k_{r} k_{w}' \int_{0}^{t} \int_{0}^{t} e^{- \nu \lvert k \rvert^{\frac{5}{2}}(t-s) - \nu \lvert k' \rvert^{\frac{5}{2}}(t-s')} d \beta_{r}(s,k) d \beta_{w}(s', k')  \Bigg] \nonumber \\
& \times \tilde{k}_{1} \tilde{k}_{2}' \Bigg[ \int_{0}^{t}\int_{0}^{t} e^{- \nu \lvert \tilde{k} \rvert^{\frac{5}{2}} (t-\tilde{s}) - \nu \lvert \tilde{k}' \rvert^{\frac{5}{2}} (t-\tilde{s}')} \overline{d \beta_{2}(\tilde{s},\tilde{k}) d \beta_{1}(\tilde{s}', \tilde{k}')} \nonumber \\
& \hspace{20mm} - \frac{\tilde{k}_{1}'}{\lvert \tilde{k} ' \rvert^{2}} \sum_{r=1}^{3}\tilde{k}_{r}' \int_{0}^{t} \int_{0}^{t} e^{- \nu \lvert \tilde{k} \rvert^{\frac{5}{2}} (t-\tilde{s}) - \nu \lvert \tilde{k}' \rvert^{\frac{5}{2}} (t-\tilde{s}')} \overline{d \beta_{2}(\tilde{s},\tilde{k}) d \beta_{r} (\tilde{s}', \tilde{k}')} \nonumber \\
& \hspace{20mm} - \frac{\tilde{k}_{2}}{\lvert \tilde{k} \rvert^{2}} \sum_{r=1}^{3} \tilde{k}_{r} \int_{0}^{t} \int_{0}^{t} e^{- \nu \lvert \tilde{k} \rvert^{\frac{5}{2}}(t-\tilde{s}) - \nu \lvert \tilde{k}' \rvert^{\frac{5}{2}} (t-\tilde{s}')} \overline{d \beta_{1}(\tilde{s}', \tilde{k}') d \beta_{r}(\tilde{s}, \tilde{k})} \nonumber \\
& \hspace{20mm} + \frac{\tilde{k}_{2}\tilde{k}_{1}'}{\lvert \tilde{k} \rvert^{2} \lvert \tilde{k}' \rvert^{2}} \sum_{r=1}^{3} \sum_{w=1}^{3} \tilde{k}_{r} \tilde{k}_{w}' \int_{0}^{t} \int_{0}^{t} e^{- \nu \lvert \tilde{k} \rvert^{\frac{5}{2}}(t-\tilde{s}) - \nu \lvert \tilde{k}' \rvert^{\frac{5}{2}}(t-\tilde{s}')} \overline{d \beta_{r}(\tilde{s},\tilde{k}) d \beta_{w}(\tilde{s}', \tilde{k}')}  \Bigg], \nonumber  \\
\tilde{\RomanI}_{2,1} \triangleq& k_{1}k_{2}' \Bigg[ \int_{0}^{t}\int_{0}^{t} e^{- \nu \lvert k \rvert^{\frac{5}{2}} (t-s) - \nu \lvert k' \rvert^{\frac{5}{2}} (t-s')} d \beta_{2}(s,k) d \beta_{1}(s', k')  \label{Define tilde I2,1} \\
& \hspace{20mm} - \frac{k_{1}'}{\lvert k ' \rvert^{2}} \sum_{r=1}^{3}k_{r}' \int_{0}^{t} \int_{0}^{t} e^{- \nu \lvert k \rvert^{\frac{5}{2}} (t-s) - \nu \lvert k' \rvert^{\frac{5}{2}} (t-s')} d \beta_{2}(s,k) d \beta_{r} (s', k')  \nonumber \\
& \hspace{20mm} - \frac{k_{2}}{\lvert k \rvert^{2}} \sum_{r=1}^{3} k_{r} \int_{0}^{t} \int_{0}^{t} e^{- \nu \lvert k \rvert^{\frac{5}{2}}(t-s) - \nu \lvert k' \rvert^{\frac{5}{2}} (t-s')} d \beta_{1}(s', k') d \beta_{r}(s, k)  \nonumber \\
& \hspace{20mm} + \frac{k_{2}k_{1}'}{\lvert k \rvert^{2} \lvert k' \rvert^{2}} \sum_{r=1}^{3} \sum_{w=1}^{3} k_{r} k_{w}' \int_{0}^{t} \int_{0}^{t} e^{- \nu \lvert k \rvert^{\frac{5}{2}}(t-s) - \nu \lvert k' \rvert^{\frac{5}{2}}(t-s')} d \beta_{r}(s,k) d \beta_{w}(s', k')  \Bigg]  \nonumber \\
& \times (4\tilde{k}_{1}\tilde{k}_{1}' + \tilde{k}_{2}\tilde{k}_{2}' + \tilde{k}_{3}\tilde{k}_{3}') \Bigg[ \int_{0}^{t}\int_{0}^{t} e^{- \nu \lvert \tilde{k} \rvert^{\frac{5}{2}} (t-\tilde{s}) - \nu \lvert \tilde{k}' \rvert^{\frac{5}{2}} (t-\tilde{s}')} \overline{d \beta_{1}(\tilde{s},\tilde{k}) d \beta_{1}(\tilde{s}', \tilde{k}')}  \nonumber \\
& \hspace{20mm} - \frac{\tilde{k}_{1}'}{\lvert \tilde{k} ' \rvert^{2}} \sum_{r=1}^{3}\tilde{k}_{r}' \int_{0}^{t} \int_{0}^{t} e^{- \nu \lvert \tilde{k} \rvert^{\frac{5}{2}} (t-\tilde{s}) - \nu \lvert \tilde{k}' \rvert^{\frac{5}{2}} (t-\tilde{s}')} \overline{d \beta_{1}(\tilde{s},\tilde{k}) d \beta_{r} (\tilde{s}', \tilde{k}')}  \nonumber \\
& \hspace{20mm} - \frac{\tilde{k}_{1}}{\lvert \tilde{k} \rvert^{2}} \sum_{r=1}^{3} \tilde{k}_{r} \int_{0}^{t} \int_{0}^{t} e^{- \nu \lvert \tilde{k}' \rvert^{\frac{5}{2}}(t-\tilde{s}') - \nu \lvert \tilde{k} \rvert^{\frac{5}{2}} (t-\tilde{s})} \overline{d \beta_{1}(\tilde{s}', \tilde{k}') d \beta_{r}(\tilde{s}, \tilde{k})}  \nonumber \\
& \hspace{20mm} + \frac{\tilde{k}_{1}\tilde{k}_{1}'}{\lvert \tilde{k} \rvert^{2} \lvert \tilde{k}' \rvert^{2}} \sum_{r=1}^{3} \sum_{w=1}^{3} \tilde{k}_{r} \tilde{k}_{w}' \int_{0}^{t} \int_{0}^{t} e^{- \nu \lvert \tilde{k} \rvert^{\frac{5}{2}}(t-\tilde{s}) - \nu \lvert \tilde{k}' \rvert^{\frac{5}{2}}(t-\tilde{s}')} \overline{d \beta_{r}(\tilde{s},\tilde{k}) d \beta_{w}(\tilde{s}', \tilde{k}')}  \Bigg]. \nonumber 
\end{align}
\end{subequations}
For brevity, we choose to not write out the rest of the terms $\{\tilde{\RomanI}_{i,j}\}_{i,j \in \{1, \hdots, 7\}: (i,j) \neq (1,1), (1,2), (2,1)}$. We note that as much as some symmetry is desired for reduction of computations, unfortunately, $\RomanI_{i,j} \neq \RomanI_{j,i}$ although they certainly are similar. Let us define for brevity
\begin{subequations}\label{Define ABCD}
\begin{align}
& A \triangleq \int_{0}^{t}\int_{0}^{t} e^{- \nu \lvert k \rvert^{\frac{5}{2}} (t-s) - \nu \lvert k' \rvert^{\frac{5}{2}} (t-s')}, \hspace{3mm} \tilde{A} \triangleq \int_{0}^{t}\int_{0}^{t} e^{- \nu \lvert \tilde{k} \rvert^{\frac{5}{2}} (t-\tilde{s}) - \nu \lvert \tilde{k}' \rvert^{\frac{5}{2}} (t-\tilde{s}')}, \\
&B_{j} \triangleq \frac{k_{j}}{\lvert k \rvert^{2}} A, \hspace{3mm} B_{j}' \triangleq \frac{k_{j}'}{\lvert k' \rvert^{2}} A, \hspace{3mm} \tilde{B}_{j} \triangleq \frac{\tilde{k}_{j}}{\lvert \tilde{k} \rvert^{2}} \tilde{A}, \hspace{3mm} \tilde{B}_{j}' \triangleq \frac{\tilde{k}_{j}'}{\lvert \tilde{k}' \rvert^{2}} \tilde{A}, \hspace{3mm} j \in \{1,2,3\}, \\ 
& D_{ij} \triangleq \frac{k_{i} k_{j}'}{\lvert k \rvert^{2} \lvert k' \rvert^{2}} A, \hspace{3mm} \tilde{D}_{ij} \triangleq \frac{ \tilde{k}_{i} \tilde{k}_{j}'}{\lvert \tilde{k} \rvert^{2} \lvert \tilde{k}' \rvert^{2}} \tilde{A} \hspace{3mm} i, j \in \{1,2,3\}, 
\end{align}
\end{subequations} 
so that we can compute using \eqref{est 61} and \eqref{Define ABCD},  
\begin{align}
\mathbb{E}[ \tilde{\RomanI}_{1,1} ] &= (4k_{1}k_{1}' + k_{2}k_{2}' + k_{3}k_{3}') (4\tilde{k}_{1}\tilde{k}_{1}' + \tilde{k}_{2}\tilde{k}_{2}' + \tilde{k}_{3}\tilde{k}_{3}') \nonumber\\
& \times  \mathbb{E} \Bigg[  \Bigg( A d \beta_{1}(s,k) d \beta_{1}(s', k') - B_{1}' d \beta_{1}(s,k) \sum_{r=1}^{3} k_{r}' d \beta_{r}(s', k')  \nonumber\\
& \hspace{20mm} - B_{1} d \beta_{1}(s', k') \sum_{r=1}^{3} k_{r} d \beta_{r}(s,k) + D_{11} \sum_{r=1}^{3} k_{r}  d \beta_{r}(s,k) \sum_{w=1}^{3} k_{w}' d \beta_{w}(s', k') \Bigg) \nonumber\\
& \hspace{5mm}  \times \Bigg( \tilde{A} \overline{ d \beta_{1} (\tilde{s}, \tilde{k}) d \beta_{1} (\tilde{s}', \tilde{k}' ) } - \tilde{B}_{1}' \overline{ d \beta_{1} (\tilde{s}, \tilde{k}) \sum_{r=1}^{3} \tilde{k}_{r}'  d \beta_{r}( \tilde{s}', \tilde{k}')}  \nonumber\\
& \hspace{20mm} - \tilde{B}_{1} \overline{ d \beta_{1} (\tilde{s}', \tilde{k}') \sum_{r=1}^{3} \tilde{k}_{r} d \beta_{r}(\tilde{s}, \tilde{k})} + \tilde{D}_{11} \overline{\sum_{r=1}^{3} \tilde{k}_{r}d \beta_{r}(\tilde{s}, \tilde{k})  \sum_{w=1}^{3} \tilde{k}_{w}' d \beta_{w} (\tilde{s}', \tilde{k}' ) } \Bigg]  \nonumber \\
=&   (4k_{1}k_{1}' + k_{2}k_{2}' + k_{3}k_{3}') (4\tilde{k}_{1}\tilde{k}_{1}' + \tilde{k}_{2}\tilde{k}_{2}' + \tilde{k}_{3}\tilde{k}_{3}') \sum_{i=1}^{16} \tilde{\RomanI}_{1,1,i}.  \label{est 1}
\end{align}
To continue, we turn to 
\begin{equation}\label{Wick} 
\mathbb{E} [ \xi_{1} \xi_{2} \xi_{3} \xi_{4}] = \mathbb{E} [\xi_{2} \xi_{3}] \mathbb{E} [\xi_{1} \xi_{4}] + \mathbb{E} [\xi_{2} \xi_{4}] \mathbb{E} [\xi_{1} \xi_{3}] + \mathbb{E} [\xi_{3} \xi_{4}] \mathbb{E} [\xi_{1} \xi_{2}] 
\end{equation}   
(cf. \cite{J97}, more precisely \cite{Y21c}) to estimate e.g. 
\begin{align*}
&\tilde{\RomanI}_{1,1,1} \triangleq A \tilde{A} \mathbb{E} [ d \beta_{1}(s,k) d \beta_{1}(s', k') \overline{ d \beta_{1} (\tilde{s}, \tilde{k}) d \beta_{1}(\tilde{s}', \tilde{k}') } ] \\
=& A \tilde{A} ds ds' d \tilde{s} d \tilde{s}' [  \delta(s'-\tilde{s}) 1_{\{k' = - \tilde{k} \}} \delta(s- \tilde{s}') 1_{\{ k = - \tilde{k}' \}}  \\
& \hspace{10mm}  + \delta(s'- \tilde{s}') 1_{\{ k ' = - \tilde{k}' \}} \delta(s- \tilde{s}) 1_{\{ k = - \tilde{k} \}} + \delta(\tilde{s} - \tilde{s}') 1_{\{ \tilde{k} = - \tilde{k} ' \}} \delta(s- s') 1_{ \{ k = -k' \}} ], 
\end{align*} 
and 
\begin{align*}
&\tilde{\RomanI}_{1,1,2} \triangleq -A \tilde{B}_{1}' \mathbb{E} [ d \beta_{1}(s,k) d \beta_{1}(s', k') \overline{ d \beta_{1} (\tilde{s}, \tilde{k}) \sum_{r=1}^{3} \tilde{k}_{r}' d \beta_{r}(\tilde{s}', \tilde{k}') } ] \\
=& -A \tilde{B}_{1}' ds ds' d \tilde{s} d \tilde{s}' [  \delta(s'-\tilde{s}) 1_{\{k' = - \tilde{k} \}} \delta(s- \tilde{s}') 1_{\{ k = - \tilde{k}' \}} \tilde{k}_{1}'  \\
&\hspace{10mm}  + \delta(s'- \tilde{s}') 1_{\{ k ' = - \tilde{k}' \}} \tilde{k}_{1}'  \delta(s- \tilde{s}) 1_{\{ k = - \tilde{k} \}} + \delta(\tilde{s} - \tilde{s}') 1_{\{ \tilde{k} = - \tilde{k} ' \}}  \tilde{k}_{1}' \delta(s- s') 1_{ \{ k = -k' \}} ]. 
\end{align*} 
Computing $\tilde{\RomanI}_{1,1,i}$ for $i \in \{3, \hdots, 16\}$ similarly leads to    
\begin{align}
\RomanI_{1,1} =& \frac{1}{4} \sum_{k, k' \in \mathbb{Z}^{3} \setminus \{0\}} e^{i4 \pi (k+k') \cdot x} \rho_{m}^{2}(k+k') \lvert\psi_{0}(k,k') \rvert^{2} \mathfrak{l} \left( \frac{\lvert k \rvert}{\lambda} \right)^{2} \mathfrak{l} \left( \frac{\lvert k'\rvert}{\lambda} \right)^{2}  \nonumber \\
& \hspace{10mm} \times \left( \frac{1}{\nu \lvert k'\rvert^{\frac{5}{2}} + 2} \right)  \left( \frac{1}{\nu \lvert k \rvert^{\frac{5}{2}} + 2} \right) \left( \frac{1- e^{-2 \nu \lvert k \rvert^{\frac{5}{2}} t}}{2 \nu \lvert k \rvert^{\frac{5}{2}}} \right) \left( \frac{1- e^{-2 \nu \lvert k' \rvert^{\frac{5}{2}} t}}{2 \nu \lvert k' \rvert^{\frac{5}{2}}} \right)   \nonumber  \\
&\hspace{10mm} \times  ( 4k_{1}k_{1}' + k_{2}k_{2}' + k_{3}k_{3}')^{2} \Bigg[ 1 - \frac{k_{1}^{2}}{\lvert k\rvert^{2}} - \frac{ (k_{1}')^{2}}{\lvert k'\rvert^{2}} + \frac{ k_{1}^{2} (k_{1}')^{2}}{\lvert k \rvert^{2} \lvert k'\rvert^{2}} \Bigg]  \nonumber \\
& +  \frac{1}{4} \sum_{k, k' \in \mathbb{Z}^{3} \setminus \{0\}} e^{i4 \pi (k+k') \cdot x} \rho_{m}^{2}(k+k') \lvert\psi_{0}(k,k') \rvert^{2} \mathfrak{l} \left( \frac{\lvert k \rvert}{\lambda} \right)^{2} \mathfrak{l} \left( \frac{\lvert k'\rvert}{\lambda} \right)^{2} \nonumber \\
& \hspace{10mm} \times \left( \frac{1}{\nu \lvert k'\rvert^{\frac{5}{2}} + 2} \right)^{2}   \left( \frac{1- e^{-2 \nu \lvert k \rvert^{\frac{5}{2}} t}}{2 \nu \lvert k \rvert^{\frac{5}{2}}} \right) \left( \frac{1- e^{-2 \nu \lvert k' \rvert^{\frac{5}{2}} t}}{2 \nu \lvert k' \rvert^{\frac{5}{2}}} \right)   \nonumber \\
&\hspace{10mm} \times  ( 4k_{1}k_{1}' + k_{2}k_{2}' + k_{3}k_{3}')^{2} \Bigg[ 1 - \frac{k_{1}^{2}}{\lvert k\rvert^{2}} - \frac{ (k_{1}')^{2}}{\lvert k'\rvert^{2}} + \frac{ k_{1}^{2} (k_{1}')^{2}}{\lvert k \rvert^{2} \lvert k'\rvert^{2}} \Bigg]  \nonumber \\
&+ \frac{1}{4} \sum_{k, \tilde{k} \in\mathbb{Z}^{3} \setminus \{0\}} \rho_{m}^{2}(0) \mathfrak{l} \left( \frac{\lvert k \rvert}{\lambda} \right)^{2} \mathfrak{l} \left( \frac{\lvert \tilde{k} \rvert}{\lambda} \right)^{2} \left( \frac{1}{\nu \lvert k \rvert^{\frac{5}{2}} + 2} \right) \left( \frac{1}{\nu \lvert \tilde{k} \rvert^{\frac{5}{2}} + 2} \right)  \left( \frac{1- e^{-2 \nu \lvert k \rvert^{\frac{5}{2}} t}}{2 \nu \lvert k \rvert^{\frac{5}{2}}} \right) \left( \frac{1- e^{-2 \nu \lvert \tilde{k} \rvert^{\frac{5}{2}} t}}{2 \nu \lvert \tilde{k} \rvert^{\frac{5}{2}}} \right)  \nonumber \\
& \hspace{10mm}  \times (4k_{1}^{2} + k_{2}^{2} + k_{3}^{2}) (4 \tilde{k}_{1}^{2} + \tilde{k}_{2}^{2} + \tilde{k}_{3}^{2})  \Bigg[1 - \frac{k_{1}^{2}}{\lvert k\rvert^{2}} - \frac{ \tilde{k}_{1}^{2}}{\lvert \tilde{k}\rvert^{2}} + \frac{ k_{1}^{2} \tilde{k}_{1}^{2}}{\lvert k \rvert^{2} \lvert \tilde{k}\rvert^{2}}   \Bigg] \label{est 26}, 
\end{align}
where the first, second, and third sums correspond respectively to groups that consists of 
\begin{align*}
&1_{\{ k' = -\tilde{k}\}} 1_{\{ k = - \tilde{k}'\}} \delta(s'- \tilde{s}) \delta(s- \tilde{s}'),\\
&1_{\{ k' = - \tilde{k}\}} 1_{\{ k = - \tilde{k}\}} \delta(s' - \tilde{s}') \delta(s- \tilde{s}),\\
&1_{\{ \tilde{k} = - \tilde{k}'\}} 1_{\{ k = -k' \}} \delta(s-s') \delta(\tilde{s} - \tilde{s}').
\end{align*}   
Working from \eqref{est 217}, we can also compute 
\begin{align}
\RomanI_{1,2} =& \frac{1}{4}  \sum_{k, k' \in \mathbb{Z}^{3} \setminus \{0\}} e^{i4 \pi (k+k') \cdot x}  \rho_{m}^{2}(k+k')  \lvert \psi_{0} (k, k') \rvert^{2}   \mathfrak{l} \left( \frac{\lvert k \rvert}{\lambda} \right)^{2} \mathfrak{l} \left( \frac{\lvert k'\rvert}{\lambda} \right)^{2}    \left( \frac{1}{\nu \lvert k'\rvert^{\frac{5}{2}} + 2} \right) \left( \frac{1}{\nu \lvert k \rvert^{\frac{5}{2}} + 2} \right)    \nonumber    \\
& \hspace{8mm}  \times  \left( \frac{ 1- e^{-2\nu \lvert k \rvert^{\frac{5}{2}} t}}{2\nu \lvert k \rvert^{\frac{5}{2}}} \right) \left( \frac{ 1- e^{-2\nu \lvert k' \rvert^{\frac{5}{2}} t}}{2\nu \lvert k' \rvert^{\frac{5}{2}}} \right)  k_{1}' k_{2}  (4k_{1}k_{1}' + k_{2}k_{2}' + k_{3}k_{3}') \Bigg[  - \frac{k_{1}' k_{2}'}{\lvert k' \rvert^{2}} + \frac{k_{1}^{2} k_{1}' k_{2}'}{\lvert k \rvert^{2} \lvert k'\rvert^{2}} \Bigg]  \nonumber \\
+& \frac{1}{4} \sum_{k, k' \in \mathbb{Z}^{3} \setminus \{0\}} e^{i4 \pi (k+k') \cdot x}  \rho_{m}^{2}(k+k') \lvert \psi_{0} (k, k') \rvert^{2}    \mathfrak{l} \left( \frac{\lvert k \rvert}{\lambda} \right)^{2} \mathfrak{l} \left( \frac{\lvert k'\rvert}{\lambda} \right)^{2}   \left( \frac{1}{\nu \lvert k'\rvert^{\frac{5}{2}} + 2} \right)^{2}   \nonumber   \\
& \hspace{8mm}  \times \left( \frac{ 1- e^{-2\nu \lvert k \rvert^{\frac{5}{2}} t}}{2\nu \lvert k \rvert^{\frac{5}{2}}} \right) \left( \frac{ 1- e^{-2\nu \lvert k' \rvert^{\frac{5}{2}} t}}{2\nu \lvert k' \rvert^{\frac{5}{2}}} \right)  (4k_{1}k_{1}' + k_{2}k_{2}' + k_{3}k_{3}') k_{1} k_{2}'   \Bigg[  -\frac{k_{1} k_{2}}{\lvert k \rvert^{2}} + \frac{k_{1}k_{2} (k_{1}')^{2}}{\lvert k \rvert^{2} \lvert k'\rvert^{2}} \Bigg]  \nonumber \\
+& \frac{1}{4} \sum_{k, \tilde{k} \in \mathbb{Z}^{3} \setminus \{0\}}  \rho_{m}^{2}(0)    \mathfrak{l} \left( \frac{\lvert k \rvert}{\lambda} \right)^{2}  \mathfrak{l}  \left( \frac{\lvert \tilde{k} \rvert}{\lambda} \right)^{2} \left( \frac{1}{\nu \lvert k\rvert^{\frac{5}{2}} + 2} \right) \left( \frac{1}{\nu \lvert \tilde{k} \rvert^{\frac{5}{2}} + 2} \right)  \left( \frac{ 1- e^{-2\nu \lvert k \rvert^{\frac{5}{2}} t}}{2\nu \lvert k \rvert^{\frac{5}{2}}} \right) \left( \frac{ 1- e^{-2\nu \lvert \tilde{k} \rvert^{\frac{5}{2}} t}}{2\nu \lvert \tilde{k} \rvert^{\frac{5}{2}}} \right)   \nonumber \\
& \hspace{8mm} \times  (4k_{1}k_{1} + k_{2}k_{2} + k_{3}k_{3}) \tilde{k}_{1} \tilde{k}_{2}    \Bigg[ - \frac{ \tilde{k}_{1}\tilde{k}_{2}}{\lvert \tilde{k} \rvert^{2}} + \frac{k_{1}^{2} \tilde{k}_{1} \tilde{k}_{2}}{\lvert k \rvert^{2} \lvert \tilde{k} \rvert^{2}}  \Bigg]. \label{est 27}
\end{align} 
We remark on the following deduction that can be made due to some symmetry and reduce computations. 
\begin{remark}\label{Remark on deduction}  
Looking at the definitions of $\tilde{\RomanI}_{1,2}$ in \eqref{est 217} and  $\tilde{\RomanI}_{2,1}$ in \eqref{Define tilde I2,1}, we observe that we only have to take the result of $\tilde{\RomanI}_{1,2}$ and carefully swap $k \leftrightarrow \tilde{k}$ and $s \leftrightarrow \tilde{s}$. Then, in the first sum that consists of $1_{ \{ k' = - \tilde{k} \}} 1_{\{ k = - \tilde{k}' \}} \delta(s'- \tilde{s}) \delta(s- \tilde{s}')$, we have $k \to \tilde{k} \to - k'$, $k' \to \tilde{k}' \to -k$, $s \to \tilde{s} \to s', s' \to \tilde{s}' \to s$, so that we only need to swap $k \leftrightarrow k'$ and $s \leftrightarrow s'$ in \eqref{est 27} to obtain the appropriate version for $\tilde{\RomanI}_{2,1}$. Similarly, in the second sum that consists of $1_{\{ k' = - \tilde{k} \}} 1_{\{ k = - \tilde{k} \}} \delta(s' - \tilde{s}') \delta(s- \tilde{s})$, we have $k \to \tilde{k} \to - k$, $k' \to \tilde{k}' \to - k'$, $s\to \tilde{s} \to s$, and $s' \to \tilde{s}' \to s'$, so that the second sum remains the same as that in the $\tilde{\RomanI}_{1,2}$. Finally, in the third sum corresponding to $1_{ \{  \tilde{k} = - \tilde{k}'\}} 1_{ \{ k = - k' \}} \delta(s- s') \delta(\tilde{s} - \tilde{s}')$, we have $k \to \tilde{k} \to - \tilde{k}'$, $\tilde{k} \to k \to -k'$, $s \to \tilde{s} \to \tilde{s}'$, and $s' \to \tilde{s}' \to \tilde{s}$, but swapping $k \leftrightarrow k', s \leftrightarrow s'$ once more implies that we only need to swap $k \leftrightarrow \tilde{k}$ and $s \leftrightarrow \tilde{s}$ in \eqref{est 27} to obtain the appropriate version of $\tilde{\RomanI}_{2,1}$. Consequently, 
\begin{align}
\RomanI_{2,1} = & \frac{1}{4}  \sum_{k, k' \in \mathbb{Z}^{3} \setminus \{0\}} e^{i4 \pi (k+k') \cdot x}  \rho_{m}^{2}(k+k')  \lvert \psi_{0} (k, k') \rvert^{2}   \mathfrak{l} \left( \frac{\lvert k \rvert}{\lambda} \right)^{2} \mathfrak{l} \left( \frac{\lvert k'\rvert}{\lambda} \right)^{2}    \left( \frac{1}{\nu \lvert k'\rvert^{\frac{5}{2}} + 2} \right) \left( \frac{1}{\nu \lvert k \rvert^{\frac{5}{2}} + 2} \right)       \nonumber \\
&   \times  \left( \frac{ 1- e^{-2\nu \lvert k \rvert^{\frac{5}{2}} t}}{2\nu \lvert k \rvert^{\frac{5}{2}}} \right) \left( \frac{ 1- e^{-2\nu \lvert k' \rvert^{\frac{5}{2}} t}}{2\nu \lvert k' \rvert^{\frac{5}{2}}} \right) k_{1} k_{2}'  (4k_{1}'k_{1} + k_{2}'k_{2} + k_{3}'k_{3}) \Bigg[  - \frac{k_{1} k_{2}}{\lvert k \rvert^{2}} + \frac{(k_{1}')^{2} k_{1} k_{2}}{\lvert k' \rvert^{2} \lvert k\rvert^{2}} \Bigg]  \nonumber \\
+& \frac{1}{4} \sum_{k, k' \in \mathbb{Z}^{3} \setminus \{0\}} e^{i4 \pi (k+k') \cdot x}  \rho_{m}^{2}(k+k') \lvert \psi_{0} (k, k') \rvert^{2}    \mathfrak{l} \left( \frac{\lvert k \rvert}{\lambda} \right)^{2} \mathfrak{l} \left( \frac{\lvert k'\rvert}{\lambda} \right)^{2}   \left( \frac{1}{\nu \lvert k'\rvert^{\frac{5}{2}} + 2} \right)^{2}   \nonumber  \\
&   \times  \left( \frac{ 1- e^{-2\nu \lvert k \rvert^{\frac{5}{2}} t}}{2\nu \lvert k \rvert^{\frac{5}{2}}} \right) \left( \frac{ 1- e^{-2\nu \lvert k' \rvert^{\frac{5}{2}} t}}{2\nu \lvert k' \rvert^{\frac{5}{2}}} \right)  (4k_{1}k_{1}' + k_{2}k_{2}' + k_{3}k_{3}') k_{1} k_{2}'   \Bigg[  -\frac{k_{1} k_{2}}{\lvert k \rvert^{2}} + \frac{k_{1}k_{2} (k_{1}')^{2}}{\lvert k \rvert^{2} \lvert k'\rvert^{2}} \Bigg]  \nonumber \\
+& \frac{1}{4} \sum_{k, \tilde{k} \in \mathbb{Z}^{3} \setminus \{0\}}  \rho_{m}^{2}(0)    \mathfrak{l} \left( \frac{\lvert k \rvert}{\lambda} \right)^{2}  \mathfrak{l}  \left( \frac{\lvert \tilde{k} \rvert}{\lambda} \right)^{2} \left( \frac{1}{\nu \lvert k\rvert^{\frac{5}{2}} + 2} \right) \left( \frac{1}{\nu \lvert \tilde{k} \rvert^{\frac{5}{2}} + 2} \right)     \nonumber \\
&  \times \left( \frac{ 1- e^{-2\nu \lvert k \rvert^{\frac{5}{2}} t}}{2\nu \lvert k \rvert^{\frac{5}{2}}} \right) \left( \frac{ 1- e^{-2\nu \lvert \tilde{k} \rvert^{\frac{5}{2}} t}}{2\nu \lvert \tilde{k} \rvert^{\frac{5}{2}}} \right) (4\tilde{k}_{1}\tilde{k}_{1} + \tilde{k}_{2}\tilde{k}_{2} + \tilde{k}_{3}\tilde{k}_{3}) k_{1} k_{2}    \Bigg[ - \frac{ k_{1} k_{2}}{\lvert  k \rvert^{2}} + \frac{\tilde{k}_{1}^{2} k_{1} k_{2}}{\lvert \tilde{k} \rvert^{2} \lvert k \rvert^{2}}  \Bigg].  \label{first reduction}
\end{align}
We emphasize that $\RomanI_{2,1} \neq \RomanI_{1,2}$.
\end{remark}
We can compute $\{\RomanI_{i,j}\}_{i,j \in \{1, \hdots, 7\}: (i,j) \neq (1,1), (1,2), (2,1)}$ similarly. We can then define 
\begin{align}
\mathcal{Y} (k, k') \triangleq& k_{1}^{2} ( k_{1}')^{2} + \lvert k \rvert^{2} (k_{1}')^{2} + \lvert k' \rvert^{2} k_{1}^{2} + 6 k_{1} k_{1}' (k \cdot k') + (k\cdot k')^{2} \nonumber \\
& - 4 \left( \frac{k_{1}^{2}}{\lvert k \rvert^{2}} + \frac{ ( k_{1}')^{2}}{\lvert k' \rvert^{2}} \right) [ 2 k_{1} k_{1}' (k\cdot k') + (k\cdot k')^{2} ] + \frac{16 k_{1}^{2} (k_{1}')^{2}}{\lvert k \rvert^{2} \lvert k' \rvert^{2}} (k\cdot k')^{2}, \label{est 57} 
\end{align}
and deduce 
\begin{align}
&  \mathbb{E} [ \lvert \Delta_{m} ( \nabla_{\text{sym}} \mathcal{L}_{\lambda} X \circlesign{\circ} P^{\lambda})_{1,1} (t) \rvert^{2} ] \nonumber \\
=&  \frac{1}{4} \sum_{k, k' \in \mathbb{Z}^{3} \setminus \{0\}} e^{i4 \pi (k+k') \cdot x} \rho_{m}^{2}(k+k') \lvert\psi_{0}(k,k') \rvert^{2} \mathfrak{l}  \left( \frac{\lvert k \rvert}{\lambda} \right)^{2} \mathfrak{l} \left( \frac{\lvert k'\rvert}{\lambda} \right)^{2} \left( \frac{1}{\nu \lvert k'\rvert^{\frac{5}{2}} + 2} \right)  \left( \frac{1}{\nu \lvert k \rvert^{\frac{5}{2}} + 2} \right)\nonumber  \\
& \hspace{5mm} \times  \left( \frac{1- e^{-2 \nu \lvert k \rvert^{\frac{5}{2}} t}}{2 \nu \lvert k \rvert^{\frac{5}{2}}} \right) \left( \frac{1- e^{-2 \nu \lvert k' \rvert^{\frac{5}{2}} t}}{2 \nu \lvert k' \rvert^{\frac{5}{2}}} \right)  \mathcal{Y} (k, k') \nonumber \\
& +  \frac{1}{4} \sum_{k, k' \in \mathbb{Z}^{3} \setminus \{0\}} e^{i4 \pi (k+k') \cdot x} \rho_{m}^{2}(k+k') \lvert\psi_{0}(k,k') \rvert^{2} \mathfrak{l} \left( \frac{\lvert k \rvert}{\lambda} \right)^{2} \mathfrak{l} \left( \frac{\lvert k'\rvert}{\lambda} \right)^{2} \left( \frac{1}{\nu \lvert k'\rvert^{\frac{5}{2}} + 2} \right)^{2} \nonumber  \\
& \hspace{5mm} \times   \left( \frac{1- e^{-2 \nu \lvert k \rvert^{\frac{5}{2}} t}}{2 \nu \lvert k \rvert^{\frac{5}{2}}} \right) \left( \frac{1- e^{-2 \nu \lvert k' \rvert^{\frac{5}{2}} t}}{2 \nu \lvert k' \rvert^{\frac{5}{2}}} \right)   \mathcal{Y}(k,k') \nonumber \\  
&+ \frac{1}{4} \sum_{k, \tilde{k} \in\mathbb{Z}^{3} \setminus \{0\}} \rho_{m}^{2}(0) \mathfrak{l} \left( \frac{\lvert k \rvert}{\lambda} \right)^{2} \mathfrak{l} \left( \frac{\lvert \tilde{k} \rvert}{\lambda} \right)^{2} \left( \frac{1}{\nu \lvert k \rvert^{\frac{5}{2}} + 2} \right) \left( \frac{1}{\nu \lvert \tilde{k} \rvert^{\frac{5}{2}} + 2} \right)   \nonumber \\
& \hspace{5mm} \times  \left( \frac{1- e^{-2 \nu \lvert k \rvert^{\frac{5}{2}} t}}{2 \nu \lvert k \rvert^{\frac{5}{2}}} \right) \left( \frac{1- e^{-2 \nu \lvert \tilde{k} \rvert^{\frac{5}{2}} t}}{2 \nu \lvert \tilde{k} \rvert^{\frac{5}{2}}} \right)  [ k_{1}^{2} \tilde{k}_{1}^{2} + k_{1}^{2} \lvert \tilde{k} \rvert^{2} + \tilde{k}_{1}^{2} \lvert k \rvert^{2} + \lvert k \rvert^{2} \lvert \tilde{k} \rvert^{2} ].  \label{est 56}
\end{align} 
Applying \eqref{est 56} to \eqref{est 78}, and using the definitions of $r_{\lambda}$, we obtain 
\begin{align}
& \mathbb{E} [ \lvert \Delta_{m} \left( ( \nabla_{\text{sym}} \mathcal{L}_{\lambda} X \circlesign{\circ} P^{\lambda} )_{1,1} - (r_{\lambda}^{1} + r_{\lambda}^{2,1} ) \right)(t) \rvert^{2} ] \label{est 79}\\
=&  \frac{1}{4} \sum_{k, k' \in \mathbb{Z}^{3} \setminus \{0\}} e^{i4 \pi (k+k') \cdot x} \rho_{m}^{2}(k+k') \lvert\psi_{0}(k,k') \rvert^{2} \mathfrak{l}  \left( \frac{\lvert k \rvert}{\lambda} \right)^{2} \mathfrak{l} \left( \frac{\lvert k'\rvert}{\lambda} \right)^{2} \left( \frac{1}{\nu \lvert k'\rvert^{\frac{5}{2}} + 2} \right) \nonumber  \\
& \hspace{5mm} \times \left[  \left( \frac{1}{\nu \lvert k' \rvert^{\frac{5}{2}} + 2} \right) +   \left( \frac{1}{\nu \lvert k \rvert^{\frac{5}{2}} + 2} \right) \right] \left( \frac{1- e^{-2 \nu \lvert k \rvert^{\frac{5}{2}} t}}{2 \nu \lvert k \rvert^{\frac{5}{2}}} \right) \left( \frac{1- e^{-2 \nu \lvert k' \rvert^{\frac{5}{2}} t}}{2 \nu \lvert k' \rvert^{\frac{5}{2}}} \right)  \mathcal{Y} (k, k') \nonumber \\
&+ \frac{1}{4} \sum_{k, \tilde{k} \in\mathbb{Z}^{3} \setminus \{0\}} \rho_{m}^{2}(0) \mathfrak{l} \left( \frac{\lvert k \rvert}{\lambda} \right)^{2} \mathfrak{l} \left( \frac{\lvert \tilde{k} \rvert}{\lambda} \right)^{2} \left( \frac{1}{\nu \lvert k \rvert^{\frac{5}{2}} + 2} \right)    \nonumber \\
& \hspace{5mm} \times \left( \frac{1}{\nu \lvert \tilde{k} \rvert^{\frac{5}{2}} + 2} \right) \left( \frac{1- e^{-2 \nu \lvert k \rvert^{\frac{5}{2}} t}}{2 \nu \lvert k \rvert^{\frac{5}{2}}} \right) \left( \frac{1- e^{-2 \nu \lvert \tilde{k} \rvert^{\frac{5}{2}} t}}{2 \nu \lvert \tilde{k} \rvert^{\frac{5}{2}}} \right)  [ k_{1}^{2} \tilde{k}_{1}^{2} + k_{1}^{2} \lvert \tilde{k} \rvert^{2} + \tilde{k}_{1}^{2} \lvert k \rvert^{2} + \lvert k \rvert^{2} \lvert \tilde{k} \rvert^{2} ]  \nonumber \\
& - \frac{1}{16}\Delta_{-1} \Bigg[\sum_{k \in \mathbb{Z}^{3} \setminus \{0\}}  \mathfrak{l} \left( \frac{\lvert k \rvert}{\lambda} \right)^{2} \left( \frac{1- e^{-2 \nu \lvert k \rvert^{\frac{5}{2}} t}}{2 \nu \lvert k \rvert^{\frac{1}{2}}} \right) \left( \frac{2}{2 + \nu \lvert k \rvert^{\frac{5}{2}}} \right)  \nonumber \\
& \hspace{22mm} +  \mathfrak{l} \left( \frac{\lvert k \rvert}{\lambda} \right)^{2} \left( \frac{1- e^{-2 \nu \lvert k \rvert^{\frac{5}{2}} t}}{2 \nu \lvert k \rvert^{\frac{1}{2}}} \right) \left( \frac{2}{2 + \nu \lvert k \rvert^{\frac{5}{2}}} \right) \frac{k_{1}^{2}}{\lvert k \rvert^{2}} \Bigg]^{2} (t), \nonumber 
\end{align}
where the second sum and the last term cancel out. After the cancellations, making use of the facts that $\mathcal{Y}(k,k')$ from \eqref{est 57} satisfies  $\lvert \mathcal{Y} (k, k') \rvert \lesssim \lvert k \rvert^{2} \lvert k' \rvert^{2}$, that $\rho_{m}(k), \rho_{c}(k-k'), \rho_{d}(k')$ imply $\lvert k \rvert \approx 2^{m}, \lvert k' \rvert \gtrsim 2^{m}$, and $m \lesssim c$, we can estimate by symmetry 
\begin{align}
& \mathbb{E} [ \lvert \Delta_{m} \left( ( \nabla_{\text{sym}} \mathcal{L}_{\lambda} X \circlesign{\circ} P^{\lambda} )_{1,1} - (r_{\lambda}^{1} + r_{\lambda}^{2,1} ) \right)(t) \rvert^{2} ]  \nonumber \\ 
\lesssim& \sum_{k, k' \in \mathbb{Z}^{3} \setminus \{0\}} \rho_{m}^{2} (k+ k') \lvert \psi_{0} (k, k') \rvert^{2} \mathfrak{l} \left( \frac{ \lvert k \rvert}{\lambda} \right)^{2} \mathfrak{l} \left( \frac{ \lvert k' \rvert}{\lambda} \right) \left( \frac{1}{\nu \lvert k' \rvert^{\frac{5}{2}}+ 2} \right)^{2} \frac{1}{\lvert k \rvert^{\frac{1}{2}} \lvert k' \rvert^{\frac{1}{2}}}  \nonumber \\
\lesssim& \sum_{k,k' \in \mathbb{Z}^{3} \setminus \{0\}: \lvert k \rvert \approx 2^{m}, \lvert k' \rvert \gtrsim 2^{m}} \lvert k' \rvert \left( \sum_{c: m \lesssim c} \frac{1}{2^{\frac{c}{2}}} \right)^{2} \left( \frac{1}{\lvert k' \rvert^{\frac{5}{2}} + 2} \right)^{2} \frac{1}{2^{\frac{m}{2}} \lvert k' \rvert^{\frac{1}{2}}}   \lesssim 1. \label{est 80} 
\end{align}  
Finally, an application of Gaussian hypercontractivity theorem (e.g. \cite[Theorem 3.50]{J97}) concludes that for any $p\in [2,\infty)$, thanks to \eqref{est 80}
\begin{align*}
& \sup_{\lambda \geq 1} \mathbb{E} [ \lVert \left( ( \nabla_{\text{sym}} \mathcal{L}_{\lambda} X \circlesign{\circ} P^{\lambda})_{1,1} - (r_{\lambda}^{1} + r_{\lambda}^{2,1}) \right) (t) \rVert_{B_{p,p}^{-\kappa}}^{p} ] \\
\lesssim& \sup_{\lambda \geq 1} \sum_{m\geq -1} 2^{-\kappa m p} \int_{\mathbb{T}^{3}}  \lVert \Delta_{m} \left( ( \nabla_{\text{sym}} \mathcal{L}_{\lambda} X \circlesign{\circ} P^{\lambda} )_{1,1} - (r_{\lambda}^{1} + r_{\lambda}^{2,1}) \right)(t)  \rVert_{L_{\omega}^{2}}^{p}  dx \lesssim 1. 
\end{align*}
We can show the analogous convergence of the other eight $(i,j)$-entries for $i,j\in \{1,2,3\}$, $(i,j) \neq (1,1)$, and hence conclude the convergence of  \eqref{est 68} in $L^{p} (\Omega; C_{\text{loc}} (\mathbb{R}_{+}; \mathcal{K}^{-\frac{5}{4} - \kappa} ) )$ for any $p \in [1,\infty)$. The convergence $\mathbb{P}$-a.s. follows similarly and we refer to the proofs in \cite[p. 33]{HR24}, \cite[p. 44]{Y23c}, and \cite[Section B.4]{Y25d}. This completes the proof of Proposition \ref{Proposition 4.14}. 
\end{proof}

\section{Proof of Theorem \ref{Theorem 2.3}}\label{Section 5}  
We start with the long-awaited definition of the HL weak solution, 

\begin{define}\label{Definition 5.1} 
Given any $u^{\text{in}} \in L_{\sigma}^{2}$ and any $\kappa \in (0, \frac{1}{2}), v \in C([0,\infty); \mathcal{S}(\mathbb{T}^{3}; \mathbb{R}^{3}))$ is called a global-in-time high-low (HL) weak solution to \eqref{Equation of v} starting from $u^{\text{in}}$ if $w = v- Y$ from \eqref{Equation of w}, where $Y$ is the unique solution to the linear equation \eqref{Equation of Y}, satisfies the following conditions. 
\begin{enumerate}[label=(\alph*)]
\item For any $T > 0,$  there exists a $\lambda_{T} > 0$ such that for any $\lambda \geq \lambda_{T}$, there is a pair $(w^{\mathcal{L},\lambda}, w^{\mathcal{H},\lambda})$ such that 
\begin{subequations}\label{est 153}
\begin{align}
& w^{\mathcal{L},\lambda} \in L^{\infty} ( 0,T; L_{\sigma}^{2}) \cap L^{2} (0,T; \dot{H}^{\frac{5}{4}}), \label{est 153a} \\
& w^{\mathcal{H},\lambda} \in L^{\infty} ( 0, T; L_{\sigma}^{2})\cap L^{2} (0, T; B_{p,2}^{\frac{5}{4} - 2 \kappa}) \hspace{3mm} \forall \hspace{1mm} p \in \left[1, \frac{12}{1+ 4 \kappa} \right], \label{est 153b}
\end{align}
\end{subequations}
that satisfies 
\begin{equation}\label{est 149}
w^{\mathcal{H},\lambda}(t) = - \mathbb{P}_{L} \divergence ( w \circlesign{\prec}_{s} \mathcal{H}_{\lambda} Q), \hspace{3mm} w(t) = w^{\mathcal{L},\lambda}(t) + w^{\mathcal{H},\lambda} (t) 
\end{equation} 
(cf. \eqref{Define QH, wH, and wL}) for all $t \in [0,T]$ with $Q$ defined by \eqref{Equation of Q}. 
\item $w$ solves \eqref{Equation of w} distributionally; i.e., for any $T > 0$ and any $\phi \in C^{\infty} ([0,T] \times \mathbb{T}^{3})$ that is divergence-free, 
\begin{align}
& \langle w(T), \phi(T) \rangle - \langle w(0), \phi(0) \rangle = \int_{0}^{T} \langle w, \partial_{t} \phi - \nu \Lambda^{\frac{5}{2}} \phi \rangle  \nonumber \\
& \hspace{10mm} + \langle w, ( w\cdot\nabla) \phi \rangle + \frac{1}{2} \langle D, (w\cdot\nabla) \phi \rangle + \frac{1}{2} \langle w, (D \cdot\nabla) \phi \rangle + \langle Y, (Y \cdot\nabla) \phi \rangle dt. 
\end{align}
 \end{enumerate} 
\end{define} 

\begin{remark}
In comparison, \cite[Definition 7.1]{HR24} for the 2D Navier-Stokes equations with full Laplacian as diffusion had $w^{\mathcal{H},\lambda} \in L^{2} (0, T; B_{4,\infty}^{1-\kappa})$ for any $\kappa > 0$. In \cite[Definition 5.1]{Y23c} for the 2D MHD system with full Laplacian as diffusion improved both integrability and summability to $w^{\mathcal{H},\lambda} \in L^{2} (0, T; B_{p,2}^{1- 2\kappa})$ for all $p \in [1, \frac{2}{\kappa}]$ for any $\kappa > 0$. For the 1D Burgers' equation with full Laplacian as diffusion but forced by $\Lambda^{\frac{1}{2}}$ applied on STWN, \cite[Definition 5.1]{Y25d} was able to improve furthermore to $w^{\mathcal{H},\lambda} \in L^{2}(0, T; B_{\infty, 2}^{1-2\kappa})$ for any $\kappa > 0$. In each case, the strength of diffusion, singularity of the force, and spatial dimensions play roles determining such regularity. In this manuscript we were able to deduce $w^{\mathcal{H},\lambda} \in L^{2} (0, T; B_{p,2}^{\frac{5}{4} - 2 \kappa})$ for all $p \in [1, \frac{12}{1+ 4 \kappa}]$ and this will play a crucial role in the proof of Proposition \ref{Proposition 5.2}. (See Remark \ref{Remark 5.2} and \eqref{Reason 1}-\eqref{Reason 4}).
\end{remark} 

\subsection{Existence}
\hfill\\ First, we present our main result on the existence of HL weak solution to \eqref{Equation of v}.
\begin{proposition}\label{Proposition 5.1}
Let $\mathcal{N}'' \subset \Omega$ be the null set from Proposition \ref{Proposition 4.1}. Then, for any $\omega \in \Omega \setminus \mathcal{N}''$ and $u^{\text{in}} \in L_{\sigma}^{2}$, there exists a HL weak solution to \eqref{Equation of v} starting from $u^{\text{in}}$. 
\end{proposition}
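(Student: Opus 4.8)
The plan is to prove existence of an HL weak solution by a Galerkin-type approximation combined with the uniform bounds already established for the mild solution theory. More precisely, I would first recall that for each $\lambda \geq 1$ and smoothed noise $\xi^{(n)}$ (or equivalently a spectral cut-off at frequency $n$), the approximate equation \eqref{Equation of w} admits a genuine (classical) solution $w_{n}$ with $w_{n}(0,\cdot) = u^{\text{in}}$, by standard ODE theory on the finite-dimensional projection of divergence-free fields together with the a priori $L^{2}$-bound. Since $u^{\text{in}} \in L_{\sigma}^{2}$ only, I would mollify the initial data as well, taking $u^{\text{in}}_{n} \triangleq S_{n} u^{\text{in}} \to u^{\text{in}}$ in $L^{2}$. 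Throughout, the enhanced noise $(\nabla_{\text{sym}}\mathcal{L}_{\lambda}X, \nabla_{\text{sym}}\mathcal{L}_{\lambda}X\circlesign{\circ} P^{\lambda} - r_{\lambda}\Id)$ is frozen at its limiting (non-smoothed) value, which is finite $\mathbb{P}$-a.s.\ on $\Omega\setminus\mathcal{N}''$ by Proposition \ref{Proposition 4.1}; this is what allows all the estimates of Section \ref{Section 4} to be applied uniformly in $n$.

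Next I would decompose $w_{n} = w_{n}^{\mathcal{L},\lambda} + w_{n}^{\mathcal{H},\lambda}$ exactly as in \eqref{Define QH, wH, and wL}, with $w_{n}^{\mathcal{H},\lambda} = -\mathbb{P}_{L}\divergence(w_{n}\circlesign{\prec}_{s}\mathcal{H}_{\lambda}Q)$, and derive $n$-uniform bounds. For $w_{n}^{\mathcal{L},\lambda}$ this is exactly Proposition \ref{Proposition 4.9}: the $L^{2}$-energy estimate, using the Anderson Hamiltonian bound \eqref{est 102} from Proposition \ref{Proposition on Anderson Hamiltonian} and the logarithmic renormalization bound \eqref{logarithmic growth}, gives $w_{n}^{\mathcal{L},\lambda} \in L^{\infty}(0,T;L^{2}_{\sigma}) \cap L^{2}(0,T;\dot H^{5/4})$ uniformly in $n$ (one chooses $\tau = \tfrac{40}{13}$ and $\kappa_{0} < \tfrac{1}{200}$ as in the corollary preceding Proposition \ref{Proposition 4.9}). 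For $w_{n}^{\mathcal{H},\lambda}$ one uses the paraproduct estimate \eqref{Sobolev products a} together with the regularity \eqref{Regularity of Q} of $Q$ and the cut-off bound of Lemma \ref{Lemma 3.5}: since $w_{n}^{\mathcal{L},\lambda}$ is bounded in $L^{2}(0,T;\dot H^{5/4})$ and in $L^{\infty}(0,T;L^{2})$, interpolation gives $w_{n}$ bounded in $L^{2}(0,T;H^{s})$ for $s$ up to (but not including) $\tfrac{5}{4}$, and then $w_{n}^{\mathcal{H},\lambda} = -\mathbb{P}_{L}\divergence(w_{n}\circlesign{\prec}_{s}\mathcal{H}_{\lambda}Q)$ is bounded in $L^{2}(0,T;B^{5/4-2\kappa}_{p,2})$ for $p\in[1,\tfrac{12}{1+4\kappa}]$ — this is the regularity claimed in \eqref{est 153b}, obtained by trading a derivative of $Q^{\mathcal{H}}$ against $\mathcal{H}_{\lambda}$ and using $\dot H^{5/4}(\mathbb{T}^3)\hookrightarrow L^{12}$. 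I would also extract a uniform bound on $\partial_{t}w_{n}$ in a negative-order space, e.g.\ in $L^{2}(0,T;H^{-M})$ for $M$ large, directly from equation \eqref{Equation of w} and the above bounds on each term (using Lemma \ref{Lemma 3.2} for the bilinear terms and \eqref{Sobolev products} for $D\otimes_{s}w_{n}$), so that the Aubin--Lions lemma applies.

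Then I would pass to the limit. By Banach--Alaoglu there is a subsequence with $w_{n}^{\mathcal{L},\lambda} \rightharpoonup w^{\mathcal{L},\lambda}$ weakly-$*$ in $L^{\infty}(0,T;L^{2})$ and weakly in $L^{2}(0,T;\dot H^{5/4})$, and $w_{n}^{\mathcal{H},\lambda}\rightharpoonup w^{\mathcal{H},\lambda}$ in $L^{2}(0,T;B^{5/4-2\kappa}_{p,2})$; Aubin--Lions upgrades the $w_{n}^{\mathcal{L},\lambda}$ convergence to strong convergence in $L^{2}(0,T;H^{s})$ for some $s\in(1,\tfrac54)$, which is what is needed to pass to the limit in the quadratic term $\langle w_{n},(w_{n}\cdot\nabla)\phi\rangle$ (here I use that $\dot H^{1}(\mathbb{T}^3)$ embeds into $L^{6}$, so $w_{n}\otimes w_{n}$ converges in $L^{1}_{t}L^{1}_{x}$ against smooth test functions). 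The linear terms pass to the limit by weak convergence, the $D\otimes_{s}w_{n}$ term by weak convergence of $w_{n}$ paired with the fixed distribution $D$ via the paraproduct estimates, and the identity $w^{\mathcal{H},\lambda} = -\mathbb{P}_{L}\divergence(w\circlesign{\prec}_{s}\mathcal{H}_{\lambda}Q)$ is preserved under the limit because the paraproduct with the fixed function $\mathcal{H}_{\lambda}Q$ is continuous from the relevant weak topology. Finally $w = v - Y$ defines $v \in C([0,\infty);\mathcal S')$ and one checks the distributional formulation (b) and the initial condition by the usual argument (weak continuity in time plus a Lebesgue-point argument at $t=0$). The diagonal/exhaustion over $T\nearrow\infty$ gives a global-in-time solution.

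The main obstacle is obtaining the $n$-uniform bound on $w_{n}^{\mathcal{L},\lambda}$, i.e.\ making the full weight of Proposition \ref{Proposition 4.9} work at the level of the approximations. The delicate points are: (i) the Anderson Hamiltonian estimate \eqref{est 102} must be applied with the \emph{limiting} (non-smoothed) enhanced noise while $w_{n}^{\mathcal{L},\lambda}$ is the approximate solution, which is legitimate because $\mathcal{A}^{\lambda_{t}}_{t}$ depends only on the frozen data; (ii) the commutator term $C_{4}$ of \eqref{Define C4}, handled in \eqref{Split C4}--\eqref{Estimate on C4} via the Kato--Ponce estimate Lemma \ref{Lemma 3.3}, must be reproved for $w_{n}$ — but since it is purely an estimate (no use of the equation for $w$), it transfers verbatim; (iii) the time-dependent cut-off $\lambda_{t}$ and the stopping-time decomposition $\{T_{i}\}$ of Definition \ref{Definition 4.2} should be replaced, for the purpose of this existence proof, by the \emph{fixed} cut-off $\lambda = \lambda_{T}$ on $[0,T]$ with $\lambda_{T}$ chosen large enough (depending on the uniform $L^{\infty}_{t}L^{2}$-bound, which one first derives by a soft argument from \eqref{needed logarithmic renormalization}), so that one stays in a single regime and avoids the stopping-time machinery; the logarithmic gain in the renormalization constant \eqref{logarithmic growth} is exactly what keeps the energy from blowing up on $[0,T]$. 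Everything else — Aubin--Lions compactness, weak lower semicontinuity, passage to the limit in the paraproducts — is routine given the uniform estimates.
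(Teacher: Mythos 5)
Your overall architecture (mollify the noise and the initial data at level $n$, derive $n$-uniform energy bounds, apply Aubin--Lions and pass to the limit in the distributional formulation and in the high--low decomposition) is the same as the paper's. However, your point (iii) contains a genuine gap. You propose to discard the stopping times $\{T_{i}\}$ and the solution-dependent cut-off $\lambda_{t}$ of Definition \ref{Definition 4.2} in favor of a fixed $\lambda_{T}$, claiming the uniform $L^{\infty}_{t}L^{2}$ bound follows from a ``soft argument'' and that the logarithmic renormalization \eqref{logarithmic growth} alone prevents blow-up. This fails: the adaptive cut-off is needed not only for the renormalization constant but, crucially, to tame the term $C_{1}$ of \eqref{Define C1}. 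With a fixed $\lambda$, the estimate \eqref{est 89} gives $C_{1}\lesssim N_{t}^{\kappa}\,\lambda^{2\kappa-\eta+\frac{1}{2}}\lVert w^{\mathcal{L}}\rVert_{\dot{H}^{\eta}}\lVert w^{\mathcal{L}}\rVert_{\dot{H}^{\frac54}}^{\frac35-\frac{2\kappa}{5}}\lVert w^{\mathcal{L}}\rVert_{L^{2}}^{\frac75+\frac{2\kappa}{5}}$, and after Gagliardo--Nirenberg and Young the residual term is a power of $\lVert w^{\mathcal{L}}\rVert_{L^{2}}^{2}$ strictly greater than one (roughly $E^{13/5}$), so the differential inequality is superlinear and does not yield a bound on $[0,T]$. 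It is precisely the factor $\lambda_{t}^{2\kappa-\eta+\frac12}\approx(1+\lVert w\rVert_{L^{2}})^{\tau(2\kappa-\eta+\frac12)}$, with $\tau$ tuned as in \eqref{est 90}, that cancels the excess power and linearizes the estimate; see Remark \ref{Remark 4.2}. The paper therefore runs the full adaptive stopping-time argument (Propositions \ref{Proposition 4.9}--\ref{Proposition 4.10}) for the approximations $w^{n}$ first, obtains the uniform bound \eqref{est 142}, and only \emph{afterwards} fixes $\bar{\lambda}_{T}\geq\sup_{n,t\leq T}\lambda_{t}^{n}$ to set up the decomposition required by Definition \ref{Definition 5.1}. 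Your order of quantifiers is circular: $\lambda_{T}$ is chosen depending on the uniform bound, which you cannot derive with $\lambda_{T}$ fixed.

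A secondary issue is your treatment of the enhanced noise. Since $w^{n}$ solves the equation driven by $X^{n}=\mathcal{L}_{n}X$, the operator appearing in its energy identity is built from $X^{n}$, so you cannot simply ``freeze'' the enhanced noise at its limiting value: each approximation must be renormalized with its own constant $r_{\lambda}^{n}$ (which depends on the cut-off level $n$, as in \eqref{Define P lambda n and r lambda n}), and the Anderson Hamiltonian bound must be applied to the enhancement of $X^{n}$. The uniformity in $n$ then comes from controlling $\bar{N}_{t}^{\kappa}=\sup_{n}N_{t}^{n,\kappa}<\infty$ a.s., as in \eqref{Define Lt n kappa and Nt n kappa}, not from the finiteness of the limiting $N_{t}^{\kappa}$ alone. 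The remaining steps of your plan (the $B_{p,2}^{\frac54-2\kappa}$ bound for the high part, the bound on $\partial_{t}w^{n}$ in a negative Sobolev space, compactness, and passage to the limit) are consistent with the paper's proof.
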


\begin{proof}[Proof of Proposition \ref{Proposition 5.1}]
For any $n \in \mathbb{N}_{0}$ we define $X^{n} \triangleq \mathcal{L}_{n} X$, where $X$ solves \eqref{Equation of X}. We define $Y_{n}$ to be the corresponding solution to \eqref{Equation of Y} with $X$ replaced by $X^{n}$. Then, similarly to \eqref{Define D}, we define 
\begin{equation}\label{Define Dn}
D^{n} \triangleq 2(X^{n} + Y^{n})
\end{equation} 
and $w^{n}$ to be the solution to 
\begin{equation}\label{Equation of wn}
\partial_{t} w^{n} + \mathbb{P}_{L} \divergence  ((w^{n})^{\otimes 2} + D^{n} \otimes_{s} w^{n} + (Y^{n})^{\otimes 2}) + \nu \Lambda^{\frac{5}{2}} w^{n} = 0, \hspace{5mm} w^{n}(0,x) = \mathcal{L}_{n} u^{\text{in}}(x) 
\end{equation} 
(cf. \eqref{Equation of w}). Additionally, we define
\begin{subequations}\label{Define Lt n kappa and Nt n kappa}  
\begin{align}
&L_{t}^{n, \kappa} \triangleq 1 + \lVert X^{n} \rVert_{C_{t} \mathscr{C}^{-\frac{1}{4} - \kappa}} + \lVert Y^{n} \rVert_{C_{t} \mathscr{C}^{1-\kappa}}, \label{Define Lt n kappa and Nt n kappa 1}   \\
& N_{t}^{n, \kappa} \triangleq L_{t}^{n, \kappa} + \sup_{i\in\mathbb{N}} \left\lVert ( \nabla \mathcal{L}_{\lambda^{i}} X^{n}) \circlesign{\circ} P^{\lambda^{i}, n} - r_{\lambda^{i}}^{n}(t) \Id \right\rVert_{C_{t}\mathscr{C}^{-\kappa}}, \hspace{5mm} \bar{N}_{t}^{\kappa} (\omega) \triangleq \sup_{n\in\mathbb{N}} N_{t}^{n,\kappa} (\omega), \label{Define Lt n kappa and Nt n kappa 2}  
\end{align}
\end{subequations} 
(cf. \eqref{Define Lt kappa and Nt kappa}) with $\{\lambda^{i}\}_{i \in \mathbb{N}}$ from Definition \ref{Definition 4.2}, 
\begin{subequations}\label{Define P lambda n and r lambda n}
\begin{align}
& P^{\lambda, n}(t,x) \triangleq \left(1 + \frac{\nu \Lambda^{\frac{5}{2}}}{2}  \right)^{-1} \nabla_{\text{sym}} \mathcal{L}_{\lambda} X^{n}(t,x), \hspace{25mm} r_{\lambda}^{n} \triangleq r_{\lambda}^{1,n} + r_{\lambda}^{2,n}, \label{Define P lambda n} \\
& r_{\lambda}^{1,n}(t) \triangleq \sum_{k\in\mathbb{Z}^{3} \setminus \{0\}} \frac{1}{4} \mathfrak{l} \left( \frac{\lvert k \rvert}{\lambda} \right) \mathfrak{l} \left( \frac{\lvert k \rvert}{n} \right)  \left( \frac{1- e^{-2 \nu \lvert k \rvert^{\frac{5}{2}} t}}{2 \nu \lvert k \rvert^{\frac{1}{2}}} \right) \left(1+ \frac{ \nu \lvert k \rvert^{\frac{5}{2}}}{2} \right)^{-1},  r_{\lambda}^{2,n}(t) \triangleq  \sum_{m=1}^{3} r_{\lambda}^{2,n,m} \delta_{m,m}  \label{Define r lambda1 n} \\
&\text{where } \hspace{1mm} r_{\lambda}^{2,n,m} \triangleq  \sum_{k\in\mathbb{Z}^{3} \setminus \{0 \}} \frac{1}{4} \mathfrak{l} \left( \frac{\lvert k \rvert}{\lambda} \right)  \left( \frac{\lvert k \rvert}{n} \right)  \left( \frac{1- e^{-2 \nu \lvert k \rvert^{\frac{5}{2}} t}}{2\nu \lvert k \rvert^{\frac{1}{2}}} \right) \left(1+ \frac{ \nu \lvert k \rvert^{\frac{5}{2}}}{2} \right)^{-1}\frac{ k_{m}^{2}}{\lvert k \rvert^{2}}, \label{Define r lambda2 n}
\end{align}
\end{subequations} 
and $(\delta_{m,m} \Id)$ is defined in \eqref{Define delta m,m} (cf. \eqref{Define P lambda and r lambda}). Identically to \eqref{logarithmic growth} we can show that $r_{\lambda}^{1,n} (t) \lesssim \ln ( \lambda \wedge n)$ and $r_{\lambda}^{2,n} (t) \lesssim \ln(\lambda \wedge n)$. Moreover, we see that 
\begin{equation}
\bar{N}_{t}^{\kappa} (\omega) < \infty \hspace{3mm} \forall \hspace{1mm} \omega \in \Omega \setminus \mathcal{N}'' \hspace{1mm} \text{ and } \hspace{1mm} \lim_{n\to\infty} N_{t}^{n,\kappa}(\omega) = N_{t}^{\kappa}(\omega), 
\end{equation} 
for $\mathcal{N}''$ from Proposition \ref{Proposition 4.1} and $N_{t}^{\kappa}$ from \eqref{Define Lt kappa and Nt kappa}. For all $i \in \mathbb{N}_{0}$ we define 
\begin{equation}\label{Define Ti 0 and T i+1 n}
T_{0}^{n} \triangleq 0 \hspace{1mm} \text{ and } \hspace{1mm} T_{i+1}^{n} (\omega, u^{\text{in}}) \triangleq \inf\{t \geq T_{i}^{n}: \hspace{1mm} \lVert w^{n}(t) \rVert_{L^{2}} \geq i +1 \},  
\end{equation} 
(cf. \eqref{Define T0 and Ti}) and 
\begin{equation}\label{Define lambda t n}
\lambda_{0}^{n} \triangleq \lambda_{0} \hspace{1mm} \text{ and } \hspace{1mm} \lambda_{t}^{n} \triangleq \left(1+ \lVert w^{n} (T_{i}^{n}) \rVert_{L^{2}} \right)^{\frac{40}{13}} \hspace{1mm} \text{ if } t > 0 \text{ and } t \in [T_{i}^{n}, T_{i+1}^{n})
\end{equation} 
where $\lambda_{0}$ was defined in \eqref{Define lambda t}. Similarly to \eqref{Equation of Q} and \eqref{Define QH, wH, and wL}, we define $Q^{n}$ as a solution to
\begin{equation}\label{Equation of Qn}
( \partial_{t} + \nu \Lambda^{\frac{5}{2}})Q^{n} = 2 X^{n}, \hspace{3mm} Q^{n}(0) = 0 
\end{equation} 
and then 
\begin{equation}\label{Define Qn H, wn H, and wn L}
Q^{n, \mathcal{H}}(t) \triangleq \mathcal{H}_{\lambda_{t}^{n}} Q^{n}(t), \hspace{3mm} w^{n,\mathcal{H}}(t) \triangleq - \mathbb{P}_{L} \divergence ( w^{n} \circlesign{\prec}_{s} Q^{n,\mathcal{H}}), \hspace{3mm} w^{n,\mathcal{L}} \triangleq w^{n} - w^{n,\mathcal{H}}. 
\end{equation}
We can repeat the proof up to Proposition \ref{Proposition 4.9} to obtain, for $\kappa_{0} > 0$ sufficiently small, a constant $C_{1} > 0$ and appropriate increasing continuous functions $C_{2}$ and $C_{3}$ from $\mathbb{R}_{\geq 0}$ to $\mathbb{R}_{\geq 0}$ such that 
\begin{align}
&\sup_{t\in [T_{i}^{n}, T_{i+1}^{n})} \lVert w^{n,\mathcal{L}}(t) \rVert_{L^{2}}^{2} + \frac{\nu}{2} \int_{T_{i}^{n}}^{T_{i+1}^{n}} \lVert w^{n,\mathcal{L}} (s) \rVert_{\dot{H}^{\frac{5}{4}}}^{2} ds \nonumber \\
\leq&  e^{(T_{i+1}^{n} - T_{i}^{n}) [ C_{1} \ln ( \lambda_{T_{i}^{n}}) + C_{2} (N_{T_{i+1}^{n}}^{\kappa})]} [ \lVert w^{n,\mathcal{L}} (T_{i}^{n} ) \rVert_{L^{2}}^{2} + C_{3} (N_{T_{i+1}^{n}}^{n,\kappa})]
\end{align} 
for all $\kappa \in (0, \kappa_{0}]$ and $i \in \mathbb{N}$ such that $i \geq i_{0} (u^{\text{in}})$.  Similarly to Proposition \ref{Proposition 4.10} and the proof of Theorem \ref{Theorem 2.2}, this leads to, uniformly over all $n \in \mathbb{N}$ and $i \geq i_{0} (u^{\text{in}})$, 
\begin{equation}\label{est 288}
T_{i+1}^{n} - T_{i}^{n} \geq \frac{1}{ \tilde{C} (\bar{N}_{T_{i+1}^{n}}^{\kappa}) ( \ln(1+i) + 1)} \ln \left( \frac{i^{2} + 2 i - C ( \bar{N}_{T_{i+1}^{n}}^{\kappa})}{i^{2} + \tilde{C} (\bar{N}_{T_{i+1}^{n}}^{\kappa})} \right) 
\end{equation} 
for some constants $C( \bar{N}_{T_{i+1}^{n}}^{\kappa})$ and $\tilde{C}(\bar{N}_{T_{i+1}^{n}}^{\kappa})$ (see \eqref{est 110}) and thus for every $T> 0$, $i \in \mathbb{N}$ such that $i > i_{0} (u^{\text{in}})$, there exists $\mathfrak{t} (i, \bar{N}_{T}^{\kappa}) \in (0, T]$ such that 
\begin{equation}
\inf_{n\in\mathbb{N}_{0}} T_{i}^{n} \geq \mathfrak{t} (i, \bar{N}_{T}^{\kappa}), \hspace{3mm} \mathfrak{t} (i, \bar{N}_{T}^{\kappa}) = T \hspace{2mm} \forall \hspace{1mm} i \text{ sufficiently large}. 
\end{equation}  
Therefore, for all $T > 0$ and $\kappa > 0$ sufficiently small, there exists a constant $C(T, \bar{N}_{T}^{\kappa}) > 0$ such that 
\begin{equation}\label{est 142}
\sup_{n\in\mathbb{N}} \left[ \lVert w^{n,\mathcal{L}}  \rVert_{C_{T} L^{2}}^{2} + \nu \int_{0}^{T} \lVert w^{n, \mathcal{L}} (t) \rVert_{\dot{H}^{\frac{5}{4}}}^{2} dt \right] \leq C(T, \bar{N}_{T}^{\kappa}). 
\end{equation} 
Additionally, we can find $\bar{\lambda}_{T} > 0$ such that 
\begin{equation}\label{est 143}
\lambda_{t}^{n} \leq \bar{\lambda}_{T} \hspace{3mm} \forall \hspace{1mm} t \in [0, T], n \in \mathbb{N}. 
\end{equation} 
Hence, extending the previous definition \eqref{Define Qn H, wn H, and wn L} to 
\begin{equation}\label{est 148}
w^{n, \mathcal{H},\lambda} \triangleq - \mathbb{P}_{L} \divergence (w^{n} \circlesign{\prec}_{s} \mathcal{H}_{\lambda} Q^{n} ), \hspace{3mm} w^{n, \mathcal{L},\lambda} \triangleq w^{n} - w^{n, \mathcal{H},\lambda} \hspace{3mm} \forall \hspace{1mm} \lambda \geq \bar{\lambda}_{T}, 
\end{equation} 
we see that for all $\lambda \geq \bar{\lambda}_{T}$ so that $\lambda \geq \lambda_{t}^{n}$ for all $t \in [0,T]$ and all $n \in \mathbb{N}$ due to \eqref{est 143}, we have 
\begin{equation}\label{est 145}
\sup_{n\in\mathbb{N}} \left[ \lVert w^{n, \mathcal{L},\lambda} \rVert_{C_{T}L^{2}}^{2} + \nu \int_{0}^{T} \lVert w^{n,\mathcal{L},\lambda} (t) \rVert_{\dot{H}^{\frac{5}{4}}}^{2} dt \right] \leq C( \lambda, T, \bar{N}_{T}^{\kappa}). 
\end{equation} 
Next, for all $\kappa > 0$ sufficiently small, due to \eqref{Estimate on wH}, \eqref{est 142}, and \eqref{Define Lt n kappa and Nt n kappa 2}, 
\begin{equation}\label{est 144}
\sup_{n\in\mathbb{N}} \left[ \lVert w^{n}  \rVert_{C_{T} L^{2}}^{2} + \nu  \int_{0}^{T} \lVert w^{n} (t) \rVert_{\dot{H}^{\alpha}}^{2} dt \right] \leq C( T, \bar{N}_{T}^{\kappa}) \hspace{3mm} \forall \hspace{1mm} \alpha \leq \frac{37}{40} - 2 \kappa.
\end{equation} 
 As a consequence of \eqref{Define Qn H, wn H, and wn L}, \eqref{Sobolev products c}, \eqref{est 144}, and \eqref{est 145}, 
\begin{equation}\label{est 146}
\sup_{n\in\mathbb{N}} \left[ \lVert w^{n} \rVert_{L^{2} (0,T; \dot{H}^{\frac{5}{4} - \frac{3\kappa}{2}})}^{2}  \right] = \sup_{n\in\mathbb{N}} \left[ \lVert w^{n, \mathcal{H}} +  w^{n, \mathcal{L}} \rVert_{L^{2} (0,T; \dot{H}^{\frac{5}{4} - \frac{3\kappa}{2}})}^{2}  \right] \leq C(T, \bar{N}_{T}^{\kappa}). 
\end{equation} 
It follows that for all $p\in [2, \frac{12}{1+ 4 \kappa}]$, for $N_{1}$ from \eqref{est 152}, due to H$\ddot{\mathrm{o}}$lder's inequality and Young's inequality for convolution, and Bernstein's inequality, 
\begin{align}
&  \lVert w^{n, \mathcal{H},\lambda} \rVert_{L^{2} (0, T; B_{p,2}^{\frac{5}{4} - 2 \kappa})}^{2} \lesssim  \int_{0}^{T} \sum_{m \geq -1} \left\lvert 2^{m( \frac{9}{4} - 2 \kappa)} \lVert \sum_{j: \lvert j-m \rvert \leq N_{1}} (S_{j-1} w^{n})(t) \Delta_{j} \mathcal{H}_{\lambda} Q^{n}(t) \rVert_{L^{p}} \right\rvert^{2} dt \nonumber \\
\lesssim& \int_{0}^{T} \sum_{m\geq -1} \left\lvert 2^{m(\frac{9}{4} - \frac{3\kappa}{2})} \lVert \Delta_{m} \mathcal{H}_{\lambda} Q^{n} (t)\rVert_{L^{\infty}} \sum_{l: l \leq m  - 2} 2^{(l-m) \frac{\kappa}{2}} 2^{-l (\frac{\kappa}{2})} \lVert \Delta_{l} w^{n}(t) \rVert_{L^{p}} \right\rvert^{2} dt  \nonumber  \\
\lesssim& \int_{0}^{T} \lVert Q^{n} \rVert_{\mathscr{C}^{\frac{9}{4} - \frac{3\kappa}{2}}}^{2}  \left\lVert 2^{-m (\frac{\kappa}{2})}  2^{m3(\frac{1}{2} - \frac{1}{p})}\lVert  \Delta_{m} w^{n}(t) \rVert_{L^{2}} \right\rVert_{l^{2}}^{2} dt  \overset{\eqref{Equation of Qn} \eqref{Define Lt n kappa and Nt n kappa 2} \eqref{est 146}}{\leq} C(T, \bar{N}_{T}^{\kappa}),  \label{est 151}
\end{align}
where the last inequality used the hypothesis that $p \leq \frac{12}{1+4 \kappa}$. 

Next, we estimate $\partial_{t}w^{n}$ in negative Sobolev space. To do so, we estimate separately for all $t \in [0, T]$, $\kappa \in (0, \frac{1}{5})$, using the Gagliardo-Nirenberg inequality of $\lVert f \rVert_{\dot{H}^{\frac{1}{4} + 3 \kappa}} \lesssim \lVert f \rVert_{L^{2}}^{\frac{4-20\kappa}{5 - 8 \kappa}} \lVert f \rVert_{\dot{H}^{\frac{5}{4} - 2 \kappa}}^{\frac{1+ 12 \kappa}{5-8\kappa}}$, 
\begin{subequations}\label{est 147}
\begin{align}
& \lVert (w^{n})^{\otimes 2} (t) \rVert_{H^{-\frac{1}{4} - 2 \kappa}} \lesssim \lVert w^{n} (t) \rVert_{L^{\frac{24}{7+ 8 \kappa}}}^{2} \lesssim \lVert w^{n} (t) \rVert_{L^{2}} \lVert w^{n} (t) \rVert_{\dot{H}^{\frac{5}{4} - 2 \kappa}}, \\
& \lVert D^{n} \otimes w^{n} (t) \rVert_{H^{-\frac{1}{4} - 2 \kappa}} \overset{\eqref{Bony's decomposition}  \eqref{Sobolev products} \eqref{Define Lt n kappa and Nt n kappa 2}}{\lesssim} \bar{N}_{T}^{\kappa} \lVert w^{n} (t)  \rVert_{H^{\frac{1}{4} + 3 \kappa}}   \lesssim \bar{N}_{T}^{\kappa} \lVert w^{n} (t) \rVert_{L^{2}}^{\frac{4 - 20 \kappa}{5 - 8 \kappa}} \lVert w^{n} (t) \rVert_{\dot{H}^{\frac{5}{4} - 2 \kappa}}^{\frac{1 + 12 \kappa}{5 - 8 \kappa}}, \\
& \lVert (Y_{n})^{\otimes 2}(t)  \rVert_{H^{-\frac{1}{4} - 2\kappa}} \lesssim \lVert (Y^{n})^{\otimes 2} (t) \rVert_{L^{2}}  \overset{\eqref{Define Lt n kappa and Nt n kappa 2}}{\lesssim} (\bar{N}_{T}^{\kappa})^{2}, 
\end{align}
\end{subequations} 
which lead to, along with \eqref{est 144} and \eqref{est 146},  
\begin{equation}
\sup_{n\in\mathbb{N}} \lVert \partial_{t} w^{n} \rVert_{L^{2} (0, T; H^{-\frac{5}{4} - 2 \kappa})}^{2} \lesssim  ( \bar{N}_{T}^{\kappa})^{4} \sup_{n\in\mathbb{N}} (1+ \lVert w^{n} \rVert_{C_{T} L^{2}}^{2}) (1+ \lVert w^{n} \rVert_{L^{2}(0, T; \dot{H}^{\frac{5}{4} - 2 \kappa})}^{2}) \leq C(T, \bar{N}_{T}^{\kappa}). 
\end{equation}
This, along with \eqref{est 144} and \eqref{est 146}, allows us to rely on Lions-Aubins compactness lemma (e.g. \cite[Lemma 4 (i)]{S90}) and Banach-Alaoglu theorem to deduce the existence of a subsequence $\{w^{n_{k}}\}_{k} \subset \{w^{n}\}_{n}$, which we relabel as $\{w^{n}\}_{n}$, and its limit $w \in L^{\infty} (0, T; L^{2} (\mathbb{T}^{3}))$ $\cap L^{2} (0, T; H^{\frac{5}{4} - \frac{3\kappa}{2}} (\mathbb{T}^{3}))$ such that 
\begin{subequations}\label{convergence} 
\begin{align}
& w^{n} \overset{\ast}{\rightharpoonup} w \hspace{3mm} \text{weak}^{\ast} \text{ in } L^{\infty} (0, T; L^{2} (\mathbb{T}^{3})), \label{convergence 1} \\
& w^{n} \rightharpoonup w \hspace{3mm} \text{weakly in } L^{2} (0, T; H^{\frac{5}{4}- \frac{3\kappa}{2}} (\mathbb{T}^{3})),  \label{convergence 2} \\
& w^{n} \to w \hspace{3mm} \text{strongly in } L^{2} (0, T; H^{\beta} (\mathbb{T}^{3})) \hspace{3mm} \forall \hspace{1mm} \beta \in \left(-\frac{5}{4} -2\kappa, \frac{5}{4}- \frac{3\kappa}{2} \right).  \label{convergence 3} 
\end{align}
\end{subequations} 
With these convergence results in hand, it readily follows that for all $i, j \in \{1,2,3\}$,  
\begin{align*}
& \lVert w_{i}^{n}w_{j}^{n}  - w_{i} w_{j} \rVert_{L^{2} (0, T; L^{1}(\mathbb{T}^{3}))},  \\
& \lVert D^{n} \otimes_{s} w^{n} - D \otimes_{s} w \rVert_{L^{2} (0, T; H^{-\frac{1}{4} - 3 \kappa} ( \mathbb{T}^{3}))}, \\
&\lVert Y_{i}^{n} Y_{j}^{n} - Y_{i}Y_{j} \rVert_{C([0, T]; \mathscr{C}^{1- 5 \kappa} (\mathbb{T}^{3}))}
\end{align*}
all vanish as $n\to\infty$ and consequently, $w$ solves \eqref{Equation of w} distributionally. We can also compute starting from \eqref{est 148} and \eqref{est 149} that 
\begin{equation*}
\int_{0}^{T} \lVert w^{n, \mathcal{H}, \lambda} - w^{\mathcal{H},\lambda} \rVert_{\dot{H}^{\frac{5}{4} - 4 \kappa}}^{2} dt \lesssim \int_{0}^{T} \lVert w^{n} - w \rVert_{H^{-\kappa}}^{2} \lVert \mathcal{H}_{\lambda} Q^{n} \rVert_{\mathscr{C}^{\frac{9}{4} - 3 \kappa}}^{2} + \lVert w \rVert_{H^{-\kappa}}^{2} \lVert \mathcal{H}_{\lambda} (Q^{n} - Q) \rVert_{\mathscr{C}^{\frac{9}{4} - 3 \kappa}}^{2} dt  \to 0 
\end{equation*}
as $n\to\infty$ due to \eqref{convergence 3} and \eqref{Define Lt n kappa and Nt n kappa 2}, which verifies \eqref{est 149}. 

Finally, concerning the regularity of $w^{\mathcal{L},\lambda}$ and $w^{\mathcal{H},\lambda}$, we see from \eqref{est 151} that $w^{\mathcal{H},\lambda} \in L^{2} (0, T; B_{p,2}^{\frac{5}{4} - 2 \kappa})$ for all $p \in \left[1, \frac{12}{1+ 4 \kappa} \right]$ as claimed in \eqref{est 153b}. The other claim that $w^{\mathcal{H},\lambda} \in L^{\infty} (0, T; L_{\sigma}^{2})$ in \eqref{est 153b} follows from \eqref{est 145} and \eqref{est 144} since $w^{\mathcal{H},\lambda} = w - w^{\mathcal{L},\lambda}$ from \eqref{est 149}. At last, the regularity of \eqref{est 153a} follows from \eqref{est 145}. This completes the proof of Proposition \ref{Proposition 5.1}.
\end{proof}

\subsection{Uniqueness}
\hfill\\ We come to the proof of uniqueness of HL weak solution. Once again, the commutator term will present us a challenge in \eqref{Define III4}. The techniques we applied in \eqref{Define C4} and \eqref{Define tilde C4} can allow us to overcome this obstacle. The new difficulty we described in Section \ref{Subsection 1.1} and Remark \ref{Remark 2.3} will come about  from \eqref{Define III5}. 

\begin{proposition}\label{Proposition 5.2}
Let $\mathcal{N}''$ be the null set from Proposition \ref{Proposition 4.1}. Then, for any $\omega \in \Omega \setminus \mathcal{N}''$, any $u^{\text{in}} \in L_{\sigma}^{2}$, and $\kappa \in (0, \frac{3}{20})$ sufficiently small, there exists at most one HL weak solution starting from $u^{\text{in}}$. 
\end{proposition}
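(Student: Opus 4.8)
The plan is to take two HL weak solutions $v, \bar v$ starting from the same $u^{\text{in}}$, form $w = v - Y$ and $\bar w = \bar v - Y$ (note $Y$ is the \emph{same} because it solves the linear equation \eqref{Equation of Y}), and work on a fixed time horizon $[0,T]$ with a common $\lambda \geq \bar\lambda_T$ large enough that the decompositions \eqref{est 149} hold for both solutions simultaneously. Write $z \triangleq w - \bar w = z^{\mathcal{L},\lambda} + z^{\mathcal{H},\lambda}$, where $z^{\mathcal{H},\lambda} = -\mathbb{P}_L \divergence(z \circlesign{\prec}_s \mathcal{H}_\lambda Q)$ and $z^{\mathcal{L},\lambda} = z - z^{\mathcal{H},\lambda}$; note $z(0) = 0$. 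The equation for $z$ is obtained by subtracting the two copies of \eqref{Equation of w}: it is linear in $z$ with coefficients depending on $w, \bar w, D, Q$. Following the derivation leading to \eqref{Equation of wL}, I would derive the equation for $z^{\mathcal{L},\lambda}$ incorporating the paracontrolled correction, so that the singular product $2X \circlesign{\succ}_s z^{\mathcal{L}}$ is absorbed into $\mathcal{A}_t^\lambda$ and the Anderson Hamiltonian bound \eqref{est 62} (via Proposition \ref{Proposition on Anderson Hamiltonian}) applies. Then perform the $L^2(\mathbb{T}^3)$-estimate on $z^{\mathcal{L},\lambda}$, producing
\[
\tfrac{1}{2}\partial_t \lVert z^{\mathcal{L},\lambda}\rVert_{L^2}^2 + \nu \lVert z^{\mathcal{L},\lambda}\rVert_{\dot H^{5/4}}^2 = 2\langle z^{\mathcal{L},\lambda}, \mathcal{A}_t^\lambda z^{\mathcal{L},\lambda}\rangle + r_\lambda(t)\lVert z^{\mathcal{L},\lambda}\rVert_{L^2}^2 + \sum_k \RomanIII_k,
\]
where the $\RomanIII_k$ collect: the high–low interaction terms $\langle z^{\mathcal{L},\lambda}, \divergence(z^{\mathcal{H},\lambda}\otimes w^{\mathcal{L},\lambda})\rangle$ and symmetric counterparts, the $Y$-interaction terms, the commutator term $\langle z^{\mathcal{L},\lambda}, \divergence \mathcal{C}^{\circlesign{\prec}_s}(z, Q^{\mathcal{H}})\rangle$ in \eqref{Define III4}, and the genuinely new bilinear term involving $z^{\mathcal{H},\lambda}$ paired against lower-frequency pieces of $w$ and $\bar w$ in \eqref{Define III5}.

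The key steps, in order: (1) Establish the analogue of \eqref{est 59} in the 3D setting: there exist $C_1, C_2 \geq 0$ such that $\lVert z\rVert_{L^{12}(\mathbb{T}^3)} \leq C_1 \lVert z^{\mathcal{L},\lambda}\rVert_{\dot H^{5/4}(\mathbb{T}^3)} + C_2 \lambda^{-\kappa/2}\lVert z\rVert_{L^{12}(\mathbb{T}^3)}$, using \eqref{est 149}, Lemma \ref{Lemma 3.1}, Sobolev embedding $\dot H^{5/4}(\mathbb{T}^3)\hookrightarrow L^{12}(\mathbb{T}^3)$, and Lemma \ref{Lemma 3.5}, so that for $\lambda$ large $\lVert z^{\mathcal{H},\lambda}\rVert_{L^{12}} \lesssim \lVert z^{\mathcal{L},\lambda}\rVert_{\dot H^{5/4}}$; similarly $\lVert z\rVert_{H^s}\lesssim \lVert z^{\mathcal{L},\lambda}\rVert_{H^s}$ for $s$ in an appropriate range (cf. \eqref{est 156}). (2) Bound the Anderson term by \eqref{est 62} with constant $\mathbf{m}(N_t^\kappa)$, and $r_\lambda(t)$ by \eqref{logarithmic growth}; since $\lambda = \bar\lambda_T$ is now fixed, both contribute $C(N_T^\kappa, T)\lVert z^{\mathcal{L},\lambda}\rVert_{L^2}^2$. (3) Estimate the commutator term $\RomanIII_4$ by mimicking the split in \eqref{Split C4}–\eqref{Estimate on C4}, using Lemma \ref{Lemma 3.3} and the Besov interpolation there, placing the $\dot H^{5/4}$-norm (full diffusion) on $z^{\mathcal{L},\lambda}$. (4) For the $Y$-interaction terms use \eqref{Sobolev products} together with $Y \in \mathscr{C}^{1-3\kappa}$ from \eqref{Regularity of Y}. (5) The delicate term: bound $\langle z^{\mathcal{L},\lambda}, \divergence(z^{\mathcal{H},\lambda}\otimes w^{\mathcal{L},\lambda})\rangle$. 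Rather than the naive route in \eqref{est 60}—which gives $\lVert z^{\mathcal{L},\lambda}\rVert_{\dot H^{5/4}}^2 \lVert w^{\mathcal{L},\lambda}\rVert_{L^2}$ and fails to close—I would split $z^{\mathcal{H},\lambda}$ itself: since $z^{\mathcal{H},\lambda} = -\mathbb{P}_L \divergence(z\circlesign{\prec}_s \mathcal{H}_\lambda Q)$ and $z = z^{\mathcal{L},\lambda} + z^{\mathcal{H},\lambda}$, iterate once so that $z^{\mathcal{H},\lambda}$ is expressed through $z^{\mathcal{L},\lambda}$ modulo a term that is small in $\lambda$; then distribute norms so that $z^{\mathcal{L},\lambda}$ carries a power strictly below $2$ of $\dot H^{5/4}$, with the remaining factors absorbed into $C(N_T^\kappa)\lVert w^{\mathcal{L},\lambda}\rVert_{L^2}^2 \lVert w^{\mathcal{L},\lambda}\rVert_{\dot H^{5/4}}^2$, which is $L^1_t$ by \eqref{est 153a}. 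Use $w^{\mathcal{H},\lambda} \in L^2(0,T; B_{p,2}^{5/4 - 2\kappa})$ for $p$ up to $\tfrac{12}{1+4\kappa}$ (as in \eqref{est 153b}) to give $z^{\mathcal{H},\lambda}$ enough integrability; this is exactly why that sharper regularity class was built into Definition \ref{Definition 5.1}.

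The main obstacle, as flagged in Remark \ref{Remark 2.3}, is precisely step (5): in three dimensions the high–low interaction term $\langle z^{\mathcal{L},\lambda}, \divergence(z^{\mathcal{H},\lambda}\otimes w^{\mathcal{L},\lambda})\rangle$ sits at the energy-critical borderline, and the straightforward Hölder/Sobolev estimate produces a term proportional to $\lVert z^{\mathcal{L},\lambda}\rVert_{\dot H^{5/4}}^2$ that cannot be dominated by the diffusion. I expect the fix to require interpolating between $L^2$ and $\dot H^{5/4}$ for $z^{\mathcal{L},\lambda}$ so that its exponent is strictly less than $2$—for instance bounding $\lVert z^{\mathcal{L},\lambda} \otimes w^{\mathcal{L},\lambda}\rVert_{\dot H^{-1/4}}$ by a product in which $z^{\mathcal{H},\lambda}$ appears with a Besov norm of positive regularity (available from \eqref{est 153b}) so that its $L^2$-factor can be pulled out and a remaining $\dot H^{5/4}$-power of $z^{\mathcal{L},\lambda}$ comes out below $2$—and then splitting $z^{\mathcal{H},\lambda}$ once more into a $z^{\mathcal{L},\lambda}$-controlled piece plus a $\lambda^{-\kappa/2}$-small piece via steps (1) and the identity \eqref{est 167}. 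Once all $\RomanIII_k$ are controlled, one arrives at a differential inequality
\[
\partial_t \lVert z^{\mathcal{L},\lambda}\rVert_{L^2}^2 \leq g(t)\, \lVert z^{\mathcal{L},\lambda}\rVert_{L^2}^2
\]
with $g \in L^1(0,T)$ depending on $N_T^\kappa$, $T$, $\lambda$, and the two solutions' energy norms; since $z^{\mathcal{L},\lambda}(0) = 0$, Grönwall forces $z^{\mathcal{L},\lambda}\equiv 0$ on $[0,T]$, hence $z \equiv 0$ there by \eqref{est 149} and step (1), and letting $T\to\infty$ gives $v = \bar v$ on $[0,\infty)$. This completes the argument.
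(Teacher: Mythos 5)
Your overall skeleton (difference $z=w-\bar w$, high--low splitting, $L^{2}$-estimate on $z^{\mathcal{L},\lambda}$, Gr\"onwall from $z(0)=0$) matches the paper, and you correctly locate the crux in the term $\langle z^{\mathcal{L},\lambda},\divergence(z^{\mathcal{H},\lambda}\otimes w^{\mathcal{L},\lambda})\rangle$ and its siblings in \eqref{Define III5,3}. But your step (5) --- the step on which the whole proposition stands, as you yourself acknowledge --- is not actually an argument. The mechanism you sketch is partly circular: you propose to bound the product ``by a product in which $z^{\mathcal{H},\lambda}$ appears with a Besov norm of positive regularity (available from \eqref{est 153b})'' and to absorb the remaining factors into the Gr\"onwall coefficient. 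The positive-regularity norms in \eqref{est 153b} belong to the \emph{known} solutions $w^{\mathcal{H},\lambda},\bar w^{\mathcal{H},\lambda}$; a positive-regularity norm of the \emph{unknown} difference $z^{\mathcal{H},\lambda}$ is not controlled by $\lVert z^{\mathcal{L},\lambda}\rVert_{L^{2}}$ or $\lVert z^{\mathcal{L},\lambda}\rVert_{\dot H^{5/4}}$ and cannot be placed in the coefficient $g(t)$ of a closed differential inequality. The remaining suggestion (``iterate once so that $z^{\mathcal{H},\lambda}$ is expressed through $z^{\mathcal{L},\lambda}$ modulo a term small in $\lambda$, then distribute norms so the $\dot H^{5/4}$-power is below $2$'') is a restatement of the goal, not a proof. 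The paper's resolution is concrete and quite different in its pivot: H\"older with exponents $(\tfrac{12}{5},4,3)$ as in \eqref{est 172}, the interpolation $\lVert f\rVert_{L^{4}}\lesssim\lVert f\rVert_{H^{1/4}}^{1/2}\lVert f\rVert_{B_{\infty,2}^{-1/4}}^{1/2}$ of \eqref{est 171} applied to $z^{\mathcal{H},\lambda}$ (the $-\tfrac14$ is chosen so that $-\tfrac14+\tfrac32=\tfrac54$, and \eqref{est 173}--\eqref{est 175} give $\lVert z^{\mathcal{H},\lambda}\rVert_{B_{\infty,2}^{-1/4}}\lesssim\lVert z^{\mathcal{L},\lambda}\rVert_{\dot H^{5/4}}$), which combined with $\lVert z^{\mathcal{H},\lambda}\rVert_{\dot H^{1/4}}\lesssim\lVert z^{\mathcal{L},\lambda}\rVert_{\dot H^{1/4}}$ and Gagliardo--Nirenberg yields the exponent $\tfrac85<2$ on $\lVert z^{\mathcal{L},\lambda}\rVert_{\dot H^{5/4}}$ in \eqref{est 177}; and then $\lVert f\rVert_{L^{3}}\lesssim\lVert f\rVert_{L^{2}}^{3/5}\lVert f\rVert_{B_{5/2,2}^{19/20}}^{2/5}$ in \eqref{Reason 1}--\eqref{Reason 4} applied to $w^{\mathcal{H},\lambda},\bar w^{\mathcal{H},\lambda}$, which is where the hypothesis $\kappa<\tfrac{3}{20}$ and the choice of $B_{5/2,2}^{19/20}$ in \eqref{Define lambda norm and MT} enter --- a condition your proposal never uses or explains.

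Two smaller points. First, invoking the Anderson Hamiltonian and the renormalization $r_{\lambda}(t)$ for $\RomanIII_{1}$ is unnecessary here: in the uniqueness proof $\lambda$ is \emph{fixed} (depending on $T$), so $\mathcal{L}_{\lambda}X\in L^{\infty}$ and the paper simply bounds $\RomanIII_{1}$ by $C(\lambda,N_{T}^{\kappa})\lVert z^{\mathcal{L},\lambda}\rVert_{L^{2}}\lVert z^{\mathcal{L},\lambda}\rVert_{\dot H^{5/4}}$ as in \eqref{est 155}; no quadratic-form bound for $\mathcal{A}_{t}^{\lambda}$ acting on the (non-paracontrolled) difference $z^{\mathcal{L},\lambda}$ is needed. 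Second, your treatment of the commutator $\RomanIII_{4}$ is broadly in the right spirit, but note the paper first uses the identity \eqref{est 167} to trade $\mathcal{C}^{\circlesign{\prec}_{s}}(z,\mathcal{H}_{\lambda}Q)$ for $(\partial_{t}z+\nu\Lambda^{5/2}z)\circlesign{\prec}_{s}\mathcal{H}_{\lambda}Q$ plus the spatial commutator, and then substitutes the equation \eqref{est 157} for $\partial_{t}z+\nu\Lambda^{5/2}z$; this substitution is what keeps all resulting norms of $z$ at levels controlled by \eqref{est 156}.
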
  
The reason for $\kappa < \frac{3}{20}$ will be made clear in \eqref{Reason 3}.

\begin{proof}[Proof of Proposition \ref{Proposition 5.2}]
Suppose that $v \triangleq w+Y$ and $\bar{v} \triangleq \bar{w} + Y$ are two HL weak solutions, both starting from $u^{\text{in}}$. We define 
\begin{equation}\label{Define z, z l lambda, z h lambda}
z \triangleq w - \bar{w}, \hspace{3mm} z^{\mathcal{L},\lambda} \triangleq w^{\mathcal{L},\lambda} - \bar{w}^{\mathcal{L},\lambda}, \hspace{3mm} z^{\mathcal{H},\lambda} \triangleq z - z^{\mathcal{L},\lambda}. 
\end{equation} 
It follows that 
\begin{align}
& \partial_{t} z^{\mathcal{L},\lambda} + \nu \Lambda^{\frac{5}{2}} z^{\mathcal{L},\lambda}  =  - \mathbb{P}_{L} \divergence \Bigg( 2 \mathcal{L}_{\lambda} X \otimes_{s} z^{\mathcal{L},\lambda} + 2 \mathcal{H}_{\lambda} X \otimes_{s} z^{\mathcal{L},\lambda} - 2 \mathcal{H}_{\lambda} X \circlesign{\succ}_{s} z^{\mathcal{L},\lambda} + 2 X \otimes_{s} z^{\mathcal{H},\lambda}  \nonumber \\
& \hspace{33mm} - 2 \mathcal{H}_{\lambda} X \circlesign{\succ}_{s} z^{\mathcal{H},\lambda} - \mathcal{C}^{\circlesign{\prec}_{s}} (z, \mathcal{H}_{\lambda} Q) + w^{\otimes 2} - \bar{w}^{\otimes 2} + 2 Y \otimes_{s} z \Bigg)\label{est 154}
\end{align}  
so that taking $L^{2}(\mathbb{T}^{3})$-inner products with $z^{\mathcal{L},\lambda}$ gives us 
\begin{equation}\label{est 178}
\frac{1}{2} \partial_{t} \lVert z^{\mathcal{L},\lambda} \rVert_{L^{2}}^{2} + \nu \lVert z^{\mathcal{L},\lambda} \rVert_{\dot{H}^{\frac{5}{4}}}^{2} = \sum_{k=1}^{5} \RomanIII_{k} 
\end{equation}  
where 
\begin{subequations}
\begin{align}
&\RomanIII_{1} \triangleq -2 \langle z^{\mathcal{L},\lambda}, \mathbb{P}_{L} \divergence ( \mathcal{L}_{\lambda} X \otimes_{s} z^{\mathcal{L},\lambda}) \rangle (t),  \label{Define III1}\\
& \RomanIII_{2} \triangleq -2 \langle z^{\mathcal{L},\lambda}, \mathbb{P}_{L} \divergence ( \mathcal{H}_{\lambda} X \otimes_{s} z^{\mathcal{L},\lambda} - \mathcal{H}_{\lambda} X \circlesign{\succ}_{s} z^{\mathcal{L},\lambda} ) \rangle (t), \label{Define III2}\\
&\RomanIII_{3} \triangleq -2 \langle z^{\mathcal{L},\lambda}, \mathbb{P}_{L} \divergence ( X \otimes_{s} z^{\mathcal{H},\lambda} - \mathcal{H}_{\lambda} X \circlesign{\succ}_{s} z^{\mathcal{H},\lambda} ) \rangle (t), \label{Define III3}\\
&\RomanIII_{4} \triangleq - \langle z^{\mathcal{L},\lambda}, \mathbb{P}_{L} \divergence \left( \mathcal{C}^{\circlesign{\prec}_{s}} (z, \mathcal{H}_{\lambda} Q) \right) \rangle(t), \label{Define III4}\\
&\RomanIII_{5} \triangleq - \langle z^{\mathcal{L},\lambda}, \mathbb{P}_{L} \divergence  \left( w^{\otimes 2} - \bar{w}^{\otimes 2} + 2Y \otimes_{s} z  \right) \rangle(t).  \label{Define III5}
\end{align}
\end{subequations} 
Considering \eqref{est 153} we define 
\begin{equation}\label{Define lambda norm and MT}
\lVert \lvert f \rvert \rVert_{\lambda} \triangleq \lVert f^{\mathcal{L}, \lambda} \rVert_{\dot{H}^{\frac{5}{4}}} + \lVert f^{\mathcal{H},\lambda} \rVert_{B_{\frac{5}{2}, 2}^{\frac{19}{20}}} \hspace{1mm} \text{ and } \hspace{1mm} M_{T} \triangleq \lVert (w^{\mathcal{L},\lambda}, w^{\mathcal{H},\lambda}, \bar{w}^{\mathcal{L},\lambda}, \bar{w}^{\mathcal{H},\lambda}) \rVert_{L_{T}^{\infty} L_{x}^{2}}.
\end{equation} 
\begin{remark}\label{Remark 5.2}
Considering \eqref{est 153}, we see that $\lVert \lvert w \rvert \rVert_{\lambda}, \lVert \lvert \bar{w} \rvert \rVert_{\lambda} \in L_{T}^{2}$ and $M_{T} < \infty$. In comparison, in the case of the 2D Navier-Stokes equations in \cite{HR24}, \cite[Definition 7.1]{HR24} has ``$w^{\mathcal{L},\lambda} \in L^{2}([0,T]; H^{1})$'' and ``$w^{\mathcal{H},\lambda} \in L^{2}([0, T]; B_{4,\infty}^{1-\delta})$'' for any $\delta \in (0,1)$ and \cite[Equation (7.18)]{HR24} has ``$\lVert \lvert \phi \rvert \rVert_{\lambda} = \lVert \phi^{\mathcal{L},\lambda} \rVert_{H^{1}} + \lVert \phi^{\mathcal{H},\lambda} \rVert_{B_{4,\infty}^{1-3\kappa}}$.'' Considering \eqref{est 153}, we can replace $\lVert f^{\mathcal{H},\lambda} \rVert_{B_{\frac{5}{2}, 2}^{\frac{19}{20}}}$ in \eqref{Define lambda norm and MT} with e.g. $\lVert f^{\mathcal{H},\lambda} \rVert_{B_{11,2}^{\frac{5}{4} - 2 \kappa}}$, and yet, we strategically chose otherwise; the reason will become clear in \eqref{Reason 1}-\eqref{Reason 4}. 
\end{remark}  

We start with the following estimate that will be useful throughout the proof of Proposition \ref{Proposition 5.2}: for all $s \in [0, \frac{5}{4} - \frac{3\kappa}{2})$, there exists a universal constant $C \geq 0$ such that 
\begin{align}
\lVert z \rVert_{H^{s}}& \overset{\eqref{Define z, z l lambda, z h lambda} \eqref{est 149}}{\leq}  \lVert z^{\mathcal{L},\lambda} \rVert_{H^{s}} + C \lVert z \circlesign{\prec}_{s} \mathcal{H}_{\lambda} Q \rVert_{H^{s+1}} \nonumber \\
&\overset{\eqref{Sobolev products c}}{\leq}  \lVert z^{\mathcal{L},\lambda} \rVert_{H^{s}} + C \lVert z \rVert_{H^{s}} \lambda^{-\frac{\kappa}{4}} \lVert Q \rVert_{\mathscr{C}^{\frac{9}{4} - \frac{5\kappa}{4}}} \overset{\eqref{Regularity of Q} \eqref{Define Lt kappa and Nt kappa}}{\leq} \lVert z^{\mathcal{L},\lambda} \rVert_{H^{s}} + C \lVert z \rVert_{H^{s}} \lambda^{-\frac{\kappa}{4}} N_{T}^{\kappa} t^{\frac{\kappa}{10}}  \label{est 165}
\end{align}
and consequently due to \eqref{Define z, z l lambda, z h lambda}, for $\lambda \gg 1$ sufficiently large, 
\begin{equation}\label{est 156} 
\lVert z \rVert_{H^{s}} \leq 2 \lVert z^{\mathcal{L}, \lambda} \rVert_{H^{s}}  \hspace{1mm} \text{ so that } \hspace{1mm} \lVert z^{\mathcal{H},\lambda} \rVert_{H^{s}}  \leq 3 \lVert z^{\mathcal{L},\lambda} \rVert_{H^{s}}.
\end{equation} 

First, we can use the divergence-free property to estimate from \eqref{Define III1}, 
\begin{align}
\RomanIII_{1} =& \int_{\mathbb{T}^{3}} ( z^{\mathcal{L},\lambda} \cdot\nabla) z^{\mathcal{L},\lambda} \cdot \mathcal{L}_{\lambda} X dx \lesssim \lVert z^{\mathcal{L},\lambda} \rVert_{L^{2}} \lVert \nabla z^{\mathcal{L},\lambda} \rVert_{L^{2}} \lVert \mathcal{L}_{\lambda} X \rVert_{L^{\infty}}  \nonumber \\
\leq& C(\lambda, N_{T}^{\kappa})  \lVert z^{\mathcal{L},\lambda} \rVert_{L^{2}} \lVert z^{\mathcal{L},\lambda} \rVert_{\dot{H}^{\frac{5}{4}}} \leq \frac{\nu}{32} \lVert z^{\mathcal{L},\lambda} \rVert_{\dot{H}^{\frac{5}{4}}}^{2} + C(\lambda, N_{T}^{\kappa}) \lVert z^{\mathcal{L},\lambda} \rVert_{L^{2}}^{2}.\label{est 155}
\end{align}

Second, we rewrite \eqref{Define III2} via \eqref{Bony's decomposition}, and estimate using the Gagliardo-Nirenberg inequalities of $\lVert f \rVert_{\dot{H}^{1-\kappa}} \lesssim \lVert f \rVert_{L^{2}}^{\frac{1+ 4 \kappa}{5} } \lVert f \rVert_{\dot{H}^{\frac{5}{4}}}^{\frac{4- 4 \kappa}{5}}$ and $\lVert f \rVert_{\dot{H}^{\frac{1}{4} + 2 \kappa}} \lesssim \lVert f \rVert_{L^{2}}^{\frac{4-8\kappa}{5}} \lVert f \rVert_{\dot{H}^{\frac{5}{4}}}^{\frac{1+ 8 \kappa}{5}}$,  
\begin{align}
\RomanIII_{2} =& - 2 \langle z^{\mathcal{L},\lambda}, \mathbb{P}_{L} \divergence \left( \mathcal{H}_{\lambda}X \circlesign{\prec}_{s} z^{\mathcal{L},\lambda} + \mathcal{H}_{\lambda} X \circlesign{\circ}_{s} z^{\mathcal{L},\lambda} \right) \rangle (t) \nonumber\\
\overset{\eqref{Sobolev products c} \eqref{Define Lt kappa and Nt kappa}}{\lesssim}& C(N_{T}^{\kappa}) \lVert z^{\mathcal{L},\lambda} \rVert_{L^{2}}^{\frac{5- 4 \kappa}{5}} \lVert z^{\mathcal{L},\lambda} \rVert_{\dot{H}^{\frac{5}{4}}}^{\frac{5+ 4 \kappa}{5}} \leq \frac{\nu}{32} \lVert z^{\mathcal{L},\lambda} \rVert_{\dot{H}^{\frac{5}{4}}}^{2} + C(N_{T}^{\kappa}) \lVert z^{\mathcal{L},\lambda} \rVert_{L^{2}}^{2}. \label{Estimate on III2}
\end{align}

 Third, we estimate of $\RomanIII_{3}$ from \eqref{Define III3} using Gagliardo-Nirenberg inequalities of $\lVert f \rVert_{\dot{H}^{1}} \lesssim \lVert f \rVert_{L^{2}}^{\frac{1}{5} } \lVert f \rVert_{\dot{H}^{\frac{5}{4}}}^{\frac{4}{5}}$ and $\lVert f \rVert_{\dot{H}^{\frac{1}{4} +  2\kappa}} \lesssim \lVert f \rVert_{L^{2}}^{\frac{4-8\kappa}{5}} \lVert f \rVert_{\dot{H}^{\frac{5}{4}}}^{\frac{1+ 8 \kappa}{5}}$, 
\begin{align}
\RomanIII_{3} \overset{\eqref{Bony's decomposition}}{=}&  -2 \langle z^{\mathcal{L},\lambda}, \mathbb{P}_{L} \divergence ( \mathcal{L}_{\lambda} X \otimes_{s} z^{\mathcal{H},\lambda} + \mathcal{H}_{\lambda} X \circlesign{\prec}_{s} z^{\mathcal{H},\lambda} + \mathcal{H}_{\lambda} X \circlesign{\circ}_{s} z^{\mathcal{H},\lambda} ) \rangle (t)  \nonumber \\
\overset{\eqref{Sobolev products d} \eqref{Sobolev products e}}{\lesssim}& \lVert z^{\mathcal{L},\lambda} \rVert_{L^{2}}^{\frac{1}{5}} \lVert z^{\mathcal{L},\lambda} \rVert_{\dot{H}^{\frac{5}{4}}}^{\frac{4}{5}} \left( \lVert \mathcal{L}_{\lambda} X \rVert_{L^{\infty}} \lVert z^{\mathcal{H},\lambda} \rVert_{L^{2}} + \lVert \mathcal{H}_{\lambda} X \rVert_{\mathscr{C}^{-\frac{1}{4} - \kappa}} \lVert z^{\mathcal{H},\lambda} \rVert_{H^{\frac{1}{4} + 2\kappa}} \right) \nonumber\\ 
\overset{\eqref{Define Lt kappa and Nt kappa} \eqref{est 156}}{\lesssim}& C(\lambda, N_{T}^{\kappa})\lVert z^{\mathcal{L}, \lambda} \rVert_{L^{2}}^{\frac{1}{5}} \lVert z^{\mathcal{L},\lambda} \rVert_{\dot{H}^{\frac{5}{4}}}^{\frac{4}{5}}  \lVert z^{\mathcal{L},\lambda} \rVert_{H^{\frac{1}{4} + 2\kappa}} \leq \frac{\nu}{32} \lVert z^{\mathcal{L},\lambda} \rVert_{\dot{H}^{\frac{5}{4}}}^{2} + C(\lambda, N_{T}^{\kappa}) \lVert z^{\mathcal{L},\lambda} \rVert_{L^{2}}^{2}. \label{Estimate on III3}
\end{align}

Next, we rewrite $\RomanIII_{4}$ from \eqref{Define III4} using \eqref{est 167} as
\begin{equation}\label{Split III4}
\RomanIII_{4} = \sum_{k=1}^{3} \RomanIII_{4,k}
\end{equation} 
where 
\begin{subequations}
\begin{align}
& \RomanIII_{4,1} \triangleq - \langle z^{\mathcal{L},\lambda}, \divergence \left( [ \partial_{t} z + \nu \Lambda^{\frac{5}{2}} z ] \circlesign{\prec}_{s} \mathcal{H}_{\lambda} Q\right) \rangle(t), \label{Define III4,1}\\
& \RomanIII_{4,2} \triangleq  \nu \langle z^{\mathcal{L}, \lambda}, \divergence \left( (\Lambda^{\frac{5}{2}} z) \circlesign{\prec}_{s} \mathcal{H}_{\lambda} Q  \right) \rangle (t),  \label{Define III4,2}\\
& \RomanIII_{4,3} \triangleq -\nu \langle z^{\mathcal{L}, \lambda}, \divergence \left( \Lambda^{\frac{5}{2}} (z \circlesign{\prec}_{s} \mathcal{H}_{\lambda} Q) - z \circlesign{\prec}_{s} \Lambda^{\frac{5}{2}} \mathcal{H}_{\lambda} Q \right) \rangle (t).  \label{Define III4,3}
\end{align}
\end{subequations}
We can rewrite using \eqref{Define z, z l lambda, z h lambda}, \eqref{Equation of w}, and \eqref{Define D} 
\begin{equation}\label{est 157}
\partial_{t} z + \nu \Lambda^{\frac{5}{2}} z =  - \mathbb{P}_{L} \divergence \left( z \otimes w + \bar{w} \otimes z + 2 [X+Y] \otimes_{s} z \right).
\end{equation} 
We estimate from \eqref{Define III4,1}
\begin{align}
\RomanIII_{4,1} \overset{\eqref{Sobolev products c}}{\lesssim}& \lVert z^{\mathcal{L},\lambda} \rVert_{\dot{H}^{\frac{5}{4}}} \lVert \partial_{t} z + \nu \Lambda^{\frac{5}{2}} z \rVert_{H^{-\frac{5}{2} + \frac{3\kappa}{2}}} \lVert \mathcal{H}_{\lambda} Q \rVert_{\mathscr{C}^{\frac{9}{4} - \frac{3\kappa}{2}}} \nonumber \\
\overset{\eqref{est 157} \eqref{Regularity of Q} \eqref{Define Lt kappa and Nt kappa}}{\lesssim}& N_{T}^{\kappa}  \lVert z^{\mathcal{L},\lambda} \rVert_{\dot{H}^{\frac{5}{4}}} \lVert  z \otimes w + \bar{w} \otimes z + 2 [X+Y] \otimes_{s} z  \rVert_{H^{-\frac{3}{2} + \frac{3\kappa}{2}}}. \label{est 158} 
\end{align} 
We can estimate using the embedding of $L^{\frac{6}{6- 3 \kappa}}(\mathbb{T}^{3})\subset H^{-\frac{3}{2} + \frac{3\kappa}{2}}(\mathbb{T}^{3})$ and $H^{\frac{3\kappa}{2}}(\mathbb{T}^{3}) \subset L^{\frac{6}{3-3\kappa}}(\mathbb{T}^{3})$, \eqref{Bony's decomposition}, and \eqref{Sobolev products}, 
\begin{align}
&\lVert  z \otimes w + \bar{w} \otimes z + 2 [X+Y] \otimes_{s} z  \rVert_{H^{-\frac{3}{2} + \frac{3\kappa}{2}}}  \lesssim \lVert z \otimes w \rVert_{L^{\frac{6}{6- 3 \kappa}}} + \lVert \bar{w} \otimes z \rVert_{L^{\frac{6}{6- 3\kappa}}}  \label{est 159} \\
& \hspace{20mm} + \lVert X \otimes_{s} z \rVert_{H^{-\frac{3}{2} + \frac{3\kappa}{2}}} + \lVert Y \otimes_{s} z \rVert_{H^{-\frac{3}{2} + \frac{3\kappa}{2}}} \lesssim \lVert z \rVert_{H^{\frac{3\kappa}{2}}} ( \lVert w \rVert_{L^{2}} + \lVert \bar{w} \rVert_{L^{2}}) + N_{T}^{\kappa} \lVert z \rVert_{H^{\frac{1}{4} + 2 \kappa}} \nonumber 
\end{align}
and conclude from \eqref{est 158} 
\begin{equation}\label{est 160}
\RomanIII_{4,1} \leq \frac{\nu}{32} \lVert z^{\mathcal{L},\lambda} \rVert_{\dot{H}^{\frac{5}{4}}}^{2} + C(N_{T}^{\kappa}) \left( \lVert z \rVert_{H^{\frac{3\kappa}{2}}}^{2} [ \lVert w \rVert_{L^{2}}^{2} + \lVert \bar{w} \rVert_{L^{2}}^{2} ] + \lVert z \rVert_{\dot{H}^{\frac{1}{4} + 2 \kappa}}^{2} \right). 
\end{equation} 
Next, we can estimate directly from \eqref{Define III4,2}, 
\begin{equation}\label{Estimate on III4,2}
\RomanIII_{4,2} \overset{\eqref{Sobolev products c}}{\lesssim} \lVert z^{\mathcal{L},\lambda} \rVert_{\dot{H}^{\frac{5}{4}}} \lVert \Lambda^{\frac{5}{2}} z \rVert_{H^{-\frac{5}{2} + \frac{3\kappa}{2}}} \lVert \mathcal{H}_{\lambda} Q \rVert_{\mathscr{C}^{\frac{9}{4} - \frac{3\kappa}{2}}}  \overset{\eqref{Regularity of Q} \eqref{Define Lt kappa and Nt kappa}}{\leq}  \frac{\nu}{32} \lVert z^{\mathcal{L},\lambda} \rVert_{\dot{H}^{\frac{5}{4}}}^{2} + C(N_{T}^{\kappa}) \lVert z \rVert_{H^{\frac{3\kappa}{2}}}^{2}. 
\end{equation} 
Next, concerning $\RomanIII_{4,3}$ of \eqref{Define III4,3}, we estimate for $N_{1}$ from \eqref{est 152}
\begin{align}
& \lVert \Lambda^{\frac{5}{2}} ( z \circlesign{\prec}_{s} \mathcal{H}_{\lambda} Q) - z \circlesign{\prec}_{s} \Lambda^{\frac{5}{2}} \mathcal{H}_{\lambda} Q \rVert_{H^{-\frac{1}{4}}}^{2}   \nonumber \\
\overset{\eqref{est 152}}{=}& \sum_{j\geq -1} 2^{-\frac{j}{2}} \left\lVert \sum_{m: \lvert j-m \rvert \leq N_{1}} \Lambda^{\frac{5}{2}} \left( ( S_{m-1} z ) \Delta_{m} \mathcal{H}_{\lambda} Q \right) - S_{m-1} z \Lambda^{\frac{5}{2}} \Delta_{m} \mathcal{H}_{\lambda} Q \right\rVert_{L^{2}}^{2}   \nonumber \\
\overset{\eqref{Kato-Ponce}}{\lesssim}& \sum_{j\geq -1} 2^{-\frac{j}{2}} \left( \lVert S_{j-1} z \rVert_{\dot{H}^{1+ \kappa}} \lVert \Lambda^{\frac{3}{2}} \Delta_{j} \mathcal{H}_{\lambda} Q \rVert_{L^{\infty}} +\lVert S_{j-1} z \rVert_{\dot{H}^{\frac{5}{2}} } \lVert \Delta_{j} \mathcal{H}_{\lambda} Q \rVert_{L^{\infty}} \right)^{2} \nonumber  \\
\lesssim&   \sum_{j\geq -1} \Bigg( \sum_{l \leq j-2} 2^{(l-j) (1- \frac{3\kappa}{2})} 2^{l(\frac{5\kappa}{2})} \lVert \Delta_{l} z \rVert_{L^{2}} 2^{j(\frac{9}{4} - \frac{3\kappa}{2})} \lVert \Delta_{j} \mathcal{H}_{\lambda} Q \rVert_{L^{\infty}}  \nonumber \\
&\hspace{12mm}  + 2^{(l-j)(\frac{5}{2} - \frac{3\kappa}{2})} 2^{l(\frac{3\kappa}{2})} \lVert \Delta_{l} z \rVert_{L^{2}} 2^{j( \frac{9}{4} - \frac{3\kappa}{2})} \lVert \Delta_{j} \mathcal{H}_{\lambda} Q \rVert_{L^{\infty}} \Bigg)^{2}  \overset{\eqref{Regularity of Q} \eqref{Define Lt kappa and Nt kappa}}{\lesssim} (N_{T}^{\kappa})^{2} \lVert z \rVert_{\dot{H}^{\frac{5\kappa}{2}}}^{2} \label{est 161}
\end{align}
which allows us to conclude from \eqref{Define III4,3}
\begin{align}
\RomanIII_{4,3} \lesssim&\lVert z^{\mathcal{L},\lambda} \rVert_{H^{\frac{5}{4}}} \lVert \Lambda^{\frac{5}{2}} (z \circlesign{\prec}_{s} \mathcal{H}_{\lambda} Q) - z \circlesign{\prec}_{s} \Lambda^{\frac{5}{2}} \mathcal{H}_{\lambda} Q \rVert_{H^{-\frac{1}{4}}}  \nonumber \\
& \hspace{20mm} \overset{\eqref{est 161}}{\lesssim} \lVert z^{\mathcal{L},\lambda} \rVert_{\dot{H}^{\frac{5}{4}}} N_{T}^{\kappa} \lVert z \rVert_{\dot{H}^{\frac{5\kappa}{2}}} \leq \frac{\nu}{32} \lVert z^{\mathcal{L},\lambda} \rVert_{\dot{H}^{\frac{5}{4}}}^{2} + C(N_{T}^{\kappa}) \lVert z \rVert_{\dot{H}^{\frac{5\kappa}{2}}}^{2}. \label{est 162}
\end{align} 
Applying \eqref{est 160}, \eqref{Estimate on III4,2}, and \eqref{est 162} to \eqref{Split III4}, we can conclude with the Gagliardo-Nirenberg inequality of $\lVert f \rVert_{\dot{H}^{\frac{1}{4} + 2\kappa}} \lesssim \lVert f \rVert_{L^{2}}^{\frac{3}{4} - 2 \kappa} \lVert f \rVert_{\dot{H}^{1}}^{\frac{1}{4} + 2 \kappa}$ that 
\begin{align}
&\RomanIII_{4} \leq \frac{3\nu}{32} \lVert z^{\mathcal{L},\lambda} \rVert_{\dot{H}^{\frac{5}{4}}}^{2} + C(N_{T}^{\kappa}) \lVert z \rVert_{\dot{H}^{\frac{1}{4} + 2 \kappa}}^{2}[ \lVert w \rVert_{L^{2}}^{2} + \lVert \bar{w} \rVert_{L^{2}}^{2} + 1 ] \nonumber \\
& \hspace{15mm} \overset{\eqref{est 149} \eqref{est 156} \eqref{Define lambda norm and MT}}{\leq} \frac{3\nu}{32} \lVert z^{\mathcal{L},\lambda} \rVert_{\dot{H}^{\frac{5}{4}}}^{2} + C(N_{T}^{\kappa}, M_{T})  \lVert z^{\mathcal{L},\lambda} \rVert_{L^{2}}^{\frac{3}{2} - 4 \kappa} \lVert z^{\mathcal{L},\lambda} \rVert_{\dot{H}^{\frac{5}{4}}}^{\frac{1}{2}+ 4 \kappa} \nonumber\\
& \hspace{15mm} \leq \frac{\nu}{8}  \lVert z^{\mathcal{L},\lambda} \rVert_{\dot{H}^{\frac{5}{4}}}^{2} + C (N_{T}^{\kappa}, M_{T}) \lVert z^{\mathcal{L},\lambda} \rVert_{L^{2}}^{2}. \label{Estimate on III4} 
\end{align} 

We come to the estimate of $\RomanIII_{5}$ of \eqref{Define III5}. First, we rewrite using \eqref{Define z, z l lambda, z h lambda}
\begin{equation}\label{Split III5}
\RomanIII_{5} =\sum_{k=1}^{4} \RomanIII_{5,k}
\end{equation}
where 
\begin{subequations}\label{Define III5,k}
\begin{align}
& \RomanIII_{5,1} \triangleq - \langle z^{\mathcal{L},\lambda}, \divergence \left( z^{\mathcal{L},\lambda} \otimes w^{\mathcal{L},\lambda} + \bar{w}^{\mathcal{L},\lambda} \otimes z^{\mathcal{L},\lambda}  \right) \rangle (t), \label{Define III5,1} \\
&\RomanIII_{5,2} \triangleq - \langle z^{\mathcal{L},\lambda}, \divergence \left( z^{\mathcal{L},\lambda} \otimes w^{\mathcal{H},\lambda} + \bar{w}^{\mathcal{H},\lambda} \otimes z^{\mathcal{L},\lambda}  \right) \rangle (t),  \label{Define III5,2}\\
& \RomanIII_{5,3} \triangleq  - \langle z^{\mathcal{L},\lambda}, \divergence \left( z^{\mathcal{H},\lambda} \otimes w^{\mathcal{L},\lambda} + z^{\mathcal{H},\lambda} \otimes w^{\mathcal{H},\lambda} + \bar{w}^{\mathcal{L},\lambda} \otimes z^{\mathcal{H},\lambda} + \bar{w}^{\mathcal{H},\lambda} \otimes z^{\mathcal{H},\lambda}  \right) \rangle (t), \label{Define III5,3}\\
& \RomanIII_{5,4} \triangleq  -2 \langle z^{\mathcal{L},\lambda},  \divergence ( Y \otimes_{s} z ) \rangle (t); \label{Define III5,4}
\end{align}
\end{subequations} 
the difficult terms are those involving $z^{\mathcal{H},\lambda}$ which we separated in \eqref{Define III5,3}. Let us estimate the most straight-forward piece $\RomanIII_{5,4}$ from \eqref{Define III5,4} first:
\begin{equation}\label{Estimate on III5,4}
\RomanIII_{5,4}  \lesssim \lVert z^{\mathcal{L},\lambda} \rVert_{\dot{H}^{\frac{5}{4}}} \lVert Y \rVert_{L^{\infty}} \lVert z \rVert_{L^{2}}  \overset{\eqref{est 156} \eqref{Define Lt kappa and Nt kappa}}{\lesssim} N_{T}^{\kappa} \lVert z^{\mathcal{L},\lambda} \rVert_{\dot{H}^{\frac{5}{4}}}   \lVert z^{\mathcal{L},\lambda} \rVert_{L^{2}}. 
\end{equation} 
Next, we estimate from \eqref{Define III5,1} using the embedding of $\dot{H}^{\frac{5}{4}} (\mathbb{T}^{3}) \hookrightarrow L^{12} (\mathbb{T}^{3})$ 
\begin{align}\label{Estimate on III5,1}
\RomanIII_{5,1} \lesssim&  \lVert z^{\mathcal{L},\lambda} \rVert_{\dot{H}^{\frac{5}{4}}} \left( \lVert z^{\mathcal{L},\lambda}  \otimes w^{\mathcal{L},\lambda} \rVert_{L^{\frac{12}{7}}} + \lVert \bar{w}^{\mathcal{L},\lambda} \otimes z^{\mathcal{L},\lambda} \rVert_{L^{\frac{12}{7}}} \right) \nonumber\\
\lesssim& \lVert z^{\mathcal{L},\lambda} \rVert_{\dot{H}^{\frac{5}{4}}} \lVert z^{\mathcal{L},\lambda} \rVert_{L^{2}} \left( \lVert w^{\mathcal{L},\lambda} \rVert_{\dot{H}^{\frac{5}{4}}} + \lVert \bar{w}^{\mathcal{L},\lambda} \rVert_{\dot{H}^{\frac{5}{4}}} \right) \overset{\eqref{Define lambda norm and MT}}{\lesssim}  \lVert z^{\mathcal{L},\lambda} \rVert_{\dot{H}^{\frac{5}{4}}} \lVert z^{\mathcal{L},\lambda} \rVert_{L^{2}} \lVert  \lvert (w, \bar{w})  \rvert \rVert_{\lambda}.
\end{align}
Next, we estimate from \eqref{Define III5,2} 
\begin{equation}\label{est 163}
\RomanIII_{5,2} \lesssim  \lVert z^{\mathcal{L},\lambda} \rVert_{\dot{H}^{\frac{5}{4}}} \lVert z^{\mathcal{L},\lambda} \rVert_{L^{\frac{24}{7}}} \left( \lVert w^{\mathcal{H},\lambda} \rVert_{L^{\frac{24}{7}}} + \lVert \bar{w}^{\mathcal{H},\lambda} \rVert_{L^{\frac{24}{7}}} \right). 
\end{equation} 
To continue, we recall that $B_{\frac{24}{7},2}^{0} \subset L^{\frac{24}{7}}$ (e.g. \cite[Theorem 6.4.4]{BL76}) and further estimate by H$\ddot{\mathrm{o}}$lder's and Bernstein's inequalities (e.g. \cite[Lemma 2.1]{BCD11})  
\begin{align}
\lVert f \rVert_{L^{\frac{24}{7}}} \lesssim& \lVert f \rVert_{B_{\frac{24}{7}, 2}^{0}} \lesssim \left( \sum_{m\geq -1} 2^{m3(\frac{1}{2} - \frac{7}{24})} \lVert \Delta_{m} f \rVert_{L^{2}} 2^{m3(\frac{2}{5} - \frac{7}{24})} \lVert \Delta_{m}f \rVert_{L^{\frac{5}{2}}} \right)^{\frac{1}{2}} \nonumber\\
& \hspace{10mm} \lesssim \left( \sum_{m\geq -1} \lVert \Delta_{m} f \rVert_{L^{2}}^{2} \right)^{\frac{1}{4}} \left( \sum_{m\geq -1} \left\lvert 2^{m(\frac{19}{20})} \lVert \Delta_{m} f \rVert_{L^{\frac{5}{2}}} \right\rvert^{2} \right)^{\frac{1}{4}} \lesssim \lVert f \rVert_{L^{2}}^{\frac{1}{2}} \lVert f \rVert_{B_{\frac{5}{2}, 2}^{\frac{19}{20}}}^{\frac{1}{2}}. \label{est 164}
\end{align}
Then, along with the Gagliardo-Nirenberg inequality of $ \lVert f \rVert_{L^{\frac{24}{7}}} \lesssim \lVert f \rVert_{L^{2}}^{\frac{1}{2}} \lVert f \rVert_{\dot{H}^{\frac{5}{4}}}^{\frac{1}{2}}$, we can continue to estimate from \eqref{est 163} 
\begin{align}
\RomanIII_{5,2}\lesssim& \lVert z^{\mathcal{L},\lambda} \rVert_{L^{2}}^{\frac{1}{2}} \lVert z^{\mathcal{L},\lambda} \rVert_{\dot{H}^{\frac{5}{4}}}^{\frac{3}{2}} \left( \lVert w^{\mathcal{H},\lambda} \rVert_{L^{2}}^{\frac{1}{2}} \lVert w^{\mathcal{H},\lambda} \rVert_{B_{\frac{5}{2}, 2}^{\frac{19}{20}}}^{\frac{1}{2}} +  \lVert \bar{w}^{\mathcal{H},\lambda} \rVert_{L^{2}}^{\frac{1}{2}} \lVert \bar{w}^{\mathcal{H},\lambda} \rVert_{B_{\frac{5}{2}, 2}^{\frac{19}{20}}}^{\frac{1}{2}} \right) \nonumber \\
& \hspace{10mm}  \overset{\eqref{Define lambda norm and MT}}{\lesssim}  \lVert z^{\mathcal{L},\lambda} \rVert_{L^{2}}^{\frac{1}{2}} \lVert z^{\mathcal{L},\lambda} \rVert_{\dot{H}^{\frac{5}{4}}}^{\frac{3}{2}}  M_{T}^{\frac{1}{2}} \lVert\lvert (w, \bar{w})\rvert \rVert_{\lambda}^{\frac{1}{2}}. \label{Estimate on III5,2}
\end{align} 

\begin{remark}\label{Remark 5.3} 
We are now ready to discuss the difficulty of $\RomanIII_{5,3}$ in \eqref{Define III5,3}. We explain the difficulty using only $\langle z^{\mathcal{L},\lambda}, \divergence \left( z^{\mathcal{H},\lambda} \otimes w^{\mathcal{L},\lambda}  \right) \rangle (t)$ as an example because the identical difficulties apply to other terms within $\RomanIII_{5,3}$. As we described in Remark \ref{Remark 2.3}, we can proceed similarly to \eqref{est 60},  
\begin{equation}\label{est 169}
 \langle z^{\mathcal{L},\lambda}, \divergence \left( z^{\mathcal{H},\lambda} \otimes w^{\mathcal{L},\lambda}  \right) \rangle 
 \lesssim \lVert z^{\mathcal{L},\lambda} \rVert_{\dot{H}^{\frac{5}{4}}} \lVert z^{\mathcal{H},\lambda} \otimes w^{\mathcal{L},\lambda} \rVert_{H^{-\frac{1}{4}}} \lesssim  \lVert z^{\mathcal{L},\lambda} \rVert_{\dot{H}^{\frac{5}{4}}}  \lVert z^{\mathcal{H},\lambda} \rVert_{L^{12}} \lVert w^{\mathcal{L},\lambda} \rVert_{L^{2}}. 
\end{equation} 
To handle the $\lVert z^{\mathcal{H},\lambda} \rVert_{L^{12}}$, we can estimate similarly to \eqref{est 165}-\eqref{est 156}, using the embeddings $\dot{H}^{\frac{5}{4}}(\mathbb{T}^{3}) \hookrightarrow L^{12} (\mathbb{T}^{3})$ and $B_{12,2}^{1} \subset W^{1,12}$ and $L^{12} \subset B_{12,12}^{0}$ from \cite[Theorem 6.4.4]{BL76}, for $N_{1}$ from \eqref{est 152}, we can find $C_{1}\geq 0$ such that 
\begin{equation}\label{est 280}
\lVert f \rVert_{L^{12}} \leq C_{1} \lVert f \rVert_{\dot{H}^{\frac{5}{4}}} 
\end{equation}
and $C_{2} \geq 0$ that satisfy 
\begin{align}
\lVert z \rVert_{L^{12}} \overset{ \eqref{Define z, z l lambda, z h lambda} \eqref{est 149}}{\leq}&  C_{1} \lVert z^{\mathcal{L},\lambda} \rVert_{\dot{H}^{\frac{5}{4}}} + C \lVert z \circlesign{\prec}_{s} \mathcal{H}_{\lambda} Q \rVert_{W^{1,2}}  \leq C_{1} \lVert z^{\mathcal{L},\lambda} \rVert_{\dot{H}^{\frac{5}{4}}} + C  \lVert z \circlesign{\prec}_{s} \mathcal{H}_{\lambda} Q \rVert_{B_{12,2}^{1}} \nonumber \\
\overset{\eqref{est 152} }{=}&  C_{1} \lVert z^{\mathcal{L},\lambda} \rVert_{\dot{H}^{\frac{5}{4}}} + C \left( \sum_{m\geq -1} \left\lvert 2^{m} \lVert \sum_{j: \lvert j-m \rvert \leq N_{1}} (S_{j-1} z ) \otimes_{s} \Delta_{j} \mathcal{H}_{\lambda} Q \rVert_{L^{12}} \right\rvert^{2} \right)^{\frac{1}{2}}  \nonumber\\
 \overset{\eqref{Regularity of Q} \eqref{Define Lt kappa and Nt kappa}}{\leq}& C_{1} \lVert z^{\mathcal{L},\lambda} \rVert_{\dot{H}^{\frac{5}{4}}} + C N_{T}^{\kappa} \lambda^{-\frac{\kappa}{2}}   \lVert z \rVert_{B_{12,12}^{0}} \leq  C_{1} \lVert z^{\mathcal{L},\lambda} \rVert_{\dot{H}^{\frac{5}{4}}} + C_{2}(N_{T}^{\kappa}) \lambda^{-\frac{\kappa}{2}}   \lVert z \rVert_{L^{12}} \label{est 166} 
\end{align}
and consequently for $\lambda \gg 1$ sufficiently large, 
\begin{equation}\label{est 168}
\lVert z \rVert_{L^{12}} \leq 2 C_{1} \lVert z^{\mathcal{L},\lambda} \rVert_{\dot{H}^{\frac{5}{4}}}  \hspace{1mm} \text{ and } \hspace{1mm} \lVert z^{\mathcal{H},\lambda} \rVert_{L^{12}} \overset{\eqref{est 280}}{\leq} 3 C_{1} \lVert z^{\mathcal{L},\lambda} \rVert_{\dot{H}^{\frac{5}{4}}}. 
\end{equation}  
 Yet, applying \eqref{est 168} to \eqref{est 169} gives us 
\begin{align}
 \langle z^{\mathcal{L},\lambda}, \divergence \left( z^{\mathcal{H},\lambda} \otimes w^{\mathcal{L},\lambda}  \right) \rangle 
 \overset{\eqref{est 169}}{\lesssim}   \lVert z^{\mathcal{L},\lambda} \rVert_{\dot{H}^{\frac{5}{4}}}  \lVert z^{\mathcal{H},\lambda} \rVert_{L^{12}} \lVert w^{\mathcal{L},\lambda} \rVert_{L^{2}} \overset{\eqref{est 168}}{\lesssim}   \lVert z^{\mathcal{L},\lambda} \rVert_{\dot{H}^{\frac{5}{4}}}^{2}  \lVert w^{\mathcal{L},\lambda} \rVert_{L^{2}} \label{est 170}
\end{align} 
which is not of a favorable form to prove uniqueness, typically a constant multiple of either $M_{T}\lVert z^{\mathcal{L},\lambda} \rVert_{L^{2}}^{2} (1+ \lVert \lvert w \rvert \rVert_{\lambda}^{2} + \lVert \lvert \bar{w} \rvert \rVert_{\lambda}^{2})$. In comparison, the analogous bound in the 2D case was (see e.g. \cite[Equation (192)]{Y23c})
\begin{align*}
 \lVert z^{\mathcal{H},\lambda} \rVert_{L^{2}}^{\frac{1}{2}} \lVert z^{\mathcal{L},\lambda} \rVert_{H^{1}}^{\frac{3}{2}} \lVert (w^{\mathcal{L},\lambda}, w^{\mathcal{H},\lambda}) \rVert_{L^{2}}^{\frac{1}{2}}  \lVert w\rvert \rVert_{\lambda}^{\frac{1}{2}},
\end{align*}
to which, applying an analogue of \eqref{est 156} with $s = 0$ and Young's inequality gives us a bound of 
\begin{align*}
C\lVert z^{\mathcal{L},\lambda} \rVert_{L^{2}}^{\frac{1}{2}} \lVert z^{\mathcal{L},\lambda} \rVert_{H^{1}}^{\frac{3}{2}} \lVert (w^{\mathcal{L},\lambda}, w^{\mathcal{H},\lambda}) \rVert_{L^{2}}^{\frac{1}{2}}  \lVert \lvert w \rvert \rVert_{\lambda}^{\frac{1}{2}} \leq \frac{\nu}{32} \lVert z^{\mathcal{L},\lambda} \rVert_{H^{1}}^{2} + C \lVert z^{\mathcal{L},\lambda} \rVert_{L^{2}}^{2}  \lVert (w^{\mathcal{L},\lambda}, w^{\mathcal{H},\lambda}) \rVert_{L^{2}}^{2}  \lVert \lvert w \rvert \rVert_{\lambda}^{2},
\end{align*} 
which is of the favorable form to complete this proof of uniqueness. Thus, we must approach $\RomanIII_{5,3}$ differently from $\RomanIII_{5,1}$ or $\RomanIII_{5,2}$ or the 2D case. 
\end{remark}
We apply H$\ddot{\mathrm{o}}$lder's inequalities to start our estimate with 
\begin{equation}\label{est 172} 
\RomanIII_{5,3} \lesssim \lVert z^{\mathcal{L},\lambda} \rVert_{W^{1, \frac{12}{5}}} \lVert z^{\mathcal{H},\lambda} \rVert_{L^{4}} \left( \lVert w^{\mathcal{L},\lambda} \rVert_{L^{3}} + \lVert w^{\mathcal{H},\lambda} \rVert_{L^{3}} + \lVert \bar{w}^{\mathcal{L},\lambda} \rVert_{L^{3}} + \lVert \bar{w}^{\mathcal{H},\lambda} \rVert_{L^{3}} \right).
\end{equation} 
Inspired by \cite[Equation (187)]{Y23c}, which showed $\lVert f \rVert_{L^{4}(\mathbb{T}^{2})} \lesssim \lVert f \rVert_{L^{2}(\mathbb{T}^{2})}^{\frac{1}{2}} \lVert f \rVert_{B_{\infty, 2}^{0}(\mathbb{T}^{2})}^{\frac{1}{2}}$, of which its proof shows independence of spatial dimension, we estimate using $B_{4,2}^{0} \subset L^{4}$ from \cite[Theorem 6.4.4]{BL76}
\begin{equation}\label{est 171} 
\lVert f \rVert_{L^{4}} \lesssim \lVert f \rVert_{B_{4,2}^{0}} \lesssim \left( \sum_{m\geq -1} \lVert \Delta_{m} f \rVert_{L^{2}} \lVert \Delta_{m} f \rVert_{L^{\infty}} \right)^{\frac{1}{2}} \lesssim \lVert f \rVert_{H^{\frac{1}{4}}}^{\frac{1}{2}} \lVert f \rVert_{B_{\infty,2}^{-\frac{1}{4}}}^{\frac{1}{2}},
\end{equation} 
which is also independent of spatial dimension because the estimates depend only on interpolation and H$\ddot{\mathrm{o}}$lder's inequalities. The heuristic behind our ``choice'' of $-\frac{1}{4}$ in \eqref{est 171} is $-\frac{1}{4} + \frac{3}{2} = \frac{5}{4}$ where $\frac{3}{2}$ represents the informal amount of derivatives one has to give up to bound $L^{\infty}(\mathbb{T}^{3})$-norm by $L^{2}(\mathbb{T}^{3})$-norm by Bernstein's inequality, as it will be clear in \eqref{est 173}, while $\frac{5}{4}$ represents the strength of the diffusion. By applying \eqref{est 171} and the embedding $\dot{H}^{\frac{1}{4}} (\mathbb{T}^{3}) \hookrightarrow L^{\frac{12}{5}} (\mathbb{T}^{3})$ to \eqref{est 172} we obtain
\begin{equation}\label{est 176}
\RomanIII_{5,3}  \lesssim \lVert z^{\mathcal{L},\lambda} \rVert_{\dot{H}^{\frac{5}{4}}} \lVert z^{\mathcal{H},\lambda} \rVert_{\dot{H}^{\frac{1}{4}}}^{\frac{1}{2}} \lVert z^{\mathcal{H},\lambda} \rVert_{B_{\infty,2}^{-\frac{1}{4}}}^{\frac{1}{2}} \left( \lVert w^{\mathcal{L},\lambda} \rVert_{L^{3}} + \lVert w^{\mathcal{H},\lambda} \rVert_{L^{3}} + \lVert \bar{w}^{\mathcal{L},\lambda} \rVert_{L^{3}} + \lVert \bar{w}^{\mathcal{H},\lambda} \rVert_{L^{3}} \right).
\end{equation} 
Now, we handle $\lVert z^{\mathcal{H},\lambda} \rVert_{B_{\infty, 2}^{-\frac{1}{4}}}$ similarly to \eqref{est 165}-\eqref{est 156}, \eqref{est 166}-\eqref{est 168} by finding the universal constant $C_{1}\geq 0$ such that the following consequence of the Bernstein's inequality of 
\begin{equation}\label{est 174} 
\lVert f \rVert_{B_{\infty,2}^{-\frac{1}{4}}} \leq C_{1} \lVert f \rVert_{H^{\frac{5}{4}}}
\end{equation} 
holds, and $C_{2} \geq 0$ such that for $N_{1}$ from \eqref{est 152} and all $\kappa \in (0, \frac{3}{4})$, 
\begin{align}
&\lVert z \rVert_{B_{\infty, 2}^{-\frac{1}{4}}}  \overset{\eqref{Define z, z l lambda, z h lambda} \eqref{est 149}}{\leq} \lVert z^{\mathcal{L},\lambda} \rVert_{B_{\infty,2}^{-\frac{1}{4}}} + \lVert \mathbb{P}_{L} \divergence (z \circlesign{\prec}_{s} \mathcal{H}_{\lambda} Q) \rVert_{B_{\infty,2}^{-\frac{1}{4}}}  \nonumber \\
&\overset{\eqref{est 174}}{\leq} C_{1} \left( \sum_{m\geq -1} \left\lvert 2^{m(-\frac{1}{4} + 3(\frac{1}{2} ) ) } \lVert \Delta_{m} z^{\mathcal{L},\lambda} \rVert_{L^{2}} \right\rvert^{2} \right)^{\frac{1}{2}} + C \left( \sum_{m\geq -1} \left\lvert 2^{\frac{3m}{4}} \lVert \Delta_{m} (z \circlesign{\prec}_{s} \mathcal{H}_{\lambda} Q ) \rVert_{L^{\infty}} \right\rvert^{2} \right)^{\frac{1}{2}}  \nonumber \\
&\overset{\eqref{est 152} }{\leq} C_{1} \lVert z^{\mathcal{L},\lambda} \rVert_{\dot{H}^{\frac{5}{4}}} + C \left( \sum_{m\geq -1} \left\lvert 2^{\frac{3m}{4}} \lVert \sum_{j: \lvert j-m \rvert \leq N_{1}} (S_{j-1} z) \otimes_{s} \Delta_{j} \mathcal{H}_{\lambda} Q \rVert_{L^{\infty}} \right\rvert^{2} \right)^{\frac{1}{2}} \nonumber \\
&\leq C_{1} \lVert z^{\mathcal{L},\lambda} \rVert_{\dot{H}^{\frac{5}{4}}} + C \left( \sum_{m\geq -1} 2^{2m ( \frac{9}{4} - 2 \kappa)} \lVert \Delta_{m} \mathcal{H}_{\lambda} Q \rVert_{L^{\infty}}^{2} \left\lvert \sum_{l: l \leq m - 2} 2^{(l-m) ( \frac{3}{2} - 2 \kappa)} 2^{-l(\frac{3}{2} - 2 \kappa)} \lVert \Delta_{l} z \rVert_{L^{\infty}} \right\rvert^{2} \right)^{\frac{1}{2}} \nonumber \ \\
&\overset{\eqref{Regularity of Q}\eqref{Define Lt kappa and Nt kappa}}{\leq} C_{1} \lVert z^{\mathcal{L},\lambda} \rVert_{\dot{H}^{\frac{5}{4}}}  + C_{2} \lambda^{-\frac{\kappa}{2}} N_{T}^{\kappa} \lVert z \rVert_{B_{\infty,2}^{-\frac{1}{4}}}.\label{est 173} 
\end{align} 
Consequently, for $\lambda \gg 1$ sufficiently large, 
\begin{equation}\label{est 175}
\lVert z \rVert_{B_{\infty,2}^{-\frac{1}{4}}} \leq 2 C_{1} \lVert z^{\mathcal{L},\lambda} \rVert_{\dot{H}^{\frac{5}{4}}}  \hspace{1mm} \text{ so that } \hspace{1mm}  \lVert z^{\mathcal{H},\lambda} \rVert_{B_{\infty,2}^{-\frac{1}{4}}} \overset{\eqref{Define z, z l lambda, z h lambda} \eqref{est 174}}{\leq} 3 C_{1} \lVert z^{\mathcal{L},\lambda} \rVert_{\dot{H}^{\frac{5}{4}}}.
\end{equation} 
We apply \eqref{est 175} to \eqref{est 176} to deduce, along with the Gagliardo-Nirenberg inequality of $\lVert f \rVert_{H^{\frac{1}{4}}} \lesssim \lVert f \rVert_{L^{2}}^{\frac{4}{5}} \lVert f \rVert_{H^{\frac{5}{4}}}^{\frac{1}{5}}$, 
\begin{align}
\RomanIII_{5,3}  \lesssim&  \lVert z^{\mathcal{L},\lambda} \rVert_{\dot{H}^{\frac{5}{4}}}^{\frac{3}{2}}  \lVert z^{\mathcal{H},\lambda} \rVert_{\dot{H}^{\frac{1}{4}}}^{\frac{1}{2}}  \left( \lVert w^{\mathcal{L},\lambda} \rVert_{L^{3}} + \lVert w^{\mathcal{H},\lambda} \rVert_{L^{3}} + \lVert \bar{w}^{\mathcal{L},\lambda} \rVert_{L^{3}} + \lVert \bar{w}^{\mathcal{H},\lambda} \rVert_{L^{3}} \right) \nonumber\\   
\overset{\eqref{est 156} }{\lesssim}& \lVert z^{\mathcal{L},\lambda} \rVert_{\dot{H}^{\frac{5}{4}}}^{\frac{8}{5}} \lVert z^{\mathcal{L},\lambda} \rVert_{L^{2}}^{\frac{2}{5}} \left( \lVert w^{\mathcal{L},\lambda} \rVert_{L^{3}} + \lVert w^{\mathcal{H},\lambda} \rVert_{L^{3}} + \lVert \bar{w}^{\mathcal{L},\lambda} \rVert_{L^{3}} + \lVert \bar{w}^{\mathcal{H},\lambda} \rVert_{L^{3}} \right).  \label{est 177}
\end{align}
To deal with $( \lVert w^{\mathcal{L},\lambda} \rVert_{L^{3}} + \lVert w^{\mathcal{H},\lambda} \rVert_{L^{3}} + \lVert \bar{w}^{\mathcal{L},\lambda} \rVert_{L^{3}} + \lVert \bar{w}^{\mathcal{H},\lambda} \rVert_{L^{3}})$, we can bound $\lVert w^{\mathcal{L},\lambda} \rVert_{L^{3}}$ and $\lVert \bar{w}^{\mathcal{L},\lambda}\rVert_{L^{3}}$  by the Gagliardo-Nirenberg inequality of $\lVert f \rVert_{L^{3}} \lesssim \lVert f \rVert_{L^{2}}^{\frac{3}{5}} \lVert f \rVert_{\dot{H}^{\frac{5}{4}}}^{\frac{2}{5}}$, 
\begin{equation}\label{First L3 estimate}
\lVert w^{\mathcal{L},\lambda} \rVert_{L^{3}} + \lVert \bar{w}^{\mathcal{L},\lambda} \rVert_{L^{3}} \lesssim ( \lVert w^{\mathcal{L},\lambda} \rVert_{L^{2}} + \lVert \bar{w}^{\mathcal{L},\lambda} \rVert_{L^{2}})^{\frac{3}{5}} (\lVert w^{\mathcal{L},\lambda} \rVert_{\dot{H}^{\frac{5}{4}}} + \lVert \bar{w}^{\mathcal{L},\lambda} \rVert_{\dot{H}^{\frac{5}{4}}} )^{\frac{2}{5}}.
\end{equation} 
We cannot estimate $\lVert w^{\mathcal{H},\lambda} \rVert_{L^{3}} + \lVert \bar{w}^{\mathcal{H},\lambda} \rVert_{L^{3}}$ similarly to \eqref{First L3 estimate} because from \eqref{est 153b} we know that $w^{\mathcal{H},\lambda}, \bar{w}^{\mathcal{H},\lambda}$ has the regularity of $L^{2}(0, T; B_{p,2}^{\frac{5}{4} - 2 \kappa})$ for $p \in [1, \frac{12}{1+ 4 \kappa} ]$, not necessarily $L^{2} (0, T; \dot{H}^{\frac{5}{4}})$. As we described in Remark \ref{Remark 5.2}, this where our choice of $\lVert \lvert \cdot \rvert \rVert_{\lambda}$ is decided. First, we estimate using $B_{3,2}^{0} \subset L^{3}$ from \cite[Theorem 6.4.4]{BL76} and Bernstein's inequality, for $p \in [1,3]$ to be subsequently determined, 
\begin{align}
\lVert f \rVert_{L^{3}} \lesssim& \lVert f \rVert_{B_{3,2}^{0}} \lesssim \left( \sum_{m\geq -1} \left\lvert \left(2^{m3(\frac{1}{2} - \frac{1}{3})} \lVert \Delta_{m} f \rVert_{L^{2}} \right)^{\frac{3}{5}}  \left(2^{m3 (\frac{1}{p}- \frac{1}{3})} \lVert \Delta_{m} f \rVert_{L^{p}} \right)^{\frac{2}{5}} \right\rvert^{2} \right)^{\frac{1}{2}} \nonumber  \\
\approx& \left( \sum_{m\geq -1} \left\lvert \lVert \Delta_{m} f \rVert_{L^{2}}^{\frac{3}{5}} \left( 2^{m (\frac{3}{p} - \frac{1}{4} )} \lVert \Delta_{m} f \rVert_{L^{p}} \right)^{\frac{2}{5}} \right\rvert^{2} \right)^{\frac{1}{2}}, \label{Reason 1}
\end{align} 
where the Bernstein's inequality crucially relies on $p \in [1,3]$. In addition to $p \in [1,3]$, considering $L^{2} (0, T; B_{p,2}^{\frac{5}{4} - 2 \kappa})$ in \eqref{est 153b} for $p \in [1, \frac{12}{1+ 4 \kappa}]$, we require $\frac{3}{p} - \frac{1}{4} \leq \frac{5}{4} - 2 \kappa$ which boils down to 
\begin{equation}\label{Reason 2} 
\frac{6}{3-4\kappa} \leq p.
\end{equation}  
Such a lower bound of $\frac{6}{3-4\kappa} \leq p$ and $p \leq 3$ implies that we can choose $p = \frac{5}{2}$ for example, which has a consequence of $\frac{3}{p} - \frac{1}{4} = \frac{19}{20}$ if 
\begin{equation}\label{Reason 3}
\kappa < \frac{3}{20}
\end{equation} 
which is valid by hypothesis of Proposition \ref{Proposition 5.2}. With these choices, we may continue from \eqref{Reason 1} by 
\begin{equation}\label{Reason 4} 
\lVert f \rVert_{L^{3}} \overset{\eqref{Reason 1}}{\lesssim} \left( \sum_{m\geq -1} \left\lvert \lVert \Delta_{m} f \rVert_{L^{2}}^{\frac{3}{5}} \left( 2^{m (\frac{19}{20} )} \lVert \Delta_{m} f \rVert_{L^{p}} \right)^{\frac{2}{5}} \right\rvert^{2} \right)^{\frac{1}{2}} \lesssim \lVert f \rVert_{L^{2}}^{\frac{3}{5}} \lVert f \rVert_{B_{\frac{5}{2}, 2}^{\frac{19}{20}}}^{\frac{2}{5}}.
\end{equation} 
At last, we are able to obtain by \eqref{Reason 4} the analogous estimate to \eqref{First L3 estimate} that is fit for $w^{\mathcal{H},\lambda}, \bar{w}^{\mathcal{H},\lambda}$:
\begin{equation}\label{Second L3 estimate}
\lVert w^{\mathcal{H},\lambda} \rVert_{L^{3}} + \lVert \bar{w}^{\mathcal{H},\lambda} \rVert_{L^{3}} \lesssim \left( \lVert w^{\mathcal{H},\lambda} \rVert_{L^{2}} + \lVert \bar{w}^{\mathcal{H},\lambda} \rVert_{L^{2}} \right)^{\frac{3}{5}} \left( \lVert w^{\mathcal{H},\lambda} \rVert_{B_{\frac{5}{2},2}^{\frac{19}{20}}} + \lVert \bar{w}^{\mathcal{H},\lambda} \rVert_{B_{\frac{5}{2},2}^{\frac{19}{20}}} \right)^{\frac{2}{5}}. 
\end{equation} 
Applying \eqref{First L3 estimate} and \eqref{Second L3 estimate} to \eqref{est 177} gives us, using the definition from \eqref{Define lambda norm and MT}, 
\begin{align}
\RomanIII_{5,3} &\lesssim  \lVert z^{\mathcal{L},\lambda} \rVert_{\dot{H}^{\frac{5}{4}}}^{\frac{8}{5}} \lVert z^{\mathcal{L},\lambda} \rVert_{L^{2}}^{\frac{2}{5}}  \lVert ( w^{\mathcal{L},\lambda}, \bar{w}^{\mathcal{L},\lambda}, w^{\mathcal{H},\lambda}, \bar{w}^{\mathcal{H},\lambda} ) \rVert_{L^{2}}^{\frac{3}{5}} \nonumber\\
& \times \left( \lVert (w^{\mathcal{L},\lambda},\bar{w}^{\mathcal{L},\lambda}) \rVert_{\dot{H}^{\frac{5}{4}}} + \lVert (w^{\mathcal{H},\lambda}, \bar{w}^{\mathcal{H},\lambda}) \rVert_{B_{\frac{5}{2}, 2}^{\frac{19}{20}}}\right)^{\frac{2}{5}}  \lesssim M_{T}^{\frac{3}{5}} \lVert z^{\mathcal{L},\lambda} \rVert_{\dot{H}^{\frac{5}{4}}}^{\frac{8}{5}} \lVert z^{\mathcal{L},\lambda} \rVert_{L^{2}}^{\frac{2}{5}} \lVert \lvert (w, \bar{w}) \rvert \rVert_{\lambda}^{\frac{2}{5}}. \label{Estimate on III5,3}
\end{align} 
We can conclude by applying \eqref{Estimate on III5,1}, \eqref{Estimate on III5,2}, \eqref{Estimate on III5,3}, and \eqref{Estimate on III5,4} to \eqref{Split III5} and Young's inequality  
\begin{align}
&\RomanIII_{5} \lesssim \lVert z^{\mathcal{L},\lambda} \rVert_{\dot{H}^{\frac{5}{4}}} \lVert z^{\mathcal{L},\lambda} \rVert_{L^{2}} \lVert  \lvert (w, \bar{w})  \rvert \rVert_{\lambda} + \lVert z^{\mathcal{L},\lambda} \rVert_{L^{2}}^{\frac{1}{2}} \lVert z^{\mathcal{L},\lambda} \rVert_{\dot{H}^{\frac{5}{4}}}^{\frac{3}{2}}  M_{T}^{\frac{1}{2}} \lVert\lvert (w, \bar{w})\rvert \rVert_{\lambda}^{\frac{1}{2}}  \nonumber\\
& + M_{T}^{\frac{3}{5}} \lVert z^{\mathcal{L},\lambda} \rVert_{\dot{H}^{\frac{5}{4}}}^{\frac{8}{5}} \lVert z^{\mathcal{L},\lambda} \rVert_{L^{2}}^{\frac{2}{5}} \lVert \lvert (w, \bar{w}) \rvert \rVert_{\lambda}^{\frac{2}{5}} + N_{T}^{\kappa} \lVert z^{\mathcal{L},\lambda} \rVert_{\dot{H}^{\frac{5}{4}}}   \lVert z^{\mathcal{L},\lambda} \rVert_{L^{2}} \nonumber\\
& \hspace{25mm}  \leq \frac{\nu}{32} \lVert z^{\mathcal{L},\lambda} \rVert_{\dot{H}^{\frac{5}{4}}}^{2} + C(M_{T}) \lVert z^{\mathcal{L},\lambda} \rVert_{L^{2}}^{2} (1+ \lVert \lvert (w, \bar{w}) \rvert \rVert_{\lambda}^{2}).\label{Estimate on III5}
\end{align}

Finally, applying \eqref{est 155}, \eqref{Estimate on III2}, \eqref{Estimate on III3}, \eqref{Estimate on III4}, and \eqref{Estimate on III5} to \eqref{est 178} gives us 
\begin{equation}
\frac{1}{2} \partial_{t} \lVert z^{\mathcal{L},\lambda} \rVert_{L^{2}}^{2} + \nu \lVert z^{\mathcal{L},\lambda} \rVert_{\dot{H}^{\frac{5}{4}}}^{2}
\leq \frac{\nu}{2} \lVert z^{\mathcal{L},\lambda} \rVert_{\dot{H}^{\frac{5}{4}}}^{2} + C( \lambda, M_{T}, N_{T}^{\kappa}) \lVert z^{\mathcal{L},\lambda} \rVert_{L^{2}}^{2} \left(1+ \lVert \lvert (w, \bar{w}) \rvert \rVert_{\lambda}^{2} \right).
\end{equation} 
Subtracting $\frac{\nu}{2} \lVert z^{\mathcal{L},\lambda} \rVert_{\dot{H}^{\frac{5}{4}}}^{2}$ from both sides and applying Gronwall's inequality, while 
recalling that $\lVert \lvert w \rvert \rVert_{\lambda}, \lVert \lvert \bar{w} \rvert \rVert_{\lambda} \in L_{T}^{2}$ from Remark \ref{Remark 5.2}, completes the proof of Proposition \ref{Proposition 5.2}. 
\end{proof} 

\section{Proof of Proposition \ref{Proposition on Anderson Hamiltonian}}\label{Proof of Proposition on Anderson Hamiltonian}
The purpose of this section is to sketch the proof of a general result that implies Proposition \ref{Proposition on Anderson Hamiltonian}. As we described in Remark \ref{Remark 2.4} in details, we will rely on the approach of \cite{AC15} making adjustments upon applications of Lemmas \ref{Burgers' Lemma A.2}-\ref{Burgers' Lemma A.3}, and estimating commutators carefully. Following \cite[p. 29]{HR24}, we consider a scalar-valued case in this section because its extension to the vector-valued case is straight-forward. As we described in \cite{Y25d}, it suffices to extend \cite{AC15} that treated the 2D case with $-\Delta$ as diffusion and an additive white-in-space noise $\eta \in \mathscr{C}^{-1 - \kappa} (\mathbb{T}^{2})$ as a force to the 3D case with $\Lambda^{\frac{5}{2}}$ as diffusion and $\eta  \in \mathscr{C}^{-\frac{5}{4} - \kappa}(\mathbb{T}^{3})$.  

\begin{define}
We define 
\begin{equation}\label{Burgers' Define E alpha}
\mathcal{E}^{\alpha} \triangleq \mathscr{C}^{\alpha} \times \mathscr{C}^{2 \alpha + \frac{5}{2}}, \hspace{2mm} \alpha \in \mathbb{R}, 
\end{equation} 
and the space of enhanced noise,  
\begin{equation}\label{Burgers' Define K alpha}
\mathcal{K}^{\alpha} \triangleq \overline{\{ (\eta, -\eta \circ \sigma(D) \eta -c ): \hspace{1mm} \eta \in C^{\infty}, c \in \mathbb{R} \}}, \hspace{1mm} \text{ where } \hspace{1mm} \sigma(D) \triangleq - ( 1 + \Lambda^{\frac{5}{2}})^{-1}, 
\end{equation} 
where the closure is taken w.r.t. $\mathcal{E}^{\alpha}$-topology (cf. \eqref{Burgers' est 21}). A general element of $\mathcal{K}^{\alpha}$ will be denoted by $\Theta \triangleq (\Theta_{1}, \Theta_{2})$. If $\eta \in \mathscr{C}^{\alpha}$ is $\Theta_{1}$, then $\Theta$ is said to be an enhancement (or lift) of $\eta$. 
\end{define} 
The case $\alpha$ = $-\frac{5}{4} - \kappa$, and $\nu =2$ applies to Proposition \ref{Proposition on Anderson Hamiltonian}; recall $\nabla X \in C_{t} \mathscr{C}_{x}^{-\frac{5}{4}-\kappa}$ $\mathbb{P}$-a.s. due to \eqref{Define Lt kappa and Nt kappa} and Proposition \ref{Proposition 4.1}.   
\begin{define}\label{Burgers' Definition 6.2}
Let $\alpha < -\frac{5}{4}$ and $\eta \in \mathscr{C}^{\alpha}$. For $\gamma \leq \alpha + \frac{5}{2}$, we define the space of distributions which are paracontrolled by $\sigma(D) \eta$ as 
\begin{equation}\label{Burgers' Define D eta gamma}
\mathcal{D}_{\eta}^{\gamma} \triangleq \{ f \in H^{\gamma}: \hspace{1mm} f^{\sharp} \triangleq f - f \prec \sigma(D) \eta \in H^{2\gamma - \frac{1}{4}}  \}. 
\end{equation} 
The space $\mathcal{D}_{\eta}^{\gamma}$, equipped with the following scalar product, is a Hilbert space: 
\begin{equation}\label{Burgers' D eta gamma norm}
\langle f,g \rangle_{\mathcal{D}_{\eta}^{\gamma}} \triangleq \langle f,g \rangle_{H^{\gamma}} + \langle f^{\sharp}, g^{\sharp} \rangle_{H^{2\gamma - \frac{1}{4}}} \hspace{3mm} \forall \hspace{1mm} f, g \in \mathcal{D}_{\eta}^{\gamma}. 
\end{equation} 
\end{define} 

The following definition, especially the product $\eta \circ f$ within $\eta f$, can be justified by Proposition \ref{Burgers' Proposition 6.1} (1), which is analogous to  \cite[Proposition 4.8]{AC15}.
\begin{define}\label{Burgers' Definition 6.3}
Let $\alpha \in (-\frac{19}{12}, -\frac{5}{4})$, $\gamma \in (\frac{1}{8} -\frac{\alpha}{2}, \alpha + \frac{5}{2}]$, and $\Theta = (\eta, \Theta_{2}) \in \mathcal{K}^{\alpha}$. We define the linear operator 
\begin{equation}\label{Burgers' Define H}
\mathcal{H}: \hspace{1mm} \mathcal{D}_{\eta}^{\gamma} \mapsto H^{\gamma - \frac{5}{2}} \text{ by } \mathcal{H}f \triangleq  -\Lambda^{\frac{5}{2}} f - \eta f \text{ where } \eta f = \eta \prec f + \eta \succ f + \eta \circ f  
\end{equation} 
(cf. \eqref{Define U}). 
\end{define} 
\begin{proposition}\label{Burgers' Proposition 6.1}
Let $\alpha \in (-\frac{19}{12}, -\frac{5}{4})$ and $\gamma \in (\frac{1}{8}-\frac{\alpha}{2}, \alpha + \frac{5}{2})$. 
\begin{enumerate}
\item Denote $\Theta = (\eta, \Theta_{2}) \in \mathcal{K}^{\alpha}$ as an enhancement of $\eta \in \mathscr{C}^{\alpha}$, and $f \in \mathcal{D}_{\eta}^{\gamma}$. Then we can define 
\begin{equation}\label{Burgers' est 165}
f \circ \eta = f \Theta_{2} + \mathcal{R} (f, \sigma(D) \eta, \eta) + f^{\sharp} \circ \eta 
\end{equation} 
and we have the following bound: for all $\kappa > 0$, 
\begin{equation}\label{Burgers' est 163} 
\lVert f \circ \eta \rVert_{H^{2 \alpha + \frac{5}{2} - \kappa}} \lesssim \lVert f \rVert_{\mathcal{D}_{\eta}^{\gamma}} \lVert \Theta \rVert_{\mathcal{E}^{\alpha}} ( 1+ \lVert \Theta \rVert_{\mathcal{E}^{\alpha}}). 
\end{equation} 

\item Let $\Theta = (\eta, \Theta_{2}) \in \mathcal{K}^{\alpha}$, and $\{ \Theta^{n} \}_{n \in\mathbb{N}}$ where 
\begin{equation}
\Theta^{n} \triangleq ( \eta_{n}, -\eta_{n} \circ ( \sigma(D) \eta_{n}) - c_{n}), \hspace{3mm} c_{n} \in \mathbb{R}, 
\end{equation} 
a family of smooth functions such that 
\begin{equation}\label{Burgers' est 169}
\Theta^{n} \to \Theta \hspace{1mm} \text{ in }\mathcal{E}^{\alpha} \hspace{1mm} \text{ as } n \nearrow \infty. 
\end{equation} 
Let $f^{n}$ be a smooth approximation of $f \in \mathcal{D}_{\eta}^{\gamma}$ such that 
\begin{equation}\label{Burgers' est 170}
\lVert f - f_{n} \rVert_{H^{\gamma}} + \lVert f_{n}^{\sharp} - f^{\sharp} \rVert_{H^{2\gamma - \frac{1}{4}}} \to 0 \hspace{1mm} \text{ as } n \nearrow \infty, \hspace{1mm} \text{ where } f_{n}^{\sharp} \triangleq f_{n} - f_{n} \prec \sigma(D) \eta_{n}. 
\end{equation} 
Then, for all $\kappa > 0$, 
\begin{equation}\label{Burgers' est 171} 
\lVert f_{n} \circ \eta_{n} - f \circ \eta \rVert_{H^{2 \alpha + \frac{5}{2} - \kappa}} \to 0 \hspace{3mm} \text{ as } n \nearrow \infty. 
\end{equation} 
\end{enumerate} 
\end{proposition}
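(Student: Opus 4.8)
The statement to prove is Proposition \ref{Burgers' Proposition 6.1}, which has two parts: (1) the bilinear estimate \eqref{Burgers' est 163} on $f\circ\eta$ for $f\in\mathcal D_\eta^\gamma$, obtained via the paracontrolled decomposition \eqref{Burgers' est 165}; and (2) the continuity/convergence statement \eqref{Burgers' est 171}. The overall strategy follows \cite[Proposition 4.8]{AC15}, but with the diffusion exponent $\frac52$ and the new regularity budgets forced by $\alpha\in(-\tfrac{19}{12},-\tfrac54)$. The plan is to first establish part (1), then deduce part (2) by a routine density/multilinearity argument, the only subtlety in (2) being that $f_n$ is paracontrolled by $\sigma(D)\eta_n$ (not $\sigma(D)\eta$), so one must track the resonant term through the changing reference.

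\textbf{Step 1: the decomposition.} Starting from $f=f^\sharp+f\prec\sigma(D)\eta$ and computing $f\circ\eta$ using Bony's paraproducts, the resonant product $(f\prec\sigma(D)\eta)\circ\eta$ is handled by the commutator $\mathcal R$ of Lemma \ref{Burgers' Lemma A.3}: writing $(f\prec\sigma(D)\eta)\circ\eta = f\,(\sigma(D)\eta\circ\eta)+\mathcal R(f,\sigma(D)\eta,\eta)$, and recognizing $\sigma(D)\eta\circ\eta=-\Theta_2$ (modulo a constant) in the enhanced-noise pair $\Theta=(\eta,\Theta_2)\in\mathcal K^\alpha$ via \eqref{Burgers' Define K alpha}, one arrives at \eqref{Burgers' est 165}, with the genuinely regular piece $f^\sharp\circ\eta$ left over. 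This part is purely algebraic once the product structure is in place.

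\textbf{Step 2: the estimates.} I would bound the three terms of \eqref{Burgers' est 165} in $H^{2\alpha+5/2-\kappa}$. The term $f\,\Theta_2$ is a product of $f\in H^\gamma\hookrightarrow\mathscr C^{\gamma-3/2}$ (Sobolev embedding in $d=3$) with $\Theta_2\in\mathscr C^{2\alpha+5/2}$; this is licit precisely because $(\gamma-\tfrac32)+(2\alpha+\tfrac52)>0$, which is where the lower bound $\gamma>\tfrac18-\tfrac\alpha2$ is used (note $2\alpha+\tfrac52>0$ fails in general here, so the positivity of the sum is essential), and Lemma \ref{Lemma 3.1}(1) or Lemma \ref{Lemma 3.2} gives the bound. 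The commutator $\mathcal R(f,\sigma(D)\eta,\eta)$ is controlled by Lemma \ref{Burgers' Lemma A.3}: one checks $f\in H^\gamma$ with $\gamma\in(0,1)$ (the range $\gamma\in(\tfrac18-\tfrac\alpha2,\alpha+\tfrac52)$ must be verified to lie below $1$ for admissible $\alpha$ — this is exactly the sort of tight bookkeeping flagged in Remark \ref{Remark 2.4}), $\sigma(D)\eta\in\mathscr C^{\alpha+5/2}$ by Lemma \ref{Burgers' Lemma A.1} with $n=\tfrac52$, and $\eta\in\mathscr C^\alpha$, with $\beta+\gamma'=\alpha+(\alpha+\tfrac52)<0$ and $\gamma+\beta+\gamma'>0$ — again the same positivity constraint. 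Finally $f^\sharp\circ\eta$ with $f^\sharp\in H^{2\gamma-1/4}$ and $\eta\in\mathscr C^\alpha$ needs $(2\gamma-\tfrac14)+\alpha>0$ for Lemma \ref{Lemma 3.1}(1)\eqref{Sobolev products e}; this is once more guaranteed by $\gamma>\tfrac18-\tfrac\alpha2$. Collecting these, and using $\lVert\sigma(D)\eta\rVert_{\mathscr C^{\alpha+5/2}}\lesssim\lVert\Theta_1\rVert_{\mathscr C^\alpha}$, gives \eqref{Burgers' est 163} with the stated quadratic dependence on $\lVert\Theta\rVert_{\mathcal E^\alpha}$.

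\textbf{Step 3: convergence.} For part (2), write $f_n\circ\eta_n - f\circ\eta$ using \eqref{Burgers' est 165} for each, i.e. as the difference of $f_n\Theta_2^n - f\Theta_2$, of $\mathcal R(f_n,\sigma(D)\eta_n,\eta_n)-\mathcal R(f,\sigma(D)\eta,\eta)$, and of $f_n^\sharp\circ\eta_n - f^\sharp\circ\eta$. Each difference is estimated by the multilinearity of the maps together with the bounds from Step 2, inserting and subtracting intermediate terms; the hypotheses \eqref{Burgers' est 169}–\eqref{Burgers' est 170} provide convergence of every factor in the relevant topology ($\eta_n\to\eta$ in $\mathscr C^\alpha$, hence $\sigma(D)\eta_n\to\sigma(D)\eta$ in $\mathscr C^{\alpha+5/2}$ by Lemma \ref{Burgers' Lemma A.1}; $\Theta_2^n\to\Theta_2$ in $\mathscr C^{2\alpha+5/2}$ by \eqref{Burgers' est 169}; $f_n\to f$ in $H^\gamma$ and $f_n^\sharp\to f^\sharp$ in $H^{2\gamma-1/4}$ by \eqref{Burgers' est 170}), and the continuity of the trilinear map $\mathcal R$ from Lemma \ref{Burgers' Lemma A.3}. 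Since all the exponent constraints were already verified to be strict in Step 2, there is room for the $-\kappa$ loss and the argument closes. \textbf{The main obstacle} I expect is not any single estimate but the simultaneous compatibility of the four inequalities on $(\alpha,\gamma)$ — $\gamma>\tfrac18-\tfrac\alpha2$, $\gamma<\alpha+\tfrac52$, $\gamma\in(0,1)$ for Lemma \ref{Burgers' Lemma A.3}, and $\alpha+(\alpha+\tfrac52)<0$ — which, as emphasized in Remark \ref{Remark 2.4}, is exactly the place where the high diffusion order $\tfrac52$ nearly breaks the first-order paracontrolled calculus and where one must pin down the admissible parameter window carefully.
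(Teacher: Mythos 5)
Your overall architecture (the decomposition \eqref{Burgers' est 165}, term-by-term estimation, and a telescoping/multilinearity argument for part (2)) matches the paper's proof, but Step 2 contains two concrete errors, the second of which is the crux of the whole proposition.

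First, for the term $f\Theta_{2}$ you claim the product is licit because $(\gamma-\tfrac32)+(2\alpha+\tfrac52)>0$ and that this follows from $\gamma>\tfrac18-\tfrac{\alpha}{2}$. That inequality is false in the stated parameter range: since $\gamma<\alpha+\tfrac52<\tfrac54$, one has $(\gamma-\tfrac32)+(2\alpha+\tfrac52)<2\alpha+\tfrac94<0$ for all $\alpha<-\tfrac98$, so in particular for every admissible $\alpha$. The route via Sobolev embedding $H^{\gamma}\hookrightarrow\mathscr{C}^{\gamma-\frac32}$ followed by a H\"older--H\"older product (or via Lemma \ref{Lemma 3.2}, which also loses $\tfrac32$ derivatives and cannot reach $H^{2\alpha+\frac52-\kappa}$) therefore fails. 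The correct mechanism, which the paper uses in \eqref{Burgers' est 166}, is the mixed Sobolev--H\"older paraproduct bounds of Lemma \ref{Lemma 3.1}: \eqref{Sobolev products a} for $f\prec\Theta_{2}$ (needs only $f\in L^{2}$), \eqref{Sobolev products d} for $f\succ\Theta_{2}$ (needs $2\alpha+\tfrac52<0$), and \eqref{Sobolev products e} for $f\circ\Theta_{2}$ under the condition $\gamma+2\alpha+\tfrac52>0$ --- with no loss of $\tfrac32$.

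Second, and more seriously, your application of Lemma \ref{Burgers' Lemma A.3} to $\mathcal{R}(f,\sigma(D)\eta,\eta)$ "with $f\in H^{\gamma}$, $\gamma\in(0,1)$" does not go through: the admissible range $\gamma\in(\tfrac18-\tfrac{\alpha}{2},\alpha+\tfrac52)$ reaches up to $\alpha+\tfrac52\in(\tfrac{11}{12},\tfrac54)$, so $\gamma\geq 1$ is allowed (and is exactly the regime needed in Propositions \ref{Burgers' Proposition 6.3}--\ref{Burgers' Proposition 6.9}, where $\gamma>1$). You flag that the constraint "must be verified to lie below $1$", but it does not, and this is precisely the obstruction described in Remark \ref{Remark 2.4}. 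The paper's resolution --- which is the one genuinely new idea in this part of the argument --- is to apply Lemma \ref{Burgers' Lemma A.3} at the \emph{reduced} regularity $\tfrac18-\tfrac{\alpha}{2}\in(\tfrac34,1)$ via the embedding $H^{\gamma}\hookrightarrow H^{\frac18-\frac{\alpha}{2}}$, and then to check that the output regularity $\tfrac18-\tfrac{\alpha}{2}+2\alpha+\tfrac52-\delta=\tfrac{21}{8}+\tfrac{3\alpha}{2}-\delta+(2\alpha+\tfrac52)-(2\alpha+\tfrac52)$ still exceeds $2\alpha+\tfrac52-\kappa$, which uses $\alpha>-\tfrac{7}{4}$ (guaranteed by $\alpha>-\tfrac{19}{12}$); see \eqref{Burgers' est 167}. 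Without this reduction your estimate of the commutator term simply does not apply for the relevant $\gamma$, so the proof as written has a genuine gap. Your Steps 1 and 3 are otherwise in line with the paper.
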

\begin{proof}[Proof of Proposition \ref{Burgers' Proposition 6.1}]
\hfill\\ (1) The hypothesis of $\gamma > \frac{1}{8} - \frac{\alpha}{2}$ and $\alpha > - \frac{19}{12}$ assures that $2\alpha + \frac{5}{2} + \gamma > 0$ so that \eqref{Sobolev products c}-\eqref{Sobolev products e} allow us to estimate the first term in \eqref{Burgers' est 165} by  
\begin{equation}\label{Burgers' est 166} 
\lVert f \Theta_{2} \rVert_{H^{2 \alpha + \frac{5}{2} - \kappa}} \lesssim \lVert f \prec \Theta_{2} \rVert_{H^{2 \alpha + \frac{5}{2} - \kappa}} + \lVert f \succ \Theta_{2} \rVert_{H^{2 \alpha + \frac{5}{2}}} + \lVert f \circ \Theta_{2} \rVert_{H^{2 \alpha + \frac{5}{2} + \gamma}} \lesssim \lVert f \rVert_{\mathcal{D}_{\eta}^{\gamma}} \lVert \Theta \rVert_{\mathcal{E}^{\alpha}}.
\end{equation} 
We come to $\mathcal{R} (f, \sigma(D) \eta, \eta)$ in \eqref{Burgers' est 165}. Thanks to our hypothesis that $-\frac{19}{12} < \alpha < - \frac{5}{4}$, we have $\frac{1}{8} - \frac{\alpha}{2} \in (\frac{3}{4}, 1)$, allowing us to rely on \eqref{Burgers' Estimate on R} to estimate, along with the third term of \eqref{Burgers' est 165}, 
\begin{subequations}  
\begin{align}
&\lVert \mathcal{R} (f, \sigma(D)\eta, \eta) \rVert_{H^{2 \alpha + \frac{5}{2} - \kappa}}   \overset{\eqref{Burgers' Estimate on R}}{\lesssim} \lVert f \rVert_{H^{\frac{1}{8} - \frac{\alpha}{2}}} \lVert \sigma(D) \eta \rVert_{\mathscr{C}^{\alpha + \frac{5}{2}}} \lVert \eta \rVert_{\mathscr{C}^{\alpha}}  \overset{\eqref{Burgers' D eta gamma norm} \eqref{Burgers' Define E alpha}}{\lesssim} \lVert f \rVert_{\mathcal{D}_{\eta}^{\gamma}} \lVert \Theta \rVert_{\mathcal{E}^{\alpha}}^{2}, \label{Burgers' est 167}\\
&\lVert f^{\sharp} \circ \eta \rVert_{H^{2 \alpha + \frac{5}{2}}} \lesssim \lVert f^{\sharp} \circ \eta \rVert_{H^{\alpha + 2 \gamma - \frac{1}{4}}} \overset{\eqref{Sobolev products e}}{\lesssim} \lVert f^{\sharp} \rVert_{H^{2\gamma - \frac{1}{4}}} \lVert \eta \rVert_{\mathscr{C}^{\alpha}}  \overset{\eqref{Burgers' Define E alpha} \eqref{Burgers' D eta gamma norm}}{\lesssim} \lVert f \rVert_{\mathcal{D}_{\eta}^{\gamma}} \lVert\Theta \rVert_{\mathcal{E}^{\alpha}}. \label{Burgers' est 168}
\end{align} 
\end{subequations}
 By applying \eqref{Burgers' est 166}, \eqref{Burgers' est 167}, and \eqref{Burgers' est 168} in \eqref{Burgers' est 165} we conclude  \eqref{Burgers' est 163}.   

(2) Similar computations to the proof of part (1) immediately lead us to 
\begin{align}
& \lVert f_{n} \circ \eta_{n} - f \circ \eta \rVert_{H^{2 \alpha + \frac{5}{2} - \kappa}}  \nonumber \\
\overset{\eqref{Burgers' est 165}}{\lesssim}& \lVert (f_{n} - f) \circ \eta_{n} \rVert_{H^{2\alpha + \frac{5}{2} - \kappa}} + \lVert f(\Theta_{2}^{n} - \Theta_{2})  \rVert_{H^{2 \alpha + \frac{5}{2} - \kappa}}    + \lVert \mathcal{R} (f, \sigma(D) (\eta_{n} - \eta), \eta_{n}) \rVert_{H^{2 \alpha + \frac{5}{2} - \kappa}}  \nonumber \\
& \hspace{10mm} + \lVert \mathcal{R} (f, \sigma(D) (\eta_{n} - \eta), \eta) \rVert_{H^{2 \alpha + \frac{5}{2} - \kappa}} + \lVert f^{\sharp} \circ (\eta_{n} - \eta) \rVert_{H^{2 \alpha + \frac{5}{2} - \kappa}}   \overset{\eqref{Burgers' est 169} \eqref{Burgers' est 170}}{\to} 0 
\end{align}
as $n \nearrow \infty$ which concludes \eqref{Burgers' est 171}.  This completes the proof of Proposition \ref{Burgers' Proposition 6.1}. 
\end{proof} 

\begin{proposition}\label{Burgers' Proposition 6.2} 
Let $\alpha \in (-\frac{19}{12}, -\frac{5}{4}), \eta \in \mathscr{C}^{\alpha}$, and $\gamma \in (\frac{11}{12}, \alpha + \frac{5}{2})$. Then $\mathcal{D}_{\eta}^{\gamma}$ is dense in $L^{2}$. 
\end{proposition}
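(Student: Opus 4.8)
The plan is to show that $\mathcal{D}_{\eta}^{\gamma}$ contains a dense subset of $L^{2}(\mathbb{T}^{3})$, and the natural candidate is $C^{\infty}(\mathbb{T}^{3})$ (or equivalently any finite linear combination of Fourier modes). The key point is that a smooth function $f$ is automatically an element of $\mathcal{D}_{\eta}^{\gamma}$: indeed $f \in H^{\gamma}$ trivially, and we must check that $f^{\sharp} = f - f \prec \sigma(D)\eta \in H^{2\gamma - \frac{1}{4}}$. Since $\eta \in \mathscr{C}^{\alpha}$ with $\alpha \in (-\frac{19}{12}, -\frac{5}{4})$, Lemma \ref{Burgers' Lemma A.1} (the Schauder estimate, noting $\sigma(D) = -(1+\Lambda^{5/2})^{-1}$ has symbol decaying like $|x|^{-5/2}$) gives $\sigma(D)\eta \in \mathscr{C}^{\alpha + \frac{5}{2}}$, and $\alpha + \frac{5}{2} > 0$. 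Then $f \prec \sigma(D)\eta$ can be estimated by \eqref{Sobolev products a}: $\lVert f \prec \sigma(D)\eta \rVert_{H^{\beta}} \lesssim \lVert f \rVert_{L^{2}} \lVert \sigma(D)\eta \rVert_{\mathscr{C}^{\alpha + \frac{5}{2}}}$ for any $\beta < \alpha + \frac{5}{2}$; more precisely, since $f$ is smooth, $f \prec \sigma(D)\eta \in H^{\alpha + \frac{5}{2} - \kappa}$ for every $\kappa > 0$ (one can apply \eqref{Sobolev products c} with $f$ carrying a large positive Sobolev exponent and lose only an arbitrarily small amount). So $f^{\sharp} \in H^{\min\{\text{large}, \alpha + \frac{5}{2} - \kappa\}}$, and it suffices to verify $2\gamma - \frac{1}{4} < \alpha + \frac{5}{2}$, i.e. $\gamma < \frac{\alpha}{2} + \frac{11}{8}$. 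The hypothesis $\gamma < \alpha + \frac{5}{2}$ combined with $\gamma > \frac{11}{12}$ needs to be cross-checked against this; since $\alpha > -\frac{19}{12}$ we have $\frac{\alpha}{2} + \frac{11}{8} > \frac{11}{12}$, and the admissible window is nonempty, so the constraint $\gamma \in (\frac{11}{12}, \alpha + \frac{5}{2})$ does sit inside $(\cdot, \frac{\alpha}{2} + \frac{11}{8})$ after a short arithmetic check — this is the place to be careful.

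Once $C^{\infty}(\mathbb{T}^{3}) \subset \mathcal{D}_{\eta}^{\gamma}$ is established, density in $L^{2}$ is immediate because $C^{\infty}(\mathbb{T}^{3})$ is already dense in $L^{2}(\mathbb{T}^{3})$ by standard Fourier truncation. So the statement reduces entirely to the inclusion $C^{\infty} \subset \mathcal{D}_{\eta}^{\gamma}$, which is the content of the paragraph above. I would organize the proof as: (i) recall $\sigma(D)\eta \in \mathscr{C}^{\alpha + \frac{5}{2}}$ via Lemma \ref{Burgers' Lemma A.1}; (ii) for $f \in C^{\infty}$, bound $\lVert f \prec \sigma(D)\eta \rVert_{H^{2\gamma - \frac{1}{4}}}$ using the paraproduct estimate \eqref{Sobolev products c} (choosing the Sobolev index on $f$ as large as needed, which is harmless), provided $2\gamma - \frac{1}{4} - (\text{index on } f) < \alpha + \frac{5}{2}$; (iii) conclude $f^{\sharp} \in H^{2\gamma - \frac{1}{4}}$, hence $f \in \mathcal{D}_{\eta}^{\gamma}$; (iv) invoke density of $C^{\infty}$ in $L^{2}$.

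The main obstacle — really the only subtle point — is the bookkeeping of Sobolev exponents in step (ii): one must confirm that the numerology $\alpha \in (-\frac{19}{12}, -\frac{5}{4})$, $\gamma \in (\frac{11}{12}, \alpha + \frac{5}{2})$ is genuinely compatible with $2\gamma - \frac{1}{4} < \alpha + \frac{5}{2}$ so that the paraproduct remainder lands in the right space. This is exactly the kind of constraint-juggling that pervades the Anderson Hamiltonian construction (cf. Remark \ref{Remark 2.4}), and the precise interval $(\frac{11}{12}, \alpha + \frac{5}{2})$ in the statement was presumably chosen to make it work. Beyond that arithmetic, everything else is a direct application of Lemma \ref{Burgers' Lemma A.1} and Lemma \ref{Lemma 3.1}, with no genuinely new analytic input required.
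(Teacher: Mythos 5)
Your argument breaks down at exactly the point you flag as ``the place to be careful,'' and the arithmetic there does not work out: the inclusion $C^{\infty}(\mathbb{T}^{3})\subset\mathcal{D}_{\eta}^{\gamma}$ is false in the stated parameter range. For smooth $f$ the paraproduct $f\prec\sigma(D)\eta$ has regularity capped by that of $\sigma(D)\eta\in\mathscr{C}^{\alpha+\frac{5}{2}}$, no matter how smooth $f$ is (take $f\equiv 1$, for which $1\prec\sigma(D)\eta$ agrees with $\sigma(D)\eta$ up to finitely many low-frequency blocks), so $f^{\sharp}=f-f\prec\sigma(D)\eta$ lies only in $H^{(\alpha+\frac{5}{2})-}$. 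You would therefore need $2\gamma-\frac{1}{4}<\alpha+\frac{5}{2}$. But $\gamma>\frac{11}{12}$ forces $2\gamma-\frac{1}{4}>\frac{19}{12}$, while $\alpha<-\frac{5}{4}$ forces $\alpha+\frac{5}{2}<\frac{5}{4}=\frac{15}{12}$; the required inequality fails for \emph{every} admissible $(\alpha,\gamma)$. Your intermediate claim that $\frac{\alpha}{2}+\frac{11}{8}>\frac{11}{12}$ is also wrong: it would require $\alpha>-\frac{11}{12}$, whereas $\alpha<-\frac{5}{4}$, so the window $(\frac{11}{12},\frac{\alpha}{2}+\frac{11}{8})$ is empty. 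This is not a defect of the definition but its whole point: $\mathcal{D}_{\eta}^{\gamma}$ consists of functions whose rough part is \emph{exactly} the paraproduct $f\prec\sigma(D)\eta$, and smooth functions do not have that structure.

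The paper's proof is necessarily different. Given $g\in C^{\infty}$, it constructs a genuinely paracontrolled approximant $f_{a}$ as the unique fixed point of $\Gamma(f)=\sigma_{a}(D)(f\prec\eta)+g$ with $\sigma_{a}(k)=-(1+a+\lvert k\rvert^{\frac{5}{2}})^{-1}$, using the $a$-dependent symbol bounds \eqref{Burgers' est 172} to make $\Gamma$ a contraction on $H^{\gamma}$ for $a$ large. Membership $f_{a}\in\mathcal{D}_{\eta}^{\gamma}$ then follows from the decomposition \eqref{Burgers' est 186}: the difference $\sigma_{a}(D)(f_{a}\prec\eta)-f_{a}\prec\sigma_{a}(D)\eta$ is a commutator that gains $\frac{5}{2}$ derivatives (up to $\delta$) by Lemma \ref{Burgers' Lemma A.2}, and $f_{a}\prec(\sigma_{a}-\sigma)(D)\eta$ is handled by the improved decay of $\sigma_{a}-\sigma$; neither gain is available in your direct computation. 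Finally $f_{a}\to g$ in $H^{\gamma}$ as $a\nearrow\infty$ by \eqref{Burgers' est 185}, which yields density. If you want to repair your write-up, this fixed-point-plus-commutator mechanism is the missing idea, not a tighter bookkeeping of exponents.
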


\begin{proof}[Proof of Proposition \ref{Burgers' Proposition 6.2} ]
We fix an arbitrary $g \in C^{\infty}$ and define 
\begin{equation}\label{Burgers' Define sigma a}
\sigma_{a}(k) \triangleq - \frac{1}{1+ a + \lvert k \rvert^{\frac{5}{2}}} \text{ for } a > 0
\end{equation} 
and consider a map 
\begin{equation}\label{Burgers' Define Gamma} 
\Gamma: \hspace{1mm} H^{\gamma}\mapsto H^{\gamma} \text{ defined by } \Gamma(f) \triangleq \sigma_{a}(D) (f\prec \eta) + g. 
\end{equation}
For any $k$, multi-index $r$ and $\vartheta \in [0, 1]$, we have for $\sigma(D)$ defined in \eqref{Burgers' Define K alpha},  
\begin{equation}\label{Burgers' est 172} 
\lvert D^{r} \sigma_{a}(k) \rvert \lesssim \frac{a^{\vartheta -1}}{(1+ \lvert k \rvert)^{\frac{5}{2} \vartheta + r}} \hspace{1mm} \text{ and } \hspace{1mm} \lvert D^{r} ( \sigma_{a} - \sigma) (k) \rvert \lesssim \frac{a^{\vartheta}}{1+ \lvert k \rvert^{\frac{5}{2}+ \frac{5}{2} \vartheta + r}}. 
\end{equation} 
Because $\gamma < \alpha + \frac{5}{2}$ by hypothesis, we can find $\epsilon_{1} > 0$ sufficiently small so that 
\begin{equation}\label{Burgers' Define first epsilon} 
\epsilon_{1} < \alpha + \frac{5}{2} - \gamma
\end{equation} 
and estimate via Lemma \ref{Burgers' Lemma A.1} for $\vartheta =  \frac{2(\gamma + \epsilon_{1} - \alpha)}{5} \in [0,1]$, for all $a \geq A$ with $A$ sufficiently large, 
\begin{equation}\label{Burgers' est 184} 
 \lVert \Gamma(f_{1}) - \Gamma(f_{2}) \rVert_{H^{\gamma}} \overset{\eqref{Burgers' Define first epsilon} \eqref{Burgers' Schauder}}{\lesssim} a^{\frac{ 2(\gamma + \epsilon_{1} - \alpha)}{5} -1} \lVert (f_{1} - f_{2}) \prec \eta \rVert_{H^{ \alpha - \epsilon_{1}}} \overset{\eqref{Sobolev products a} \eqref{Burgers' Define first epsilon}}{\ll} \lVert f_{1} - f_{2} \rVert_{L^{2}}, 
\end{equation} 
and thus $\Gamma$, as a contraction for all such large $a$, admits a unique fixed point $f_{a}$. 

An identical estimate in \eqref{Burgers' est 184} shows that, since $\gamma > \frac{11}{12}$ by hypothesis, the fixed point $f_{a}$ satisfies 
\begin{equation}\label{Burgers' est 185} 
\lVert f_{a} - g \rVert_{H^{\gamma}} \overset{\eqref{Burgers' Schauder}}{\lesssim} a^{\frac{ 2(\gamma + \epsilon_{1} - \alpha)}{5} - 1} \lVert f_{a} \prec \eta \rVert_{H^{\alpha - \epsilon_{1}}} \overset{\eqref{Sobolev products a}}{\lesssim}  a^{ \frac{ 2(\gamma + \epsilon_{1} - \alpha)}{5} - 1} \lVert f_{a} \rVert_{H^{\gamma}} \lVert \eta  \rVert_{\mathscr{C}^{\alpha}}.
\end{equation}
Taking $a \geq A$ for $A \gg \lVert \eta \rVert_{\mathscr{C}^{\alpha}}^{\frac{1}{1- \frac{2(\gamma + \epsilon_{1} - \alpha)}{5}}}$ in this inequality gives us $\sup_{a \geq A} \lVert f_{a} \rVert_{H^{\gamma}} \leq 2 \lVert g\rVert_{H^{\gamma}}$ and plugging this inequality back into the upper bound in \eqref{Burgers' est 185} finally shows 
\begin{equation*}
\lVert f_{a} - g \rVert_{H^{\gamma}} \lesssim a^{ \frac{ 2(\gamma + \epsilon_{1} - \alpha)}{5} - 1} \lVert g \rVert_{H^{\gamma}} \lVert \eta  \rVert_{\mathscr{C}^{\alpha}} 
\end{equation*}
which allows us to conclude that $f_{a}$ converges to $g$ in $H^{\gamma}$ as $a\nearrow \infty$ and hence in $L^{2}$. 

To show that $f_{a} - f_{a} \prec \sigma(D) \eta \in H^{2\gamma - \frac{1}{4}}$, we first rewrite using \eqref{Burgers' Define Gamma}, 
\begin{equation}\label{Burgers' est 186}
 f_{a} - f_{a} \prec \sigma(D) \eta = \sigma_{a}(D) (f_{a} \prec \eta) - f_{a} \prec \sigma_{a}(D) \eta + f_{a} \prec \left( \sigma_{a}(D) - \sigma(D) \right)\eta + g 
\end{equation} 
and conclude via the estimates of 
\begin{subequations}\label{Burgers' est 187}
\begin{align}
& \lVert \sigma_{a}(D) (f_{a} \prec \eta) - f_{a} \prec \sigma_{a}(D) \eta \rVert_{H^{2\gamma - \frac{1}{4}}}  \lesssim \lVert f_{a} \rVert_{H^{\gamma}} \lVert \eta \rVert_{\mathscr{C}^{\alpha}}, \label{Burgers' est 187a} \\
&  \lVert f_{a} \prec \left(\sigma_{a} (D) -\sigma(D)\right) \eta \rVert_{H^{2\gamma - \frac{1}{4}}} \lesssim_{a} \lVert f_{a} \rVert_{H^{\gamma}} \lVert \eta \rVert_{\mathscr{C}^{\alpha}} \label{Burgers' est 187b}
\end{align}
\end{subequations} 
where we used Lemma \ref{Burgers' Lemma A.2}, \eqref{Sobolev products c}, and \eqref{Burgers' est 172}. Considering \eqref{Burgers' est 186}-\eqref{Burgers' est 187} allows us to conclude that $f_{a} - f_{a} \prec \sigma(D) \eta \in H^{2\gamma - \frac{1}{4}}$ and therefore $f_{a} \in \mathcal{D}_{\eta}^{\gamma}$ in \eqref{Burgers' Define D eta gamma} so that $\mathcal{D}_{\eta}^{\gamma}$ is dense in $C^{\infty}$, which implies the claim and completes the proof of Proposition \ref{Burgers' Proposition 6.2}.
\end{proof}  

\begin{proposition}\label{Burgers' Proposition 6.3}
Define $\mathcal{H}$ by \eqref{Burgers' Define H}. Let $\alpha \in (-\frac{19}{12}, -\frac{5}{4}), \gamma \in (\frac{11}{12}, \alpha + \frac{5}{2})$, 
\begin{equation}\label{Burgers' Define rho}
\rho \in \left( \frac{2}{5} \left(2 \gamma - \alpha - \frac{11}{4} \right), 1 + \frac{2\alpha}{5}\right), 
\end{equation}  
and $\Theta = ( \eta, \Theta_{2}) \in \mathcal{K}^{\alpha}$. Then there exists $A = A( \lVert \Theta \rVert_{\mathcal{E}^{\alpha}})$ such that for all $a \geq A$ and $g \in H^{2 \gamma - \frac{11}{4}}$, 
\begin{equation}\label{Burgers' est 188}
(-\mathcal{H} + a) f = g 
\end{equation} 
admits a unique solution $f_{a} \in \mathcal{D}_{\eta}^{\gamma}$. Additionally, the mapping 
\begin{equation}\label{Burgers' Define mathcal G}
\mathcal{G}_{a}: \hspace{1mm} L^{2} \mapsto \mathcal{D}_{\eta}^{\gamma} \text{ for } a \geq A, \text{ defined by } \mathcal{G}_{a} g \triangleq f_{a}, 
\end{equation}  
is uniformly bounded; in fact, for all $g \in H^{-\delta}$, all $\delta \in [0, \frac{11}{4}- 2 \gamma]$, the following estimates hold: 
\begin{subequations}\label{Burgers' est 202} 
\begin{align}
&\lVert f_{a} \rVert_{H^{\gamma}} + a^{-\rho} \lVert f_{a}^{\sharp} \rVert_{H^{2\gamma - \frac{1}{4}}} \lesssim \left( a^{\frac{2(\gamma + \delta)}{5} -1} + a^{-\rho + \frac{2}{5} (2 \gamma - \frac{1}{4} + \delta) -1} \right) \lVert g \rVert_{H^{-\delta}},\label{Burgers' est 202a} \\
&\lVert \mathcal{G}_{a} g \rVert_{\mathcal{D}_{\eta}^{\gamma}} \lesssim \left(a^{\rho + \frac{2(\gamma + \delta)}{5} - 1} + a^{\frac{2}{5} (2\gamma - \frac{1}{4} + \delta) - 1} \right) \lVert g \rVert_{H^{-\delta}}. \label{Burgers' est 202b} 
\end{align}
\end{subequations}
\end{proposition}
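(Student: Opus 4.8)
The plan is to adapt the paracontrolled Banach fixed-point scheme of \cite[Section 4]{AC15}, carrying it out first for smooth enhanced noise and smooth datum and then passing to rough data by the stability statement of Proposition \ref{Burgers' Proposition 6.1} (2) and the density result of Proposition \ref{Burgers' Proposition 6.2}. Fix $\Theta=(\eta,\Theta_{2})\in\mathcal{K}^{\alpha}$ smooth. Adding $f$ to both sides of \eqref{Burgers' est 188} gives $(1+a+\Lambda^{\frac{5}{2}})f=g+f-\eta f$, and inverting the left-hand side by $-\sigma_{a}(D)=(1+a+\Lambda^{\frac{5}{2}})^{-1}$ with $\sigma_{a}$ from \eqref{Burgers' Define sigma a} leads to the mild equation
\begin{equation}\label{prop63 mild}
f=-\sigma_{a}(D)(g+f)+\sigma_{a}(D)(f\prec\eta)+\sigma_{a}(D)(f\succ\eta)+\sigma_{a}(D)(f\circ\eta),
\end{equation}
in which the resonant term is read through Proposition \ref{Burgers' Proposition 6.1} (1). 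I would take $\Phi_{a}(f)$ to be the right-hand side of \eqref{prop63 mild} on $\mathcal{D}_{\eta}^{\gamma}$ and produce $f_{a}$ by Picard iteration from $f_{0}=0$, the essential point being that every term of $\Phi_{a}$ is smoothed by $\sigma_{a}(D)$, exactly as for the auxiliary map $\Gamma$ in the proof of Proposition \ref{Burgers' Proposition 6.2}.

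Running the iteration requires two estimates. For the contraction in $H^{\gamma}$, since $\Phi_{a}(f_{1})-\Phi_{a}(f_{2})$ is entirely under $\sigma_{a}(D)$, I would use Lemma \ref{Burgers' Lemma A.1} in the sharp form $\lVert\sigma_{a}(D)h\rVert_{H^{s+\frac{5}{2}\vartheta}}\lesssim a^{\vartheta-1}\lVert h\rVert_{H^{s}}$ for $\vartheta\in[0,1]$ (a consequence of the first bound in \eqref{Burgers' est 172}), together with \eqref{Sobolev products c}--\eqref{Sobolev products d} for $f\prec\eta,f\succ\eta$ and \eqref{Burgers' est 163} for $f\circ\eta$, to obtain $\lVert\Phi_{a}(f_{1})-\Phi_{a}(f_{2})\rVert_{H^{\gamma}}\le\theta(a,\lVert\Theta\rVert_{\mathcal{E}^{\alpha}})\lVert f_{1}-f_{2}\rVert_{H^{\gamma}}$ with $\theta\to0$ as $a\to\infty$ (here $\alpha>-\frac{5}{2}$ and $\gamma<2\alpha+5-\kappa$ are precisely what keep the relevant $\vartheta<1$). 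For the paracontrolled component, writing $\Phi_{a}(f)^{\sharp}=\Phi_{a}(f)-\Phi_{a}(f)\prec\sigma(D)\eta$ and using $\sigma_{a}(D)(f\prec\eta)-f\prec\sigma(D)\eta=\mathscr{C}_{a}(f,\eta)+f\prec(\sigma_{a}-\sigma)(D)\eta$ with $\mathscr{C}_{a}(f,h)\triangleq\sigma_{a}(D)(f\prec h)-f\prec\sigma_{a}(D)h$ (the commutator of Lemma \ref{Burgers' Lemma A.2} with $\sigma_{a}$ in place of $\sigma$), one is left to bound $-\sigma_{a}(D)(g+f)$, $\sigma_{a}(D)(f\succ\eta)$, $\sigma_{a}(D)(f\circ\eta)$, $\mathscr{C}_{a}(f,\eta)$ and $f\prec(\sigma_{a}-\sigma)(D)\eta$ in $H^{2\gamma-\frac{1}{4}}$ by Lemma \ref{Burgers' Lemma A.2}, the second bound in \eqref{Burgers' est 172}, Proposition \ref{Burgers' Proposition 6.1} (1), and \eqref{Sobolev products}. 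Collecting the powers of $a$ produced by each Schauder and commutator step gives exactly $a^{\frac{2(\gamma+\delta)}{5}-1}$ and $a^{-\rho+\frac{2}{5}(2\gamma-\frac{1}{4}+\delta)-1}$ as in \eqref{Burgers' est 202a}, the weight $a^{-\rho}$ with $\rho$ in the interval \eqref{Burgers' Define rho} being what absorbs the worst of them. Hence $f_{n}\to f_{a}$ in $H^{\gamma}$ while $\{f_{n}^{\sharp}\}_{n}$ stays bounded in $H^{2\gamma-\frac{1}{4}}$, so $f_{a}\in\mathcal{D}_{\eta}^{\gamma}$ solves \eqref{Burgers' est 188} and satisfies \eqref{Burgers' est 202a}; uniqueness in $\mathcal{D}_{\eta}^{\gamma}$ follows from the $H^{\gamma}$-contraction since the equation is affine in $f$, and \eqref{Burgers' est 202b} is a re-weighting of \eqref{Burgers' est 202a} through \eqref{Burgers' D eta gamma norm}.

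It then remains to remove the smoothness assumptions. Given $\Theta\in\mathcal{K}^{\alpha}$ approximated by smooth $\Theta^{n}$ as in \eqref{Burgers' est 169} and $g^{n}\to g$ in $H^{-\delta}$, the $a$-uniform bounds \eqref{Burgers' est 202} together with Proposition \ref{Burgers' Proposition 6.1} (2) show $\{\mathcal{G}_{a}g^{n}\}_{n}$ is Cauchy in $\mathcal{D}_{\eta}^{\gamma}$; its limit solves \eqref{Burgers' est 188} for $(\Theta,g)$, the density of smooth functions in $H^{-\delta}$ together with the Hilbert structure of $\mathcal{D}_{\eta}^{\gamma}$ yields the continuous extension $\mathcal{G}_{a}$ of \eqref{Burgers' Define mathcal G}, and \eqref{Burgers' est 202} survive the passage to the limit; uniqueness for rough $\Theta$ follows once more from the contraction.

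The main obstacle will be the exponent bookkeeping, for the reason flagged in Remark \ref{Remark 2.4}: $\gamma$ may exceed $1$ (it runs up to $\alpha+\frac{5}{2}$, hence close to $\frac{5}{4}$), so Lemma \ref{Burgers' Lemma A.2}, which demands a Sobolev exponent in $(0,1)$ in its low slot, can be applied to $\mathscr{C}_{a}(f,\eta)$ only with $f$ carrying a reduced regularity $H^{\gamma'}$, $\gamma'<1$, in place of $H^{\gamma}$; this shrinks the smoothing available from $\sigma_{a}(D)$, and one must check that $\gamma'+\alpha+\frac{5}{2}-\delta\ge2\gamma-\frac{1}{4}$ nevertheless holds for a suitably small $\delta>0$ and $\gamma'$ close to $1$, which is precisely where the narrow window $\alpha\in(-\frac{19}{12},-\frac{5}{4})$ and the interval \eqref{Burgers' Define rho} are consumed---the same kind of verification as the one already performed in the proof of Proposition \ref{Burgers' Proposition 6.1} (1). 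A secondary, milder difficulty is that controlling $f^{\sharp}$ in $H^{2\gamma-\frac{1}{4}}$ forces, as in Remark \ref{Remark 2.2}, a commutator of the form $\Lambda^{\frac{5}{2}}(uv)-(\Lambda^{\frac{5}{2}}u)v-u\Lambda^{\frac{5}{2}}v$ (cf.\ \eqref{our commutator} and \eqref{Burgers' Define B}) for which \eqref{Li and D'Ancona} is unavailable; I would dispatch it by the three-piece decomposition described in Remark \ref{Remark 2.2}.
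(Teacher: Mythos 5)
Your overall strategy---rewriting $(-\mathcal{H}+a)f=g$ as a mild equation smoothed by $\sigma_{a}(D)$, running a fixed-point argument, extracting the powers of $a$ from the Schauder bound \eqref{Burgers' est 172} and the commutator $\mathscr{C}_{a}(f,\eta)=\sigma_{a}(D)(f\prec\eta)-f\prec\sigma_{a}(D)\eta$ together with $f\prec(\sigma_{a}-\sigma)(D)\eta$, and using the $a^{-\rho}$ weight with $\rho$ in \eqref{Burgers' Define rho} to absorb the non-decaying terms---is the same as the paper's (which uses $\tilde{\sigma}_{a}(D)=-(a+\Lambda^{\frac{5}{2}})^{-1}$ rather than your $\sigma_{a}(D)$, an immaterial difference). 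You also correctly identify the genuinely delicate point, namely that $\gamma$ may exceed $1$ so Lemma \ref{Burgers' Lemma A.2} must be fed $f$ at a reduced regularity $\gamma'<1$, which is exactly how \eqref{Burgers' est 196a} is obtained. (Your ``secondary difficulty'' about the commutator $\Lambda^{\frac{5}{2}}(uv)-(\Lambda^{\frac{5}{2}}u)v-u\Lambda^{\frac{5}{2}}v$ is not needed for this proposition; it only enters later through the operator $B$ of Definition \ref{Burgers' Definition 6.4}.)

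There is, however, one genuine gap: the map $\Phi_{a}$ is \emph{not} a contraction in $H^{\gamma}$ alone, and your Picard iteration does not close as stated. The resonant term $\sigma_{a}(D)\bigl((f_{1}-f_{2})\circ\eta\bigr)$ can only be estimated through \eqref{Burgers' est 163}, whose right-hand side carries the full norm $\lVert f_{1}-f_{2}\rVert_{\mathcal{D}_{\eta}^{\gamma}}$, i.e.\ it feeds $\lVert f_{1}^{\sharp}-f_{2}^{\sharp}\rVert_{H^{2\gamma-\frac{1}{4}}}$ back into the $H^{\gamma}$ bound; so the claimed factor $\theta(a,\lVert\Theta\rVert_{\mathcal{E}^{\alpha}})\lVert f_{1}-f_{2}\rVert_{H^{\gamma}}$ is unobtainable, and it is not enough to know that $\{f_{n}^{\sharp}\}_{n}$ merely ``stays bounded.'' The paper resolves this by doubling the unknown: it works in the space $\tilde{\mathcal{D}}_{\eta}^{\gamma,\rho,A}$ of \emph{pairs} $(f_{a},f_{a}')_{a\geq A}$ with the weighted norm \eqref{Burgers' Define tilde D norm}, defines $\mathcal{M}(f,f')=(M(f,f'),f)$ with the resonant product evaluated through the paracontrolled derivative $f'$ (see \eqref{Burgers' est 191}--\eqref{Burgers' est 193}), and---because the second component of $\mathcal{M}$ is the identity in $f$ and therefore cannot contract in one step---shows that $\mathcal{M}^{2}$, not $\mathcal{M}$, is a contraction for $A\gg[1+\lVert\Theta\rVert_{\mathcal{E}^{\alpha}}^{4}]^{1/\lambda}$ as in \eqref{Burgers' est 203}. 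Your argument needs this (or an equivalent contraction in the full weighted paracontrolled norm) to produce the fixed point and, a fortiori, to get uniqueness in $\mathcal{D}_{\eta}^{\gamma}$; once that is in place, extracting \eqref{Burgers' est 202} by re-inserting the fixed point into the self-mapping bounds, as in \eqref{Burgers' est 204}, goes through as you describe. Note also that the paper performs the fixed point directly for rough $\Theta\in\mathcal{K}^{\alpha}$ (the product $f_{a}\eta$ being justified by \eqref{Burgers' est 165}), so no smooth-approximation step is needed, though your approximation route is viable.
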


\begin{proof}[Proof of Proposition \ref{Burgers' Proposition 6.3}] 
For any $A > 0$, we define the Banach space  
\begin{subequations}
\begin{align}
& \tilde{\mathcal{D}}_{\eta}^{\gamma, \rho, A} \triangleq \{ (f_{a}, f_{a}')_{a \geq A} \in C([A, \infty); H^{\gamma})^{2}:  \hspace{1mm} f_{a} \in \mathcal{D}_{\eta}^{\gamma}, \lVert (f, f') \rVert_{\tilde{D}_{\eta}^{\gamma, \rho, A}} < \infty \}, \label{Burgers' Define tilde D}\\
\text{where } &\lVert (f, f') \rVert_{\tilde{D}_{\eta}^{\gamma, \rho, A}} \triangleq \sup_{a \geq A} \lVert f_{a}' \rVert_{H^{\gamma}} + \sup_{a\geq A} a^{-\rho} \lVert f_{a}^{\sharp} \rVert_{H^{2\gamma - \frac{1}{4}}}+ \sup_{a \geq A} \lVert f_{a} \rVert_{H^{\gamma}}, \label{Burgers' Define tilde D norm}
\end{align}
\end{subequations}
and 
\begin{equation}\label{Burgers' Define mathcal M}
\mathcal{M}(f, f') \triangleq (M(f, f'), f), \hspace{3mm} M(f, f')_{a} \triangleq \tilde{\sigma}_{a}(D) (f_{a} \eta - g) \hspace{3mm} \forall \hspace{1mm} (f, f') \in \tilde{D}_{\eta}^{\gamma, \rho, A}, 
\end{equation} 
where 
\begin{subequations}\label{Burgers' Define tilde sigma a}
\begin{align}
&\tilde{\sigma}_{a}(D) \triangleq -\frac{1}{a+ \lvert k \rvert^{\frac{5}{2}}} \hspace{2mm} \text{ for } a > 2 \\
\text{that satisfies } &\lvert D^{r} \tilde{\sigma}_{a}(k) \rvert \lesssim \frac{a^{\vartheta -1}}{(1+ \lvert k \rvert)^{\frac{5}{2} \vartheta + r}}  \hspace{2mm} \text{ and } \hspace{2mm} \lvert D^{r} ( \tilde{\sigma}_{a} - \sigma) (k) \rvert \lesssim \frac{a^{\vartheta}}{1+ \lvert k \rvert^{\frac{5}{2}+ \frac{5}{2} \vartheta + r}}
\end{align}
\end{subequations} 
for any multi-index $r$, similarly to \eqref{Burgers' est 172}, and the product $f_{a} \eta$ is justified via \eqref{Burgers' est 165}. To find a solution to \eqref{Burgers' est 188}, it suffices to prove that $\mathcal{M}$ admits a unique fixed point in $\tilde{\mathcal{D}}_{\eta}^{\gamma, \rho, A}$. To do so, the idea is to show that if $(f, f') \in \tilde{\mathcal{D}}_{\eta}^{\gamma, \rho, A}$, then 
\begin{equation}\label{Burgers' est 194} 
M(f, f')_{a} \in H^{\gamma} \hspace{1mm} \text{ and } \hspace{1mm} M(f, f')^{\sharp} \triangleq M(f, f') - f \prec \sigma(D) \eta \in H^{2\gamma} 
\end{equation} 
so that $\mathcal{M}(f, f') \in \tilde{D}_{\eta}^{\gamma, \rho, A}$ allowing us to conclude that $\mathcal{M} (\tilde{\mathcal{D}}_{\eta}^{\gamma, \rho, A}) \subset \tilde{\mathcal{D}}_{\eta}^{\gamma, \rho, A}$. For this purpose, first, we need to improve the $\epsilon_{1} > 0$ from \eqref{Burgers' Define first epsilon}. Because $\gamma < \alpha + \frac{5}{2}$ by hypothesis and $\rho >  \frac{2}{5} (2 \gamma - \alpha -\frac{11}{4})$ by \eqref{Burgers' Define rho},  we can find $\epsilon_{2} > 0$ sufficiently small such that 
\begin{equation}\label{Burgers' Define epsilon two}
\epsilon_{2} < \min\left\{ \alpha + \frac{5}{2} - \gamma,  \frac{11}{4} + \alpha + \frac{5\rho}{2} - 2 \gamma, \frac{1}{3}\right\}.
\end{equation} 
Then, with choices of $\vartheta = \frac{2(\gamma + \epsilon_{2} - \alpha)}{5}, - \frac{2\alpha}{5}, \frac{2(\gamma + \delta)}{5} \in [0,1]$ for any $\delta \in [0, \frac{11}{4} - 2 \gamma]$, we deduce by \eqref{Burgers' Schauder}, \eqref{Sobolev products c} and \eqref{Burgers' Define tilde sigma a}, 
\begin{subequations}\label{Burgers' est 173}   
\begin{align}
& \lVert \tilde{\sigma}_{a}(D) (f_{a} \prec \eta) \rVert_{H^{\gamma}} \lesssim a^{\frac{ 2(\gamma + \epsilon_{2} - \alpha)}{5} -1} \lVert f_{a} \prec \eta \rVert_{H^{\alpha - \epsilon_{2}}} \lesssim a^{\frac{ 2(\gamma + \epsilon_{2} - \alpha)}{5} -1} \lVert f_{a} \rVert_{H^{\gamma}} \lVert \eta \rVert_{\mathscr{C}^{\alpha}}, \label{Burgers' est 173a} \\
& \lVert \tilde{\sigma}_{a}(D) ( f_{a} \circ \eta + f_{a} \succ \eta ) \rVert_{H^{\gamma}}  \lesssim  a^{- \frac{2\alpha}{5} -1} \left( \lVert f_{a} \circ \eta \rVert_{H^{\gamma + \alpha}} + \lVert f_{a}  \rVert_{H^{\gamma}} \lVert \eta \rVert_{\mathscr{C}^{\alpha}}  \right),  \label{Burgers' est 173b} \\
& \lVert \tilde{\sigma}_{a}(D) g \rVert_{H^{\gamma}} \lesssim  a^{\frac{2(\gamma + \delta)}{5}  - 1} \lVert g \rVert_{H^{-\delta}}. \label{Burgers' est 173c} 
\end{align}  
\end{subequations} 
To treat $\lVert f_{a} \circ \eta \rVert_{H^{\gamma + \alpha}}$ in \eqref{Burgers' est 173b}, we write $f_{a} \circ \eta = f_{a} \circ \eta - f_{a}^{\sharp} \circ \eta + f_{a}^{\sharp} \circ \eta$ and estimate 
\begin{align}
&\lVert f_{a} \circ \eta - f_{a}^{\sharp} \circ \eta \rVert_{H^{\gamma + \alpha}}  \lesssim \lVert f_{a}' \prec\Theta_{2} \rVert_{H^{\gamma + \alpha}} + \lVert f_{a}' \succ \Theta_{2} \rVert_{H^{\gamma + \alpha}} + \lVert f_{a}' \circ \Theta_{2} \rVert_{H^{\gamma + \alpha}} \nonumber \\
&  \hspace{18mm} + \lVert \mathcal{R} ( f_{a}', \sigma(D)\eta, \eta)  \rVert_{H^{\gamma + \alpha}}   \overset{\eqref{Sobolev products c}-\eqref{Sobolev products e} \eqref{Burgers' Estimate on R}}{\lesssim} \lVert f_{a}' \rVert_{H^{\gamma}} ( \lVert \Theta_{2} \rVert_{\mathscr{C}^{2 \alpha + \frac{5}{2}}} + \lVert \eta \rVert_{\mathscr{C}^{\alpha}}^{2}); \label{Burgers' est 191}
\end{align}
applying \eqref{Sobolev products e} in the other term $\lVert f_{a}^{\sharp} \circ \eta \rVert_{H^{\gamma + \alpha}}$, we have in sum
\begin{equation}\label{Burgers' est 193}
\lVert f_{a} \circ \eta \rVert_{H^{\gamma + \alpha}} \lesssim  \lVert f_{a}' \rVert_{H^{\gamma}} ( \lVert \Theta_{2} \rVert_{\mathscr{C}^{2 \alpha + \frac{5}{2}}} + \lVert \eta \rVert_{\mathscr{C}^{\alpha}}^{2})  + a^{\rho} \left( \frac{ \lVert f_{a}^{\sharp} \rVert_{H^{2\gamma - \frac{1}{4}}}}{a^{\rho}} \right) \lVert \eta \rVert_{\mathscr{C}^{\alpha}}. 
\end{equation} 
Applying \eqref{Burgers' est 173}, \eqref{Burgers' est 193}, \eqref{Burgers' Define tilde D norm} to \eqref{Burgers' Define mathcal M} allows us to deduce 
\begin{align}
& \lVert M(f, f')_{a} \rVert_{H^{\gamma}} \nonumber \overset{\eqref{Burgers' Define mathcal M}}{=} \lVert \tilde{\sigma}_{a}(D) (f_{a} \eta - g) \rVert_{H^{\gamma}} \nonumber \\ 
\lesssim& a^{\max \{ \frac{ 2(\gamma + \epsilon_{2} - \alpha)}{5} -1, \rho - \frac{2\alpha}{5} -1 \}} \lVert (f, f') \rVert_{\tilde{\mathcal{D}}_{\eta}^{\gamma, \rho, A}} ( \lVert \eta \rVert_{\mathscr{C}^{\alpha}} + \lVert \eta \rVert_{\mathscr{C}^{\alpha}}^{2} + \lVert \Theta_{2} \rVert_{\mathscr{C}^{2 \alpha + \frac{5}{2}}} )+a^{\frac{2(\gamma + \delta)}{5} -1} \lVert g\rVert_{H^{-\delta}},  \label{Burgers' est 198}
\end{align}
which implies $M(f, f')_{a} \in H^{\gamma}$, the first claim in \eqref{Burgers' est 194}. 

Next, to show the second claim in \eqref{Burgers' est 194}, namely that $M(f, f')^{\sharp} \in H^{2\gamma - \frac{1}{4}}$, we write from \eqref{Burgers' est 194} and \eqref{Burgers' Define mathcal M}, 
\begin{equation}\label{Burgers' est 197}
M(f, f')_{a}^{\sharp} = \sum_{k=1}^{6} \RomanIV_{k} 
\end{equation} 
where  
\begin{subequations} 
\begin{align}
&\RomanIV_{1} \triangleq \mathscr{C}_{a}(f_{a}, \eta), \hspace{9mm} \RomanIV_{2} \triangleq  \tilde{\sigma}_{a}(D) (f_{a} \circ \eta - f_{a}^{\sharp} \circ \eta),  \hspace{1mm} \RomanIV_{3} \triangleq  \tilde{\sigma}_{a}(D) (f_{a}^{\sharp} \circ \eta), \label{Burgers' Define IV1, IV2, and IV3}\\
& \RomanIV_{4} \triangleq \tilde{\sigma}_{a}(D) (f_{a}\succ \eta), \hspace{1mm} \RomanIV_{5} \triangleq - \tilde{\sigma}_{a}(D) g, \hspace{20mm} \RomanIV_{6} \triangleq f_{a} \prec \left( \tilde{\sigma}_{a}- \sigma \right)(D) \eta, \label{Burgers' Define IV3, IV4, and IV5}
\end{align}
\end{subequations} 
and
\begin{equation}\label{Burgers' Define mathcal Ca}
\mathscr{C}_{a}(f, g) \triangleq \tilde{\sigma}_{a}(D) (f \prec g) - f \prec \tilde{\sigma}_{a}(D)g
\end{equation} 
analogously to \eqref{Burgers' Define mathcal C}. With 
\begin{align*}
&\vartheta = \frac{2(\gamma + \epsilon_{2} - \alpha)}{5}, \hspace{1mm} \vartheta = \frac{2(\gamma - \frac{1}{4} - \alpha)}{5}, \\
&\vartheta = -\frac{2\alpha}{5},  \hspace{1mm}\vartheta = \frac{2(2 \gamma - \frac{1}{4} + \delta)}{5},  \hspace{1mm}\vartheta = \frac{2}{5} \left(2\gamma - \frac{11}{4} + \epsilon_{2} - \alpha\right)  \in [0,1]
\end{align*}
where $\delta \in [0, \frac{11}{4}-2\gamma]$, we estimate using \eqref{Burgers' est 195}, \eqref{Burgers' Define tilde sigma a}, \eqref{Burgers' Schauder}, \eqref{Burgers' est 191}, \eqref{Sobolev products},
\begin{subequations}\label{Burgers' est 196}  
\begin{align}
&\lVert \RomanIV_{1} \rVert_{H^{2\gamma - \frac{1}{4}}} \lesssim a^{\frac{ 2(\gamma + \epsilon_{2} - \alpha)}{5} - 1} \lVert f_{a} \rVert_{H^{\gamma}} \lVert \eta \rVert_{\mathscr{C}^{\alpha}}, \label{Burgers' est 196a} \\
& \lVert \RomanIV_{2}  \rVert_{H^{2\gamma - \frac{1}{4}}} \lesssim a^{\frac{2(\gamma - \frac{1}{4} - \alpha)}{5} -1} \lVert f_{a}' \rVert_{H^{\gamma}} \left( \lVert \Theta_{2} \rVert_{\mathscr{C}^{2\alpha + \frac{5}{2} }} + \lVert \eta \rVert_{\mathscr{C}^{\alpha}}^{2} \right), \\
&\lVert \RomanIV_{3} \rVert_{H^{2\gamma - \frac{1}{4}}}  \lesssim a^{-\frac{2\alpha}{5}} \lVert f_{a}^{\sharp} \rVert_{H^{2\gamma - \frac{1}{4}}} \lVert \eta \rVert_{\mathscr{C}^{\alpha}}, \hspace{4mm}  \lVert \RomanIV_{4} \rVert_{H^{2\gamma - \frac{1}{4}}} \lesssim a^{\frac{2(\gamma - \frac{1}{4} - \alpha)}{5} -1} \lVert f_{a} \rVert_{H^{\gamma}} \lVert \eta \rVert_{\mathscr{C}^{\alpha}}, \label{Burgers' est 196b}\\
& \lVert \RomanIV_{5} \rVert_{H^{2\gamma - \frac{1}{4}}} \lesssim a^{\frac{ 2 (2\gamma - \frac{1}{4} + \delta)}{5} - 1}\lVert g \rVert_{H^{-\delta}}, \hspace{8mm}  \lVert \RomanIV_{6} \rVert_{H^{2\gamma - \frac{1}{4}}} \lesssim a^{ \frac{2}{5} (2 \gamma - \frac{11}{4} + \epsilon_{2} - \alpha)} \lVert f_{a} \rVert_{H^{\gamma}} \lVert \eta \rVert_{\mathscr{C}^{\alpha}}. \label{Burgers' est 196c}
\end{align} 
\end{subequations} 
Applying \eqref{Burgers' est 196} to  \eqref{Burgers' est 197}, and using \eqref{Burgers' Define tilde D norm} give us
\begin{align}
& a^{-\rho} \lVert M(f, f')_{a}^{\sharp} \rVert_{H^{2\gamma - \frac{1}{4}}}\lesssim \lVert (f, f' ) \rVert_{\tilde{\mathcal{D}}_{\eta}^{\gamma, \rho, A}} \Bigg[ a^{\frac{2(\gamma - \frac{1}{4} - \alpha)}{5} - 1 - \rho} ( \lVert \Theta_{2} \rVert_{\mathscr{C}^{2 \alpha + \frac{5}{2} }} +\lVert \eta \rVert_{\mathscr{C}^{\alpha}}^{2} )  \nonumber \\
& \hspace{30mm}  + a^{ \max \{ \frac{2( 2 \gamma - \frac{11}{4} + \epsilon_{2} - \alpha)}{5} - \rho,  -\frac{2\alpha}{5}  -1 \}} \lVert \eta \rVert_{\mathscr{C}^{\alpha}} \Bigg] + a^{ \frac{2(2\gamma - \frac{1}{4} + \delta)}{5} - 1-  \rho } \lVert g \rVert_{H^{-\delta}}. \label{Burgers' est 199}
\end{align}
Consequently, applying \eqref{Burgers' est 198} and \eqref{Burgers' est 199}, using \eqref{Burgers' Define rho} and  \eqref{Burgers' Define epsilon two}, and taking $\delta = 0$ for convenience leads us to 
\begin{align}
& \lVert \mathcal{M} (f, f') \rVert_{\tilde{\mathcal{D}}_{\eta}^{\gamma, \rho, A}} \lesssim  \sup_{a \geq A} \lVert f_{a} \rVert_{H^{\gamma}}  \nonumber\\
&\hspace{3mm}+  \sup_{a \geq A} a^{- \lambda} \lVert (f, f') \rVert_{\tilde{\mathcal{D}}_{\eta}^{\gamma, \rho, A}} ( 1+ \lVert \Theta \rVert_{\mathcal{E}^{\alpha}}^{2})  + \left( A^{ \frac{2}{5} (2\gamma - \frac{1}{4}) - 1 - \rho} + A^{\frac{2\gamma}{5} - 1} \right) \lVert g \rVert_{L^{2}},  \label{Burgers' est 200} 
\end{align}
where 
\begin{equation}\label{Burgers' Define lambda}
\lambda \triangleq \min \left\{ \rho - \frac{2}{5} \left(2 \gamma - \frac{11}{4} + \epsilon_{2} - \alpha \right), 1 - \frac{2}{5} (\gamma + \epsilon_{2} - \alpha), 1 + \frac{2\alpha}{5} - \rho \right\} > 0
\end{equation} 
and therefore we conclude that $\mathcal{M}(f,f') \in \tilde{\mathcal{D}}_{\eta}^{\gamma, \rho, A}$. Similarly to \eqref{Burgers' est 200}, we can show 
\begin{align}
 \lVert \mathcal{M} (f, f') - \mathcal{M} (h, h') \rVert_{\tilde{\mathcal{D}}_{\eta}^{\gamma, \rho, A}} \lesssim&  \sup_{a \geq A} \lVert f_{a} - h_{a} \rVert_{H^{\gamma}}  \label{Burgers' est 201} \\
&+  A^{- \lambda} \lVert (f, f') - (h, h') \rVert_{\tilde{\mathcal{D}}_{\eta}^{\gamma, \rho, A}} ( 1+ \lVert \Theta \rVert_{\mathcal{E}^{\alpha}}^{2}).  \nonumber 
\end{align}
We can make use of \eqref{Burgers' est 201} and analogous computations to \eqref{Burgers' est 198} to obtain 
\begin{align}
& \lVert \mathcal{M}^{2} (f, f') - \mathcal{M}^{2} (h, h') \rVert_{\tilde{\mathcal{D}}_{\eta}^{\gamma, \rho, A}} \nonumber \\
\lesssim& A^{- \lambda} \lVert (f, f') - (h, h') \rVert_{\tilde{\mathcal{D}}_{\eta}^{\gamma, \rho, A}} \Bigg[ 1+\lVert \Theta \rVert_{\mathcal{E}^{\alpha}}^{4} \Bigg] \ll \lVert (f, f') - (h, h') \rVert_{\tilde{\mathcal{D}}_{\eta}^{\gamma, \rho, A}}   \label{Burgers' est 203}
\end{align}
for 
\begin{equation}\label{Burgers' large A}
A \gg [1 + \lVert \Theta \rVert_{\mathcal{E}^{\alpha}}^{4} ]^{\frac{1}{ \lambda}} 
\end{equation} 
and therefore the mapping $\mathcal{M}^{2}: \hspace{1mm} \tilde{\mathcal{D}}_{\eta}^{\gamma, \rho, A} \mapsto \tilde{\mathcal{D}}_{\eta}^{\gamma, \rho, A}$ is a contraction. Consequently, the fixed point theorem gives us unique $(f, f') \in \tilde{\mathcal{D}}_{\eta}^{\gamma, \rho, A}$ such that $\mathcal{M}(f,f') = (f,f')$. Finally, making use of $M(f,f') = f$, $f = f'$, and computations that led to \eqref{Burgers' est 198} and \eqref{Burgers' est 199} lead to, for a universal constant $C \geq 0$, 
\begin{align}
&\lVert f_{a} \rVert_{H^{\gamma}} + a^{-\rho} \lVert f_{a}^{\sharp} \rVert_{H^{2\gamma - \frac{1}{4}}} \nonumber\\
\leq& \frac{1}{2} [\lVert f_{a} \rVert_{H^{\gamma}} + a^{-\rho} \lVert f_{a}^{\sharp} \rVert_{H^{2\gamma - \frac{1}{4}}}] + C \left( a^{\frac{2(\gamma + \delta)}{5} -1} + a^{-\rho + \frac{2}{5} (2\gamma - \frac{1}{4} + \delta) -1} \right) \lVert g \rVert_{H^{-\delta}}\label{Burgers' est 204}
\end{align} 
for all $A \geq 1$ sufficiently large. Subtracting $\frac{1}{2} [\lVert f_{a} \rVert_{H^{\gamma}} + a^{-\rho} \lVert f_{a}^{\sharp} \rVert_{H^{2\gamma - \frac{1}{4}}}]$ from both sides leads to \eqref{Burgers' est 202} as desired because $\mathcal{G}_{a} g = f_{a}$ by \eqref{Burgers' Define mathcal G}. This completes the proof of Proposition \ref{Burgers' Proposition 6.3}.
\end{proof}  

\begin{proposition}\label{Burgers' Proposition 6.4} 
Let $\alpha \in ( -\frac{19}{12}, -\frac{5}{4})$,  $\gamma \in (\frac{11}{12}, \alpha + \frac{5}{2})$ and define $\lambda$ by \eqref{Burgers' Define lambda}. Then there exists a constant $C > 0$ such that for all $\Theta = (\eta, \Theta_{2})$, $\tilde{\Theta} = (\tilde{\eta}, \tilde{\Theta}_{2}) \in \mathcal{K}^{\alpha}$, and $a \geq C[1+\lVert \Theta \rVert_{\mathcal{E}^{\alpha}}^{4}]^{\frac{1}{\lambda}}$ from \eqref{Burgers' large A}, we have the following bounds:
\begin{align}
\lVert \mathcal{G}_{a}(\Theta) g - \mathcal{G}_{a} ( \tilde{\Theta}) g \rVert_{H^{\gamma}} \leq& \lVert (\mathcal{G}_{a}(\Theta) g - \mathcal{G}_{a} (\tilde{\Theta}) g,\mathcal{G}_{a}(\Theta) g - \mathcal{G}_{a} (\tilde{\Theta}) g) \rVert_{\tilde{\mathcal{D}}_{\eta}^{\gamma, \rho, A}}  \nonumber\\
\lesssim& \lVert g \rVert_{L^{2}} \lVert \Theta - \tilde{\Theta} \rVert_{\mathcal{E}^{\alpha}} ( 1+ \lVert \Theta \rVert_{\mathcal{E}^{\alpha}} + \lVert \tilde{\Theta} \rVert_{\mathcal{E}^{\alpha}}),  \label{Burgers' est 267}
\end{align} 
where $\mathcal{G}_{a}(\Theta), \mathcal{G}_{a}(\tilde{\Theta}): \hspace{1mm} L^{2} \mapsto \mathcal{D}_{\eta}^{\gamma}$ are the resolvent operators associated to the rough distributions $\Theta, \tilde{\Theta} \in \mathcal{K}^{\alpha}$ constructed in Proposition \ref{Burgers' Proposition 6.3}. 
\end{proposition}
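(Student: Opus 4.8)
\textbf{Proof proposal for Proposition \ref{Burgers' Proposition 6.4}.}
The plan is to reproduce the fixed-point argument of Proposition \ref{Burgers' Proposition 6.3} but applied to the \emph{difference} of the two solution maps, exploiting the (bi)linearity of the relevant operators in the noise. Concretely, write $\phi_a \triangleq \mathcal{G}_a(\Theta)g$ and $\tilde\phi_a \triangleq \mathcal{G}_a(\tilde\Theta)g$; these are the unique fixed points in $\tilde{\mathcal{D}}_{\eta}^{\gamma,\rho,A}$, respectively $\tilde{\mathcal{D}}_{\tilde\eta}^{\gamma,\rho,A}$, of the contraction $\mathcal{M}^2$ (with $a\ge A \gg [1+\lVert\Theta\rVert_{\mathcal{E}^\alpha}^4]^{1/\lambda}$; note both norms $\lVert\Theta\rVert_{\mathcal{E}^\alpha}$, $\lVert\tilde\Theta\rVert_{\mathcal{E}^\alpha}$ must be controlled so one takes $A$ large relative to both, which is legitimate since the hypothesis only asks $a\gtrsim [1+\lVert\Theta\rVert_{\mathcal{E}^\alpha}^4]^{1/\lambda}$ and the implicit constant in \eqref{Burgers' est 267} may depend on $\lVert\tilde\Theta\rVert_{\mathcal{E}^\alpha}$ through the stated $(1+\lVert\Theta\rVert_{\mathcal{E}^\alpha}+\lVert\tilde\Theta\rVert_{\mathcal{E}^\alpha})$ factor). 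The key identity is that $M(f,f')_a$ depends linearly on $f_a\eta$ and on $f_a' \circ \eta$ (through $\Theta_2$), both of which are linear in $(\Theta_1,\Theta_2)$ for fixed $(f,f')$; hence the map $\mathcal{M}$ itself is affine in $\Theta\in\mathcal{K}^\alpha$ once $(f,f')$ is frozen.

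The main steps, in order, are: (i) Set up the Banach space $\tilde{\mathcal{D}}_{\eta}^{\gamma,\rho,A}$ and observe that the difference $(\phi_a-\tilde\phi_a)$ satisfies a perturbed fixed-point relation. Writing $\mathcal{M}_\Theta$ for the map built from $\Theta$ (as in \eqref{Burgers' Define mathcal M}), we have $(\phi,\phi)=\mathcal{M}_\Theta(\phi,\phi)$ and $(\tilde\phi,\tilde\phi)=\mathcal{M}_{\tilde\Theta}(\tilde\phi,\tilde\phi)$, so
\[
(\phi-\tilde\phi,\phi-\tilde\phi) = \mathcal{M}_\Theta(\phi,\phi) - \mathcal{M}_\Theta(\tilde\phi,\tilde\phi) + \bigl(\mathcal{M}_\Theta - \mathcal{M}_{\tilde\Theta}\bigr)(\tilde\phi,\tilde\phi).
\]
(ii) The first difference is handled exactly as in \eqref{Burgers' est 201}--\eqref{Burgers' est 203}: iterating once more, $\lVert \mathcal{M}_\Theta^2(\phi,\phi)-\mathcal{M}_\Theta^2(\tilde\phi,\tilde\phi)\rVert \le \tfrac12 \lVert(\phi-\tilde\phi,\phi-\tilde\phi)\rVert$ for $A$ large, which is absorbable. (iii) The genuinely new term is $(\mathcal{M}_\Theta-\mathcal{M}_{\tilde\Theta})(\tilde\phi,\tilde\phi)$, which by the affine structure splits into a piece involving $\tilde\phi_a(\eta-\tilde\eta)$ — estimated by rerunning the bounds \eqref{Burgers' est 173a}, \eqref{Burgers' est 173b}, \eqref{Burgers' est 196a}--\eqref{Burgers' est 196c} with $\eta$ replaced by $\eta-\tilde\eta$ and using Proposition \ref{Burgers' Proposition 6.1}(1) for the paracontrolled product — plus a piece involving $\Theta_2-\tilde\Theta_2$ entering through $\tilde\phi_a'\circ\eta - (\tilde\phi_a')^\sharp\circ\eta$ as in \eqref{Burgers' est 191}, \eqref{Burgers' est 193}. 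Each of these carries a factor $\lVert\Theta-\tilde\Theta\rVert_{\mathcal{E}^\alpha}$, a power of $A$ that is strictly negative in the relevant regime (so it is $\lesssim 1$ for $A$ large), a factor $(1+\lVert\Theta\rVert_{\mathcal{E}^\alpha}+\lVert\tilde\Theta\rVert_{\mathcal{E}^\alpha})$ coming from the remaining copies of the noise, and a factor $\lVert(\tilde\phi,\tilde\phi)\rVert_{\tilde{\mathcal{D}}_{\eta}^{\gamma,\rho,A}}$, which by \eqref{Burgers' est 202b} with $\delta=0$ is $\lesssim \lVert g\rVert_{L^2}$. (iv) Combine: the $\tfrac12$-contraction from step (ii) is absorbed, and one is left with the desired bound \eqref{Burgers' est 267}. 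Finally, the first inequality in \eqref{Burgers' est 267} is immediate since $\lVert\cdot\rVert_{H^\gamma}$ is one of the three terms in $\lVert\cdot\rVert_{\tilde{\mathcal{D}}_{\eta}^{\gamma,\rho,A}}$ (see \eqref{Burgers' Define tilde D norm}).

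The main obstacle I anticipate is the bookkeeping in step (iii): one must be careful that the paracontrolled structures of $\tilde\phi$ are defined \emph{relative to} $\sigma(D)\tilde\eta$, not $\sigma(D)\eta$, so when forming $(\tilde\phi_a)^\sharp$-type quantities to estimate $\tilde\phi_a \circ \eta$ (resp.\ $\tilde\phi_a\circ(\eta-\tilde\eta)$) one picks up extra commutator/remainder terms of the form $\tilde\phi_a \prec \sigma(D)(\eta-\tilde\eta)$ and $\mathcal{R}(\tilde\phi_a,\sigma(D)\tilde\eta,\eta-\tilde\eta)$; these are controlled by Lemma \ref{Burgers' Lemma A.2} and Lemma \ref{Burgers' Lemma A.3} respectively, but only because $\gamma>\tfrac{11}{12}$ and $\alpha>-\tfrac{19}{12}$ keep the Besov exponents in the admissible windows (as in \eqref{Burgers' est 167}). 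A secondary subtlety — already flagged in Remark \ref{Remark 2.4} — is that the commutator-type quantities $\Lambda^{5/2}(f\prec g)-\ldots$ implicit in $\mathscr{C}_a$ of \eqref{Burgers' Define mathcal Ca} must be estimated via Lemma \ref{Burgers' Lemma A.2} with the same careful loss of $\tfrac14$ in regularity; in the difference estimate this is harmless since it is linear in the noise, but the exponents must be tracked so that the $A$-powers stay negative, exactly as in the verification of $\lambda>0$ in \eqref{Burgers' Define lambda}. Everything else is a routine re-run of the estimates already established in Propositions \ref{Burgers' Proposition 6.1} and \ref{Burgers' Proposition 6.3}.
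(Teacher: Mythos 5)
Your proposal is correct and follows essentially the same route as the paper: subtract the two resolvent (fixed-point) identities, split the result into a part that is Lipschitz in $(f_a-\tilde f_a, f_a^\sharp-\tilde f_a^\sharp)$ with a negative power of $a$ (absorbed for $a$ large) and a part linear in $\Theta-\tilde\Theta$ controlled via Lemmas \ref{Burgers' Lemma A.2}--\ref{Burgers' Lemma A.3} and \eqref{Burgers' est 202}. The only cosmetic difference is that the paper implements the absorption by writing out the coupled identities \eqref{Burgers' est 205a}--\eqref{Burgers' est 205b} and closing the two-component system \eqref{Burgers' est 222}, \eqref{Burgers' est 234} directly, rather than invoking the $\mathcal{M}^{2}$-contraction abstractly; the mechanism, and the bookkeeping issue you flag about $\tilde f_a^{\sharp}$ being paracontrolled by $\sigma(D)\tilde\eta$, are exactly as in the paper.
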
 

\begin{proof}[Proof of Proposition \ref{Burgers' Proposition 6.4}]
We take $a \geq A( \lVert \Theta \rVert_{\mathcal{E}^{\alpha}}) + A ( \lVert \tilde{\Theta} \rVert_{\mathcal{E}^{\alpha}})$ according to \eqref{Burgers' large A} so that 
\begin{equation}\label{Burgers' est 220}
f_{a} \triangleq \mathcal{G}_{a} (\Theta) g \hspace{1mm} \text{ and } \hspace{1mm} \tilde{f}_{a} \triangleq \mathcal{G}_{a} (\tilde{\Theta}) g 
\end{equation} 
are well-defined by Proposition \ref{Burgers' Proposition 6.3}. We can verify from \eqref{Burgers' est 188}, \eqref{Burgers' Define H}, and \eqref{Burgers' Define tilde sigma a} that 
\begin{subequations}
\begin{align}
f_{a} - \tilde{f}_{a}=& \tilde{\sigma}_{a}(D) [ f_{a} \prec \eta - \tilde{f}_{a} \prec \tilde{\eta} + f_{a} \succ \eta - \tilde{f}_{a} \succ \tilde{\eta} + f_{a} \Theta_{2} - \tilde{f}_{a} \tilde{\Theta}_{2} + f_{a}^{\sharp} \circ \eta - \tilde{f}_{a}^{\sharp} \circ \tilde{\eta} ]  \nonumber \\
& \hspace{20mm} + \tilde{\sigma}_{a}(D) [ \mathcal{R} (f_{a}, \sigma(D) \eta, \eta) - \mathcal{R} (\tilde{f}_{a}, \sigma(D) \tilde{\eta}, \tilde{\eta} ) ],  \label{Burgers' est 205a} \\
f_{a}^{\sharp} - \tilde{f}_{a}^{\sharp} =&  \tilde{\sigma}_{a}(D) [ f_{a} \succ \eta - \tilde{f}_{a} \succ \tilde{\eta}  + f_{a} \Theta_{2} - \tilde{f}_{a} \tilde{\Theta}_{2} + f_{a}^{\sharp} \circ \eta - \tilde{f}_{a}^{\sharp} \circ \eta]  \nonumber \\
&+ \tilde{\sigma}_{a}(D) [ \mathcal{R} (f_{a}, \sigma(D) \eta, \eta) - \mathcal{R} (\tilde{f}_{a}, \sigma(D) \tilde{\eta}, \tilde{\eta} ) ]  \nonumber \\
&+\mathscr{C}_{a} (f_{a}, \eta) + f_{a} \prec (\tilde{\sigma}_{a} - \sigma) (D) \eta  - \mathscr{C}_{a} (\tilde{f}_{a}, \tilde{\eta})- \tilde{f}_{a} \prec (\tilde{\sigma}_{a} - \sigma) (D) \tilde{\eta}, \label{Burgers' est 205b}  
\end{align}
\end{subequations} 
where $\mathscr{C}_{a}(f_{a}, \eta)$ was defined in \eqref{Burgers' Define mathcal Ca}. With the same $\rho$ from \eqref{Burgers' Define rho}, because $\gamma < \alpha + \frac{5}{2}$ by hypothesis, we can find the same $\epsilon_{2}$ in \eqref{Burgers' Define epsilon two} and estimate similarly to \eqref{Burgers' est 196a} with $\vartheta = \frac{2(\gamma + \epsilon_{2} - \alpha)}{5} \in [0,1]$ in \eqref{Burgers' est 195}, 
\begin{subequations}\label{Burgers' est 206}
\begin{align}
&a^{- \rho} \lVert \mathscr{C}_{a} (f_{a} - \tilde{f}_{a},\eta) \rVert_{H^{2\gamma - \frac{1}{4}}}  \lesssim  a^{-( \rho  + 1 - \frac{2}{5} (\gamma + \epsilon_{2} - \alpha) )} \lVert f_{a} - \tilde{f}_{a} \rVert_{H^{\gamma}} \lVert \eta \rVert_{\mathscr{C}^{\alpha}},  \label{Burgers' est 206a}\\
&a^{-\rho} \lVert \mathscr{C}_{a} (\tilde{f}_{a}, \eta - \tilde{\eta}) \rVert_{H^{2\gamma - \frac{1}{4}}} \lesssim a^{- (\rho + 1 - \frac{2}{5} (\gamma + \epsilon_{2} - \alpha))} \lVert \tilde{f}_{a} \rVert_{H^{\gamma}} \lVert \eta - \tilde{\eta} \rVert_{\mathscr{C}^{\alpha}}. \label{Burgers' est 206b} 
\end{align}
\end{subequations} 
Additionally, as $\rho >  \frac{2}{5} (2 \gamma - \alpha - \frac{11}{4})$ from \eqref{Burgers' Define rho}, we can find 
\begin{equation}\label{Burgers' Define epsilon three}
\epsilon_{3} \in \left(0, \rho - \frac{2}{5} \left(2 \gamma - \alpha - \frac{11}{4} \right)\right)
\end{equation} 
and estimate by \eqref{Sobolev products c} and \eqref{Burgers' Define tilde sigma a} with $\vartheta = \frac{2}{5} (2\gamma - \alpha - \frac{11}{4}) + \epsilon_{3} \in [0,1]$, 
\begin{subequations}\label{Burgers' est 207} 
\begin{align}
&a^{-\rho}\lVert ( f_{a} - \tilde{f}_{a}) \prec (\tilde{\sigma}_{a} - \sigma) (D) \eta \rVert_{H^{2\gamma - \frac{1}{4}}} \lesssim  a^{-( \rho - \frac{2}{5} (2\gamma - \alpha - \frac{11}{4}) - \epsilon_{3})} \lVert f_{a} - \tilde{f}_{a} \rVert_{H^{\gamma}} \lVert \eta \rVert_{\mathscr{C}^{\alpha}}, \label{Burgers' est 207a}  \\
& a^{-\rho} \lVert \tilde{f}_{a} \prec ( \tilde{\sigma}_{a} - \sigma) (D) (\eta - \tilde{\eta} ) \rVert_{H^{2\gamma - \frac{1}{4}}} \lesssim a^{-( \rho - \frac{2}{5} (2\gamma - \alpha - \frac{11}{4})- \epsilon_{3})} \lVert \tilde{f}_{a} \rVert_{H^{\gamma}} \lVert \eta - \tilde{\eta} \rVert_{\mathscr{C}^{\alpha}}.  \label{Burgers' est 207b}
\end{align} 
\end{subequations} 
Applying \eqref{Burgers' est 206} and \eqref{Burgers' est 207}, and making use of $\rho - \frac{2}{5} ( 2\gamma - \alpha - \frac{11}{4}) - \epsilon_{3} > 0$ due to \eqref{Burgers' Define epsilon three} lead to 
\begin{align}
a^{-\rho} \lVert f_{a}^{\sharp} - \tilde{f}_{a}^{\sharp} \rVert_{H^{2\gamma - \frac{1}{4}}} \lesssim& a^{\frac{4\gamma}{5} - \frac{11}{10}} \lVert g \rVert_{L^{2}} \lVert \Theta - \tilde{\Theta} \rVert_{\mathcal{E}^{\alpha}}  \nonumber \\
&+ a^{-(\rho - \frac{2}{5} (2\gamma - \alpha - \frac{11}{4}) - \epsilon_{3})} \lVert f_{a} - \tilde{f}_{a} \rVert_{H^{\gamma}} (1+ \lVert \Theta \rVert_{\mathcal{E}^{\alpha}})^{2}. \label{Burgers' est 222}
\end{align} 
Next, with the same $\epsilon_{2}$ from \eqref{Burgers' Define epsilon two}, due to \eqref{Burgers' Schauder} and \eqref{Sobolev products c}, we can estimate  
\begin{equation}\label{Burgers' est 223}
\lVert \tilde{\sigma}_{a}(D) [ (f_{a} - \tilde{f}_{a}) \prec \eta ] \rVert_{H^{\gamma}} \lesssim a^{\frac{2(\gamma + \epsilon_{2} - \alpha)}{5} - 1} \lVert f_{a} - \tilde{f}_{a} \rVert_{H^{\gamma}} \lVert \eta \rVert_{\mathscr{C}^{\alpha}}, 
\end{equation} 
which leads to 
\begin{align}
\lVert f_{a} - \tilde{f}_{a} \rVert_{H^{\gamma}}   \lesssim& a^{\frac{2}{5} (\gamma + \epsilon_{2} - \alpha) - 1} \left(a^{\frac{2\gamma}{5} -1} + a^{-\rho + \frac{2}{5} (2\gamma - \frac{1}{4}) - 1} \right) \lVert g \rVert_{L^{2}} \lVert \Theta - \tilde{\Theta} \rVert_{\mathcal{E}^{\alpha}} (1+ \lVert \Theta \rVert_{\mathcal{E}^{\alpha}} + \lVert \tilde{\Theta} \rVert_{\mathcal{E}^{\alpha}})  \nonumber \\
&+ a^{\frac{2}{5} (-\gamma + \frac{1}{4} - \alpha) - 1} \lVert f_{a}^{\sharp} - \tilde{f}_{a}^{\sharp} \rVert_{H^{2\gamma - \frac{1}{4}}} \lVert \eta \rVert_{\mathscr{C}^{\alpha}}  \nonumber \\
&+ a^{\frac{2}{5} (-\gamma + \frac{1}{4} - \alpha) - 1}  \left(a^{\rho + \frac{2\gamma}{5} -1} + a^{\frac{2}{5} (2\gamma - \frac{1}{4}) -1} \right)\lVert g \rVert_{L^{2}} \lVert \Theta - \tilde{\Theta} \rVert_{\mathcal{E}^{\alpha}}. \label{Burgers' est 234} 
\end{align}
Summing \eqref{Burgers' est 234} and \eqref{Burgers' est 222}, making use of $\rho - \frac{2}{5} (2\gamma - \alpha - \frac{11}{4}) - \epsilon_{3} > 0$ from \eqref{Burgers' Define epsilon three}, we obtain
\begin{align}
&\lVert f_{a} - \tilde{f}_{a} \rVert_{H^{\gamma}} + a^{-\rho} \lVert f_{a}^{\sharp} - \tilde{f}_{a}^{\sharp} \rVert_{H^{2\gamma - \frac{1}{4}}} \nonumber\\
\lesssim& a^{\frac{4\gamma}{5} - \frac{11}{10}} \lVert g \rVert_{L^{2}} \lVert \Theta- \tilde{\Theta} \rVert_{\mathcal{E}^{\alpha}}( 1+ \lVert \Theta \rVert_{\mathcal{E}^{\alpha}} +\lVert \tilde{\Theta} \rVert_{\mathcal{E}^{\alpha}}), \label{Burgers' est 235}
\end{align}
which allows us to conclude \eqref{Burgers' est 267}. This completes the proof of Proposition \ref{Burgers' Proposition 6.4}. 
\end{proof} 

In Definition \ref{Burgers' Definition 6.3}, we had $\alpha \in (-\frac{19}{12}, -\frac{5}{4})$, $\gamma \in (\frac{1}{8} -\frac{\alpha}{2}, \alpha + \frac{5}{2}]$. We now tighten this to $\alpha > -\frac{17}{12}$ and $\gamma > \frac{\alpha}{4} + \frac{21}{16}$ while keeping the same upper bounds. 

\begin{define}\label{Burgers' Definition 6.4}
Let $\alpha \in (-\frac{17}{12}, -\frac{5}{4}), \gamma \in (\frac{\alpha}{4} + \frac{21}{16}, \alpha + \frac{5}{2})$, and $\Theta = (\eta, \Theta_{2}) \in \mathcal{K}^{\alpha}$. We define a bilinear operator $B: \hspace{1mm} H^{\gamma} \times \mathcal{K}^{\alpha} \mapsto H^{2\gamma - \frac{1}{4}}$ by 
\begin{align}
B(f,\Theta) \triangleq& \sigma(D) [ \Lambda^{\frac{5}{2}} ( f \prec \sigma(D) \eta) - \Lambda^{\frac{5}{2}} f \prec \sigma(D) \eta - f \prec \Lambda^{\frac{5}{2}} \sigma(D) \eta \nonumber \\
& \hspace{20mm}  - (1-\Lambda^{\frac{5}{2}}) f \prec \sigma(D) \eta + f \succ \eta + f \Theta_{2} ]. \label{Burgers' Define B}
\end{align}  
Then we define 
\begin{equation}\label{Burgers' Define D Theta gamma}
\mathcal{D}_{\Theta}^{\gamma} \triangleq \{ f \in H^{\gamma}: \hspace{1mm} f^{\flat} \triangleq f - f \prec \sigma(D) \eta -B(f, \Theta) \in H^{\frac{5}{2}} \} 
\end{equation} 
with an inner product 
\begin{equation}\label{Burgers' Norm D Theta gamma}
\langle f, g \rangle_{\mathcal{D}_{\Theta}^{\gamma}} \triangleq \langle f, g \rangle_{H^{\gamma}} + \langle f^{\flat}, g^{\flat} \rangle_{H^{\frac{5}{2}}}. 
\end{equation} 
\end{define} 

\begin{remark}
We see the commutator 
\begin{equation*}
\Lambda^{\frac{5}{2}} ( f \prec \sigma(D) \eta) - \Lambda^{\frac{5}{2}} f \prec \sigma(D) \eta - f \prec \Lambda^{\frac{5}{2}} \sigma(D) \eta
\end{equation*}
within the definition of $B(f, \Theta)$. For completeness, let us describe the motivation behind these definitions, justification of their regularity through an \emph{a priori} estimate that will be useful subsequently. First, using \eqref{Burgers' Define D eta gamma} and \eqref{Burgers' Define K alpha}, one can verify that 
\begin{align}
\Lambda^{\frac{5}{2}} f=& \Lambda^{\frac{5}{2}} f^{\sharp} + \Lambda^{\frac{5}{2}} \left( f \prec \sigma(D) \eta \right) - \Lambda^{\frac{5}{2}} f \prec \sigma(D) \eta - f \prec \Lambda^{\frac{5}{2}} \sigma(D) \eta \nonumber\\
&\hspace{10mm} - (1- \Lambda^{\frac{5}{2}}) f \prec \sigma(D) \eta - f \prec \eta \label{Burgers' est 256} 
\end{align}
and consequently, relying on \eqref{Burgers' Define H} and \eqref{Burgers' est 165} shows 
\begin{align}
\mathcal{H} f \overset{\eqref{Burgers' Define H}}{=}&  -\Lambda^{\frac{5}{2}} f^{\sharp} - \Lambda^{\frac{5}{2}} \left( f \prec \sigma(D) \eta \right) + \Lambda^{\frac{5}{2}} f \prec \sigma(D) \eta + f \prec \Lambda^{\frac{5}{2}} \sigma(D) \eta  \nonumber \\
& \hspace{10mm} + (1- \Lambda^{\frac{5}{2}} ) f \prec \sigma(D) \eta - f \succ \eta - f \Theta_{2} - \mathcal{R} (f, \sigma(D) \eta, \eta) - f^{\sharp} \circ \eta.  \label{est 271} 
\end{align} 
Considering that $f^{\sharp} \in H^{2\gamma - \frac{1}{4}}$ from \eqref{Burgers' Define D eta gamma} if $f \in \mathcal{D}_{\eta}^{\gamma}$, we claim that $\mathcal{H}f \in H^{2\gamma - \frac{11}{4}}$. Once again, the most non-trivial term is the commutator 
\begin{equation*}
- \Lambda^{\frac{5}{2}} \left( f \prec \sigma(D) \eta \right) + \Lambda^{\frac{5}{2}} f \prec \sigma(D) \eta + f \prec \Lambda^{\frac{5}{2}} \sigma(D) \eta 
\end{equation*}
in \eqref{est 271}. We proceed with this estimate similarly as before (recall \eqref{Split C4}-\eqref{Estimate on C4} and \eqref{est 126}-\eqref{est 131}). First,  
\begin{equation}\label{est 273}
\lVert \Lambda^{\frac{5}{2}} f \prec \sigma(D) \eta \rVert_{H^{2\gamma - \frac{11}{4}}} \overset{\eqref{Sobolev products c}}{\lesssim} \lVert \Lambda^{\frac{5}{2}} f \rVert_{H^{\gamma - \frac{5}{2}}} \lVert \sigma(D) \eta \rVert_{\mathscr{C}^{\gamma - \frac{1}{4}}} \lesssim \lVert f \rVert_{H^{\gamma}} \lVert \eta \rVert_{\mathscr{C}^{\alpha}}.
\end{equation} 
Next, relying on $N_{1}$ from \eqref{est 152}, for $\epsilon \in (0, \frac{3}{2})$ to be optimized subsequently, we can estimate 
\begin{align}
& \lVert \Lambda^{\frac{5}{2}} \left( f \prec \sigma(D) \eta \right) - f \prec \Lambda^{\frac{5}{2}} \sigma(D) \eta \rVert_{H^{2\gamma - \frac{11}{4}}}^{2} \nonumber\\
=& \sum_{j\geq -1} 2^{2j(2\gamma - \frac{11}{4})} \lVert \sum_{r: \lvert r-j \rvert \leq N_{1}} \Lambda^{\frac{5}{2}} \left( S_{r-1} f \Delta_{r} \sigma(D) \eta \right) - S_{r-1} f \Delta_{r} \Lambda^{\frac{5}{2}} \sigma(D) \eta\rVert_{L^{2}}^{2}  \nonumber\\
\overset{\eqref{Kato-Ponce}}{\lesssim}& \sum_{j\geq -1} 2^{2j(2 \gamma - \frac{11}{4})} \left[ \lVert \Lambda^{\frac{5}{2}} S_{j-1} f \rVert_{L^{2}} \lVert \Delta_{j} \sigma(D) \eta \rVert_{L^{\infty}} + \lVert \nabla S_{j-1} f \rVert_{L^{\frac{6}{3-2\epsilon}}} \lVert \Delta_{j} \Lambda^{\frac{3}{2}} \sigma(D) \eta \rVert_{L^{\frac{3}{\epsilon}}} \right]^{2}  \nonumber \\
\lesssim& \lVert \eta \rVert_{\mathscr{C}^{\alpha}}^{2} \Bigg[ \sum_{j\geq -1} 2^{2j(\gamma - \frac{11}{4} - \alpha)} \left( 2^{-\lvert j \rvert (\frac{5}{2} - \gamma)} \ast 2^{j\gamma} \lVert \Delta_{j} f \rVert_{L^{2}} \right)^{2} \nonumber\\
& \hspace{20mm} + \sum_{j\geq -1} 2^{2j( \gamma - \frac{11}{4} - \alpha + \epsilon)} \left( 2^{- \lvert j \rvert (1+ \epsilon - \gamma)} \ast 2^{j\gamma} \lVert \Delta_{j} f \rVert_{L^{2}} \right)^{2} \Bigg]. \label{est 274}
\end{align}
We see that if $\epsilon \in (0, \frac{3}{2})$ additionally satisfies 
\begin{equation}\label{est 275}
\gamma - 1 < \epsilon \leq - \gamma + \frac{11}{4} + \alpha, 
\end{equation} 
then we can apply H$\ddot{o}$lder's inequality and Young's inequality for convolution on \eqref{est 274} and conclude that 
\begin{equation}\label{est 277}
 \lVert \Lambda^{\frac{5}{2}} \left( f \prec \sigma(D) \eta \right) - f \prec \Lambda^{\frac{5}{2}} \sigma(D) \eta \rVert_{H^{2\gamma - \frac{11}{4}}}^{2}  \lesssim \lVert \eta \rVert_{\mathscr{C}^{\alpha}}^{2} \lVert f \rVert_{H^{\gamma}}^{2} 
\end{equation} 
and hence deduce together with \eqref{est 273} that 
\begin{equation}\label{est 276}
\lVert  \Lambda^{\frac{5}{2}} \left( f \prec \sigma(D) \eta \right) - \Lambda^{\frac{5}{2}} f \prec \sigma(D) \eta - f \prec \Lambda^{\frac{5}{2}} \sigma(D) \eta \rVert_{H^{2\gamma - \frac{11}{4}}} \lesssim \lVert f \rVert_{H^{\gamma}} \lVert \eta \rVert_{\mathscr{C}^{\alpha}}. 
\end{equation} 
One can verify that $\epsilon = \frac{7}{8} + \frac{\alpha}{2}$ satisfies \eqref{est 275}; thus, we now conclude \eqref{est 276}. 

We estimated $- \Lambda^{\frac{5}{2}} \left( f \prec \sigma(D) \eta \right) + \Lambda^{\frac{5}{2}} f \prec \sigma(D) \eta + f \prec \Lambda^{\frac{5}{2}} \sigma(D) \eta$ in \eqref{est 271} and we can estimate the rest as follows:
\begin{subequations}\label{est 278}
\begin{align}
& \lVert (1- \Lambda^{\frac{5}{2}}) f \prec \sigma(D) \eta \rVert_{H^{2\gamma - \frac{11}{4}}} \overset{\eqref{Sobolev products c}}{\lesssim}   \lVert f \rVert_{H^{\gamma}} \lVert \eta \rVert_{\mathscr{C}^{\alpha}}, \label{est 278a} \\
&  \lVert f \succ \eta \rVert_{H^{2\gamma - \frac{11}{4}}} \overset{\eqref{Sobolev products d}}{\lesssim} \lVert f \rVert_{H^{\gamma}} \lVert \eta \rVert_{\mathscr{C}^{\alpha}},\label{est 278b} \\
&  \lVert f \Theta_{2} \rVert_{H^{2\gamma - \frac{11}{4}}}\lesssim  \lVert f \prec \Theta_{2} \rVert_{H^{2\gamma - \frac{11}{4}}}  \nonumber \\
& \hspace{15mm} + \lVert f \succ \Theta_{2} \rVert_{H^{2\gamma - \frac{11}{4}}} + \lVert f \circ \Theta_{2} \rVert_{H^{\gamma + 2 \alpha + \frac{5}{2}}} \overset{\eqref{Sobolev products c} \eqref{Sobolev products d} \eqref{Sobolev products e}}{\lesssim} \lVert f \rVert_{H^{\gamma}} \lVert \Theta_{2} \rVert_{\mathscr{C}^{2\alpha + \frac{5}{2}}}, \label{est 278c}\\ 
& \lVert \mathcal{R} (f, \sigma(D) \eta, \eta) \rVert_{H^{2\gamma - \frac{11}{4}}}  \overset{\eqref{Burgers' Estimate on R}}{\lesssim} \lVert f \rVert_{H^{\frac{1}{8} - \frac{\alpha}{2}}} \lVert \sigma(D) \eta \rVert_{\mathscr{C}^{\alpha + \frac{5}{2}}} \lVert \eta \rVert_{\mathscr{C}^{\alpha}} \lesssim \lVert f \rVert_{H^{\gamma}} \lVert \eta \rVert_{\mathscr{C}^{\alpha}}^{2},  \label{est 278d}\\
& \lVert f^{\sharp} \circ \eta \rVert_{H^{2\gamma - \frac{11}{4}}} \lesssim \lVert f^{\sharp} \circ \eta \rVert_{H^{2\gamma - \frac{1}{4} + \alpha}} 
\overset{\eqref{Sobolev products e}}{\lesssim} \lVert f^{\sharp} \rVert_{H^{2\gamma - \frac{1}{4}}} \lVert \eta \rVert_{\mathscr{C}^{\alpha}}, \label{est 278e}
\end{align}
\end{subequations} 
and conclude that $\mathcal{H} f \in H^{2\gamma - \frac{11}{4}}$. If we assume that $\mathcal{H}f = \lambda f$ for some scalar $\lambda$ and define 
\begin{equation}\label{est 279} 
f^{\flat} \triangleq \sigma(D) [ - f^{\sharp} + \lambda f + \mathcal{R} (f, \sigma(D) \eta, \eta) + f^{\sharp} \circ \eta], 
\end{equation} 
then one can show using \eqref{est 271}, \eqref{Burgers' Define K alpha} and the definition of $B(f, \Theta)$ from \eqref{Burgers' Define B} that 
\begin{equation}\label{Burgers' est 245} 
f^{\sharp} = B(f,\Theta) + f^{\flat}.
\end{equation} 
Comparing with $f^{\sharp} = f - f \prec \sigma(D) \eta$ from \eqref{Burgers' Define D eta gamma}, this justifies the definition $f^{\flat} \triangleq f - f \prec \sigma(D) \eta -B(f, \Theta)$ in \eqref{Burgers' Define D Theta gamma}. Moreover, one can directly verify that $f^{\flat} \in H^{2\gamma + \frac{9}{4} + \alpha}$ which justifies the regularity $f^{\flat} \in H^{\frac{5}{2}}$ in \eqref{Burgers' Define D Theta gamma}.  At last, combining \eqref{est 276}, \eqref{est 278a}, \eqref{est 278b}, and \eqref{est 278c} justifies the fact that $B$ is $H^{2\gamma - \frac{1}{4}}$-valued in harmony with Definition \ref{Burgers' Definition 6.4}. 
\end{remark} 

\begin{remark}
We observe that $\mathcal{D}_{\Theta}^{\gamma} \subset \mathcal{D}_{\eta}^{\gamma}$ and that any $f \in \mathcal{D}_{\Theta}^{\gamma}$ has the regularity of $H^{(\alpha + \frac{5}{2})-}$ which motivates us to define for any $\alpha \in \left( - \frac{17}{12}, -\frac{5}{4} \right)$, $\Theta = ( \eta, \Theta_{2} ) \in \mathcal{K}^{\alpha}$, and 
\begin{equation}\label{Burgers' Define new kappa}
\kappa \in \left( 0, \frac{1}{4} \right) 
\end{equation} 
sufficiently small fixed, 
\begin{equation}\label{Burgers' Define D Theta}
\mathcal{D}_{\Theta} \triangleq \{ f \in H^{\alpha + \frac{5}{2} - \kappa}: \hspace{1mm} f^{\flat} \triangleq f - f \prec \sigma(D) \eta - B(f, \Theta) \in H^{\frac{5}{2}} \}. 
\end{equation} 
\end{remark} 

\begin{proposition}\label{Burgers' Proposition 6.5}
Let $\alpha \in (-\frac{17}{12}, -\frac{5}{4})$ and $\gamma \in (1, \alpha + \frac{5}{2})$. Then, for any $a \geq 2$, we define 
\begin{align}
B_{a} (f, \Theta) \triangleq& \sigma_{a}(D) [ \Lambda^{\frac{5}{2}} \left( f \prec \sigma(D) \eta \right) - \Lambda^{\frac{5}{2}} f \prec \sigma(D) \eta - f \prec \Lambda^{\frac{5}{2}} \sigma(D) \eta \nonumber \\
& \hspace{20mm}  - (1-\Lambda^{\frac{5}{2}}) f \prec \sigma(D) \eta + f \succ \eta + f \Theta_{2} ], \label{Burgers' Define Ba}
\end{align} 
where $\sigma_{a}$ was defined in \eqref{Burgers' Define sigma a}. Then 
\begin{equation}\label{Burgers' est 240} 
\lVert B_{a} (f, \Theta) \rVert_{H^{2\gamma - \frac{1}{4}}} \lesssim  \lVert f \rVert_{H^{\gamma}} \lVert \Theta \rVert_{\mathcal{E}^{\alpha}}, \hspace{5mm} \lVert (B- B_{a}) (f, \Theta) \rVert_{H^{2 \gamma + \frac{9}{4}}} \lesssim a  \lVert f \rVert_{H^{\gamma}} \lVert \Theta \rVert_{\mathcal{E}^{\alpha}}. 
\end{equation} 
\end{proposition}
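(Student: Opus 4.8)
The plan is to exploit that $B_{a}(f,\Theta)$ is obtained from $B(f,\Theta)$ in \eqref{Burgers' Define B} by replacing, in front of the \emph{identical} inner expression, the Fourier multiplier $\sigma(D)=-(1+\Lambda^{\frac52})^{-1}$ by $\sigma_{a}(D)$ from \eqref{Burgers' Define sigma a} (compare \eqref{Burgers' Define Ba}). Writing $\mathcal{B}(f,\Theta)$ for that bracketed six-term combination $\Lambda^{\frac52}(f\prec\sigma(D)\eta)-(\Lambda^{\frac52}f)\prec\sigma(D)\eta-f\prec(\Lambda^{\frac52}\sigma(D)\eta)-(1-\Lambda^{\frac52})f\prec\sigma(D)\eta+f\succ\eta+f\Theta_{2}$, we have $B(f,\Theta)=\sigma(D)\mathcal{B}(f,\Theta)$ and $B_{a}(f,\Theta)=\sigma_{a}(D)\mathcal{B}(f,\Theta)$, so $(B-B_{a})(f,\Theta)=(\sigma(D)-\sigma_{a}(D))\mathcal{B}(f,\Theta)$. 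Consequently the proposition reduces to one \emph{a priori} bound, $\lVert\mathcal{B}(f,\Theta)\rVert_{H^{2\gamma-\frac{11}{4}}}\lesssim\lVert f\rVert_{H^{\gamma}}\lVert\Theta\rVert_{\mathcal{E}^{\alpha}}$, together with two elementary symbol mapping statements, which I would carry out in that order.

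For the symbol estimates I would invoke \eqref{Burgers' est 172} (equivalently the bounds recorded just after \eqref{Burgers' Define tilde sigma a}). Taking $\vartheta=1$ in the first bound there gives $\lvert D^{r}\sigma_{a}(k)\rvert\lesssim(1+\lvert k\rvert)^{-\frac52-r}$ with a constant \emph{uniform in} $a\ge 2$, so Lemma \ref{Burgers' Lemma A.1} (i.e.\ \eqref{Burgers' Schauder} with $n=\frac52$) yields $\lVert\sigma_{a}(D)g\rVert_{H^{s+\frac52}}\lesssim\lVert g\rVert_{H^{s}}$ uniformly; applying this with $s=2\gamma-\frac{11}{4}$ and $g=\mathcal{B}(f,\Theta)$ gives the first claimed bound. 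Taking $\vartheta=1$ in the second bound of \eqref{Burgers' est 172} gives $\lvert D^{r}(\sigma_{a}-\sigma)(k)\rvert\lesssim a\,(1+\lvert k\rvert)^{-5-r}$, so Lemma \ref{Burgers' Lemma A.1} with $n=5$ yields $\lVert(\sigma_{a}(D)-\sigma(D))g\rVert_{H^{s+5}}\lesssim a\lVert g\rVert_{H^{s}}$; with $s=2\gamma-\frac{11}{4}$ this produces $\lVert(B-B_{a})(f,\Theta)\rVert_{H^{2\gamma+\frac94}}\lesssim a\lVert\mathcal{B}(f,\Theta)\rVert_{H^{2\gamma-\frac{11}{4}}}$, and the \emph{a priori} bound closes it. (Here the arithmetic $2\gamma-\frac{11}{4}+\frac52=2\gamma-\frac14$ and $2\gamma-\frac{11}{4}+5=2\gamma+\frac94$ is what fixes the target spaces.)

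It remains to prove $\lVert\mathcal{B}(f,\Theta)\rVert_{H^{2\gamma-\frac{11}{4}}}\lesssim\lVert f\rVert_{H^{\gamma}}\lVert\Theta\rVert_{\mathcal{E}^{\alpha}}$, and this is exactly the term-by-term computation already performed in the remark following Definition \ref{Burgers' Definition 6.4}. The three ``soft'' terms are controlled by Lemma \ref{Lemma 3.1}: $(1-\Lambda^{\frac52})f\prec\sigma(D)\eta$ by \eqref{Sobolev products c} using $\sigma(D)\eta\in\mathscr{C}^{\alpha+\frac52}$ (cf.\ \eqref{est 278a}); $f\succ\eta$ by \eqref{Sobolev products d} using $\eta\in\mathscr{C}^{\alpha}$ (cf.\ \eqref{est 278b}), which needs $\gamma+\alpha>0$, valid since $\gamma>1$ and $\alpha>-\frac54$; and $f\Theta_{2}$, after a Bony split, by \eqref{Sobolev products c}, \eqref{Sobolev products d}, \eqref{Sobolev products e} using $\Theta_{2}\in\mathscr{C}^{2\alpha+\frac52}$ (cf.\ \eqref{est 278c}), which needs $\gamma+2\alpha+\frac52>0$, valid since $\gamma>1$ and $\alpha>-\frac{17}{12}$. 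The $(\Lambda^{\frac52}f)\prec\sigma(D)\eta$ term is handled by \eqref{Sobolev products c} as in \eqref{est 273}. The remaining, and essential, piece is the genuine fractional commutator $\Lambda^{\frac52}(f\prec\sigma(D)\eta)-f\prec(\Lambda^{\frac52}\sigma(D)\eta)$, which I would treat precisely as in \eqref{est 274}--\eqref{est 277}: decompose $f\prec\sigma(D)\eta$ dyadically via \eqref{est 152}, apply the Kato--Ponce commutator estimate \eqref{Kato-Ponce} of Lemma \ref{Lemma 3.3} block by block with a free Bernstein exponent $\epsilon\in(0,\frac32)$, and close with H\"older's inequality and Young's inequality for convolution; this succeeds provided $\gamma-1<\epsilon\le\frac{11}{4}+\alpha-\gamma$, an interval that is nonempty because $\gamma<\alpha+\frac52<\frac{15}{8}+\frac{\alpha}{2}$ when $\alpha<-\frac54$, with the concrete choice $\epsilon=\frac78+\frac{\alpha}{2}$ admissible exactly as in \eqref{est 275}.

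The hard part is this fractional commutator: because $\Lambda^{\frac52}$ has no Leibniz rule, the derivatives do not cancel and one is forced into the Littlewood--Paley plus Kato--Ponce scheme, where the subtlety is to pick the auxiliary exponent $\epsilon$ so that both resulting dyadic sums converge --- exactly the obstruction emphasized in Remark \ref{Remark 2.2} and in connection with \eqref{Burgers' Define B}. Everything else (the paraproduct bookkeeping above and the symbol bounds \eqref{Burgers' est 172}) is routine, and since the commutator estimate is already granted by the remark after Definition \ref{Burgers' Definition 6.4}, the proof of Proposition \ref{Burgers' Proposition 6.5} will consist mainly of assembling these pieces.
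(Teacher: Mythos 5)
Your proposal is correct and follows essentially the same route as the paper: the paper's proof likewise applies the two symbol bounds \eqref{Burgers' est 172} with $\vartheta=1$ (via Lemma \ref{Burgers' Lemma A.1} with $n=\tfrac52$ and $n=5$, respectively) to the common bracketed expression, whose $H^{2\gamma-\frac{11}{4}}$ bound is exactly the combination of \eqref{est 276}, \eqref{est 278a}, \eqref{est 278b}, and \eqref{est 278c} that you assemble. Your write-up merely spells out the shift arithmetic and the admissibility of $\epsilon=\tfrac78+\tfrac{\alpha}{2}$, which the paper leaves implicit.
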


\begin{proof}[Proof of Proposition \ref{Burgers' Proposition 6.5}]
Applying the two inequalities in \eqref{Burgers' est 172} with $\vartheta = 1$ and relying on \eqref{est 276}, \eqref{est 278a}, \eqref{est 278b}, and \eqref{est 278c} give us the desired result \eqref{Burgers' est 240}. 
\end{proof}  

\begin{proposition}\label{Burgers' Proposition 6.6}
Let $\alpha \in (-\frac{17}{12}, -\frac{5}{4})$, $\Theta = ( \eta, \Theta_{2} ) \in \mathcal{K}^{\alpha}$, and $\kappa$ from \eqref{Burgers' Define new kappa} sufficiently small fixed. If $f \in \mathcal{D}_{\Theta}$ where $\mathcal{D}_{\Theta}$ is defined in \eqref{Burgers' Define D Theta}, then $\mathcal{H} f \in L^{2}$. 
\end{proposition}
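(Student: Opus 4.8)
The plan is to rewrite $\mathcal{H}f$, for $f\in\mathcal{D}_\Theta$, as a sum in which the only term that exhausts all the Sobolev regularity of $f$ is $\Lambda^{\frac52}f^\flat$, and then invoke the defining property of $\mathcal{D}_\Theta$ in \eqref{Burgers' Define D Theta}, namely $f^\flat\in H^{\frac52}$, to conclude $\Lambda^{\frac52}f^\flat\in L^2$. As a preliminary I would set $\gamma\triangleq\alpha+\frac52-\kappa$ and check that the hypotheses $\alpha\in(-\frac{17}{12},-\frac54)$ and $\kappa\in(0,\frac14)$ place $\gamma$ in the admissible window $\bigl(\frac18-\frac\alpha2,\alpha+\frac52\bigr]$ of Definition \ref{Burgers' Definition 6.3}; moreover $f^\sharp=f-f\prec\sigma(D)\eta$ equals $B(f,\Theta)+f^\flat$ by the very definition of $f^\flat$ in \eqref{Burgers' Define D Theta} and lies in $H^{2\gamma-\frac14}$, since $2\gamma-\frac14<\frac52$ on this range of $\alpha$ so the $B$-contribution dominates. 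In particular $\mathcal{D}_\Theta\subset\mathcal{D}_\eta^\gamma$, so the identity \eqref{est 271} for $\mathcal{H}f$ is available.

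Next I would carry out the algebraic rearrangement in \eqref{est 271}. By the definition of $B(f,\Theta)$ in \eqref{Burgers' Define B} together with $\sigma(D)=-(1+\Lambda^{\frac52})^{-1}$ from \eqref{Burgers' Define K alpha}, the block formed by the three commutator terms, the term $(1-\Lambda^{\frac52})f\prec\sigma(D)\eta$, and $-f\succ\eta-f\Theta_2$ appearing in \eqref{est 271} is exactly $(1+\Lambda^{\frac52})B(f,\Theta)=B(f,\Theta)+\Lambda^{\frac52}B(f,\Theta)$. Substituting this identity and $f^\sharp=B(f,\Theta)+f^\flat$ into \eqref{est 271}, so that $-\Lambda^{\frac52}f^\sharp=-\Lambda^{\frac52}B(f,\Theta)-\Lambda^{\frac52}f^\flat$, the two copies of $\Lambda^{\frac52}B(f,\Theta)$ cancel and one obtains the clean identity
\[
\mathcal{H}f=-\Lambda^{\frac52}f^\flat+B(f,\Theta)-\mathcal{R}(f,\sigma(D)\eta,\eta)-f^\sharp\circ\eta.
\]

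It then remains to see that each of the last three terms is already in $L^2$; this is where the parameter windows enter. One has $B(f,\Theta)\in H^{2\gamma-\frac14}$ by Proposition \ref{Burgers' Proposition 6.5} (equivalently, the estimates collected in the remark following Definition \ref{Burgers' Definition 6.4}), and $2\gamma-\frac14=2\alpha+\frac{19}4-2\kappa>0$ since $\alpha>-\frac{17}{12}$ and $\kappa$ is small, hence $B(f,\Theta)\in L^2$. For $\mathcal{R}(f,\sigma(D)\eta,\eta)$ I would use the Sobolev embedding $H^{\alpha+\frac52-\kappa}\hookrightarrow H^{\frac18-\frac\alpha2}$ (legitimate because $\frac18-\frac\alpha2\le\alpha+\frac52-\kappa$ on our range) followed by Lemma \ref{Burgers' Lemma A.3} with $(\alpha,\beta,\gamma)$ replaced by $\bigl(\frac18-\frac\alpha2,\ \alpha+\frac52,\ \alpha\bigr)$, whose hypotheses hold since $\frac18-\frac\alpha2\in(\frac34,1)$, $\beta+\gamma=2\alpha+\frac52<0$ because $\alpha<-\frac54$, and $\frac18-\frac\alpha2+\beta+\gamma=\frac{3\alpha}2+\frac{21}8>0$ because $\alpha>-\frac{17}{12}$; this places $\mathcal{R}(f,\sigma(D)\eta,\eta)$ in $H^{\frac{3\alpha}2+\frac{21}8-\delta}\subset L^2$ for $\delta>0$ small. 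Finally $f^\sharp\in H^{2\gamma-\frac14}$, and \eqref{Sobolev products e} gives $f^\sharp\circ\eta\in H^{2\gamma-\frac14+\alpha}$, which lies in $L^2$ because $2\gamma-\frac14+\alpha=3\alpha+\frac{19}4-2\kappa>0$ on our range. Summing, $\mathcal{H}f\in L^2$.

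The step requiring care is the cancellation: correctly matching the mixture of commutator and paraproduct terms in \eqref{est 271} with $(1+\Lambda^{\frac52})B(f,\Theta)$, so that the top-order pieces cancel against $-\Lambda^{\frac52}f^\sharp$ and only $-\Lambda^{\frac52}f^\flat$ survives at top order. Once the identity $\mathcal{H}f=-\Lambda^{\frac52}f^\flat+B(f,\Theta)-\mathcal{R}(f,\sigma(D)\eta,\eta)-f^\sharp\circ\eta$ is in hand, the three remaining estimates are routine consequences of Proposition \ref{Burgers' Proposition 6.5}, Lemma \ref{Burgers' Lemma A.3}, and \eqref{Sobolev products e}; the only other subtlety is the arithmetic check that the stated ranges $\alpha\in(-\frac{17}{12},-\frac54)$ and $\kappa\in(0,\frac14)$ force each of $2\alpha+\frac{19}4$, $\frac{3\alpha}2+\frac{21}8$, and $3\alpha+\frac{19}4$ to be strictly positive — which is precisely why those parameter windows are imposed.
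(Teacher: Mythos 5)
Your proposal is correct and follows essentially the same route as the paper: you derive exactly the identity \eqref{Burgers' est 251} (your $-f^{\sharp}\circ\eta$ equals the paper's $-\eta\circ(B(f,\Theta)+f^{\flat})$ since $f^{\sharp}=B(f,\Theta)+f^{\flat}$ and the resonant product is symmetric), and then verify term by term membership in $L^{2}$ using \eqref{Burgers' Define D Theta}, the mapping property of $B$, Lemma \ref{Burgers' Lemma A.3}, and \eqref{Sobolev products e}. The paper states this verification in one line; your arithmetic checks that $2\alpha+\tfrac{19}{4}$, $\tfrac{3\alpha}{2}+\tfrac{21}{8}$, and $3\alpha+\tfrac{19}{4}$ are positive on the stated range are exactly the details being elided there.
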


\begin{proof}[Proof of Proposition \ref{Burgers' Proposition 6.6}]
The claim follows by writing 
\begin{equation}\label{Burgers' est 251}
\mathcal{H} f = -\Lambda^{\frac{5}{2}} f^{\flat} +  B(f, \Theta) - \mathcal{R} (f, \sigma(D) \eta, \eta) - \eta \circ \left(B(f, \Theta) + f^{\flat} \right)
\end{equation} 
and making use of \eqref{Burgers' Define D Theta}, \eqref{Burgers' Define B}, \eqref{Burgers' Estimate on R}, and \eqref{Sobolev products e} to verify that each term is in $L^{2}$. 
\end{proof} 

\begin{proposition}\label{Burgers' Proposition 6.7}
Let $\alpha \in (-\frac{17}{12}, -\frac{5}{4}), \gamma \in (\frac{\alpha}{4} + \frac{21}{16}, \alpha + \frac{5}{2})$, and $\Theta = (\eta, \Theta_{2}), \tilde{\Theta} = (\tilde{\eta}, \tilde{\Theta}_{2}) \in \mathcal{K}^{\alpha}$. Then, for all $f \in \mathcal{D}_{\Theta}$, there exists $g \in \mathcal{D}_{\tilde{\Theta}}$ such that 
\begin{equation}\label{Burgers' est 250} 
\lVert f-g \rVert_{H^{\gamma}} + \lVert f^{\flat} - g^{\flat} \rVert_{H^{\frac{5}{2}}} \lesssim ( \lVert f \rVert_{H^{\gamma}} + \lVert g \rVert_{H^{\gamma}} ) (1+ \lVert \tilde{\Theta} \rVert_{\mathcal{E}^{\alpha}}) \lVert \Theta - \tilde{\Theta} \rVert_{\mathcal{E}^{\alpha}}, 
\end{equation} 
where 
\begin{equation}\label{Burgers' Define g flat}
g^{\flat} \triangleq g - g \prec \sigma(D) \tilde{\eta} - B(g, \tilde{\Theta}). 
\end{equation} 
In particular, if $(\eta_{n}, c_{n})_{n\in\mathbb{N}} \subset C^{\infty} \times \mathbb{R}$ is a family such that  $(\eta_{n}, -\eta_{n} \circ \sigma(D) \eta_{n} - c_{n}) \to \Theta$ in $\mathcal{E}^{\alpha}$ as $n \nearrow 0$, then there exists a family $\{f_{n} \}_{n \in\mathbb{N}} \subset H^{\frac{5}{2} }$ such that  
\begin{equation}\label{Burgers' est 254} 
\lim_{n \nearrow \infty} \lVert f_{n} - f \rVert_{H^{\gamma}} + \lVert f_{n}^{\flat} - f^{\flat} \rVert_{H^{\frac{5}{2}}} = 0, 
\end{equation} 
where $f_{n}^{\flat} \triangleq f_{n} - f_{n}\prec \sigma(D) \eta_{n} - B(f_{n}, \Theta^{n})$ and $\Theta^{n} \triangleq (\eta_{n}, -\eta_{n} \circ \sigma(D) \eta_{n} - c_{n})$. 
\end{proposition}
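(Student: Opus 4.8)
The plan is to build the comparison element $g$ through the resolvents constructed in Propositions \ref{Burgers' Proposition 6.3}--\ref{Burgers' Proposition 6.4}, rather than by a direct paracontrolled ansatz (a direct fixed point $g = f^{\flat} + g \prec \sigma(D)\tilde{\eta} + B(g,\tilde{\Theta})$ is only a compact, not a contractive, perturbation of the identity on $H^{\gamma}$, and its injectivity is not free). Fix $a$ large enough that both $\mathcal{G}_{a}(\Theta)$ and $\mathcal{G}_{a}(\tilde{\Theta})$ are defined, i.e. $a \gtrsim (1+\lVert \Theta\rVert_{\mathcal{E}^{\alpha}}^{4})^{1/\lambda} + (1+\lVert \tilde{\Theta}\rVert_{\mathcal{E}^{\alpha}}^{4})^{1/\lambda}$ from \eqref{Burgers' large A}, with $\lambda$ from \eqref{Burgers' Define lambda}. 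Given $f \in \mathcal{D}_{\Theta}$, Proposition \ref{Burgers' Proposition 6.6} gives $\mathcal{H}f \in L^{2}$, so $\psi \triangleq (-\mathcal{H}+a)f \in L^{2}$, and the representation \eqref{Burgers' est 251} of $\mathcal{H}f$ together with \eqref{est 273}, \eqref{est 276}, \eqref{est 278} and \eqref{Burgers' Estimate on R} yields $\lVert \psi\rVert_{L^{2}} \lesssim (\lVert f\rVert_{H^{\gamma}} + \lVert f^{\flat}\rVert_{H^{5/2}})(1+\lVert \Theta\rVert_{\mathcal{E}^{\alpha}}^{2}) + a\lVert f\rVert_{L^{2}}$. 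Since $f \in \mathcal{D}_{\Theta}\subset \mathcal{D}_{\eta}^{\gamma}$ solves $(-\mathcal{H}_{\Theta}+a)f=\psi$, uniqueness in Proposition \ref{Burgers' Proposition 6.3} forces $f = \mathcal{G}_{a}(\Theta)\psi$; I then \emph{define} $g \triangleq \mathcal{G}_{a}(\tilde{\Theta})\psi$.

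Next I check $g \in \mathcal{D}_{\tilde{\Theta}}$. From $(-\mathcal{H}_{\tilde{\Theta}}+a)g=\psi$ we get $\mathcal{H}_{\tilde{\Theta}}g = ag-\psi \in L^{2}$, and repeating the computation \eqref{est 279}--\eqref{Burgers' est 245} with the scalar $\lambda f$ there replaced by $ag-\psi$ gives $g - g\prec\sigma(D)\tilde{\eta} - B(g,\tilde{\Theta}) = \sigma(D)[-g^{\sharp} + (ag-\psi) + \mathcal{R}(g,\sigma(D)\tilde{\eta},\tilde{\eta}) + g^{\sharp}\circ\tilde{\eta}]$; every bracketed term lies in $L^{2}$ (the worst is $ag-\psi$, the others being better since $g^{\sharp}\in H^{2\gamma-1/4}$ and $\gamma > 1/8$), so the $5/2$-smoothing of $\sigma(D)$ puts the right-hand side in $H^{5/2}$, i.e. $g^{\flat}\in H^{5/2}$. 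The same identity applied to $f$ reads $f^{\flat} = \sigma(D)[-f^{\sharp}+(af-\psi)+\mathcal{R}(f,\sigma(D)\eta,\eta)+f^{\sharp}\circ\eta]$, and upon subtraction the $-\psi$ contributions cancel:
\begin{equation*}
f^{\flat}-g^{\flat} = \sigma(D)\big[ -(f^{\sharp}-g^{\sharp}) + a(f-g) + \mathcal{R}(f,\sigma(D)\eta,\eta) - \mathcal{R}(g,\sigma(D)\tilde{\eta},\tilde{\eta}) + f^{\sharp}\circ\eta - g^{\sharp}\circ\tilde{\eta}\big].
\end{equation*}

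Then come the two estimates. For $\lVert f-g\rVert_{H^{\gamma}}$ — and, through the $\tilde{\mathcal{D}}_{\eta}^{\gamma,\rho,A}$-norm, also for $a^{-\rho}\lVert f^{\sharp}-g^{\sharp}\rVert_{H^{2\gamma-1/4}}$ — I apply Proposition \ref{Burgers' Proposition 6.4} with source $\psi$, obtaining $\lVert f-g\rVert_{H^{\gamma}} + a^{-\rho}\lVert f^{\sharp}-g^{\sharp}\rVert_{H^{2\gamma-1/4}} \lesssim \lVert \psi\rVert_{L^{2}}\lVert \Theta-\tilde{\Theta}\rVert_{\mathcal{E}^{\alpha}}(1+\lVert \Theta\rVert_{\mathcal{E}^{\alpha}}+\lVert \tilde{\Theta}\rVert_{\mathcal{E}^{\alpha}})$. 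Feeding this into the displayed identity, each surviving difference ($f^{\sharp}-g^{\sharp}$ in a weak norm, $f-g$ in $L^{2}$, the difference of the resonant terms $\mathcal{R}$, and $f^{\sharp}\circ\eta-g^{\sharp}\circ\tilde{\eta}$) is bounded by $(\lVert f\rVert_{H^{\gamma}}+\lVert g\rVert_{H^{\gamma}})(1+\lVert \tilde{\Theta}\rVert_{\mathcal{E}^{\alpha}})\lVert \Theta-\tilde{\Theta}\rVert_{\mathcal{E}^{\alpha}}$ using bilinearity of $B$ and $\mathcal{R}$ in the noise, \eqref{Burgers' Estimate on R}, \eqref{Sobolev products e}, and the bound just obtained for $f^{\sharp}-g^{\sharp}$; since $\sigma(D)$ gains $5/2$ derivatives, this recovers the $H^{5/2}$-norm, giving \eqref{Burgers' est 250}. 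For the ``in particular'' claim, take $\tilde{\Theta}=\Theta^{n}=(\eta_{n},-\eta_{n}\circ\sigma(D)\eta_{n}-c_{n})$ and set $f_{n}\triangleq \mathcal{G}_{a}(\Theta^{n})\psi$: since $\eta_{n}$ is smooth, an elliptic bootstrap on $-\Lambda^{5/2}f_{n}-\eta_{n}f_{n}+af_{n}=\psi\in L^{2}$ gives $f_{n}\in H^{5/2}$ (and it agrees with the paracontrolled solution by uniqueness), and \eqref{Burgers' est 250} with $\tilde{\Theta}=\Theta^{n}$, together with $\lVert \Theta^{n}\rVert_{\mathcal{E}^{\alpha}}\to\lVert \Theta\rVert_{\mathcal{E}^{\alpha}}$ and $\lVert \Theta-\Theta^{n}\rVert_{\mathcal{E}^{\alpha}}\to 0$, yields \eqref{Burgers' est 254}.

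The main obstacle I anticipate is the $H^{5/2}$-estimate of $f^{\flat}-g^{\flat}$: after the $-\psi$ cancellation one must check that every surviving term, including the difference of the ``hidden commutator'' pieces carried inside $f^{\sharp},g^{\sharp}$ and inside $B$, is measured in a norm weak enough that the smoothing $\sigma(D)$ can promote it to $H^{5/2}$ \emph{while} keeping the noise dependence linear in $\lVert \Theta-\tilde{\Theta}\rVert_{\mathcal{E}^{\alpha}}$; this is precisely where the commutator bounds \eqref{est 273}--\eqref{est 276} (and their bilinear-difference versions, cf. \eqref{est 273}, \eqref{est 277}, \eqref{est 276}) are essential and where the lower bound $\gamma > \frac{\alpha}{4}+\frac{21}{16}$ is used. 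A secondary nuisance is that the crude bound for $\lVert \psi\rVert_{L^{2}}$ carries $\lVert f^{\flat}\rVert_{H^{5/2}}$ and $\lVert \Theta\rVert_{\mathcal{E}^{\alpha}}^{2}$; these are harmless for \eqref{Burgers' est 254} (where $f$ is fixed and the $\Theta^{n}$ are bounded), and for \eqref{Burgers' est 250} one either absorbs them into the implicit constant or refines the decomposition \eqref{Burgers' est 251} of $\mathcal{H}f$ so as to trade them for $\lVert f\rVert_{H^{\gamma}}$.
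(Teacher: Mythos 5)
Your route is genuinely different from the paper's. The paper does \emph{not} run the fixed point with $\sigma(D)$; it defines $\Gamma(g) = g \prec \sigma_{a}(D)\tilde{\eta} + B_{a}(g,\tilde{\Theta}) + f - [f \prec \sigma_{a}(D)\eta + B_{a}(f,\Theta)]$ using the $a$-regularized symbol $\sigma_{a}(D)$ from \eqref{Burgers' Define sigma a} and the regularized bilinear map $B_{a}$ from \eqref{Burgers' Define Ba}. Thanks to \eqref{Burgers' est 172} these operators carry negative powers of $a$, so $\Gamma$ \emph{is} a contraction on $H^{\gamma}$ for $a\gg 1$ -- your stated objection (``only a compact, not a contractive, perturbation'') applies to the unregularized ansatz but not to the map the paper actually uses. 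The fixed point then satisfies $g - g\prec\sigma_{a}(D)\tilde{\eta} - B_{a}(g,\tilde{\Theta}) = f - f\prec\sigma_{a}(D)\eta - B_{a}(f,\Theta)$, and \eqref{Burgers' est 180b}, \eqref{Burgers' est 246}--\eqref{Burgers' est 249} convert this into \eqref{Burgers' est 250} with exactly the stated right-hand side, because the only surviving factors of $a$ appear with negative exponents.

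Your resolvent-based construction $g \triangleq \mathcal{G}_{a}(\tilde{\Theta})\psi$ with $\psi = (-\mathcal{H}+a)f$ is workable qualitatively (Propositions \ref{Burgers' Proposition 6.3}, \ref{Burgers' Proposition 6.4}, and \ref{Burgers' Proposition 6.6} are all available at this point, and your verification that $g\in\mathcal{D}_{\tilde{\Theta}}$ mirrors \eqref{Burgers' est 258}--\eqref{Burgers' est 259}), but it does not deliver \eqref{Burgers' est 250} in the stated form, and this is a genuine gap rather than a nuisance. Your bound passes through $\lVert\psi\rVert_{L^{2}} \lesssim (\lVert f\rVert_{H^{\gamma}}+\lVert f^{\flat}\rVert_{H^{\frac{5}{2}}})(1+\lVert\Theta\rVert_{\mathcal{E}^{\alpha}}^{2}) + a\lVert f\rVert_{L^{2}}$ with $a \gtrsim (1+\lVert\Theta\rVert_{\mathcal{E}^{\alpha}}^{4})^{\frac{1}{\lambda}}$, and the $H^{\frac{5}{2}}$-estimate of $f^{\flat}-g^{\flat}$ additionally requires $\lVert f^{\sharp}-g^{\sharp}\rVert_{L^{2}}$ and $a\lVert f-g\rVert_{L^{2}}$, which cost further factors of $a^{\rho}$ and $a$. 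The result is an estimate whose right-hand side contains $\lVert f^{\flat}\rVert_{H^{\frac{5}{2}}}$ -- a norm of the variable $f$ that cannot be ``absorbed into the implicit constant'' -- and a polynomial in $\lVert\Theta\rVert_{\mathcal{E}^{\alpha}}$ of degree governed by $\frac{1}{\lambda}$, rather than the linear factor $(1+\lVert\tilde{\Theta}\rVert_{\mathcal{E}^{\alpha}})$ claimed. This weaker inequality does suffice for \eqref{Burgers' est 254} and for the downstream use in Proposition \ref{Burgers' Proposition 6.8} (there $f$ is fixed and the $\Theta^{n}$ are bounded), so your argument salvages the applications; but to prove the proposition as stated you need the paper's $a$-regularized contraction, which keeps every occurrence of $a$ at a negative power.
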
 

\begin{proof}[Proof of Proposition \ref{Burgers' Proposition 6.7}]
Let $f \in \mathcal{D}_{\Theta}$ and define $\Gamma: \hspace{1mm} H^{\gamma} \mapsto H^{\gamma}$ by 
\begin{equation}\label{Burgers' est 175}
\Gamma(g) \triangleq g \prec \sigma_{a}(D) \tilde{\eta} + B_{a} (g, \tilde{\Theta}) + f - [f \prec \sigma_{a}(D) \eta+ B_{a} (f, \Theta) ], 
\end{equation} 
where $B_{a}(g, \tilde{\Theta})$ and $B_{a} (f, \Theta)$ are defined according to \eqref{Burgers' Define Ba}. Then, for all $g_{1}, g_{2} \in H^{\gamma}$, we can estimate using \eqref{Burgers' est 175} and \eqref{Burgers' Define Ba} 
\begin{align}  
 \lVert \Gamma(g_{1}) - \Gamma(g_{2}) \rVert_{H^{\gamma}} \leq& \lVert (g_{1} - g_{2}) \prec \sigma_{a}(D) \tilde{\eta} \rVert_{H^{\gamma}} + \lVert \sigma_{a}(D) \left( \Lambda^{\frac{5}{2}} (g_{1} - g_{2}) \prec \sigma(D) \tilde{\eta} \right) \rVert_{H^{\gamma}}  \nonumber\\
&+ \lVert \sigma_{a}(D) \left( \Lambda^{\frac{5}{2}} \left( ( g_{1} - g_{2}) \prec \sigma(D)\tilde{\eta} \right) - (g_{1} - g_{2}) \prec \Lambda^{\frac{5}{2}} \sigma(D) \tilde{\eta} \right) \rVert_{H^{\gamma}}  \nonumber\\
&+ \lVert \sigma_{a}(D) \left( (1- \Lambda^{\frac{5}{2}}) (g_{1} - g_{2})  \prec \sigma(D) \tilde{\eta} \right) \rVert_{H^{\gamma}} + \lVert \sigma_{a}(D) \left( (g_{1} - g_{2}) \succ \tilde{\eta} \right) \rVert_{H^{\gamma}} \nonumber \\
&+ \lVert \sigma_{a}(D) \left( (g_{1} - g_{2}) \tilde{\Theta}_{2} \right) \rVert_{H^{\gamma}}. \label{Burgers' est 176}
\end{align}
Because $\gamma < \alpha + \frac{5}{2}$ by hypothesis, we can find $\epsilon_{2} > 0$ that satisfies \eqref{Burgers' Define epsilon two} and rely on \eqref{Sobolev products a} and  \eqref{Burgers' est 172} to deduce 
\begin{equation}\label{Burgers' est 177}
\lVert (g_{1} - g_{2}) \prec \sigma_{a}(D) \tilde{\eta} \rVert_{H^{\gamma}} \lesssim a^{\frac{ 2(\gamma + \epsilon_{2} - \alpha)}{5} -1} \lVert \tilde{\Theta} \rVert_{\mathcal{E}^{\alpha}} \lVert g_{1} - g_{2} \rVert_{H^{\gamma}}.
\end{equation} 
For the rest of the terms in \eqref{Burgers' est 176}, we can rely on \eqref{est 273}, \eqref{est 277}, \eqref{est 278a}, \eqref{est 278b}, and \eqref{est 278c} and deduce 
\begin{equation}\label{Burgers' est 179}
 \lVert \Gamma (g_{1}) - \Gamma(g_{2}) \rVert_{H^{\gamma}} \lesssim  a^{\frac{2(\gamma + \epsilon_{2} - \alpha)}{5} -1} \lVert g_{1} - g_{2}  \rVert_{H^{\gamma}}.
\end{equation} 
Considering \eqref{Burgers' Define epsilon two}, we deduce that $\Gamma$ is a contraction for $a \gg 1$ so that there exists a unique $g$ such that 
\begin{subequations}\label{Burgers' est 180}
\begin{align}
g \overset{\eqref{Burgers' est 175}}{=}& g \prec \sigma_{a}(D) \tilde{\eta} + B_{a} (g, \tilde{\Theta}) + f - [f \prec \sigma_{a}(D) \eta+ B_{a} (f, \Theta) ] \label{Burgers' est 180a}\\
\overset{\eqref{Burgers' Define D Theta}}{=}&g \prec \sigma_{a}(D) \tilde{\eta} + B_{a} (g, \tilde{\Theta}) + f^{\flat} + f \prec ( \sigma - \sigma_{a})(D) \eta + B(f, \Theta) - B_{a}(f, \Theta). \label{Burgers' est 180b}
\end{align}
\end{subequations} 
Next, for any $\kappa > 0$, we can estimate 
\begin{equation}\label{est 281} 
\lVert g \prec \sigma_{a}(D) \tilde{\eta} \rVert_{H^{\frac{5}{2}+ \alpha - \kappa}} \overset{\eqref{Sobolev products a}}{\lesssim} \lVert g \rVert_{L^{2}} \lVert \sigma_{a}(D) \tilde{\eta} \rVert_{\mathscr{C}^{\frac{5}{2}+ \alpha}} \lesssim \lVert g \rVert_{H^{\gamma}} \lVert \tilde{\eta} \rVert_{\mathscr{C}^{\alpha}} \lesssim 1, 
\end{equation} 
and deduce $g \in H^{\frac{5}{2} + \alpha - \kappa}$ by relying on \eqref{Burgers' est 180b}. Additionally, we can write using \eqref{Burgers' Define g flat} and \eqref{Burgers' est 180b}
\begin{align}
g^{\flat} =& f^{\flat} + g \prec (\sigma_{a} - \sigma)(D) \tilde{\eta} + f \prec (\sigma - \sigma_{a}) (D) \eta \nonumber \\
&+ B(f, \Theta) - B_{a}(f, \Theta) + B_{a} (g, \tilde{\Theta}) - B(g, \tilde{\Theta}),  \label{Burgers' est 246} 
\end{align}
and show that $g^{\flat} \in H^{\frac{5}{2}}$; particularly, the estimates \eqref{est 276}, \eqref{est 278a}, \eqref{est 278b}, and \eqref{est 278c} can be used to estimate $\lVert B(f, \Theta) - B_{a}(f, \Theta) \rVert_{H^{\frac{5}{2}}}$. This allows us to conclude that $g \in \mathcal{D}_{\tilde{\Theta}}$ by \eqref{Burgers' Define D Theta}. Next, we estimate from \eqref{Burgers' est 180a}
\begin{align}
\lVert f -g \rVert_{H^{\gamma}} \leq& \lVert (g-f) \prec \sigma_{a}(D) \tilde{\eta} \rVert_{H^{\gamma}} + \lVert f \prec \sigma_{a}(D) (\tilde{\eta} - \eta) \rVert_{H^{\gamma}}  \nonumber \\
& \hspace{10mm} + \lVert B_{a}(f-g, \Theta) \rVert_{H^{\gamma}} + \lVert B_{a} (g, \Theta - \tilde{\Theta}) \rVert_{H^{\gamma}}.  \label{Burgers' est 248} 
\end{align}
Because $\gamma < \alpha + \frac{5}{2}$ by hypothesis, we can find $\epsilon_{2} > 0$ that satisfies \eqref{Burgers' Define epsilon two} and estimate 
\begin{equation}\label{Burgers' est 247}
 \lVert (g-f) \prec \sigma_{a}(D) \tilde{\eta} \rVert_{H^{\gamma}} \overset{\eqref{Sobolev products a}}{\lesssim}\lVert g-f \rVert_{L^{2}} \lVert \sigma_{a}(D) \tilde{\eta} \rVert_{\mathscr{C}^{\gamma + \epsilon_{2}}} \overset{\eqref{Burgers' est 172}}{\lesssim}a^{\frac{ 2(\gamma + \epsilon_{2} - \alpha)}{5} -1} \lVert g -f \rVert_{H^{\gamma}}\lVert \tilde{\Theta} \rVert_{\mathcal{E}^{\alpha}}
\end{equation} 
while we can rely on \eqref{est 276}, \eqref{est 278a}, \eqref{est 278b}, and \eqref{est 278c} to deduce 
\begin{align*}
\lVert B_{a}(f-g, \Theta) \rVert_{H^{\gamma}} \lesssim a^{\frac{2}{5}(\frac{11}{4} - \gamma) - 1} \lVert f-g \rVert_{H^{\gamma}} \lVert \Theta \rVert_{\mathcal{E}^{\alpha}}
\end{align*}
so that  
\begin{align*}
\lVert f-g \rVert_{H^{\gamma}} \lesssim& [a^{\frac{2(\gamma + \epsilon_{2} - \alpha)}{5} -1} \lVert \tilde{\Theta} \rVert_{\mathcal{E}^{\alpha}} + a^{\frac{2}{5} (\frac{11}{4} - \gamma) -1} \lVert \Theta \rVert_{\mathcal{E}^{\alpha}} ] \lVert f-g \rVert_{H^{\gamma}}  \\
&+ [a^{\frac{ 2(\gamma + \epsilon_{2} - \alpha)}{5} -1} \lVert f \rVert_{H^{\gamma}} + a^{\frac{2}{5} (\frac{11}{4} - \gamma) -1} \lVert g \rVert_{H^{\gamma}} ] \lVert \Theta - \tilde{\Theta} \rVert_{\mathcal{E}^{\alpha}}.
\end{align*}
Making use of the fact that $\frac{2( \gamma + \epsilon_{2} - \alpha)}{5} - 1 < 0$ and $\frac{2}{5} \left( \frac{11}{4} - \gamma \right) - 1 < 0$ leads to, for all sufficiently large $a \gg 1$, 
\begin{equation}\label{Burgers' est 249}
 \lVert f-g \rVert_{H^{\gamma}} \lesssim [a^{\frac{ 2}{5} (\gamma + \epsilon_{2} - \alpha) -1} \lVert f \rVert_{H^{\gamma}} + a^{\frac{2}{5} (\frac{11}{4} - \gamma) -1} \lVert g \rVert_{H^{\gamma}} ] \lVert \Theta - \tilde{\Theta} \rVert_{\mathcal{E}^{\alpha}}. 
\end{equation}
At last, we can estimate $f^{\flat} - g^{\flat}$ similarly starting from \eqref{Burgers' Define D Theta} and \eqref{Burgers' Define g flat}, making use of \eqref{Burgers' est 249} and conclude \eqref{Burgers' est 250}.  This completes the proof of Proposition \ref{Burgers' Proposition 6.7}.
\end{proof}  

\begin{proposition}\label{Burgers' Proposition 6.8}
Let $\alpha \in (-\frac{17}{12}, -\frac{5}{4}), \gamma \in (1,\alpha +\frac{5}{2})$, and $\Theta = (\eta, \Theta_{2}), \tilde{\Theta} = (\tilde{\eta}, \tilde{\Theta}_{2}) \in \mathcal{K}^{\alpha}$. Define $\mathcal{H}^{\Theta} \triangleq  -\Lambda^{\frac{5}{2}} - \eta$ and $\mathcal{H}^{\tilde{\Theta}} \triangleq -\Lambda^{\frac{5}{2}} - \tilde{\eta}$ with respective domains denoted by $\mathcal{D}_{\Theta}$ and $\mathcal{D}_{\tilde{\Theta}}$ defined according to \eqref{Burgers' Define D Theta}. Then 
\begin{align}
\lVert \mathcal{H}^{\Theta} f - \mathcal{H}^{\tilde{\Theta}} g \rVert_{L^{2}} \lesssim& ( \lVert f-g \rVert_{H^{\gamma}} + \lVert f^{\flat} - g^{\flat} \rVert_{H^{\frac{5}{2}}} + \lVert \Theta - \tilde{\Theta} \rVert_{\mathcal{E}^{\alpha}})  \nonumber \\
& \times \left( 1+ \lVert \Theta \rVert_{\mathcal{E}^{\alpha}}^{2} + \lVert \tilde{\Theta} \rVert_{\mathcal{E}^{\alpha}}^{2} \right) \left( 1+ \lVert f \rVert_{H^{\gamma}} + \lVert g \rVert_{H^{\gamma}} + \lVert f^{\flat} \rVert_{H^{\frac{5}{2}}} \right).  \label{Burgers' est 252}
\end{align} 
Moreover, the operator $\mathcal{H}: \hspace{1mm} \mathcal{D}_{\Theta} \mapsto L^{2}$ is symmetric in $L^{2}$ so that 
\begin{equation}
\langle \mathcal{H}f, g \rangle_{L^{2}} = \langle f, \mathcal{H} g \rangle_{L^{2}} \hspace{3mm} \forall \hspace{1mm} f, g \in \mathcal{D}_{\Theta}. 
\end{equation} 
\end{proposition}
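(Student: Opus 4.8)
\emph{Proof strategy.} The plan is to prove the stability bound \eqref{Burgers' est 252} first by a term-by-term estimation on a fixed representation of $\mathcal{H}$, and then to deduce the symmetry by an approximation argument that uses that same representation together with Proposition \ref{Burgers' Proposition 6.7}. For the stability bound I would start from the identity \eqref{Burgers' est 251}, valid for every $\Theta = (\eta, \Theta_2) \in \mathcal{K}^\alpha$ and every $f \in \mathcal{D}_\Theta$,
\[
\mathcal{H}^\Theta f = -\Lambda^{\frac{5}{2}} f^\flat + B(f,\Theta) - \mathcal{R}(f, \sigma(D)\eta, \eta) - \eta \circ \bigl( B(f,\Theta) + f^\flat \bigr),
\]
and the analogous identity for $\mathcal{H}^{\tilde\Theta} g$, and subtract. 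Each of the four summands of the difference is estimated in $L^2$ using the multilinearity of the building blocks. The term $-\Lambda^{\frac{5}{2}}(f^\flat - g^\flat)$ is controlled by $\lVert f^\flat - g^\flat \rVert_{H^{5/2}}$. For the bilinear term one writes $B(f,\Theta) - B(g,\tilde\Theta) = B(f-g,\Theta) + B(g, \Theta - \tilde\Theta)$ and uses the bound $\lVert B(h,\Xi) \rVert_{H^{2\gamma - 1/4}} \lesssim \lVert h \rVert_{H^\gamma}\lVert \Xi \rVert_{\mathcal{E}^\alpha}$ established in the discussion preceding Proposition \ref{Burgers' Proposition 6.5} (which already absorbs the commutator estimate \eqref{est 273}--\eqref{est 277}); since $\gamma > 1$ gives $2\gamma - \frac{1}{4} > 0$, this is an $L^2$ bound.

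For $\mathcal{R}(f,\sigma(D)\eta,\eta) - \mathcal{R}(g,\sigma(D)\tilde\eta,\tilde\eta)$ one splits into a piece carrying $f-g$ and two pieces carrying $\eta - \tilde\eta$ through $\sigma(D)$, and applies \eqref{Burgers' Estimate on R} with the admissible exponent $\lVert \cdot \rVert_{H^{1/8 - \alpha/2}} \lesssim \lVert \cdot \rVert_{H^\gamma}$ (valid since $\frac{1}{8} - \frac{\alpha}{2} < 1 < \gamma$), whose output index $\frac{21}{8} + \frac{3\alpha}{2} - \delta$ is positive for $\alpha > -\frac{7}{4}$; this gives a bound by $\lVert f-g \rVert_{H^\gamma}\lVert \eta \rVert_{\mathscr{C}^\alpha}^2 + \lVert g \rVert_{H^\gamma}\lVert \eta-\tilde\eta \rVert_{\mathscr{C}^\alpha}(\lVert \eta \rVert_{\mathscr{C}^\alpha} + \lVert \tilde\eta \rVert_{\mathscr{C}^\alpha})$. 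Finally one decomposes
\[
\eta\circ(B(f,\Theta)+f^\flat) - \tilde\eta\circ(B(g,\tilde\Theta)+g^\flat) = (\eta-\tilde\eta)\circ(B(f,\Theta)+f^\flat) + \tilde\eta\circ(B(f,\Theta)-B(g,\tilde\Theta)) + \tilde\eta\circ(f^\flat - g^\flat),
\]
and uses the resonant-product estimate \eqref{Sobolev products e}, applicable because $\alpha + \min\{2\gamma - \frac{1}{4}, \frac{5}{2}\} > 0$, together with the $B$-bound above. Collecting the pieces and factoring out $(1+\lVert \Theta \rVert_{\mathcal{E}^\alpha}^2 + \lVert \tilde\Theta \rVert_{\mathcal{E}^\alpha}^2)(1+\lVert f \rVert_{H^\gamma} + \lVert g \rVert_{H^\gamma} + \lVert f^\flat \rVert_{H^{5/2}})$ yields \eqref{Burgers' est 252}; note $g^\flat$ occurs only inside the genuine difference $f^\flat - g^\flat$, which is why it is absent from the prefactor.

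For the symmetry claim I would argue by approximation. Choose smooth enhanced noises $\Theta^n = (\eta_n, -\eta_n \circ \sigma(D)\eta_n - c_n)$ with $\Theta^n \to \Theta$ in $\mathcal{E}^\alpha$, which exist by the definition \eqref{Burgers' Define K alpha} of $\mathcal{K}^\alpha$. For smooth data, $\mathcal{H}^{\Theta^n}$ equals $-\Lambda^{\frac{5}{2}}$ plus multiplication by the real-valued function $\eta_n$ (and a real constant), both manifestly symmetric on $L^2$, so $\langle \mathcal{H}^{\Theta^n}\phi,\psi\rangle_{L^2} = \langle \phi, \mathcal{H}^{\Theta^n}\psi\rangle_{L^2}$ for all smooth $\phi,\psi$, hence for all $\phi,\psi \in H^{5/2} \subset \mathcal{D}_{\Theta^n}$ by density and continuity. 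Given $f, g \in \mathcal{D}_\Theta$, Proposition \ref{Burgers' Proposition 6.7} (applicable since $\gamma > 1 > \frac{\alpha}{4} + \frac{21}{16}$) provides $f_n, g_n \in H^{5/2}$ with $\lVert f_n - f \rVert_{H^\gamma} + \lVert f_n^\flat - f^\flat \rVert_{H^{5/2}} \to 0$ and likewise for $g_n$; applying \eqref{Burgers' est 252} with $(\Theta^n, \Theta)$ in place of $(\Theta, \tilde\Theta)$ gives $\mathcal{H}^{\Theta^n} f_n \to \mathcal{H}^\Theta f$ and $\mathcal{H}^{\Theta^n} g_n \to \mathcal{H}^\Theta g$ in $L^2$, while $f_n \to f$ and $g_n \to g$ in $L^2$. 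Passing to the limit in $\langle \mathcal{H}^{\Theta^n} f_n, g_n\rangle_{L^2} = \langle f_n, \mathcal{H}^{\Theta^n} g_n\rangle_{L^2}$ yields the symmetry of $\mathcal{H}$ on $\mathcal{D}_\Theta$.

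The main obstacle is the multilinear bookkeeping in the stability estimate: one must systematically split the bilinear term $B$, the trilinear term $\mathcal{R}$, and the resonant product $\eta\circ(\cdot)$ into argument-difference and noise-difference pieces and verify in each case that the relevant product estimates of Lemma \ref{Lemma 3.1} and Lemma \ref{Burgers' Lemma A.3} apply, in particular that every resulting Sobolev index is non-negative so that the bound genuinely lands in $L^2$. The one genuinely new analytic difficulty, the commutator hidden inside $B$, has already been resolved in the remark preceding Proposition \ref{Burgers' Proposition 6.5} via \eqref{est 273}--\eqref{est 277}, so no further harmonic analysis is needed here; the remaining work is organizational, and the symmetry argument then follows routinely once the stability estimate and Proposition \ref{Burgers' Proposition 6.7} are in place.
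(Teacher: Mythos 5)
Your proposal is correct and follows essentially the same route as the paper: both start from the representation \eqref{Burgers' est 251}, decompose the difference $\mathcal{H}^{\Theta}f-\mathcal{H}^{\tilde{\Theta}}g$ into the same argument-difference and noise-difference pieces (your grouping of the resonant-product term expands to exactly the paper's \eqref{Burgers' est 253}), and control them with the $B$-bounds from \eqref{est 276}--\eqref{est 278c}, Lemma \ref{Burgers' Lemma A.3}, and \eqref{Sobolev products e}. The symmetry argument via smooth approximation of $\Theta$, Proposition \ref{Burgers' Proposition 6.7}, and passage to the limit using the stability estimate is also the paper's argument.
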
 
\begin{proof}[Proof of Proposition \ref{Burgers' Proposition 6.8}]  
We can bound from \eqref{Burgers' est 251}, 
\begin{align}
& \lVert \mathcal{H}^{\Theta} f -\mathcal{H}^{\tilde{\Theta}}  g \rVert_{L^{2}}  \leq \lVert \Lambda^{\frac{5}{2}} (f^{\flat} - g^{\flat}) \rVert_{L^{2}} + \lVert B(f-g, \Theta) \rVert_{L^{2}} + \lVert B(g, \Theta - \tilde{\Theta}) \rVert_{L^{2}}   \nonumber \\
&\hspace{18mm} + \lVert \mathcal{R} (f-g, \sigma(D) \eta, \eta) \rVert_{L^{2}} + \lVert \mathcal{R} (g, \sigma(D) (\eta - \tilde{\eta}), \eta) \rVert_{L^{2}} + \lVert \mathcal{R} (g, \sigma(D) \tilde{\eta}, \eta - \tilde{\eta}) \rVert_{L^{2}}  \nonumber \\
& \hspace{18mm} + \lVert ( \eta - \tilde{\eta}) \circ B(f,\Theta) \rVert_{L^{2}} + \lVert \tilde{\eta} \circ B(f-g, \Theta) \rVert_{L^{2}} + \lVert \tilde{\eta} \circ B(g, \Theta- \tilde{\Theta}) \rVert_{L^{2}}  \nonumber \\
& \hspace{18mm} + \lVert (\eta - \tilde{\eta}) \circ f^{\flat} \rVert_{L^{2}} + \lVert \tilde{\eta} \circ (f^{\flat} - g^{\flat}) \rVert_{L^{2}}.  \label{Burgers' est 253}
\end{align}
We can rely on the estimates \eqref{est 276}, \eqref{est 278a}, \eqref{est 278b}, and \eqref{est 278c} to handle the terms involving $B(f-g, \Theta), B(g, \Theta - \tilde{\Theta}), B(f,\Theta), B(f-g, \Theta)$, and $B(g, \Theta - \tilde{\Theta})$, and readily deduce \eqref{Burgers' est 252}. 

Next, we let $\{ \Theta^{n} \}_{n \in \mathbb{N}} = \{ (\eta_{n}, -\eta_{n} \circ \sigma(D) \eta_{n} - c_{n}) \}_{n\in\mathbb{N}} \subset C^{\infty}$ satisfy $\Theta^{n} \to \Theta$ in $\mathcal{E}^{\alpha}$ as $n \nearrow \infty$. Then, by Proposition \ref{Burgers' Proposition 6.7}, there exists $\{f_{n}\}_{n \in\mathbb{N}} \subset H^{\frac{5}{2}}$ such that \eqref{Burgers' est 254} is satisfied. The smoothness of $\eta_{n}$ allows us to define $\mathcal{H}_{n} \triangleq  -\Lambda^{\frac{5}{2}} - \eta_{n}$ on $H^{\frac{5}{2}}$. Moreover, applying the assumption of $\Theta^{n} \to \Theta$ in $\mathcal{E}^{\alpha}$ as $n \nearrow \infty$ and \eqref{Burgers' est 254} to \eqref{Burgers' est 252} shows that $\mathcal{H}_{n} f_{n} \to \mathcal{H} f$ in $L^{2}$. Additionally, if $\{g_{n}\}_{n \in \mathbb{N}} \subset H^{\frac{5}{2}}$ satisfies $\mathcal{H}_{n}g_{n} \to \mathcal{H} g$ in $L^{2}$ as $n \nearrow \infty$ and $\lim_{n \nearrow \infty} \lVert g_{n} - g \rVert_{H^{\gamma}} + \lVert g_{n}^{\flat} - g^{\flat} \rVert_{H^{\frac{5}{2}}} = 0$ similarly to \eqref{Burgers' est 254}, then $\langle \mathcal{H}_{n} f_{n}, g_{n} \rangle_{L^{2}}  = \langle f_{n}, \mathcal{H}_{n} g_{n} \rangle_{L^{2}}$ so that 
\begin{align*}
\lvert \langle \mathcal{H}f, g \rangle_{L^{2}} - \langle f, \mathcal{H} g \rangle_{L^{2}}& \rvert \leq \lVert \mathcal{H} f - \mathcal{H} f_{n} \rVert_{L^{2}} \lVert g \rVert_{L^{2}} + \lVert \mathcal{H} f_{n} \rVert_{L^{2}} \lVert g - g_{n} \rVert_{L^{2}} \\
&+ \lVert f_{n} - f \rVert_{L^{2}} \lVert \mathcal{H}_{n} g_{n} \rVert_{L^{2}} + \lVert f \rVert_{L^{2}} \lVert \mathcal{H}_{n}g_{n} - \mathcal{H} g \rVert_{L^{2}}  \to 0 \text{ as } n \nearrow \infty. 
\end{align*}
This completes the proof of Proposition \ref{Burgers' Proposition 6.8}.
\end{proof} 

\begin{proposition}\label{Burgers' Proposition 6.9}
Let $\alpha \in (-\frac{17}{12}, -\frac{5}{4}), \gamma \in (1, \alpha + \frac{5}{2}),$ and $A = A( \lVert \Theta \rVert_{\mathcal{E}^{\alpha}})$ from Proposition \ref{Burgers' Proposition 6.3}. Then, for all $a \geq A$, $-\mathcal{H} + a: \hspace{1mm} \mathcal{D}_{\Theta} \mapsto L^{2}$ is invertible with inverse $\mathcal{G}_{a}: \hspace{1mm} L^{2} \mapsto \mathcal{D}_{\Theta}$. Additionally, $\mathcal{G}_{a}: \hspace{1mm} L^{2} \mapsto L^{2}$ is bounded, self-adjoint, and compact. 
\end{proposition}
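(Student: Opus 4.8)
The plan is to identify the resolvent operator $\mathcal{G}_{a}$ built in Proposition \ref{Burgers' Proposition 6.3} — which a priori is only known to map $L^{2}$ into the larger paracontrolled space $\mathcal{D}_{\eta}^{\gamma}$ — with the two-sided inverse of $-\mathcal{H}+a$ acting on the genuine operator domain $\mathcal{D}_{\Theta}$ of \eqref{Burgers' Define D Theta}, and then to read off boundedness, self-adjointness and compactness from the quantitative bounds already established.

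\emph{Step 1: the inverse lands in $\mathcal{D}_{\Theta}$.} Fix $a\geq A$ and $g\in L^{2}$, and let $f_{a}\triangleq \mathcal{G}_{a}g\in\mathcal{D}_{\eta}^{\gamma}$ be the unique solution of $(-\mathcal{H}+a)f_{a}=g$ provided by Proposition \ref{Burgers' Proposition 6.3}. Since $g\in L^{2}$ the equation gives $\mathcal{H}f_{a}=af_{a}-g\in L^{2}$. I would then repeat the computation of the Remark surrounding \eqref{est 279}--\eqref{Burgers' est 245}, with the scalar term $\lambda f$ replaced by the $L^{2}$ distribution $\mathcal{H}f_{a}$: define
\begin{equation*}
f_{a}^{\flat}\triangleq \sigma(D)\left[-f_{a}^{\sharp}+\mathcal{H}f_{a}+\mathcal{R}(f_{a},\sigma(D)\eta,\eta)+f_{a}^{\sharp}\circ\eta\right].
\end{equation*}
Because $\sigma(D)$ gains $\tfrac{5}{2}$ derivatives (Lemma \ref{Burgers' Lemma A.1}), $f_{a}^{\sharp}\in H^{2\gamma-\frac14}$ by \eqref{Burgers' Define D eta gamma}, $\mathcal{H}f_{a}\in L^{2}$, and $\mathcal{R}(f_{a},\sigma(D)\eta,\eta)$, $f_{a}^{\sharp}\circ\eta$ are controlled by \eqref{Burgers' Estimate on R} and \eqref{Sobolev products e}, one gets $f_{a}^{\flat}\in H^{5/2}$; combined with the algebraic identity \eqref{Burgers' est 245} this says exactly $f_{a}-f_{a}\prec\sigma(D)\eta-B(f_{a},\Theta)=f_{a}^{\flat}\in H^{5/2}$, i.e. $f_{a}\in\mathcal{D}_{\Theta}$. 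Hence $\mathcal{G}_{a}(L^{2})\subset\mathcal{D}_{\Theta}$.

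\emph{Step 2: bijectivity.} Surjectivity of $-\mathcal{H}+a:\mathcal{D}_{\Theta}\to L^{2}$ is Step 1. For injectivity, $\mathcal{D}_{\Theta}\subset\mathcal{D}_{\eta}^{\gamma}$, so any $f\in\mathcal{D}_{\Theta}$ with $(-\mathcal{H}+a)f=0$ is a solution in $\mathcal{D}_{\eta}^{\gamma}$ of the equation with zero right-hand side, and the uniqueness clause of Proposition \ref{Burgers' Proposition 6.3} forces $f=0$. Thus $-\mathcal{H}+a:\mathcal{D}_{\Theta}\to L^{2}$ is a bijection whose inverse is precisely $\mathcal{G}_{a}$. \emph{Step 3: the functional-analytic properties.} Taking $\delta=0$ in \eqref{Burgers' est 202a} gives $\lVert\mathcal{G}_{a}g\rVert_{H^{\gamma}}\lesssim_{a}\lVert g\rVert_{L^{2}}$, and since $\gamma>1>0$ this yields $\lVert\mathcal{G}_{a}g\rVert_{L^{2}}\lesssim_{a}\lVert g\rVert_{L^{2}}$, so $\mathcal{G}_{a}$ is bounded on $L^{2}$. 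For symmetry, take $g_{1},g_{2}\in L^{2}$ and $f_{i}\triangleq\mathcal{G}_{a}g_{i}\in\mathcal{D}_{\Theta}$; using that $\mathcal{H}$ is symmetric on $\mathcal{D}_{\Theta}$ by Proposition \ref{Burgers' Proposition 6.8},
\begin{equation*}
\langle\mathcal{G}_{a}g_{1},g_{2}\rangle_{L^{2}}=\langle f_{1},(-\mathcal{H}+a)f_{2}\rangle_{L^{2}}=\langle(-\mathcal{H}+a)f_{1},f_{2}\rangle_{L^{2}}=\langle g_{1},\mathcal{G}_{a}g_{2}\rangle_{L^{2}},
\end{equation*}
so $\mathcal{G}_{a}$ is an everywhere-defined bounded symmetric operator on $L^{2}$, hence self-adjoint. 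Finally, $\mathcal{G}_{a}$ factors as the bounded map $L^{2}\to H^{\gamma}$ of \eqref{Burgers' est 202a} followed by the Rellich embedding $H^{\gamma}(\mathbb{T}^{3})\hookrightarrow L^{2}(\mathbb{T}^{3})$, which is compact; therefore $\mathcal{G}_{a}:L^{2}\to L^{2}$ is compact.

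The main obstacle is Step 1: one must bootstrap the paracontrolled solution $f_{a}\in\mathcal{D}_{\eta}^{\gamma}$ — which only records regularity of the first correction $f_{a}^{\sharp}$ — up to membership in the genuine operator domain $\mathcal{D}_{\Theta}$, where the second-level correction $f_{a}^{\flat}$, built from the $\Lambda^{5/2}$-commutator hidden in $B(f,\Theta)$ of \eqref{Burgers' Define B}, must lie in $H^{5/2}$. This is exactly where the troublesome commutator of Remark \ref{Remark 2.2} re-enters, and it is absorbed by the estimates \eqref{est 273}, \eqref{est 277}, \eqref{est 276} together with \eqref{est 278a}--\eqref{est 278e}; once the identity \eqref{Burgers' est 245} is in place, the remaining steps are soft functional analysis, and they additionally show that the shifted operator $-\mathcal{H}+a$ is closed (having bounded everywhere-defined inverse) and self-adjoint, so that $\mathcal{H}$ itself is self-adjoint with compact resolvent.
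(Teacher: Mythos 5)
Your proposal is correct and follows essentially the same route as the paper's proof: you identify $\mathcal{G}_{a}g$ from Proposition \ref{Burgers' Proposition 6.3} with the solution in $\mathcal{D}_{\Theta}$ by constructing $f_{a}^{\flat}=\sigma(D)[-f_{a}^{\sharp}-g+af_{a}+\mathcal{R}(f_{a},\sigma(D)\eta,\eta)+f_{a}^{\sharp}\circ\eta]$ (your $\mathcal{H}f_{a}=af_{a}-g$ substitution reproduces the paper's \eqref{Burgers' est 258}) and verifying $f_{a}^{\flat}\in H^{\frac{5}{2}}$ exactly as in \eqref{Burgers' est 259}, then deduce boundedness from \eqref{Burgers' est 202a}, self-adjointness from the symmetry in Proposition \ref{Burgers' Proposition 6.8}, and compactness by factoring through the Rellich embedding $H^{\gamma}\hookrightarrow L^{2}$. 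The only addition is your explicit injectivity argument via uniqueness in $\mathcal{D}_{\eta}^{\gamma}$, which the paper leaves implicit.
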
 

\begin{proof}[Proof of Proposition \ref{Burgers' Proposition 6.9}]
Let $g \in L^{2}$. By Proposition \ref{Burgers' Proposition 6.3}, there exists a unique $f_{a} \in \mathcal{D}_{\eta}^{\gamma}$ such that $f_{a} = \mathcal{G}_{a} g$; i.e., $(-\mathcal{H} + a) f_{a} = g$, and consequently due to \eqref{Burgers' Define H} and \eqref{Burgers' est 165}, 
\begin{align}
 (1 + \Lambda^{\frac{5}{2}}) & f_{a}^{\sharp} = f_{a}^{\sharp} + g - af_{a} - \Lambda^{\frac{5}{2}} \left( f_{a} \prec \sigma(D) \eta \right) + \Lambda^{\frac{5}{2}} f_{a} \prec \sigma(D) \eta+ f_{a} \prec \Lambda^{\frac{5}{2}} \sigma(D) \eta \nonumber \\
 & + (1 - \Lambda^{\frac{5}{2}}) f_{a} \prec \sigma(D) \eta - f_{a} \succ \eta   - \left( f_{a} \Theta_{2} + \mathcal{R} (f_{a}, \sigma(D) \eta, \eta ) + f_{a}^{\sharp} \circ \eta \right).  \label{Burgers' est 257}
\end{align} 
Resultantly, 
\begin{align}
f_{a}^{\flat} \triangleq f_{a}^{\sharp} - B(f_{a}, \Theta) \overset{\eqref{Burgers' est 257} \eqref{Burgers' Define B}}{=}  \sigma(D) [-f_{a}^{\sharp} - g + a f_{a} + \mathcal{R} (f_{a}, \sigma(D) \eta, \eta) + f_{a}^{\sharp} \circ \eta]   \label{Burgers' est 258}
\end{align}
where we can show that $f_{a}^{\flat} \in H^{\frac{5}{2}}$ 
\begin{equation}\label{Burgers' est 259}
\lVert f_{a}^{\flat} \rVert_{H^{\frac{5}{2}}} \overset{\eqref{Burgers' Estimate on R} \eqref{Sobolev products e}}{\lesssim}  a \lVert f_{a} \rVert_{\mathcal{D}_{\eta}^{\gamma}} (1+ \lVert \eta \rVert_{\mathscr{C}^{\alpha}}^{2}) + \lVert g \rVert_{L^{2}}  \lesssim 1, 
\end{equation} 
and consequently $f_{a} \in H^{\alpha + \frac{5}{2} - \kappa}$ so that $f_{a} \in \mathcal{D}_{\Theta}$ by \eqref{Burgers' Define D Theta}.  Moreover, we can choose $\rho = \frac{2}{5}(\gamma -\frac{1}{4})$ since it satisfies \eqref{Burgers' Define rho} and deduce from \eqref{Burgers' Norm D Theta gamma}, 
\begin{equation}\label{Burgers' est 260} 
\lVert \mathcal{G}_{a} g \rVert_{\mathcal{D}_{\Theta}^{\gamma}} \leq \lVert \mathcal{G}_{a} g \rVert_{H^{\gamma}} + a^{-\rho} \lVert ( \mathcal{G}_{a} g)^{\sharp} \rVert_{H^{2\gamma - \frac{1}{4}}} + \lVert f_{a}^{\flat} \rVert_{H^{\frac{5}{2}}}  \overset{\eqref{Burgers' est 259}\eqref{Burgers' est 202}}{\lesssim} a^{\frac{4\gamma}{5} - \frac{1}{10}} (1+\lVert \eta \rVert_{\mathscr{C}^{\alpha}}^{2} ) \lVert g\rVert_{L^{2}}.
\end{equation}   
Next, the fact that $\mathcal{G}_{a}: \hspace{1mm} L^{2} \mapsto L^{2}$ is self-adjoint follows from the symmetry of $\mathcal{H}$. Finally, writing $\mathcal{G}_{a}: \hspace{1mm} L^{2} \mapsto L^{2}$ as a composition of $\mathcal{G}_{a}:\hspace{1mm} L^{2} \mapsto H^{\gamma}$ and an embedding operator $i:  \hspace{1mm} H^{\gamma} \mapsto L^{2}$ demonstrates that $\mathcal{G}_{a}: \hspace{1mm} L^{2} \mapsto L^{2}$ is compact. This  completes the proof of Proposition \ref{Burgers' Proposition 6.9}.
\end{proof}

\appendix

\section{Proof of Theorem \ref{Theorem 2.2}}\label{Proof of Theorem 2.2}
Suppose that $T^{\max} < \infty. $ By Proposition \ref{Proposition 4.12} this implies $\limsup_{t\nearrow T^{\max}} \lVert w_{u}(t) \rVert_{L^{2}} = + \infty$.  By  \eqref{Define T0 and Ti} this implies $T_{i} < T^{\max}$ for all $i \in \mathbb{N}$. Because $T^{\max} < + \infty$,  \eqref{est 110} gives us  
\begin{equation}\label{est 141} 
T_{i+1} - T_{i} \geq \frac{1}{ \tilde{C} (N_{T^{\max}}^{\kappa}) (\ln(1+i) + 1)} \ln \left( \frac{i^{2} + 2i - C(N_{T^{\max}}^{\kappa})}{i^{2} + \tilde{C} (N_{T^{\max}}^{\kappa} )} \right)
\end{equation} 
where $\sum_{i=1}^{\infty} T_{i+1} - T_{i} < \infty$ because $T_{\max} < \infty$. On the other hand, the sum over the right hand side over $i \in \mathbb{N}$ blows up to $+ \infty$ and thus we reach a contradiction.

\section*{Acknowledgments}
The author expresses deep gratitude to Prof. Carl Mueller, Prof. Tomasso Rosati, Prof. Samy Tindel Prof. Jiahong Wu, Prof. Quoc Huang Nguyen, and Prof. In-Jee Jeong for valuable discussions.

\end{document}